\theoremstyle{definition}
\newtheorem{theorem}{Theorem}[section]
\newtheorem{corollary}[theorem]{Corollary}
\newtheorem{assumption}[theorem]{Assumption}
\newtheorem{conj}[theorem]{Conjecture}
\newtheorem{lemma}[theorem]{Lemma}
\newtheorem{proposition}[theorem]{Proposition}
\newtheorem{remark}[theorem]{Remark}
\newtheorem{notation}[theorem]{Notation}
\newtheorem{definition}[theorem]{Definition}
\newtheorem{construction}[theorem]{Construction}
\newtheorem{example}[theorem]{Example}
\tikzstyle{vertex}=[circle, draw=black, fill=black, minimum size=2pt, inner sep=2]
\tikzset{
  on each segment/.style={
    decorate,
    decoration={
      show path construction,
      moveto code={},
      lineto code={
        \path [#1]
        (\tikzinputsegmentfirst) -- (\tikzinputsegmentlast);
      },
      curveto code={
        \path [#1] (\tikzinputsegmentfirst)
        .. controls
        (\tikzinputsegmentsupporta) and (\tikzinputsegmentsupportb)
        ..
        (\tikzinputsegmentlast);
      },
      closepath code={
        \path [#1]
        (\tikzinputsegmentfirst) -- (\tikzinputsegmentlast);
      },
    },
  },
  mid arrow/.style={postaction={decorate,decoration={
        markings,
        mark=at position .5 with {\arrow[scale=1.2,#1]{stealth}}
      }}},
}
\let\oldproofname=\proofname
\renewcommand{\proofname}{\rm\bf{\oldproofname}}
\journal{Arxiv}
\begin{document}

\begin{frontmatter}



\title{Hamiltonian decompositions of the wreath product of hamiltonian decomposable digraphs}


\author{Alice Lacaze-Masmonteil} 

\affiliation{organization={University of Regina},
            addressline={Department of Mathematics and Statistics, University of Regina, 3737 Wascana Pkwy}, 
            city={Regina},
            postcode={S4S 0A2}, 
            state={Saskachewan},
            country={Canada}}

\begin{abstract}
We affirm most open cases of a conjecture that first appeared in Alspach et al.~(1987) which stipulates that the wreath (lexicographic) product of two hamiltonian decomposable directed graphs is also hamiltonian decomposable. Specifically, we show that the wreath product of  a hamiltonian decomposable directed graph $G$, such that $|V(G)|$ is even and $|V(G)|\geqslant 2$, with a hamiltonian decomposable directed graph $H$, such that $|V(H)| \geqslant 4$, is also hamiltonian decomposable except possibly when $G$ is a directed cycle and $H$ is a directed graph of odd order that admits a decomposition into $c$ directed hamiltonian cycle where $c$ is odd and $3\leqslant c \leqslant |V(H)|-2$. 
\end{abstract}

\begin{keyword}
Wreath product, decompositions, hamiltonian cycle, directed graphs.


\end{keyword}

\end{frontmatter}



\section{Introduction}\label{S:intro}

The \textit{wreath product} of directed graph (digraph for short) $G$ with digraph $H$, denoted $G \wr H$, is the digraph on vertex set $V(G) \times V(H)$ with a directed edge (arc) from $(g_1, h_1)$ to $(g_2, h_2)$ if and only if there exists an arc from $g_1$ to $g_2$ in $G$, or there exists an arc from $h_1$ to $h_2$ in $H$ and $g_1=g_2$.  A (directed) graph that admits a decomposition into (directed) hamiltonian cycles is a \textit{hamiltonian decomposable} (directed) graph. 

Regarding undirected graphs, Baranyai and  Szász  \cite{HamLex} have shown that the wreath product of two hamiltonian decomposable graphs is hamiltonian decomposable. An analogous statement has been shown to hold for the strong product of two hamiltonian decomposable graphs \cite{Fan, Zhou}. Stong \cite{Stong} has shown that, with some additional assumptions,  the Cartesian product of two hamiltonian decomposable graphs is also hamiltonian decomposable. Bermond \cite{Berm} has shown that the tensor product of two odd ordered hamiltonian decomposable graphs is also hamiltonian decomposable. In addition, Balakrishnan et al.~\cite{Bala+} have shown that the tensor product of two complete graphs of order at least three is hamiltonian decomposable. 

There are few results on the analogous problem for digraphs. The Cartesian product of a directed $n$-cycle with a directed $m$-cycle is hamiltonian decomposable if and only if there exist positive integers $R_1$ and $R_2$ such that $\textrm{gcd}(m, n)=R_1+R_2$ and $\textrm{gcd}(mn, R_1R_2)=1$ \cite{Kea}. As for the tensor product, Paulraja and Sivasankar \cite{Praja2} have shown that the tensor product of several specific classes of hamiltonian decomposable digraphs is also hamiltonian decomposable. 

This paper will focus on the following long-standing conjecture on the wreath product of two hamiltonian decomposable digraphs which first appeared in \cite{CD1}. 

\begin{conj} \cite{CD1}
\label{conj:main}
Let $G$ and $H$ be hamiltonian decomposable directed graphs. The wreath product $G \wr H$  is also hamiltonian decomposable. 
\end{conj}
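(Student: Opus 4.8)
The plan is to reduce the full conjecture to a single base case---$G$ a directed cycle---and then resolve that case by an explicit two-family cycle construction, isolating the genuine difficulty in a parity condition. Throughout, write $n=|V(G)|$, $m=|V(H)|$, let $\oplus$ denote arc-disjoint union, and fix decompositions $G=\vec{C}^{(1)}\oplus\cdots\oplus\vec{C}^{(a)}$ and $H=\vec{D}^{(1)}\oplus\cdots\oplus\vec{D}^{(b)}$ into directed hamiltonian cycles. A digraph that decomposes into $k$ directed hamiltonian cycles is $k$-regular, so $G\wr H$ is $(am+b)$-regular on $nm$ vertices and the goal is a decomposition into $am+b$ directed hamiltonian cycles. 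The arcs partition as $G\wr H=\bigl(\bigoplus_{s=1}^{a}\vec{C}^{(s)}\wr\bar{K}_m\bigr)\oplus\bigl(\bar{K}_n\wr H\bigr)$, separating the between-fibre arcs inherited from $G$ from the $n$ within-fibre copies of $H$, where $\bar{K}_m$ is the edgeless digraph on $m$ vertices. Absorbing the within-fibre block into the first slab gives $G\wr H=\bigl(\vec{C}^{(1)}\wr H\bigr)\oplus\bigl(\bigoplus_{s=2}^{a}\vec{C}^{(s)}\wr\bar{K}_m\bigr)$, and since $(m+b)+(a-1)m=am+b$, the conjecture follows once I show (i) $\vec{C}_n\wr\bar{K}_m$ decomposes into $m$ directed hamiltonian cycles, and (ii) $\vec{C}_n\wr H$ decomposes into $m+b$ directed hamiltonian cycles. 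As (i) is the $b=0$ case of (ii), everything rests on the base case $G=\vec{C}_n$.

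For the base case I would encode cycles by permutations. In $\vec{C}_n\wr\bar{K}_m$ a directed hamiltonian cycle is exactly a choice of a perfect matching $\sigma_i\in S_m$ across each fibre-gap $i\to i+1$ whose composition $\sigma_{n-1}\cdots\sigma_0$ is a single $m$-cycle, and a decomposition into $m$ such cycles resolves the complete bipartite digraph at each gap into $m$ matchings with all $m$ resulting compositions being $m$-cycles. Using shift matchings $\sigma_i^{(k)}\colon j\mapsto j+c_{i,k}\pmod m$, this asks for an $m\times n$ array over $\mathbb{Z}_m$ with every column a permutation and every row sum coprime to $m$. A parity phenomenon already appears here: for $m=2$ the row sums must all be odd while they total $n$, which is possible only for even $n$, and indeed $\vec{C}_3\wr\bar{K}_2$ admits no such decomposition. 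Thus (i) holds cleanly for even $n$---the friendly regime matching the hypothesis that $|V(G)|$ is even---while for odd $n$ the bare slab can obstruct and must instead be merged with a neighbouring slab or with within-fibre arcs.

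The core is (ii) with $b\ge1$. I would build two families. From each hamiltonian cycle $\vec{D}^{(t)}$ of $H$, opening its fibre-copy into a hamiltonian path on every fibre and bridging consecutive fibres by a single between-fibre arc produces one directed hamiltonian cycle of $\vec{C}_n\wr H$ that consumes $n(m-1)$ within-fibre and $n$ between-fibre arcs; this yields $b$ ``$H$-based'' cycles but strands one within-fibre arc of $\vec{D}^{(t)}$ on each fibre, leaving $bn$ within-fibre arcs. The remaining $m$ ``major'' cycles must then cover the $nm^2-bn$ surviving between-fibre arcs together with these $bn$ stranded within-fibre arcs, each as a length-$nm$ hamiltonian cycle obtained by perturbing the template of (i): at selected gaps a between-fibre matching edge is swapped for a stranded within-fibre arc plus a compensating between-fibre arc, with the perturbations arranged so that every composition remains an $m$-cycle and the stranded arcs are exhausted consistently around the $n$-cycle.

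The main obstacle is exactly this consistent exhaustion: whether the $bn$ stranded arcs can be matched with compensating between-fibre arcs so that all $m$ perturbed compositions remain $m$-cycles is governed by a parity and divisibility relation among $m$, $n$, and $b$. When $a\ge2$ the extra slabs provide slack---stranded arcs may be spread across several slabs and balanced there---so I expect the construction to close whenever $a\ge2$, in particular for $m\ge4$, recovering and extending the friendly cases. The rigid and genuinely hard case is the single directed cycle $a=1$: with only one slab there is no room to redistribute, and when $H$ has odd order $m$ and its decomposition uses an odd number $c=b$ of hamiltonian cycles with $3\le c\le m-2$, the stranded within-fibre arcs fail to pair up and the template perturbation breaks down. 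This is precisely the exceptional family, and settling it---presumably by abandoning the two-family template for a genuinely global cycle construction---is the crux on which a complete proof of the conjecture hinges.
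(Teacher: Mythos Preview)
The statement is Conjecture~\ref{conj:main}, which the paper does \emph{not} prove in full: Theorem~\ref{thm:al} establishes most cases but leaves open $G=\vec{C}_n$ with $|V(H)|$ even and $H$ decomposing into an odd number of hamiltonian cycles (Theorem~\ref{thm:geneven}). Your proposal is likewise not a proof but a plan --- you write ``I expect the construction to close whenever $a\ge2$'' and name an unresolved ``crux'' --- so there is no full proof on either side to compare. Where your plan and the paper's partial results overlap, the architecture agrees closely: your reduction to $G=\vec{C}_n$ is Lemma~\ref{lem:redCn}; your permutation encoding is Notation~\ref{not:Sm}; and your two-family construction (one cycle per $\vec{D}^{(t)}$ via fibre-paths plus bridging, then $m$ ``major'' cycles absorbing the stranded within-fibre arcs by perturbing a shift template) is precisely Lemma~\ref{lem:central} and Proposition~\ref{prop:twin}. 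The paper formalises your perturbation condition as a \emph{truncated hamiltonian $n$-tuple} and reduces everything to constructing a \emph{$c$-twined $2$-factorization} of $\vec{C}_2\wr\overline{K}_m$ (Definitions~\ref{defn:hp} and~\ref{defn:twined}, Corollary~\ref{cor:main}); Sections~\ref{sec:meven}--\ref{sec:modd} then build these by explicit permutation calculations, a constructive layer your sketch omits entirely and where essentially all the work lies.

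Two substantive problems beyond mere incompleteness. First, you locate the hard residual case at $m$ odd with $c$ odd and $3\le c\le m-2$, but the paper settles exactly that case (Theorem~\ref{thm:oddmc}); what it cannot close is $m$ even with $c$ odd and $G=\vec{C}_n$, so your parity diagnosis is inverted relative to what the paper actually achieves. Second, your ``extra slabs provide slack when $a\ge2$'' heuristic is the right idea --- the paper deploys it in Case~2 of Theorem~\ref{thm:geneven} --- but making it rigorous requires the separate fact that $\vec{C}_n\wr\vec{C}_m$ is hamiltonian decomposable for the relevant $n,m$ (Theorem~\ref{thm:2}), a nontrivial input your plan neither supplies nor invokes.
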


In \cite{Ng}, Ng successfully adapts certain methods of Baranyai and Szász \cite{HamLex} to affirm Conjecture \ref{conj:main} for the following cases. 

\begin{theorem} \cite{Ng}
\label{thm:ng}
Let $G$ and $H$ be two hamiltonian decomposable digraphs such that $|V(G)|$ is odd and $|V(H)|\geqslant 3$. Then $G \wr H$ is hamiltonian decomposable.
\end{theorem}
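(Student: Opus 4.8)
The plan is to reduce the statement to the wreath product of a single directed cycle with $H$ and with the empty digraph, and then build the required Hamiltonian cycles one ``colour class'' at a time, adapting the permutation/sequencing technique of Baranyai and Sz\'asz. First I would record the regularity data. Write $m=|V(G)|$ and $n=|V(H)|$. Since $G$ is hamiltonian decomposable, its in- and out-degree both equal some $r$, and it decomposes into directed hamiltonian cycles $C_1,\dots,C_r$, each a copy of $\overrightarrow{C}_m$; likewise $H$ decomposes into $s$ directed hamiltonian cycles $D_1,\dots,D_s$. A direct count of arcs shows that every vertex of $G\wr H$ has in- and out-degree $rn+s$, so a hamiltonian decomposition must consist of exactly $rn+s$ directed hamiltonian cycles, each of length $mn$.

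Next I would exploit the natural arc-partition of $G\wr H$ into \emph{within-layer} arcs (the $m$ disjoint copies of $H$, one per vertex of $G$) and \emph{between-layer} arcs (which split along the cycles $C_i$). Assigning all $m$ copies of $H$ to the first cycle gives the arc-disjoint decomposition
\[
G \wr H \;=\; (C_1 \wr H)\ \oplus\ \bigoplus_{i=2}^{r}\bigl(C_i \wr \overline{K}_n\bigr),
\]
in which every factor spans all $mn$ vertices because each $C_i$ spans $G$. Hence it suffices to prove two building-block lemmas: \textbf{(i)} $\overrightarrow{C}_m \wr \overline{K}_n$ decomposes into $n$ directed hamiltonian cycles, and \textbf{(ii)} $\overrightarrow{C}_m \wr H$ decomposes into $n+s$ directed hamiltonian cycles. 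Any hamiltonian cycle of a factor is a hamiltonian cycle of $G\wr H$, and the counts add up: $(r-1)n+(n+s)=rn+s$, exactly the required number.

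For the constructions I would identify each layer's vertices with $\mathbb{Z}_n$. Between two consecutive layers the arcs form a complete bipartite digraph, which decomposes into the $n$ shift-matchings $a\mapsto a+c$. A prospective hamiltonian cycle then corresponds to choosing one perfect matching (a permutation of $\mathbb{Z}_n$) per transition, and such a colour class is a single spanning cycle precisely when the composition of these $m$ permutations around the $m$ layers is an $n$-cycle; across colours, the per-transition choices must tile the complete bipartite digraph. For \textbf{(i)} I would assign the shift-permutations so that each colour's net shift around the $m$ layers is coprime to $n$ (so each class closes into one cycle). For \textbf{(ii)} I would form $s$ of the cycles by traversing, in each layer, a hamiltonian path obtained from $D_j$ by deleting one arc, joined by a single between-layer arc per transition; this leaves exactly $s$ unused within-layer arcs per layer, which I would absorb into the remaining $n$ cycles by locally rerouting a decomposition coming from \textbf{(i)}.

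The hard part will be guaranteeing \emph{single-cycle closure}: ensuring that every colour class is one spanning cycle rather than a union of shorter cycles, while simultaneously tiling all between-layer arcs and absorbing the leftover within-layer arcs by the splicing surgery. This is a sequencing / complete-mapping condition on the $m$ permutations, and it is exactly here that the hypothesis $m$ odd is essential: oddness makes the required net shifts simultaneously attainable and lets the leftover within-layer arcs be reinserted without breaking any cycle, thereby avoiding the parity obstruction that is responsible for the conjecture's exceptional family (where $m$ is even and $H$ has odd order).
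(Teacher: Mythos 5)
The paper never actually proves this statement: it is Ng's theorem, imported as Theorem \ref{thm:ng} with a citation to \cite{Ng}, so your proposal can only be measured against Ng's method as this paper reconstructs it in Section \ref{S:tool}. At the level of architecture you have reproduced that method faithfully. Your splitting $G\wr H=(C_1\wr H)\oplus\bigoplus_{i=2}^{r}(C_i\wr\overline{K}_n)$ is exactly how Lemma \ref{lem:redCn} is obtained in \cite{Ng}; your building block (i) is Lemma \ref{lem:empty}; and your ``hamiltonian path per layer joined by one between-layer arc per transition, with the $s$ leftover layer arcs absorbed by local rerouting'' is precisely the decomposition $\{C^0,C^1\}$ of Lemma \ref{lem:central}, whose single-cycle closure requirement is formalized as the truncated hamiltonian tuple condition $T(\hat{\sigma}_0\hat{\sigma}_1\cdots\hat{\sigma}_{n-1})=2$ of Definition \ref{defn:hp} (in the paper's indexing), and whose simultaneous-tiling requirement is the column regularity built into Definition \ref{defn:twined} and Proposition \ref{prop:twin}.

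The gap is that everything past this architecture is asserted rather than proved, and the one concrete mechanism you do commit to is infeasible. For (i) you propose using only the shift matchings $a\mapsto a+c$ on $\mathds{Z}_n$ and making every colour's net shift around the $m$ layers a unit of $\mathds{Z}_n$. Under the tiling constraint each transition must use every shift exactly once, so the integer sum of all chosen shifts is $m\,n(n-1)/2$; when $n$ is even every unit of $\mathds{Z}_n$ is odd, so your $n$ row sums would all be odd and their total even, whereas $m\,n(n-1)/2$ is odd whenever $m$ is odd and $n\equiv 2\pmod 4$. Hence for, say, $\vec{C}_3\wr\overline{K}_6$ no shift-only scheme exists; non-cyclic regular permutation sets (in the spirit of $\mathcal{F}_m$ from Construction \ref{cons:Fodd}, or Lemma \ref{lem:dec}) are unavoidable, and your closing claim that oddness of $|V(G)|$ ``makes the required net shifts simultaneously attainable'' is exactly backwards in this congruence class. (The same parity count shows $\vec{C}_m\wr\overline{K}_2$ is genuinely not decomposable for odd $m$, which is why $|V(H)|\geqslant 3$ is hypothesised.) More importantly, the heart of Ng's proof --- exhibiting, for an odd number of layers, a regular permutation set per transition whose rows simultaneously realize $s$ truncated tuples and $n$ hamiltonian tuples, and verifying the cycle structure of every product --- is deferred entirely to ``a sequencing / complete-mapping condition'' with no construction and no identified use of the odd-order hypothesis; that verification is the bulk of Ng's paper, just as the analogous constructions occupy Sections \ref{sec:meven}--\ref{sec:modd} and the appendices here for even $|V(G)|$. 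Finally, your description of the conjecture's exceptional family is inaccurate: by Theorem \ref{thm:al} the open case has $G$ a directed cycle, $|V(H)|$ \emph{even}, and $H$ decomposing into an odd number of directed hamiltonian cycles, not $H$ of odd order.
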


Using very different methods than those used in this paper and in \cite{Ng}, the author \cite{Me} affirms Conjecture \ref{conj:main} when $H$ is a directed cycle or the complete symmetric digraph.

\begin{theorem} \cite{Me}
\label{thm:1}
Let $G$ be a hamiltonian decomposable digraph of even order $n$ and $H$ be the complete symmetric digraph on $m$ vertices.  The digraph $G \wr H$ is hamiltonian decomposable if and only if $(n,m) \neq(2,3)$ and $m \neq 2$.
\end{theorem}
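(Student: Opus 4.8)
The plan is to reduce the statement for a general hamiltonian decomposable $G$ to two ``single directed cycle'' building blocks, and then to analyze those. Throughout write $\vec{C}_n$ for the directed $n$-cycle, $\overline{K}_m$ for the empty digraph on $m$ vertices, and $K_m^\ast=H$ for the complete symmetric digraph. Decompose $G=C_1\oplus\cdots\oplus C_d$ into arc-disjoint directed Hamilton cycles, where $d$ is the common in/out-degree. The arcs of $G\wr K_m^\ast$ split into the within-fibre copies of $K_m^\ast$ together with the blow-ups $C_i\wr\overline{K}_m$ of the Hamilton cycles of $G$; attaching \emph{all} the within-fibre arcs to the first cycle gives the arc-disjoint decomposition
\[
 G\wr K_m^\ast \;=\; \bigl(C_1\wr K_m^\ast\bigr)\ \oplus\ \bigoplus_{i=2}^{d}\bigl(C_i\wr\overline{K}_m\bigr).
\]
Since $G\wr K_m^\ast$ is $(dm+m-1)$-regular, it suffices to prove two lemmas: \textbf{(L1)} $\vec{C}_n\wr\overline{K}_m$ decomposes into $m$ directed Hamilton cycles, and \textbf{(L2)} $\vec{C}_n\wr K_m^\ast$ decomposes into $2m-1$ directed Hamilton cycles under the stated restrictions. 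Adjoining the $m$ cycles from each of the $d-1$ blow-ups to the $2m-1$ cycles of $C_1\wr K_m^\ast$ yields $dm+m-1$ Hamilton cycles, matching the regularity.

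For \textbf{(L1)}, a directed Hamilton cycle of $\vec{C}_n\wr\overline{K}_m$ is recorded by a tuple of transition permutations $\sigma_0,\dots,\sigma_{n-1}$ of $\mathbb{Z}_m$ whose composite is an $m$-cycle, and a decomposition into $m$ cycles amounts to choosing, at each transition $i$, the $m$ shift permutations $a\mapsto a+\phi_i(c)$ indexed by colours $c\in\mathbb{Z}_m$ (so $\phi_i$ is a bijection and the $m^2$ arcs are partitioned), subject to each composite offset $\sum_i\phi_i(c)$ being a unit modulo $m$. Here the even order of $n$ is convenient: pairing the $n$ transitions, one places $\phi_0(c)=c,\ \phi_1(c)=1-c$ on the first pair and $\phi_{2j}(c)=c,\ \phi_{2j+1}(c)=-c$ on the others, so that every composite offset equals $1$, a unit. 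Thus each colour class is a single Hamilton cycle and together they exhaust all arcs.

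The main content is \textbf{(L2)}. The base case $n=2$ is exactly $\vec{C}_2\wr K_m^\ast=K_{2m}^\ast$, which by Tillson's theorem decomposes into $2m-1$ directed Hamilton cycles precisely when $2m\notin\{4,6\}$; as $m\geqslant 3$ together with $(n,m)\neq(2,3)$ forces $m\geqslant 4$ here, this case follows, and it simultaneously reveals why $(n,m)=(2,3)$ (which is $K_6^\ast$) and $m=2$ with $n=2$ (which is $K_4^\ast$) must be excluded. For $n\geqslant 4$ even I would build the $2m-1$ cycles by first noting a structural constraint: any directed Hamilton cycle $C$ crosses \emph{every} transition the same number of times $x_C$ (entries equal exits at each fibre), and hence uses exactly $m-x_C$ within-fibre arcs in each fibre. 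A decomposition therefore corresponds to a ``crossing profile'' $(x_k)\in\{1,\dots,m\}^{2m-1}$ with $\sum_k x_k=m^2$, realized by combining pure blow-up cycles ($x_k=m$, furnished by the factorization in (L1)) with lower-crossing cycles that thread the within-fibre arcs into globally spanning cycles through directed Hamilton-path segments inside each fibre.

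The hard part is the explicit wiring of these lower-crossing cycles for $n\geqslant 4$: the within-fibre arcs are \emph{a priori} confined to single fibres, and they must be routed into spanning cycles while exactly consuming the remaining transition ``slot''. The naive profile of $m-1$ blow-ups plus $m$ crossing-once cycles is \emph{not} always available, since for $m=3$ the complete symmetric digraph $K_3^\ast$ does not split into three directed Hamilton paths; so the crossing profile (for instance $(2,2,2,2,1)$ when $m=3$) must be chosen as a function of $m$ and the segments wired by hand, and this case analysis is where I expect the real work to lie. A reassuring feature is that the within-fibre arcs are only ever decomposed into \emph{paths}, not cycles, which is why Tillson's exceptions $m\in\{4,6\}$ do not obstruct (L2) even though $K_4^\ast$ and $K_6^\ast$ themselves are not hamiltonian decomposable. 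Finally, for necessity beyond the $K_4^\ast$/$K_6^\ast$ coincidences, the remaining $m=2$ cases fail by a parity obstruction: for $G=\vec{C}_n$ the within-arcs force exactly two ``straight'' Hamilton cycles whose within-directions, read cyclically around the even cycle, realize an even number of transpositions at the transitions, whereas the complementary pure blow-up cycle requires an odd number to be connected; the general even-order $G$ follows from an analogous counting of crossings and within-arc usage.
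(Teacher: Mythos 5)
First, a point of orientation: this paper contains no proof of Theorem \ref{thm:1} to compare against. The result is quoted from \cite{Me}, and the machinery developed here is deliberately capped at decompositions of $H$ into $c\leqslant m-2$ directed hamiltonian cycles (Definition \ref{defn:twined}, Proposition \ref{prop:twin}, Corollary \ref{cor:main}), with the extremal case $c=m-1$ --- exactly the complete symmetric digraph --- explicitly deferred to \cite{Me}. Judged on its own terms, your proposal has a correct outer layer: the splitting $G\wr K_m^\ast=(C_1\wr K_m^\ast)\oplus\bigoplus_{i=2}^{d}(C_i\wr\overline{K}_m)$ is valid; your (L1) is the paper's Lemma \ref{lem:empty} and your shift construction (composite offset $1$, hence $\gcd(m,1)=1$ cycle per colour) is correct; the identification $\vec{C}_2\wr K_m^\ast=K_{2m}^\ast$ plus Tillson's theorem settles $n=2$ and correctly accounts for the exceptions $(n,m)=(2,3)$ and $(2,2)$; and your bookkeeping for (L2) checks out (the crossing number $x_C$ is constant over transitions, $\sum_k x_k=m^2$, within-fibre arcs form path systems, and $K_3^\ast$ indeed has no decomposition into three directed hamiltonian paths). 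But the actual construction for (L2) when $n\geqslant 4$ --- choosing a crossing profile as a function of $m$, decomposing every fibre's copy of $K_m^\ast$ into the prescribed systems of directed paths simultaneously in all $n$ fibres, and threading the crossing arcs so that each colour class closes into a single $(nm)$-cycle while the leftover crossing arcs still fall apart into the pure blow-up cycles of (L1) --- is the entire content of the theorem, and you supply none of it; you say yourself this is where the real work lies. The difficulty is global, not confined to $m=3$: at each of the $n$ transitions the $m^2$ crossing arcs must be exactly partitioned among the $2m-1$ cycles (the constraint this paper formalizes elsewhere via regular permutation sets), and nothing in the proposal exhibits a compatible assignment, let alone verifies single-cycle closure of each class.

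Second, the necessity of $m\neq 2$ is only gestured at. Tillson covers only $n=2$ (the $K_4^\ast$ and $K_6^\ast$ coincidences); for $n\geqslant 4$, and a fortiori for general hamiltonian decomposable $G$ of even order, your ``parity obstruction'' is not a proof. You never define the invariant (the parity of which permutation, computed with respect to what), the claim that the within-arc directions of the ``straight'' cycles realize an even number of transpositions is not attached to any quantity shown to be preserved across an arbitrary hamiltonian decomposition, and for $G\neq\vec{C}_n$ there is no single cyclic order of transitions along which your signs can even be read, so ``an analogous counting'' is asserted rather than established. Since the statement is a biconditional, this direction carries as much weight as the constructions; as it stands, your proposal proves the full equivalence only for $n=2$, and both the sufficiency for even $n\geqslant 4$ and the necessity for $m=2$, $n\geqslant 4$ remain genuine gaps.
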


\begin{theorem} \cite{Me}
\label{thm:2}
Let $G$ be a hamiltonian decomposable digraph of even order $n$ and $H$ be the directed $m$-cycle. The digraph $G\wr H$ is hamiltonian decomposable in each of the following cases:
\begin{enumerate} [label=\textbf{(S\arabic*)}]
\item $m$ is even, $m \geqslant 4$, and $n \geqslant 4$;
\item $m=4$ and $n=2$;
\item $m$ is odd, and $m\geqslant 5$.
\end{enumerate}
Furthermore, if $G$ is a directed $n$-cycle and $m\in \{2,3\}$, then $G\wr H$ is not hamiltonian decomposable. 
\end{theorem}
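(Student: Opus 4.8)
The plan is to reduce to the case $G=\vec{C}_n$, construct an explicit decomposition, and then dispose of the non-existence claim by a counting argument. Throughout, I write the vertices of $G\wr H$ as pairs $(g,j)$ with $j\in\mathbb{Z}_m$, calling an arc \emph{vertical} when it arises from an arc of $H=\vec{C}_m$ (so it stays inside one fibre $\{g\}\times\mathbb{Z}_m$) and \emph{horizontal} when it arises from an arc of $G$ (so it joins every vertex of one fibre to every vertex of another). Each vertex has out-degree $rm+1$, where $r$ is the number of cycles in a Hamiltonian decomposition of $G$, so any Hamiltonian decomposition of $G\wr H$ consists of exactly $rm+1$ directed Hamiltonian cycles. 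Fixing a decomposition $\vec{C}_n^{(1)},\dots,\vec{C}_n^{(r)}$ of $G$ and noting that the vertical arcs form $n$ disjoint copies of $\vec{C}_m$ regardless of this choice, one obtains the arc-disjoint decomposition
$$A(G\wr H)=A\!\left(\vec{C}_n^{(1)}\wr \vec{C}_m\right)\;\sqcup\;\bigsqcup_{i=2}^{r}A\!\left(\vec{C}_n^{(i)}\wr \overline{K}_m\right),$$
where $\overline{K}_m$ is the empty digraph on $m$ vertices. Isolating as a lemma that $\vec{C}_n\wr\overline{K}_m$ is Hamiltonian decomposable (into $m$ cycles), the entire positive part reduces to decomposing $\vec{C}_n\wr\vec{C}_m$ into $m+1$ directed Hamiltonian cycles; in case (S2), where $n=2$ forces $G=\vec{C}_2$, this is the whole problem.

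For the core construction I would coordinatize $\vec{C}_n\wr\vec{C}_m$ by $\mathbb{Z}_n\times\mathbb{Z}_m$ and group the horizontal arcs into $m$ \emph{slope classes}, the class of slope $s$ consisting of all arcs $(i,j)\to(i+1,j+s)$. Each slope class is a spanning $1$-regular subdigraph, and it is a single Hamiltonian cycle precisely when $\gcd(ns,m)=1$. When enough slopes are Hamiltonian, the idea is to keep $m-1$ of them as Hamiltonian cycles and to weave the one leftover slope class together with the $nm$ vertical arcs into the final two Hamiltonian cycles, yielding $m+1$ in all.

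The main obstacle is that for even $m$ (cases (S1) and (S2)) the product $ns$ is always even, so \emph{no} slope class is Hamiltonian; the same breakdown occurs in (S3) whenever $\gcd(n,m)>1$. The heart of the proof is therefore an explicit, parity-sensitive construction that merges several non-Hamiltonian slope classes into Hamiltonian cycles, using vertical arcs as connectors along shared fibres while preserving arc-disjointness and producing exactly $m+1$ cycles. I expect the case analysis to turn on the parity of $m$ and on $\gcd(n,m)$, with $n=2$ (as in (S2), $\vec{C}_2\wr\vec{C}_4$) and the smallest admissible values of $m$ being the tightest to arrange. This is where essentially all the labour lies, and the delicate point is always to verify that the assembled pieces close up into single Hamiltonian cycles rather than into several shorter cycles.

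Finally, for the non-existence claim I would argue by counting. In any decomposition of $\vec{C}_n\wr\vec{C}_m$ into $m+1$ Hamiltonian cycles, the entry/exit balance at each fibre forces every cycle $F_t$ to use the same number $k_t$ of horizontal arcs out of each fibre, with $1\leqslant k_t\leqslant m$, $\sum_t k_t=m^2$, and $\sum_t (m-k_t)=m$. For $m=2$ this forces one cycle to be purely horizontal ($k_t=2$) and the other two to use exactly one vertical arc per fibre ($k_t=1$); tracking the horizontal arcs the two mixed cycles are compelled to take pins down the arcs of the purely horizontal cycle and shows it splits into two vertex-disjoint $n$-cycles, a contradiction. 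For $m=3$ the only admissible profiles $(k_t)$ are $\{3,3,2,1\}$ and $\{3,2,2,2\}$, and I would exclude each by the same forced-arc analysis, again extracting a splitting of one of the cycles. This shows that neither $\vec{C}_n\wr\vec{C}_2$ nor $\vec{C}_n\wr\vec{C}_3$ is Hamiltonian decomposable.
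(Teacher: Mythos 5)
You should first note that the paper does not prove Theorem \ref{thm:2} at all: it is quoted from \cite{Me}, so your attempt can only be judged against the general machinery here (Section \ref{S:tool}) and on its own merits. Your frame is sound: the splitting $G\wr H=(\vec{C}_n\wr\vec{C}_m)\oplus(\vec{C}_n\wr\overline{K}_m)\oplus\cdots\oplus(\vec{C}_n\wr\overline{K}_m)$ is the same identity used in Theorem \ref{thm:geneven}, the count of $rm+1$ cycles is right, the fibre-balance constraints $\sum_t k_t=m^2$ and $\sum_t(m-k_t)=m$ are correct, and your $m=2$ argument genuinely works: the two $k_t=1$ cycles force the leftover crossing arcs at each boundary into parallel translates $x\mapsto x+(t_{i+1}-t_i)$, so the purported purely horizontal hamiltonian cycle splits into the two orbits, each a directed $n$-cycle. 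But the entire positive content of the theorem --- decomposing $\vec{C}_n\wr\vec{C}_m$ into $m+1$ directed hamiltonian cycles --- is deferred to an ``expected case analysis,'' and the specific plan you give cannot succeed as stated. A slope class is hamiltonian iff $\gcd(ns,m)=1$, so with $n$ even and $m$ even \emph{no} slope is hamiltonian, and with $m$ odd at most $\varphi(m)<m-1$ are (none at all when $\gcd(n,m)>1$); hence ``retain $m-1$ hamiltonian slopes and weave the last one with the vertical arcs'' is structurally unavailable in essentially every case the theorem covers, not just a tight corner. One is forced to use 2-factors whose permutations vary from level to level --- exactly the hamiltonian and truncated hamiltonian $n$-tuples of Definition \ref{defn:hp} and Lemma \ref{lem:central}, or the different constructions of \cite{Me} --- and proving that such non-uniform 2-factors close into single cycles while the whole family stays arc-disjoint (the regular-permutation-set bookkeeping) is where the theorem actually lives. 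You have re-derived the scaffolding and omitted the load-bearing part.

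Two further concrete holes. First, your plan is insensitive to the boundary between (S1) and (S2): the statement claims decomposability for all even $m\geqslant 4$ only when $n\geqslant 4$, and for $n=2$ only $m=4$; the case $n=2$, $m$ even, $m\geqslant 6$ is conspicuously absent. A uniform ``merge several non-hamiltonian slope classes'' recipe that does not see this distinction is either overreaching or incomplete, and you offer no mechanism that would succeed for $n\geqslant 4$ yet degrade precisely at $n=2$. Second, for $m=3$ the non-existence claim is only asserted: your profiles $(3,3,2,1)$ and $(3,2,2,2)$ are the right ones, but the ``same forced-arc analysis'' does not transfer verbatim --- with three vertical arcs per fibre distributed among up to three mixed cycles, the leftover crossing arcs at each boundary are no longer forced into parallel translates, and ruling out both profiles requires a genuinely new argument. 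As it stands, your proposal proves the $m=2$ half of the final sentence and nothing else; the three positive cases and the $m=3$ case remain open in your write-up.
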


Theorem \ref{thm:al}, which summarizes this paper's result, complements current results by addressing almost all open cases of Conjecture \ref{conj:main}. 

\begin{theorem} 
\label{thm:al}
Let $G$ and $H$ be two hamiltonian decomposable digraphs such that $|V(G)|$ is even, $|V(H)|>3$, and $H$ is not a directed cycle or a complete symmetric digraph. Then $G \wr H$ is hamiltonian decomposable except possibly when $G$ is a directed cycle, $|V(H)|$ is even, and $H$ admits a decomposition into an odd number of directed hamiltonian cycles.
\end{theorem}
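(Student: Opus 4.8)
The plan is to decompose $G\wr H$ layer by layer, exploiting the arc‑disjoint decomposition that the wreath product inherits from the Hamiltonian decompositions of its factors. Write $n=|V(G)|$ and $m=|V(H)|$, let $C_1,\dots,C_r$ be the directed Hamiltonian cycles decomposing $G$ (so $G$ is $r$‑regular), and let $D_1,\dots,D_c$ be those decomposing $H$. Identify $V(G\wr H)$ with the $n$ \emph{layers} $\{g\}\times V(H)$, $g\in V(G)$. Every arc of $G\wr H$ either lies inside a single layer (a copy of an arc of $H$) or runs from one layer to another (induced by an arc of $G$); hence
\[
G\wr H \;=\; \Big(\bigsqcup_{i=1}^{r} C_i\wr \overline{K}_m\Big)\;\sqcup\;\Big(\bigsqcup_{g\in V(G)} H_g\Big),
\]
where $\overline{K}_m$ is the arcless digraph on $m$ vertices, $C_i\wr\overline{K}_m$ carries all between‑layer arcs coming from $C_i$, and $H_g$ is the copy of $H$ in layer $g$. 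A quick degree count shows $G\wr H$ is $(rm+c)$‑regular, so a Hamiltonian decomposition must consist of exactly $rm+c$ directed Hamiltonian cycles.

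The first ingredient I would establish is that each ``pure'' block $C_i\wr\overline{K}_m\cong \vec{C}_n\wr\overline{K}_m$ decomposes into $m$ directed Hamiltonian cycles; in permutation terms a Hamiltonian cycle here is a tuple $(\pi_0,\dots,\pi_{n-1})$ of permutations of $V(H)$ whose product is a single $m$‑cycle, and a decomposition is a collection of such tuples forming a Latin square at each layer boundary. With the blocks handled, the crux is to absorb the within‑layer copies $\bigsqcup_g H_g$. The key construction I would carry out is a \emph{splicing} step: starting from the $m$ Hamiltonian cycles of one block, which pass through each layer exactly $m$ times, I would cut them at chosen vertices of each layer and reconnect them through arcs of the $D_k$'s, thereby trading a controlled number of between‑layer arcs for the within‑layer arcs of $H$ and producing $m+c$ Hamiltonian cycles out of the arcs of one block together with all of $\bigsqcup_g H_g$. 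Here the hypothesis that $n$ is even is what makes the splicing work: orienting the splice alternately around the even cycle $\vec{C}_n$ guarantees that each reconnected object is a single cycle (a single $m$‑cycle at the permutation level) rather than a disjoint union, exactly as the $2$‑edge‑colourability of an even cycle permits.

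With these tools the argument would split on $r$. If $r\geqslant 2$ (equivalently, $G$ is not a directed cycle), I would merge $\bigsqcup_g H_g$ using \emph{two} blocks $C_1\wr\overline{K}_m$ and $C_2\wr\overline{K}_m$ jointly; the extra block supplies the freedom to realise splices of either parity, so the merge always yields the correct count, and the remaining $r-2$ pure blocks contribute $m$ cycles each, for $rm+c$ in total. If $r=1$, so $G=\vec{C}_n$, only a single block is available, and the merge as constructed succeeds when $m$ is odd or $c$ is even (the excluded cases $H$ a complete symmetric digraph or a directed cycle being handled by Theorems~\ref{thm:1} and \ref{thm:2}); this yields the stated conclusion, the single uncovered configuration being $G=\vec{C}_n$ with $m$ even and $c$ odd.

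The hard part will be the base case $G=\vec{C}_n$ and the parity it forces. At each of the $n$ layer boundaries the arcs used by the $rm+c$ cycles must partition a complete bipartite digraph $\overrightarrow{K}_{m,m}$, so they assemble into the rows of order‑$m$ arrays whose induced permutations have a \emph{product of signs} constrained by the requirement that each global cycle close up into a single $m$‑cycle (sign $(-1)^{m-1}$). Tracking this signed invariant around $\vec{C}_n$, when $m$ is even the total forced sign is incompatible with absorbing an odd number $c$ of within‑layer cycles, which is the structural reason the construction breaks down exactly in the exceptional configuration; the delicate point of the write‑up is to verify that outside this configuration the splices can be chosen to be simultaneously arc‑disjoint, sign‑balanced, and each closing into one Hamiltonian cycle.
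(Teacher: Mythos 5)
Your high-level skeleton does track the paper's architecture more closely than you might expect: the split of $G\wr H$ into pure blocks $C_i\wr\overline{K}_m$ plus the in-layer copies $H_g$, the decomposition of each block into $m$ directed hamiltonian cycles encoded as tuples of permutations forming a Latin-square (regular) condition at each boundary, the absorption of the $H_g$'s by cutting some vertical 2-factors at a chosen vertex of each layer and rerouting through a hamiltonian cycle of $H$ (this is precisely the truncation mechanism of Lemma~\ref{lem:central} and Proposition~\ref{prop:twin}, with the $c$-twined 2-factorization packaging $c$ spliced 2-factors and $m-c$ hamiltonian $n$-tuples, yielding your count of $m+c$ cycles per merged block), and the use of a second hamiltonian cycle of $G$ when $G$ is not a directed cycle (the paper's Theorem~\ref{thm:geneven}, Case 2, peels $H=\Gamma\oplus\vec{C}_m$ and invokes Theorem~\ref{thm:2} for $\vec{C}_n\wr\vec{C}_m$, rather than splicing across two blocks, but the intent is the same).

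The genuine gap is that the entire content of the theorem lies in the existence of the splices, and your proposal never constructs them, while the mechanism you offer in their place cannot do the job. First, "each reconnected object is a single cycle" means, in permutation terms, that for every admissible pair $(m,c)$ one must exhibit $m$ tuples, drawn from regular permutation sets, of which $c$ satisfy $T(\hat{\sigma}_0\cdots\hat{\sigma}_{n-1})=2$ and $m-c$ satisfy $T(\sigma_0\cdots\sigma_{n-1})=1$; this is exactly what consumes Sections~\ref{sec:meven}--\ref{sec:modd} and two appendices of explicit products over congruence classes of $m$ modulo $4$, $6$, $8$ and $12$. Your sign-tracking invariant is a one-bit necessary condition: a permutation of even order $m$ with the sign of an $m$-cycle need not be an $m$-cycle, so "sign-balanced" choices are nowhere near sufficient for each spliced object to close into one hamiltonian cycle, and no amount of bookkeeping of signs around $\vec{C}_n$ replaces the constructions. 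Second, your parity usage is inverted: evenness of $n$ is not what "makes the splicing work" --- the odd-$n$ case was already settled by Ng (Theorem~\ref{thm:ng}), and even $n$ is precisely the obstruction this paper overcomes; in the paper evenness enters only in Lemma~\ref{lem:red2}, where even $n\geqslant 4$ is reduced to $n=2$ by padding tuples with cancelling pairs $\pi_{i+1},\pi_{-i-1}$, after which all the work happens at $n=2$, where no "alternating orientation around the even cycle" exists to exploit. Third, your closing claim that the forced sign is \emph{incompatible} with odd $c$ when $m$ is even overstates what is true: the paper proves no obstruction in the exceptional configuration and leaves it open ("except possibly"); if a boundary-local sign invariant genuinely forced failure there, it would also have to explain why the same configuration becomes feasible the moment $G$ has a second hamiltonian cycle, which a per-boundary invariant of $\vec{C}_n\wr H$ cannot do. As written, the proposal is a plausible roadmap whose every load-bearing step --- the splices, their simultaneous arc-disjointness, and the closure of each spliced object into a single cycle --- remains unproven.
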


\begin{proof}
If $|V(H)|$ is even, the statement follows from Theorem \ref{thm:geneven}.  If $|V(H)|$ is odd, the statement follows from Theorem \ref{thm:oddmc}. \end{proof}

This paper is structured as follows. In Section \ref{S:Prm}, we introduce key terminology and definitions. Then, in Section \ref{S:tool}, we show that $G \wr H$ is hamiltonian decomposable when the wreath product of a directed 2-cycle with the empty graph admits a particular decomposition into directed cycles. Lastly, in Sections \ref{sec:meven} and \ref{sec:modd}, we establish the existence of these decompositions for all applicable cases. 

\section{Preliminaries}
\label{S:Prm}

In this paper, we assume that all digraphs are strict. A digraph is \textit{strict} when it does not contain loops or repeated arcs. We denote the vertex set of digraph  $G$ as $V(G)$ and its arc set  as $A(G)$. An arc with tail $x$ and head $y$ is denoted $(x, y)$. The directed cycle on $m$ vertices is denoted $\vec{C}_m$ while  the empty digraph on $m$ vertices is denoted $\overline{K}_m$. If $P$ is a directed path (dipath for short) of digraph $G$, then its \textit{source} is the vertex with in-degree 0, and is denoted $s(P)$, while its \textit{terminal} is the vertex with out-degree 0, and is denoted $t(P)$.

We will adopt the following notation.

\begin{notation} \rm
\label{not:conchp5}
We assume that $G$ and $H$ are two digraphs on $n$ and  $m$ vertices, respectively. Let $V(G)=\mathds{Z}_n$ and $V(H)=\mathds{Z}_m$. Then, $V(G\wr H)=\{(x, y)\ |\ x\in \mathds{Z}_n, y\in \mathds{Z}_m\}$. To avoid conflict with the notation used for arcs, we write $x_y$ for  $(x,y)$. For each $i \in \mathds{Z}_n$, we let $V_i=\{i_0, i_1, \ldots, i_{m-1}\}$.  \end{notation}

Using Notation \ref{not:conchp5}, we describe a particular set of arcs of $G\wr H$. 

\begin{definition}
An arc of $G \wr H$ of the form $(i_{j_1}, i_{j_2})$, where $j_1, j_2 \in \mathds{Z}_m$, is a \textit{horizontal arc}. 
\end{definition}

We now introduce a key property of the wreath product. The subdigraph of $G \wr H$ induced by the set of vertices $V_i$ is necessarily isomorphic to $H$. This observation gives rise to the following definition. 

\begin{definition} \rm
\label{def:embed}
Let $H_i$ be the subdigraph of $G \wr H$ induced by vertex set $V_i$. An \textit{embedding of $H$ into $V_i$} is an isomorphism $\phi: H \mapsto H_i$. 
\end{definition}

It follows from the definition of the wreath product that, in $G \wr H$, we can embed $H$ into each $V_i$ arbitrarily. In certain proofs, our constructions are made relative to specific embeddings of $H$.

Next, we introduce definitions related to cycle decompositions of digraphs. A \textit{decomposition} of  a digraph $G$ is a set $\{H_1, H_2, \ldots, H_r\}$ of pairwise arc-disjoint subdigraphs of $G$ such that $A(G)=A(H_1) \cup A(H_2) \cup \cdots \cup A(H_r)$. If $G$ admits the decomposition  $\{H_1, H_2, \ldots, H_r\}$, we write $G=H_1\oplus H_2 \oplus \cdots \oplus H_r$. A spanning subdigraph of $G$ comprised of the disjoint union of directed cycles of $G$ is a \textit{directed 2-factor} of $G$. A decomposition of $G$ into directed 2-factors is a directed \textit{2-factorization}. Note that a directed hamiltonian cycle of $G$ is necessarily a 2-factor of $G$ and a \textit{hamiltonian decomposition} of $G$ is a directed 2-factorization in which all subdigraphs are directed hamiltonian cycles. 

Most of our constructions are directed 2-factorizations of $\vec{C}_n \wr \overline{K}_m$. We will make the following standard assumption.

\begin{assumption}\rm
If $V(\vec{C}_n)=\mathds{Z}_n$, then $A(\vec{C}_n)=\{(i, i+1)\ |\ i \in \mathds{Z}_n\}$ with addition in modulo $n$.
\end{assumption}

We will describe 2-factors of  $\vec{C}_n \wr \overline{K}_m$ as $n$-tuples of permutations from the symmetric group $S_m$.  If $\pi, \sigma \in S_m$, then $i^{\pi\sigma}=(i^\pi)^\sigma$. A cycle of a permutation is written as $(a_0,a_1,a_2, \ldots, a_{m-1})$ instead of the standard $(a_0\ a_1\ a_2\  \ldots\ a_{m-1})$ for ease of notation in later computations. The identity permutation will be denoted as $id$. 
 
\begin{notation}\rm
\label{not:Sm}
A directed 2-factor $F$ of  $\vec{C}_n \wr \overline{K}_m$  corresponds to the $n$-tuple $(\sigma_0, \sigma_1, \ldots, \sigma_{n-1})$ where $\sigma_i\in S_m$ and  $A(F)=\bigcup_{i\in \mathds{Z}_n}\{(i_j, (i+1)_{j^{\sigma_i}}) \ |\ j \in \mathds{Z}_m\}$.
\end{notation}

Given $\sigma \in S_m$, we denote the number of cycles of $\sigma$ in its disjoint cycle notation as $T(\sigma)$. Clearly, the number of directed cycles in the directed 2-factor $F=(\sigma_0, \sigma_1, \ldots, \sigma_{n-1})$  of $\vec{C}_n \wr \overline{K}_m$  equals $T(\sigma_0\sigma_1 \cdots \sigma_{n-1})$. Therefore, $F=(\sigma_0, \sigma_1, \ldots, \sigma_{n-1})$  is a directed hamiltonian cycle of $\vec{C}_n \wr \overline{K}_m$ if and only if $T(\sigma_0\sigma_1\cdots \sigma_{n-1})=1$. 

A directed 2-factorization of $\vec{C}_n \wr \overline{K}_m$ necessarily contains $m$ directed 2-factors. If $D=\{F_0, F_1, F_2, \ldots, F_{m-1}\}$ is a directed 2-factorization of  $\vec{C}_n \wr \overline{K}_m$, then $D$ can be described as a set of $m$ $n$-tuples of elements of $S_m$ as follows:

\begin{center}
$D=\{F_k=(\sigma_{(k,1)}, \sigma_{(k,2)}, \ldots, \sigma_{(k,n)})\ |  \ k \in \mathds{Z}_m,  \sigma_{(k,j)} \in S_m \ \textrm{for all}\ j=1,2, \ldots, n \}$.
\end{center}

Conversely, it is not always the case that a set of $m$ $n$-tuples of permutations will correspond to a 2-factorization of  $\vec{C}_n \wr \overline{K}_m$. Namely, for each $j \in \mathds{Z}_n$, the set of permutations $T_j=\{\sigma_{(k, j)} \ | \  k \in \mathds{Z}_m\}$ must satisfy a very specific property given in Definition \ref{defn:decof} below. 

\begin{definition} \rm
\label{defn:decof}
Let $S=\{\sigma_0, \sigma_1, \ldots, \sigma_{m-1}\}$ be a set of $m$ permutations of $S_m$. The set $S$ is a \textit{regular permutation set of order $m$} if $j^{\sigma_{k_1}}\neq j ^{\sigma_{k_2}}$ for all $j\in \mathds{Z}_m$ and $k_1, k_2\in \mathds{Z}_m$ such that $k_1 \neq k_2$. A permutation $\sigma \in S_m$ is an \textit{$(m-1)$-stabilizer} if $(m-1)^\sigma=m-1$. 
\end{definition}

Therefore, the set of $m$ $n$-tuples $D=\{(\sigma_{(k,1)}, \sigma_{(k,2)}, \ldots, \sigma_{(k,n)})\ |  \ k \in \mathds{Z}_m,  \sigma_{(k,j)} \in S_m\ \textrm{for all}\ j=1,2, \ldots, n \}$ corresponds to a 2-factorization of $\vec{C}_n \wr \overline{K}_m$ precisely when, for each $j\in \mathds{Z}_n$, the set $T_j=\{\sigma_{(k, j)} \ | \  k \in \mathds{Z}_m\}$  is a regular set of order $m$. Definition \ref{defn:decof} implies that the permutations in $T_j$ partition the set of arcs with tail in $V_j$ and head in $V_{j+1}$. 

In Definition \ref{def:cyc} below, we give a very simple example of a regular permutation set of order $m$ which is also key to some of our constructions. 

\begin{definition}
\label{def:cyc}
The cyclic group $\langle (0,1,2,3, \ldots, m-1) \rangle$ is denoted $\Pi_m$. We have that $\Pi_m=\{\pi_i\ |\ i=0,1,\ldots, m-1\}$, where $\pi_i=(0,1,2,3, \ldots, m-1)^i$. 
\end{definition}

Lastly, in Lemma \ref{lem:dec}, we show how to construct a new regular permutation set of order $m$ from an existing one. Proof of Lemma \ref{lem:dec} is straightforward and thus omitted. 

\begin{lemma}
\label{lem:dec}
Let $S=\{\sigma_0, \sigma_1, \ldots, \sigma_{m-1}\}$ be a regular permutation set of order $m$ and $\mu \in S_m$.  Then  the set $\mu \cdot S=\{\mu \sigma_0, \mu \sigma_1, \ldots, \mu \sigma_{m-1}\}$ is also a regular permutation set of order $m$. 
\end{lemma}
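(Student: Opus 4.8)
The plan is to work directly from Definition \ref{defn:decof}, unpacking what regularity means and then transporting that condition across left multiplication by $\mu$. Recall that $S$ being a regular permutation set of order $m$ says precisely that, for every fixed $j \in \mathds{Z}_m$, the values $j^{\sigma_0}, j^{\sigma_1}, \ldots, j^{\sigma_{m-1}}$ are pairwise distinct; equivalently, for each fixed $j$ the map $k \mapsto j^{\sigma_k}$ is a bijection of $\mathds{Z}_m$. I would restate the goal in the same form: for every $j \in \mathds{Z}_m$ and every pair $k_1 \neq k_2$, one must have $j^{\mu\sigma_{k_1}} \neq j^{\mu\sigma_{k_2}}$.

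The key step is to invoke the composition convention $i^{\pi\sigma} = (i^\pi)^\sigma$, under which $\mu$ acts first. Fixing $j$ and writing $j' = j^\mu$, I obtain $j^{\mu\sigma_k} = (j^\mu)^{\sigma_k} = (j')^{\sigma_k}$ for every $k$. Now the regularity of $S$, applied at the single index $j'$ in place of $j$, gives $(j')^{\sigma_{k_1}} \neq (j')^{\sigma_{k_2}}$ whenever $k_1 \neq k_2$; translating this back yields exactly $j^{\mu\sigma_{k_1}} \neq j^{\mu\sigma_{k_2}}$. Since $j$ was arbitrary, $\mu \cdot S$ satisfies the defining condition of a regular permutation set.

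The only remaining bookkeeping is to confirm that $\mu \cdot S$ genuinely consists of $m$ distinct permutations, so that it has the correct cardinality: regularity forces the $\sigma_k$ to be distinct (if $\sigma_{k_1} = \sigma_{k_2}$ then $j^{\sigma_{k_1}} = j^{\sigma_{k_2}}$ for all $j$, contradicting regularity), and left multiplication by $\mu$ is a bijection of $S_m$, hence injective, so the $\mu\sigma_k$ remain distinct. There is no substantive obstacle here; the single point to handle with care is the direction of the convention $i^{\pi\sigma}=(i^\pi)^\sigma$, which is what lets me reduce regularity of $\mu \cdot S$ at index $j$ to regularity of $S$ at the shifted index $j^\mu$. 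This is the whole content of the argument, which is why the paper omits it.
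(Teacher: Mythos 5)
Your proof is correct, and it is exactly the routine verification the paper has in mind when it declares the proof of Lemma \ref{lem:dec} ``straightforward and thus omitted'': using the convention $i^{\pi\sigma}=(i^\pi)^\sigma$ to shift the index from $j$ to $j^\mu$ and invoking regularity of $S$ there. Your extra check that the $m$ permutations $\mu\sigma_k$ remain distinct is a harmless bonus; nothing is missing.
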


\section{Reduction}
\label{S:tool}

We now proceed with the description of our general approach which is similar to that of Ng \cite{Ng}.  In his approach, Ng took the following reduction step formulated, in Lemma \ref{lem:redCn} below. Proof of Lemma \ref{lem:redCn} can be found in the proof of Proposition 1 of \cite{Ng}. 

\begin{lemma} \cite{Ng}
\label{lem:redCn}
Let $G$ and $H$ be two hamiltonian decomposable digraphs such that $|V(G)|=n$ and $|V(H)|>2$.  If $\vec{C}_n \wr H$  is hamiltonian decomposable, then $G \wr H$ is also hamiltonian decomposable.   
\end{lemma}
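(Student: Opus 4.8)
The plan is to leverage the hypothesis on $\vec{C}_n \wr H$ after splitting $G \wr H$ according to a fixed hamiltonian decomposition of $G$. Write $G = C^1 \oplus C^2 \oplus \cdots \oplus C^d$, where each $C^i$ is a directed hamiltonian cycle of $G$ and hence isomorphic to $\vec{C}_n$. Separating the arcs of $G \wr H$ into those lying within a single copy $V_i$ and those joining distinct copies, and then grouping the latter according to which $C^i$ their projection onto $V(G)$ belongs to, I would obtain the decomposition
\[
G \wr H \;=\; (\overline{K}_n \wr H) \,\oplus\, (C^1 \wr \overline{K}_m) \,\oplus\, (C^2 \wr \overline{K}_m) \,\oplus\, \cdots \,\oplus\, (C^d \wr \overline{K}_m),
\]
where $\overline{K}_n \wr H$ is simply the disjoint union of the $n$ embedded copies of $H$ and each $C^i \wr \overline{K}_m$ carries the inter-copy arcs coming from the hamiltonian cycle $C^i$.

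First I would merge the block of horizontal arcs $\overline{K}_n \wr H$ with one of the inter-copy blocks, say $C^1 \wr \overline{K}_m$. Since $\overline{K}_n \wr H$ is invariant under any relabelling of $V(G)$, the isomorphism of $V(G)$ taking $C^1$ to $\vec{C}_n$ extends (acting trivially on the $H$-coordinate) to an isomorphism
\[
(\overline{K}_n \wr H) \oplus (C^1 \wr \overline{K}_m) \;\cong\; \vec{C}_n \wr H .
\]
By hypothesis $\vec{C}_n \wr H$ is hamiltonian decomposable, and as each of its hamiltonian cycles spans all $nm$ vertices, these transport to directed hamiltonian cycles of $G \wr H$. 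It therefore remains to decompose each of the $d-1$ remaining blocks $C^i \wr \overline{K}_m \cong \vec{C}_n \wr \overline{K}_m$ into directed hamiltonian cycles; collecting all of these cycles together with those obtained from the merged block yields a hamiltonian decomposition of $G \wr H$, since every arc lies in exactly one block.

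The crux is thus the auxiliary fact that $\vec{C}_n \wr \overline{K}_m$ is itself hamiltonian decomposable whenever $m > 2$. Here I would work entirely in the permutation-tuple framework of Notation \ref{not:Sm}: it suffices to exhibit $m$ $n$-tuples $F_k = (\sigma_{(k,0)}, \ldots, \sigma_{(k,n-1)})$ such that, for each coordinate $j$, the set $\{\sigma_{(k,j)} : k \in \mathds{Z}_m\}$ is a regular permutation set of order $m$, while for each $k$ the product $\sigma_{(k,0)} \cdots \sigma_{(k,n-1)}$ is a single $m$-cycle, i.e.\ $T(\sigma_{(k,0)} \cdots \sigma_{(k,n-1)}) = 1$. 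A natural first attempt takes every $\sigma_{(k,j)}$ from the cyclic set $\Pi_m$ of Definition \ref{def:cyc}, which reduces the problem to finding an $m \times n$ array over $\mathds{Z}_m$ whose columns are permutations and whose row sums are all coprime to $m$; this succeeds immediately when $n$ is even (pair the coordinates to cancel, reserving two of them to force a row sum of $1$). I expect the genuine difficulty to lie in the remaining parities — in particular $n$ odd with $m$ even — where the $\Pi_m$-only construction breaks down for divisibility reasons and one must instead build the regular permutation sets from non-cyclic permutations (or invoke the corresponding Latin-square existence result). This is also precisely where the restriction $m > 2$ is forced: for $m = 2$ the only regular permutation set of order $2$ leaves no freedom, and $\vec{C}_n \wr \overline{K}_2$ fails to be hamiltonian decomposable when $n$ is odd.
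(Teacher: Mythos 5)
Your reduction is correct and is essentially the argument the paper points to: the paper does not prove Lemma \ref{lem:redCn} itself but cites Proposition 1 of \cite{Ng}, and Ng's proof is exactly your block decomposition $G \wr H = (C^1 \wr H) \oplus (C^2 \wr \overline{K}_m) \oplus \cdots \oplus (C^d \wr \overline{K}_m)$, i.e.\ splitting $G$ into directed hamiltonian cycles, absorbing all horizontal arcs into one cycle to get a copy of $\vec{C}_n \wr H$ (decomposable by hypothesis), and decomposing each leftover block $\vec{C}_n \wr \overline{K}_m$ separately. The identification $(\overline{K}_n \wr H) \oplus (C^1 \wr \overline{K}_m) \cong \vec{C}_n \wr H$ via a relabelling of $V(G)$ carrying $C^1$ to $\vec{C}_n$ is sound, and every arc of $G \wr H$ indeed lies in exactly one block.

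One caveat. The auxiliary fact you need --- that $\vec{C}_n \wr \overline{K}_m$ is hamiltonian decomposable --- is a separately stated result (Lemma \ref{lem:empty}, also from \cite{Ng}), and you would do better to cite it than to prove it inline, because your inline sketch is complete only for $n$ even: the $\Pi_m$-pairing construction you describe works there, but for $n$ odd you only gesture at ``non-cyclic permutations or a Latin-square existence result'' without supplying the construction, so as a self-contained argument the odd-$n$ case of the lemma is open in your write-up. Your diagnosis of where the cyclic construction obstructs is broadly right, though slightly coarse: for $n$ odd the row-sum/column-permutation scheme over $\mathds{Z}_m$ is blocked by a parity count when $m \equiv 2 \pmod 4$, whereas for $m \equiv 0 \pmod 4$ one can still realize all row sums coprime to $m$ within $\Pi_m$ (e.g.\ for $m=4$, $n=3$ take columns $(1,3,0,2)$, $(0,1,3,2)$, $(0,3,2,1)$, with row sums $1,3,1,1$). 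Finally, your closing observation is correct and worth keeping: for $n$ odd the two forced tuples over the unique regular permutation set $\{id,(0,1)\}$ of order $2$ cannot both have odd parity, so $\vec{C}_n \wr \overline{K}_2$ is not hamiltonian decomposable --- which shows that the blanket claim ``for all $n, m \geqslant 2$'' in Lemma \ref{lem:empty} as restated in this paper is slightly too generous (harmless here, since the paper only uses it with $n$ even or $m \geqslant 3$, and Lemma \ref{lem:redCn} assumes $|V(H)| > 2$).
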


Also key to Ng's approach is the following lemma. 

\begin{lemma} \cite{Ng}
\label{lem:empty}
The digraph $\vec{C}_n \wr \overline{K}_m$ is hamiltonian decomposable for all $n, m \geqslant 2$.   
\end{lemma}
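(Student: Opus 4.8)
The plan is to translate the problem into the language of \emph{regular permutation sets} developed in Section~\ref{S:Prm}. By Notation~\ref{not:Sm} and the discussion following Definition~\ref{defn:decof}, a hamiltonian decomposition of $\vec{C}_n \wr \overline{K}_m$ is exactly a family of $m$ $n$-tuples $F_k=(\sigma_{(k,0)},\dots,\sigma_{(k,n-1)})$, $k\in\mathds{Z}_m$, satisfying two conditions: for each column $j$ the set $T_j=\{\sigma_{(k,j)}\mid k\in\mathds{Z}_m\}$ is a regular permutation set of order $m$, and for each row $k$ the product satisfies $T(\sigma_{(k,0)}\sigma_{(k,1)}\cdots\sigma_{(k,n-1)})=1$, i.e.\ it is a single $m$-cycle. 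My goal is therefore to construct such a family.

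The simplest attempt is to let every column be the cyclic regular set $\Pi_m$ of Definition~\ref{def:cyc}. Concretely, set $\sigma_{(k,j)}=\pi_{a_{(k,j)}}$, where for each fixed $j$ the assignment $k\mapsto a_{(k,j)}$ is a bijection of $\mathds{Z}_m$; this guarantees $T_j=\Pi_m$, which is regular. Because $\pi_i\pi_{i'}=\pi_{i+i'}$, the product along row $k$ equals $\pi_{s_k}$ with $s_k=\sum_{j} a_{(k,j)} \pmod m$, and $\pi_{s_k}$ is an $m$-cycle precisely when $\gcd(s_k,m)=1$. The whole problem thus reduces to the following: find bijections $a^{(0)},\dots,a^{(n-1)}$ of $\mathds{Z}_m$ whose pointwise sum $k\mapsto\sum_j a^{(j)}(k)$ is coprime to $m$ at every $k$. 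To realize this I would assemble the columns from two gadgets: a \emph{cancelling pair} $\bigl(k\mapsto k,\ k\mapsto -k\bigr)$, whose pointwise sum is the constant $0$, and a \emph{base pair} $\bigl(k\mapsto k,\ k\mapsto 1-k\bigr)$, whose pointwise sum is the constant $1$. When $n$ is even, one base pair together with $(n-2)/2$ cancelling pairs makes every row-sum $s_k\equiv 1$, which is coprime to $m$; this handles all $m\geqslant 2$. When $n$ is odd and $m$ is odd, I would replace the base pair by a \emph{base triple} $\bigl(k\mapsto k,\ k\mapsto k,\ k\mapsto 1-2k\bigr)$—the last map is a bijection since $2$ is invertible modulo an odd $m$—again of constant sum $1$, and pad with $(n-3)/2$ cancelling pairs. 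These two cases dispose of every pair $(n,m)$ except $n$ odd and $m$ even.

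The case $n$ odd, $m$ even is the main obstacle, and it requires looking past the simple gadgets above. Since $m$ is even, each row-product $m$-cycle is an odd permutation, so the product of all $m$ row-products is even; computing signs column by column forces $\prod_{j}\varepsilon(T_j)=+1$, where $\varepsilon(T_j)$ denotes the product of the signs of the permutations in $T_j$. For a regular permutation set of order $m$ (equivalently, a Latin square of order $m$) this quantity is a fixed parity invariant equal to $(-1)^{m/2}$. When $m\equiv 0\pmod 4$ the invariant is $+1$ and no contradiction arises, so the construction can still be pushed through, but now one needs a bespoke three-column gadget producing (not necessarily constant) coprime row-sums in place of the simple base triple; engineering such a gadget for every $m\equiv 0\pmod 4$ is the crux of this branch. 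When $m\equiv 2\pmod 4$ the invariant is $-1$, so $\prod_j\varepsilon(T_j)=(-1)^n=-1$ for odd $n$, which is incompatible with the required $+1$; here no choice of regular sets can succeed, and this sub-case must be isolated and argued separately. Note that left-multiplication as in Lemma~\ref{lem:dec} cannot repair it, since replacing $T_j$ by $\mu\cdot T_j$ changes $\varepsilon(T_j)$ by $\operatorname{sgn}(\mu)^m=+1$ for even $m$. Pinning down exactly what happens for $n$ odd and $m$ even is where I expect essentially all of the difficulty of the statement to concentrate.
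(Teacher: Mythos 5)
Your translation of the problem into exponent bijections for cyclic columns is sound, and your two easy gadget constructions are correct: for $n$ even (any $m$) and for $n$ odd with $m$ odd, the base pair/triple plus cancelling pairs do produce a hamiltonian decomposition, exactly in the spirit of the regular-set machinery of Section~\ref{S:Prm}. (For calibration: the paper itself offers no proof of Lemma~\ref{lem:empty} — it is imported from Ng, where it is established by a direct construction — so your attempt must stand on its own.) It does not: the branch $n$ odd, $m$ even is not proved. You leave $m\equiv 0\pmod 4$ explicitly unfinished (``the crux''), and for $m\equiv 2\pmod 4$ you conclude that \emph{no} choice of regular sets can work, which flatly contradicts the statement you were asked to prove. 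The culprit is your claimed invariant: it is \emph{false} that every regular permutation set of even order $m$ has sign-product $\varepsilon=(-1)^{m/2}$ once $m\geqslant 6$. Take the rows, in one-line notation, $\rho_0=[0,1,4,3,2,5]$, $\rho_1=[1,2,3,4,5,0]$, $\rho_2=[2,3,0,5,4,1]$, $\rho_3=[3,4,5,0,1,2]$, $\rho_4=[4,5,2,1,0,3]$, $\rho_5=[5,0,1,2,3,4]$ (this is $\Pi_6$ with the order-$3$ subsquare on rows, columns and symbols $\{0,2,4\}$ replaced by the other order-$3$ square). One checks each column has six distinct entries, so this is a regular permutation set of order $6$, yet all six rows are odd permutations, giving $\varepsilon=+1\neq(-1)^{3}$. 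The sign of a Latin square is notoriously \emph{not} an invariant (this is the whole content of the Alon--Tarsi problem), so your parity obstruction evaporates for every even $m\geqslant 6$: one simply mixes columns of either sign.

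Two further points sharpen this. First, at $m=2$ your obstruction is genuine, because there the regular set is unique: the two rows of any candidate decomposition are forced to be componentwise complementary, $\tau_j=\sigma_j\,(0,1)$, so the two row products have signs differing by $(-1)^n$ and cannot both be the transposition when $n$ is odd. Hence $\vec{C}_n\wr\overline{K}_2$ is \emph{not} hamiltonian decomposable for odd $n$, and the lemma as printed (``for all $n,m\geqslant 2$'') overstates Ng's result; note the present paper only ever invokes it with $n$ even, where your own base-pair construction already suffices. Second, the sub-case you flag as the crux, $n$ odd and $m\equiv 0\pmod 4$, is in fact completable inside your framework with cyclic columns alone: for $n=3$, $m=4$ the exponent bijections $a=[1,3,0,2]$, $b=[1,0,3,2]$, $c=[1,2,0,3]$ have pointwise sums $3,1,3,3$, all units modulo $4$, and padding with cancelling pairs then handles every odd $n$. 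In summary, the gap is threefold: a false sign-invariance claim, a wrongly concluded impossibility for $m\equiv 2\pmod 4$ with $m\geqslant 6$, and an unconstructed case $m\equiv 0\pmod 4$; as written, your argument establishes the lemma only for $n$ even or $m$ odd.
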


As a result of Theorem \ref{thm:ng} and Lemma \ref{lem:redCn}, it suffices to concentrate on the case of Conjecture \ref{conj:main} where $G=\vec{C}_n$ with $n$ even. In what follows, we show that, if  $n$ is even and $|V(H)|=m$, the digraph $\vec{C}_n \wr H$ is hamiltonian decomposable when $\vec{C}_2\wr\overline{K}_m$ admits a particular directed 2-factorization. Although our general approach closely follows that of Ng \cite{Ng}, our terminology differs significantly. This is due to the fact that we explain why this method gives rise to the desired decompositions of $\vec{C}_n \wr H$ in far more detail than \cite{Ng}. 

We begin with Lemma \ref{lem:central} in which we show that a carefully chosen subdigraph of $\vec{C}_n \wr \vec{C}_m$ is hamiltonian decomposable. To prove Lemma \ref{lem:central}, we will require the following definitions. 

\begin{definition}\rm
\label{defn:trunc}
Let $\sigma \in S_m$ such that $\sigma$ is not an $(m-1)$-stabilizer.  The \textit{truncation} of $\sigma$, denoted $\hat{\sigma}$, is the permutation $\hat{\sigma}=\sigma\,(m-1, (m-1)^\sigma)$. 
\end{definition}

See Example \ref{ex:truca} below for a very simple example of the truncation of a permutation. 

\begin{example}\label{ex:truca}
\rm Let $\sigma=(0,1,2,3,4,5,6,7)$ such that $\sigma \in S_8$. Then, we see that $7^\sigma=0$. Therefore, $\hat{\sigma}=(0,1,2,3,4,5,6,7)(7,0)=(0,1,2,3,4,5,6)(7)$. $\square$
\end{example}

It follows from Definition \ref{defn:trunc} that, if $\sigma$ is not an $(m-1)$-stabilizer, then $(m-1)^{\hat{\sigma}}=m-1$, so $\hat{\sigma}$ is an $(m-1)$-stabilizer. We will also require the following definition. 

\begin{definition} \rm
\label{defn:hp}
Let $F=(\sigma_0, \sigma_1, \ldots, \sigma_{n-1})$ be a directed 2-factor of $\vec{C}_n \wr \overline{K}_m$. 
\begin{enumerate} 
\item If $T(\sigma_0\sigma_1\cdots\sigma_{n-1})=1$, then $F$ is called a \textit{hamiltonian $n$-tuple}. 
\item If no permutation in $F$ is an $(m-1)$-stabilizer and $T(\hat{\sigma}_0\hat{\sigma}_1\cdots\hat{\sigma}_{n-1})=2$, then $F$ is called a \textit{truncated hamiltonian $n$-tuple}.
\end{enumerate}
\end{definition}

\begin{lemma}
\label{lem:central}
Let $F=(\sigma_0, \sigma_1, \ldots, \sigma_{n-1})$ be a directed 2-factor of $\vec{C}_n \wr \vec{C}_m$ that is also a truncated hamiltonian $n$-tuple. Moreover, let $\Gamma$ be a spanning subdigraph of $\vec{C}_n \wr \vec{C}_m$ obtained from $F$ by adjoining all horizontal arcs of $\vec{C}_n \wr \vec{C}_m$. Then $\Gamma$ is hamiltonian decomposable. 
\end{lemma}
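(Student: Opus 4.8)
The plan is to re-partition the arcs of $\Gamma$ into two directed hamiltonian cycles. Write $\mathcal{H}$ for the set of horizontal arcs of $\vec{C}_n \wr \vec{C}_m$, which form one directed $m$-cycle inside each $V_i$. Then $\Gamma = F \oplus \mathcal{H}$, so every vertex of $\Gamma$ has in-degree and out-degree $2$: one incoming and one outgoing arc from $F$, and one of each from $\mathcal{H}$. Thus $\{F,\mathcal{H}\}$ is already a directed $2$-factorization of $\Gamma$, and the goal is to produce a \emph{second} $2$-factorization $\{C_1,C_2\}$ in which both factors are single cycles. For each $i \in \mathds{Z}_n$ set $a_i = (m-1)^{\sigma_i}$ and $b_i = (m-1)^{\sigma_i^{-1}}$; since $F$ is a truncated hamiltonian $n$-tuple, no $\sigma_i$ is an $(m-1)$-stabilizer, so $a_i,b_i \neq m-1$, and a direct computation gives the identity $\hat{\sigma}_i = (b_i,m-1)\,\sigma_i$. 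I will use the embedding freedom recorded in Section \ref{S:Prm} and choose the embedding of $\vec{C}_m$ into each $V_i$ so that the horizontal arc leaving $i_{b_i}$ is $(i_{b_i}, i_{m-1})$; this prescribes only a single adjacency of the horizontal $m$-cycle, so it is always possible.

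With this embedding fixed, define $C_2$ to consist of, in each $V_i$, the directed hamiltonian path of the horizontal $m$-cycle obtained by deleting the arc $(i_{b_i}, i_{m-1})$ (this path starts at $i_{m-1}$ and ends at $i_{b_i}$), together with the single vertical arc $(i_{b_i}, (i+1)_{m-1})$ of $F$, which is an arc since $b_i^{\sigma_i}=m-1$. Tracing $C_2$ shows it enters each $V_i$ at $i_{m-1}$, sweeps out all of $V_i$, and leaves from $i_{b_i}$ into $(i+1)_{m-1}$; hence it visits $V_0, V_1, \ldots, V_{n-1}$ in cyclic order, covers each block entirely, and closes up after exactly $n$ vertical arcs. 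Therefore $C_2$ is a single directed hamiltonian cycle, with \emph{no} side condition required, precisely because the chosen vertical arc lands at the column $m-1$ at which the next block is entered.

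Define $C_1$ to be the complementary $2$-factor, consisting of the $n$ horizontal arcs $(i_{b_i}, i_{m-1})$ together with every vertical arc of $F$ except the $n$ arcs $(i_{b_i}, (i+1)_{m-1})$; a routine check confirms $C_1$ is a directed $2$-factor. To count its cycles I follow a trajectory of $C_1$ and record, for each block $V_i$, the column at which the trajectory enters $V_i$ along a vertical arc; these entry columns range over $\mathds{Z}_m \setminus \{m-1\}$. The transition from the entry column at $V_i$ to the entry column at $V_{i+1}$ is $k \mapsto k^{\sigma_i}$ for $k \neq b_i$, while the entry column $b_i$ routes through the detour $i_{b_i} \to i_{m-1} \to (i+1)_{a_i}$, giving $b_i \mapsto a_i$. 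Extending this map to all of $\mathds{Z}_m$ by $m-1 \mapsto m-1$ yields exactly $\hat{\sigma}_i$, so the return map around the $n$ blocks is $\hat{\sigma}_0\hat{\sigma}_1\cdots\hat{\sigma}_{n-1}$. Since each $\hat{\sigma}_i$ fixes $m-1$, the column $m-1$ is a fixed point of this product corresponding to no genuine trajectory, and the remaining cycles of the product are in bijection with the cycles of $C_1$. As $F$ is a truncated hamiltonian $n$-tuple, $T(\hat{\sigma}_0\cdots\hat{\sigma}_{n-1}) = 2$, so the product has exactly one non-trivial cycle; hence $C_1$ is a single directed hamiltonian cycle and $\{C_1,C_2\}$ is the required hamiltonian decomposition of $\Gamma$.

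The main obstacle is the cycle-count step for $C_1$: one must set up the correspondence between closed trajectories of $C_1$ and cycles of the permutation product carefully enough to see that the horizontal detour at column $b_i$ reproduces precisely the truncation $\hat{\sigma}_i=(b_i,m-1)\sigma_i$, and that the fixed column $m-1$ accounts for exactly the ``extra'' cycle counted by $T(\hat{\sigma}_0\cdots\hat{\sigma}_{n-1})=2$. The no-stabilizer hypothesis is used to guarantee $b_i\neq m-1$, so that the detour arc $(i_{b_i},i_{m-1})$ and the vertical arc $(i_{m-1},(i+1)_{a_i})$ out of $i_{m-1}$ are well defined and distinct; and the embedding choice placing $i_{m-1}$ immediately after $i_{b_i}$ is what lets the transposition $(b_i,m-1)$ be realized by a single horizontal arc rather than a long, self-intersecting horizontal detour.
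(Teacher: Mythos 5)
Your proposal is correct and takes essentially the same approach as the paper: both decompositions split $\Gamma$ into one hamiltonian cycle consisting of the horizontal $(m-1)$-paths plus one $F$-arc per block incident with column $m-1$, and then show that the complementary factor realizes the truncations $\hat{\sigma}_i$, so that $T(\hat{\sigma}_0\hat{\sigma}_1\cdots\hat{\sigma}_{n-1})=2$ (with $m-1$ a fixed point) forces it to be a single directed $(mn)$-cycle. The only difference is a mirror-image choice: you fix the horizontal arc \emph{into} $i_{m-1}$ (from $i_{(m-1)^{\sigma_i^{-1}}}$, extracting the $F$-arcs into column $m-1$), whereas the paper fixes the horizontal arc \emph{out of} $i_{m-1}$ (to $i_{(m-1)^{\sigma_{i-1}}}$, extracting the $F$-arcs out of column $m-1$).
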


\begin{proof} Every vertex in $\Gamma$ has out-degree 2 and thus, any hamiltonian decomposition of $\Gamma$ consists of two directed hamiltonian cycles. 

Before we proceed, we first specify how we embed $\vec{C}_m$ into each $V_i$ of $\vec{C}_n \wr \vec{C}_m$. The digraph $\vec{C}_m$ is embedded into $V_i$ such that $(i_{m-1}, i_{(m-1)^{\sigma_{i-1}}})$ is a horizontal arc in $V_i$. Observe that, in the embedding of $\vec{C}_m$ into $V_i$, we then have a dipath of length $m-1$ with source $i_{(m-1)^{\sigma_{i-1}}}$ and terminal $i_{m-1}$. We will call this dipath $P_i$.  

We now form a spanning subdigraph of $\Gamma$, which we call $C^0$, such that 

\begin{center}
$A(C^0)= \cup_{i=0}^{n-1} \{((i-1)_{m-1}, i_{(m-1)^{\sigma_{i-1}}})\} \bigcup (\cup_{i=0}^{n-1} A(P_i))$. 
\end{center}

\noindent There exists an arc in $C^0$ from $t(P_{i-1})$ to $s(P_i)$ for each $i \in \mathds{Z}_n$, namely the arc $((i-1)_{m-1}, i_{(m-1)^{\sigma_{i-1}}})$.  Therefore, the digraph $C^0$ is a directed hamiltonian cycle of $\Gamma$. 

Next, we let  $C^1$ be the spanning digraph of $\Gamma$ such that $A(C^1)=A(\Gamma)-A(C^0)$. For each $i \in \mathds{Z}_n$, we form the following set of arcs:

\begin{center}
 $A_i=\{ ((i-1)_j, i_{j^{\sigma_{i-1}}})\ | \ j\in \mathds{Z}_m / \{m-1\}\}\cup \{(i_{m-1}, i_{(m-1)^{\sigma_{i-1}}})\}$. 
\end{center}

The digraph $C^1$ is a spanning digraph comprised of disjoint directed cycles. We must now show that $C^1$ is indeed a directed hamiltonian cycle. 

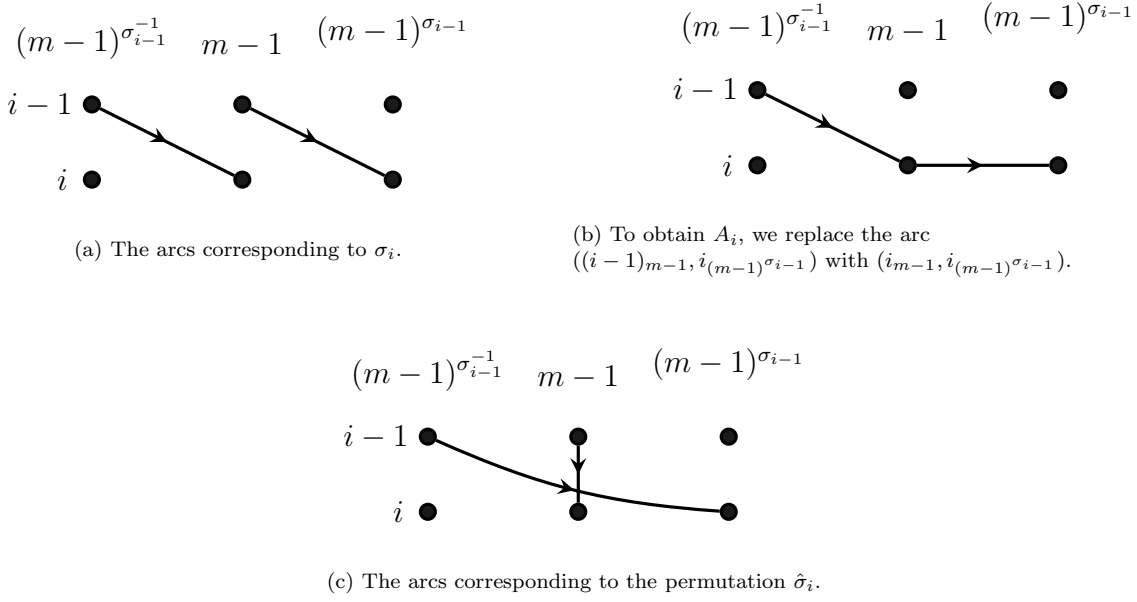
\begin{figure}[ht!]
\begin{center}
\begin{subfigure}[c]{0.49 \textwidth}
\begin{center}
\begin{tikzpicture}[
  very thick,
  every node/.style={circle,draw=black,fill=black!90}, inner sep=2]
    every node/.style={circle,draw=black,fill=black!90}, inner sep=1.5, scale=0.75]
  \node (x0) at (0.0,3.0) [label={[label distance=-4mm]above :$(m-1)^{\sigma^{-1}_{i-1}}$}] [label=left:$i-1$] {};
  \node (x1) at (2.0,3.0) [label={above:{$m-1$}}]{};
  \node (x2) at (4.0,3.0)[label={[label distance=-3mm]above:$(m-1)^{\sigma_{i-1}}$}] {};
  \node (y0) at (0.0,2.0) [label=left:$i$] {};
  \node (y1) at (2.0,2.0) {};
  \node (y2) at (4.0,2.0) {};
  
  \path [very thick, draw=black, postaction={very thick, on each segment={mid arrow}}]
	(x0) to (y1)
	(x1) to  (y2);
\end{tikzpicture}
\end{center}
\caption{The arcs corresponding to $\sigma_i$.}
\label{ex:swi1}
\end{subfigure}
\begin{subfigure}[c]{0.50\textwidth}
\begin{center}
\begin{tikzpicture}[
  very thick,
  every node/.style={circle,draw=black,fill=black!90}, inner sep=2]
    every node/.style={circle,draw=black,fill=black!90}, inner sep=1.5, scale=0.75]
  \node (x0) at (0.0,3.0) [label={[label distance=-4mm]above :$(m-1)^{\sigma^{-1}_{i-1}}$}] [label=left:$i-1$] {};
  \node (x1) at (2.0,3.0) [label={above:{$m-1$}}]{};
  \node (x2) at (4.0,3.0)[label={[label distance=-3mm]above:$(m-1)^{\sigma_{i-1}}$}] {};
  \node (y0) at (0.0,2.0) [label=left:$i$] {};
  \node (y1) at (2.0,2.0) {};
  \node (y2) at (4.0,2.0) {};
  
  \path [very thick, draw=black,postaction={very thick, on each segment={mid arrow}}]
	(x0) to (y1)
	(y1) to  (y2);
\end{tikzpicture}
\end{center}
\caption{To obtain $A_i$, we replace the arc\\ $((i-1)_{m-1}, i_{(m-1)^{\sigma_{i-1}}})$ with $(i_{m-1}, i_{(m-1)^{\sigma_{i-1}}})$.}
\label{ex:swi2}
\end{subfigure}
\begin{subfigure}[c]{0.55 \textwidth}
\begin{center}
\begin{tikzpicture}[
  very thick,
  every node/.style={circle,draw=black,fill=black!90}, inner sep=2]
    every node/.style={circle,draw=black,fill=black!90}, inner sep=1.5, scale=0.75]
  \node (x0) at (0.0,3.0) [label={[label distance=-4mm]above :$(m-1)^{\sigma^{-1}_{i-1}}$}] [label=left:$i-1$] {};
  \node (x1) at (2.0,3.0) [label={above:{$m-1$}}]{};
  \node (x2) at (4.0,3.0)[label={[label distance=-3mm]above:$(m-1)^{\sigma_{i-1}}$}] {};
  \node (y0) at (0.0,2.0) [label=left:$i$] {};
  \node (y1) at (2.0,2.0) {};
  \node (y2) at (4.0,2.0) {};
  
  \path [very thick, draw=black, postaction={very thick, on each segment={mid arrow}}]
	(x0) to [bend right=10] (y2)
	(x1) to  (y1);
\end{tikzpicture}
\end{center}
\caption{The arcs corresponding to the permutation $\hat{\sigma}_i$.}
\label{ex:swi3}
\end{subfigure}
\end{center}
\caption{Illustration of the truncation of $\sigma_i$.}
\label{fig:hat}
\end{figure}

In order to show that $C^1$ is a directed hamiltonian cycle, we will first show that  the permutation of elements of $\mathds{Z}_m/\{m-1\}$ by the arcs in $A_i$ and by $\hat{\sigma}_{i-1}$ is the same. The set $A_i$ is the set of arcs corresponding to $\sigma_{i-1}$  with tail in $V_{i-1}$ and head in $V_i$ and with the arc $((i-1)_{m-1}, i_{(m-1)^{\sigma_{i-1}}})$ replaced by the arc $(i_{m-1}, i_{(m-1)^{\sigma_{i-1}}})$. See Figures \ref{ex:swi1} and \ref{ex:swi2} for an illustration of this replacement. Observe that $\hat{\sigma}_{i-1}$ fixes $m-1$ and maps $(m-1)^{\sigma_{i-1}^{-1}}$ to $(m-1)^{\sigma_{i-1}}$. Otherwise, the permutation $\hat{\sigma}_{i-1}$ maps $j$ to $j^{\sigma_{i-1}}$ when $j \not\in \{m-1, (m-1)^{\sigma^{-1}_{i-1}} \}$. 

For each $i\in \mathds{Z}_n$, we obtain $B_i$ from $A_i$ by replacing the two arcs in Figure \ref{ex:swi2} with the two arcs in Figure \ref{ex:swi3}. Observe that $B_i$ is precisely the set of arcs corresponding to $\hat{\sigma}_i$. Let $\hat{F}=(\hat{\sigma}_0, \hat{\sigma}_1, \ldots, \hat{\sigma}_{n-1})$. That is, $\hat{F}$ is a directed 2-factor of $\vec{C}_n \wr \overline{K}_m$ with arc set $\cup_{i=0}^{n-1} B_i$. By assumption, $\hat{F}$ consists of two disjoint directed cycles, one of which is a directed $n$-cycle with vertex set $\{i_{m-1}\ | \ i\in \mathds{Z}_n\}$. It follows that the arc set  $\cup_{i=0}^{n-1}(B_i-\{(i-1)_{m-1}, i_{m-1}\})$ gives rise to a directed $(mn-n)$-cycle, and hence $\cup_{i=0}^{n-1} A_i$ gives rise to a directed $(mn)$-cycle of $\vec{C}_n \wr \vec{C}_m$. As a result, the set $\{C^0, C^1\}$ is a decomposition of $\Gamma$ into directed hamiltonian cycles.  \end{proof}

Our next objective is to use Lemma \ref{lem:central} to show that $\vec{C}_n \wr H$ is hamiltonian decomposable by using a carefully constructed directed 2-factorization of $\vec{C}_n \wr \overline{K}_m$. In Definition \ref{defn:twined} below, we use Definition \ref{defn:hp} to define this particular type of directed 2-factorization of $\vec{C}_n \wr \overline{K}_m$. 

\begin{definition}\rm
 \label{defn:twined}
Let $\{R_1, R_2, \ldots, R_{n}\}$ be a set of $n$ regular permutation sets of order $m$ and let  
\begin{center}
$D=\{(\sigma_{(k,1)}, \sigma_{(k,2)}, \ldots, \sigma_{(k,n)})\ |  \ k \in \mathds{Z}_m,  \sigma_{(k,j)} \in R_j \ \textrm{for all}\ j=1,2, \ldots, n \}$
\end{center}
\noindent  be a directed 2-factorization of $\vec{C}_n \wr \overline{K}_m$. Furthermore, assume that $0 \leqslant c \leqslant m-2$.  The set $D$ is called a \textit{$c$-twined 2-factorization of $\vec{C}_n \wr\overline{K}_m$} if there exists a partition of $D$ into two sets $D_T$ and $D_H$ such that $D_T$ contains $c$ truncated hamiltonian $n$-tuples and $D_H$  contains $m-c$ hamiltonian $n$-tuples. 
\end{definition}

In Proposition \ref{prop:twin}, we show that a $c$-twined 2-factorization of $\vec{C}_n \wr \overline{K}_m$ gives rise to a decomposition of $\vec{C}_n \wr H$ into directed hamiltonian cycles when $|V(H)|=m$ and $H$ admits a decomposition into $c$ directed hamiltonian cycles, with $0 \leqslant c \leqslant m-2$. 

\begin{proposition}
\label{prop:twin}
Let $H$ be a digraph on $m$ vertices that admits a decomposition into $c$ directed hamiltonian cycles, with $0 \leqslant c\leqslant m-2$. If $\vec{C}_n \wr \overline{K}_m$ admits a $c$-twined 2-factorization, then $\vec{C}_n \wr H$ is hamiltonian decomposable. 
\end{proposition}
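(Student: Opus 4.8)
The plan is to decompose $\vec{C}_n \wr H$ by combining the decomposition of $H$ into $c$ directed hamiltonian cycles with a $c$-twined $2$-factorization of $\vec{C}_n \wr \overline{K}_m$, using Lemma \ref{lem:central} as the engine. First I would fix the decomposition $H = \vec{H}^1 \oplus \vec{H}^2 \oplus \cdots \oplus \vec{H}^c$ into directed hamiltonian cycles, and take a $c$-twined $2$-factorization $D = D_T \cup D_H$ of $\vec{C}_n \wr \overline{K}_m$ guaranteed by hypothesis, where $D_T = \{F_1, \ldots, F_c\}$ consists of $c$ truncated hamiltonian $n$-tuples and $D_H$ consists of $m-c$ hamiltonian $n$-tuples. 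The key observation is that the arc set of $\vec{C}_n \wr H$ splits naturally into the arcs of $\vec{C}_n \wr \overline{K}_m$ (the "non-horizontal" arcs, carried by the permutations $\sigma_{(k,j)}$) together with the horizontal arcs, which are exactly $n$ disjoint copies of the arc set of $H$, one copy inside each $V_i$.

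Next I would handle the two parts of the $2$-factorization separately. Each hamiltonian $n$-tuple $F \in D_H$ already corresponds to a directed hamiltonian cycle of $\vec{C}_n \wr \overline{K}_m$, hence of $\vec{C}_n \wr H$, since $T(\sigma_0 \cdots \sigma_{n-1}) = 1$; these $m - c$ cycles I can take directly into the final hamiltonian decomposition. The heart of the argument is the $c$ truncated hamiltonian $n$-tuples in $D_T$. For each $F_\ell \in D_T$, I would embed the $\ell$-th hamiltonian cycle $\vec{H}^\ell$ of $H$ into each $V_i$ in the specific way demanded by Lemma \ref{lem:central} — namely so that $(i_{m-1}, i_{(m-1)^{\sigma_{i-1}}})$ is a horizontal arc of the embedded copy in $V_i$ — and then form the spanning subdigraph $\Gamma_\ell$ consisting of $F_\ell$ together with all these horizontal arcs. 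Since $F_\ell$ is a truncated hamiltonian $n$-tuple and each $\vec{H}^\ell$ is a directed $m$-cycle embedded as $\vec{C}_m$, Lemma \ref{lem:central} applies verbatim and yields two directed hamiltonian cycles decomposing $\Gamma_\ell$. Running over all $\ell = 1, \ldots, c$ produces $2c$ hamiltonian cycles.

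The final step is bookkeeping: I must check that the pieces are genuinely arc-disjoint and exhaust $A(\vec{C}_n \wr H)$. The non-horizontal arcs are partitioned because $D$ is a $2$-factorization of $\vec{C}_n \wr \overline{K}_m$, so the $\sigma_{(k,j)}$ across all $n$-tuples account for each such arc exactly once. The horizontal arcs of $\vec{C}_n \wr H$ inside a fixed $V_i$ form a copy of $A(H) = A(\vec{H}^1) \cup \cdots \cup A(\vec{H}^c)$; embedding $\vec{H}^\ell$ into $V_i$ for each $\ell$ distributes these to the $\Gamma_\ell$ disjointly, which is why I can consistently use the same index $\ell$ for the $\ell$-th cycle of $H$ and the $\ell$-th truncated tuple. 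Thus $\{\Gamma_1, \ldots, \Gamma_c\}$ together with the $m-c$ cycles from $D_H$ partition $A(\vec{C}_n \wr H)$, giving $2c + (m - c) = m + c$ directed hamiltonian cycles, which matches the out-degree count $m + c$ of $\vec{C}_n \wr H$ (each vertex has out-degree $m$ from the $G$-arcs and $c$ from the $H$-arcs).

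The main obstacle I anticipate is verifying that the Lemma \ref{lem:central} embedding requirement can be met simultaneously and consistently for all $c$ copies — that is, confirming the horizontal arc $(i_{m-1}, i_{(m-1)^{\sigma_{i-1}}})$ demanded for $\Gamma_\ell$ lies in the copy of $\vec{H}^\ell$ embedded in $V_i$, rather than in some other copy $\vec{H}^{\ell'}$. Because each $\vec{H}^\ell$ is spanning and we are free to embed $H$ into each $V_i$ arbitrarily (as noted after Definition \ref{def:embed}), I expect one can choose the embedding of each $\vec{H}^\ell$ so that its unique arc out of $i_{m-1}$ lands on $i_{(m-1)^{\sigma_{i-1}}}$ with $\sigma_{i-1}$ taken from $F_\ell$; the care lies in ensuring these choices do not conflict across the $c$ indices, which should follow from the $\vec{H}^\ell$ being arc-disjoint and each being a single directed $m$-cycle admitting the required rotation.
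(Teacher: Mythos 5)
Your overall architecture is exactly that of the paper's proof: the hamiltonian $n$-tuples of $D_H$ go into the decomposition unchanged; each truncated hamiltonian $n$-tuple is augmented with one embedded hamiltonian cycle of $H$ per $V_i$ so that the proof of Lemma \ref{lem:central} applies; and your arc-count $2c+(m-c)=m+c$ and the partition of non-horizontal versus horizontal arcs both match. The one genuine defect is the device you use to realize the embedding condition, and it is precisely the point you flag as the ``main obstacle.'' You propose to embed each cycle $\vec{H}^{\ell}$ into $V_i$ \emph{separately}, arguing that conflicts are avoided because the $\vec{H}^{\ell}$ are arc-disjoint and each cycle ``admits the required rotation.'' This is not legitimate: the horizontal arcs of $\vec{C}_n\wr H$ inside $V_i$ form a single fixed copy of $A(H)$, so the only freedom available is the choice of \emph{one} isomorphism $\phi_i:H\to H_i$ per column (Definition \ref{def:embed}) --- one per cycle per column is not available. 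Once $\phi_i$ is fixed, the images of all $c$ cycles are determined simultaneously; independently rotated copies of the $\vec{H}^{\ell}$ would in general union to a digraph that is not the induced copy of $H$ on $V_i$ (or would even repeat arcs), so ``arc-disjointness of the $\vec{H}^{\ell}$'' does not rescue the construction as you state it.

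The repair is short and is what the paper actually does: choose the single embedding $\phi_i$ so that the $c$ out-neighbours of $i_{m-1}$ are exactly $\{i_{(m-1)^{\sigma_{(k,i-1)}}} \mid k=0,\dots,c-1\}$. This is possible because the preimage $\phi_i^{-1}(i_{m-1})$ has exactly $c$ pairwise distinct out-neighbours in $H$ (one per cycle, $H$ being strict), while the $c$ targets are pairwise distinct by regularity of the permutation set $T_{i-1}$ (Definition \ref{defn:decof}) and none equals $m-1$ because truncated hamiltonian $n$-tuples contain no $(m-1)$-stabilizers (Definition \ref{defn:hp}) --- two facts your write-up never verifies but your construction silently needs. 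Then $\Gamma_k$ takes, in each $V_i$, whichever cycle of the inherited decomposition $D_i$ contains the forced arc $(i_{m-1}, i_{(m-1)^{\sigma_{(k,i-1)}}})$; note this cycle may correspond to \emph{different} cycles of the original decomposition of $H$ for different $i$, and nothing in the proof of Lemma \ref{lem:central} requires otherwise. (If you insist on your fixed matching of tuple $\ell$ to cycle $\vec{H}^{\ell}$, you can also get it, but only by prescribing where $\phi_i$ sends each individual out-neighbour, again within a single embedding of $H$.) Arc-disjointness of the $\Gamma_k$ on horizontal arcs then holds because distinct $k$ force distinct arcs out of $i_{m-1}$, hence distinct cycles of $D_i$. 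With this substitution your proof is complete and coincides with the paper's.
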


\begin{proof} Let

\begin{center}
 $D=\{(\sigma_{(k,1)}, \sigma_{(k,2)}, \ldots, \sigma_{(k,n)})\ |  \ k \in \mathds{Z}_m \ \textrm{and}\  j=1,2, \ldots, n \}$ 
 \end{center}
\noindent  is a  $c$-twined 2-factorization of $\vec{C}_n \wr \overline{K}_m$. Furthermore, let
 
     \smallskip
{\centering
  $ \displaystyle
    \begin{aligned} 
&D_T&&=\{(\sigma_{(k,1)}, \sigma_{(k,2)}, \ldots, \sigma_{(k,n)})\ | \ k=0, 1,\, \ldots, c-1\}\ \textrm{and} \\
&D_H&&=\{(\sigma_{(k,1)}, \sigma_{(k,2)}, \ldots, \sigma_{(k,n)})\ |\ k=c, c+1,\, \ldots, m-1\}\\ 
     \end{aligned}
  $ 
\par}
   \smallskip

\noindent  form a partition of $D$ such that $D_T$ is comprised of $c$ truncated hamiltonian $n$-tuples and $D_H$ is comprised of $m-c$ hamiltonian $n$-tuples. For each $i\in \mathds{Z}_n$, we embed $H$ into each $V_i$ so that the set of $c$ out-neighbours of $i_{m-1}$ is 

\begin{center}
$\{i_{(m-1)^{\sigma_{(0, {i-1})}}}, i_{(m-1)^{\sigma_{(1, {i-1})}}}, \ldots, i_{(m-1)^{\sigma_{(c-1, {i-1})}}} \}$. 
\end{center}

\noindent Let $H_i$ be the image of the embedding of $H$ into $V_i$ described above, and let $D_i$ be the decomposition of $H_i$ into directed hamiltonian cycles inherited from $H$.  

Next, for each $L_k\in D_T$, where $L_k=(\sigma_{(k,1)}, \sigma_{(k,2)}, \ldots, \sigma_{(k,n)})$, we construct a spanning subdigraph of $\vec{C}_n \wr H$, which we call $\Gamma_k$. The subdigraph $\Gamma_k$ is comprised of the directed 2-factor described by $L_k$. In addition, the digraph $\Gamma_k$ also contains the directed hamiltonian cycle in $D_i$ that contains the arc $(i_{m-1}, i_{(m-1)^{\sigma_{(k,i-1)}}})$. Because $L_k$ is a truncated hamiltonian $n$-tuple, it follows from the proof of Lemma \ref{lem:central} that $\Gamma_k$ is hamiltonian decomposable. 

If $k_1\neq k_2$, then $\Gamma_{k_1}$ and  $\Gamma_{k_2}$ are arc-disjoint. Digraphs $\Gamma_{k_1}$ and  $\Gamma_{k_2}$ do not share a horizontal arc because because, for each $i \in \mathds{Z}_m$, they contain different directed hamiltonian cycles of $D_i$. 

For each $k \in \{c, \ldots, m-1\}$, let $C^k= (\sigma_{(k,1)}, \sigma_{(k,2)}, \ldots, \sigma_{(k,n)}) \in D_H$. Then $C^k$ is a directed hamiltonian cycle of $\vec{C}_n \wr \overline{K}_m$, and hence of $\vec{C}_n\wr H$. Since $D$ is a directed 2-factor of $\vec{C}_n \wr \overline{K}_m$, each non-horizontal arc of $\vec{C}_n \wr H$ appears precisely once in 
$\{\Gamma_0, \Gamma_1, \ldots, \Gamma_{c-1}\}\cup \{C^0, C^1, \ldots, C^{m-1}\}$. Furthermore,  each horizontal arc of $\vec{C}_n\wr H$ appears in a subdigraph in $\{\Gamma_0, \Gamma_1, \ldots, \Gamma_{c-1}\}$ exactly once. Therefore, we see that $\vec{C}_n \wr H=\Gamma_0\oplus\Gamma_1\oplus\cdots \oplus \Gamma_{c-1} \oplus C^c\oplus C^{c+1}\oplus\cdots\oplus C^{m-1}$.  

Since each subdigraph in  $\{\Gamma_0, \Gamma_1, \ldots, \Gamma_{c-1}\}$ is a spanning subdigraph of $\vec{C}_n \wr H$ that is hamiltonian decomposable, $\vec{C}_n \wr H$ is hamiltonian decomposable. \end{proof}

By Proposition \ref{prop:twin}, to show that $\vec{C}_n\wr H$ is hamiltonian decomposable when  $|V(H)|=m$ and $c$ is the number of directed hamiltonian cycles in a decomposition of $H$, it suffices to find a $c$-twined 2-factorization of $\vec{C}_n \wr \overline{K}_m$. In Lemma \ref{lem:red2}, we reduce the latter problem to the case $n=2$. 

\begin{lemma}
\label{lem:red2}
Let $0 \leqslant c \leqslant m-2$ and $n \geqslant 4$ be even. If $\vec{C}_2\wr \overline{K}_m$ admits a $c$-twined 2-factorization, then $\vec{C}_n \wr \overline{K}_m$ also admits a $c$-twined 2-factorization.  
\end{lemma}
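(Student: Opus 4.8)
The plan is to take the given $c$-twined 2-factorization of $\vec{C}_2 \wr \overline{K}_m$ and lengthen each of its $m$ 2-tuples into an $n$-tuple by appending, in the $n-2$ extra coordinates, a pattern of permutations that cancels in pairs. Concretely, after relabelling so that the $c$ truncated hamiltonian 2-tuples are indexed by $k=0,1,\ldots,c-1$ and the $m-c$ hamiltonian 2-tuples by $k=c,\ldots,m-1$, I would fix a regular permutation set $R=\{\rho_0,\ldots,\rho_{m-1}\}$ of order $m$ and define the $k$-th $n$-tuple to be $(\sigma_{(k,1)}, \sigma_{(k,2)}, \rho_k, \rho_k^{-1}, \rho_k, \rho_k^{-1}, \ldots, \rho_k, \rho_k^{-1})$, where coordinates $3,5,\ldots,n-1$ carry $\rho_k$ and coordinates $4,6,\ldots,n$ carry $\rho_k^{-1}$ (this is where evenness of $n$ is used). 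For $R$ I would take a relabelling of $\Pi_m$ in which $\pi_0=id$ is assigned index $m-1$; since $c\leqslant m-2$, this guarantees that $\rho_0,\ldots,\rho_{c-1}$ are all non-$(m-1)$-stabilizers, which will matter for the truncated tuples.

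First I would check that this list of $n$-tuples is a 2-factorization, i.e.\ that each coordinate yields a regular permutation set of order $m$. Coordinates $1$ and $2$ reproduce the two regular sets of the given factorization; every odd extra coordinate carries $R$, and every even extra coordinate carries $R^{-1}=\{\rho_0^{-1},\ldots,\rho_{m-1}^{-1}\}$. Here I would record the easy fact that the inverse of a regular permutation set is again regular: if $i^{\rho_{k_1}^{-1}}=i^{\rho_{k_2}^{-1}}=j$, then $j^{\rho_{k_1}}=j^{\rho_{k_2}}=i$, so regularity of $R$ forces $k_1=k_2$. Hence every coordinate is regular and we indeed obtain a 2-factorization of $\vec{C}_n \wr \overline{K}_m$.

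Next I would verify that the twining structure is preserved. Since $\rho_k\rho_k^{-1}=id$, the product along the $k$-th $n$-tuple telescopes to $\sigma_{(k,1)}\sigma_{(k,2)}$, so each $n$-tuple inherits the cycle count $T(\sigma_{(k,1)}\sigma_{(k,2)})$ of its parent 2-tuple; in particular the $m-c$ hamiltonian 2-tuples give hamiltonian $n$-tuples. For the $c$ truncated tuples I must instead control the truncated product. By the choice of $R$, no appended permutation $\rho_k$ or $\rho_k^{-1}$ with $k<c$ is an $(m-1)$-stabilizer, and neither are $\sigma_{(k,1)},\sigma_{(k,2)}$, so the truncation of the whole $n$-tuple is defined. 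The key identity is that $\widehat{\rho^{-1}}=(\hat{\rho})^{-1}$ whenever $\rho$ is not an $(m-1)$-stabilizer: writing $\tau=(m-1,(m-1)^\rho)$ so that $\hat{\rho}=\rho\tau$ and $(\hat{\rho})^{-1}=\tau\rho^{-1}$, the conjugation $\rho^{-1}(m-1,(m-1)^{\rho^{-1}})\rho=((m-1)^\rho, m-1)=\tau$ gives $\widehat{\rho^{-1}}=\rho^{-1}(m-1,(m-1)^{\rho^{-1}})=\tau\rho^{-1}=(\hat{\rho})^{-1}$. Consequently $\hat{\rho}_k\,\widehat{\rho_k^{-1}}=\hat{\rho}_k(\hat{\rho}_k)^{-1}=id$, the truncated product also telescopes to $\hat{\sigma}_{(k,1)}\hat{\sigma}_{(k,2)}$, and the value $T=2$ is preserved. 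Thus the $c$ truncated tuples remain truncated hamiltonian $n$-tuples, and the resulting factorization is $c$-twined.

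The main obstacle is exactly this last point. Appending permutations that cancel in the ordinary product need not leave the truncated product $\hat{\sigma}_0\cdots\hat{\sigma}_{n-1}$ invariant, and a priori the appended block could split or merge the two directed cycles that characterize a truncated hamiltonian tuple. The identity $\widehat{\rho^{-1}}=(\hat{\rho})^{-1}$ is precisely what forces the appended block to cancel after truncation as well, and arranging (via the relabelling of $\Pi_m$) that the appended permutations in the truncated rows avoid being $(m-1)$-stabilizers is what keeps every truncation defined throughout.
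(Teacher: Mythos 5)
Your proposal is correct and is essentially the paper's own proof: the paper likewise pads each 2-tuple of the given $c$-twined 2-factorization with $(n-2)/2$ mutually cancelling pairs from $\Pi_m$ (prepended, as $(\pi_{i+1},\pi_{-i-1},\ldots,\pi_{i+1},\pi_{-i-1},\sigma_i,\mu_i)$, rather than appended), with the index shift $i\mapsto i+1$ playing exactly the role of your relabelling of $\Pi_m$, so that the rows indexed $0,\ldots,c-1$ avoid the $(m-1)$-stabilizer $\pi_0=id$ and both the full and truncated products telescope onto the original pair. The only difference is cosmetic: you justify the truncated cancellation explicitly via the identity $\widehat{\rho^{-1}}=(\hat{\rho})^{-1}$, whereas the paper asserts $\hat{\pi}_{i+1}\hat{\pi}_{-i-1}=id$ implicitly in its computation of $T(\hat{\pi}_{i+1}\hat{\pi}_{-i-1}\cdots\hat{\sigma}_i\hat{\mu}_i)$.
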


\begin{proof} Let $D$ be a  $c$-twined 2-factorization of $\vec{C}_2\wr \overline{K}_m$. Then there exist two regular permutation sets of order $m$, $R_1'=\{\sigma_0, \sigma_1, \ldots, \sigma_{m-1}\}$ and $R_2'=\{\mu_0, \mu_1, \ldots, \mu_{m-1}\}$. Without loss of generality, we let $D=\{(\sigma_i, \mu_i)\ | \ i=0,1, \ldots, m-1\}$ and assume that $D_T=\{(\sigma_i, \mu_i)\ | \ i=0,1, \ldots, c-1\}$ is the subset of $D$ comprised of $c$ truncated hamiltonian pairs while $D_H=\{(\sigma_i, \mu_i)\ | \ i=c,c+1, \ldots, m-1\}$ is the subset of $D$ that contains $m-c$ hamiltonian pairs. 

For $i \in \{1, \ldots, n-3\}$, let $R_i=\Pi_m$, where the cyclic group $\Pi_m$ is defined in Definition \ref{def:cyc}; moreover, let $R_{n-1}=R'_1$, and $R_n=R'_2$. We then construct the following set of $n$-tuples: $D_T'=\{(\pi_{i+1}, \pi_{-i-1}, \ldots,\pi_{i+1}, \pi_{-i-1}, \sigma_i, \mu_i)\ |\ i=0,1, \ldots, c-1\}.$

\noindent Observe that no $n$-tuple of $D_T'$ contains an $(m-1)$-stabilizer. Furthermore, we see that, since $(\mu_i, \sigma_i)$ is a truncated hamiltonian pair, we have
$T(\hat{\pi}_{i+1}\hat{\pi}_{-i-1}\cdots\hat{\pi}_{i+1}\hat{\pi}_{-i-1}\hat{\sigma}_i\hat{\mu}_{i})=T(\hat{\sigma}_i\hat{\mu}_{i})=2$. As a result, each $n$-tuple of $D_T'$ is a truncated hamiltonian $n$-tuple.

Next, we construct the following set of $n$-tuples: $D'_H=\{(\pi_{i+1}, \pi_{-i-1}, \ldots, \pi_{i+1}, \pi_{-i-1}, \sigma_i, \mu_i)\ | \ i=c,c+1, \ldots, m-1\}$. Observe that  $T(\pi_{i+1}\pi_{-i-1} \pi_{i+1}\pi_{-i-1}\cdots\sigma_i\mu_i)=T(\sigma_i\mu_i)=1$. Therefore, each $n$-tuple in $D'_H$ is a hamiltonian $n$-tuple. 

For each $j \in \{1,2, \ldots, n\}$, each element of $R_j$ appears in exactly one $n$-tuple of $D'=D'_H\cup D'_T$. Since each $R_j$ is a regular permutation set of order $m$, it follows that $D'$ is a directed 2-factorization of $\vec{C}_n \wr \overline{K}_m$. In addition, note that $D_T'$ contains $c$ truncated hamiltonian $n$-tuples, the set $D_H'$ contains $m-c$ hamiltonian $n$-tuples, and that $D_T'$ and $D_H'$ are disjoint. Therefore, the set $D'$ is the desired $c$-twined 2-factorization of  $\vec{C}_n \wr \overline{K}_m$.  \end{proof}

In Corollary \ref{cor:main} below, we summarize the implications of Proposition \ref{prop:twin} and Lemma \ref{lem:red2}

\begin{corollary}
\label{cor:main}
Let $H$ be a digraph that admits a decomposition into $c$ directed hamiltonian cycles, and $n$ be an even integer. If the digraph $\vec{C}_2 \wr \overline{K}_m$ admits a $c$-twined 2-factorization, then $\vec{C}_n \wr H$ is hamiltonian decomposable. 
\end{corollary}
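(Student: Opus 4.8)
The plan is to recognize that this corollary is essentially a concatenation of Lemma \ref{lem:red2} and Proposition \ref{prop:twin}, so the only genuine work is to organize the case analysis on $n$ and to confirm that the hypotheses of both cited results are actually met. First I would record a preliminary observation: since $\vec{C}_2 \wr \overline{K}_m$ is assumed to admit a $c$-twined 2-factorization, Definition \ref{defn:twined} forces $0 \leqslant c \leqslant m-2$. This is exactly the range on $c$ required by both Lemma \ref{lem:red2} and Proposition \ref{prop:twin}, so no separate bound on $c$ needs to be imposed in the statement; it comes for free from the standing hypothesis.

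Next I would split into two cases according to the value of the even integer $n$ (noting $n \geqslant 2$, since $\vec{C}_n$ is defined). In the base case $n = 2$, the hypothesis directly supplies a $c$-twined 2-factorization of $\vec{C}_2 \wr \overline{K}_m = \vec{C}_n \wr \overline{K}_m$. Since $H$ decomposes into $c$ directed hamiltonian cycles with $0 \leqslant c \leqslant m-2$, Proposition \ref{prop:twin} applies verbatim and yields that $\vec{C}_n \wr H$ is hamiltonian decomposable.

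In the case $n \geqslant 4$, I would first invoke Lemma \ref{lem:red2}: it takes the $c$-twined 2-factorization of $\vec{C}_2 \wr \overline{K}_m$ guaranteed by hypothesis and produces a $c$-twined 2-factorization of $\vec{C}_n \wr \overline{K}_m$ for any even $n \geqslant 4$. With this 2-factorization in hand, Proposition \ref{prop:twin} again applies, again because $H$ decomposes into $c$ directed hamiltonian cycles with $0 \leqslant c \leqslant m-2$, and concludes that $\vec{C}_n \wr H$ is hamiltonian decomposable. Combining the two cases covers all even $n$ and completes the argument.

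There is essentially no hard step here: the corollary is a bookkeeping statement whose content is already packaged in the two cited results. The one point requiring any care — and the one I would flag explicitly — is that Lemma \ref{lem:red2} is stated only for $n \geqslant 4$, so the value $n = 2$ must be treated on its own rather than being subsumed under the reduction. Fortunately this case is immediate, since the $c$-twined 2-factorization needed to feed into Proposition \ref{prop:twin} is precisely the hypothesis of the corollary.
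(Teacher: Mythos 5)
Your proof is correct and takes essentially the same route as the paper, which presents Corollary \ref{cor:main} exactly as the summary of Lemma \ref{lem:red2} and Proposition \ref{prop:twin} with no further argument. Your explicit treatment of the case $n=2$, where Lemma \ref{lem:red2} does not apply and the hypothesis feeds directly into Proposition \ref{prop:twin}, together with the observation that $0 \leqslant c \leqslant m-2$ is forced by Definition \ref{defn:twined}, correctly fills in the bookkeeping the paper leaves implicit.
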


The implications of Corollary \ref{cor:main} are as follows. To show that $\vec{C}_n \wr H$ is hamiltonian decomposable, where $|V(H)|=m$ and $H$ admits a decomposition into $c$ directed hamiltonian cycles,  it suffices to construct two regular permutation sets of order $m$ from which we can form a $c$-twined 2-factorization of $\vec{C}_2\wr \overline{K}_m$ for all $2 \leqslant c \leqslant m-2$. If $c\in \{1, m-1\}$, then $H$ is a complete symmetric digraph or an $m$-cycle because $H$ is strict. These extremal cases are addressed in \cite{Me} and are thus not considered in our general approach. Lastly, the case $c=0$ is addressed in Lemma \ref{lem:empty}. 

The remainder of this paper is structured as follows. In Section \ref{sec:meven}, we construct a $c$-twined 2-factorization for $\vec{C}_2\wr \overline{K}_m$ when $m$ and $c$ are even. Corollary \ref{cor:main} and Lemma \ref{lem:redCn} will jointly imply that $G \wr H$ is hamiltonian decomposable when $|V(H)|$ is even and $H$ admits a decomposition into an even number of directed hamiltonian cycles. We then show that this result can be extended to the case $|V(H)|$ even, $c$ odd, and $G\neq \vec{C}_n$.  Then, in Section \ref{sec:modd}, we show that $\vec{C}_2\wr \overline{K}_m$ admits a $c$-twined 2-factorization for all $2 \leqslant c \leqslant m-2$ and all odd $m\geqslant 5$. Corollary \ref{cor:main} and Lemma \ref{lem:redCn}  imply that  $G \wr H$ is hamiltonian decomposable when  $|V(H)|\geqslant 5$ is odd. 

\section{The case $m$ is even}
\label{sec:meven}

Before we proceed with the case $m$ is even, we give a brief outline of the proof of Proposition \ref{prop:evenx2}, which also describes the proofs of Propositions \ref{prop:m1even}-\ref{prop:5mod6}.  Our goal is to construct a $c$-twined 2-factorization of  $\vec{C}_2 \wr \overline{K}_m$, call it $D$, for given $c$ and $m$ values. We will start with two regular permutations sets of order $m$, $R_1$ and $R_2$. The set $D$ will be constructed such that $D \subset R_1\times R_2$. We then construct two sets of pairs, $D_T$ and $D_H$, such that $D_T$ contains $c$ pairs and $D_H$ contains $m-c$ pairs. We then let $D=D_T\cup D_H$ and show that $D$ is indeed a 2-factorization of $\vec{C}_2\wr \overline{K}_m$ by verifying that each permutation of $R_1$ and $R_2$ appears exactly once in $D$. This implies that $D_T$ and $D_H$ are disjoint. The most important and complicated step is to then verify that all pairs in $D_T$ are truncated hamiltonian pairs and all pairs in $D_H$ are hamiltonian pairs. For truncated hamiltonian pairs of permutations, we compute the product of their respective truncation and show that the resulting permutation has two cycles in its disjoint cycle notation. Similarly, for hamiltonian pairs, we show that their product is a permutation on a single cycle. 

\begin{proposition}
\label{prop:evenx2}
Let $m$ and $c$ be even integers such that $m \geqslant 4$ and $2 \leq c \leq m-2$. The digraph $\vec{C}_2 \wr \overline{K}_m$ admits a $c$-twined 2-factorization. 
\end{proposition}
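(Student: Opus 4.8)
The plan is to unwind the definition of a $c$-twined $2$-factorization in the case $n=2$ and reduce the problem to a purely permutation-theoretic construction. For $n=2$ a directed $2$-factor is a single pair $(\sigma,\mu)\in S_m\times S_m$, and by Notation~\ref{not:Sm} the number of directed cycles it contributes is $T(\sigma\mu)$. Thus, after recalling Definition~\ref{defn:hp}, the pair $(\sigma,\mu)$ is a hamiltonian pair exactly when $\sigma\mu$ is a single $m$-cycle, and it is a truncated hamiltonian pair exactly when neither $\sigma$ nor $\mu$ is an $(m-1)$-stabilizer and $T(\hat\sigma\hat\mu)=2$. Since $\hat\sigma$ and $\hat\mu$ both fix $m-1$, so does $\hat\sigma\hat\mu$; hence one of the two cycles counted by $T(\hat\sigma\hat\mu)$ is always the fixed point $m-1$, and the truncated condition is equivalent to $\hat\sigma\hat\mu$ acting as a single $(m-1)$-cycle on $\mathds{Z}_m\setminus\{m-1\}$. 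My goal is therefore to produce $m$ pairs $(\sigma_k,\mu_k)$, $k\in\mathds{Z}_m$, such that $R_1=\{\sigma_k\}$ and $R_2=\{\mu_k\}$ are each regular permutation sets of order $m$, exactly $c$ of the pairs are truncated, and the remaining $m-c$ are hamiltonian; the partition into $D_T$ and $D_H$ then witnesses the required $c$-twined $2$-factorization.

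To build the pairs I would work with the rotations $\pi_a\in\Pi_m$ of Definition~\ref{def:cyc}. Two building blocks drive the construction. First, $(\pi_a,\pi_b)$ is hamiltonian precisely when $\pi_a\pi_b=\pi_{a+b}$ is a single $m$-cycle, i.e.\ when $\gcd(a+b,m)=1$. Second, because $m$ is even the element $m-1$ is odd, and for $\gcd(a,m)=1$ the truncation $\hat\pi_a$ is a single $(m-1)$-cycle on $\mathds{Z}_m\setminus\{m-1\}$; since the square of an odd-length cycle is again a single cycle, $\hat\pi_a\hat\pi_a=\hat\pi_a^{\,2}$ is a single $(m-1)$-cycle, so $(\pi_a,\pi_a)$ is a truncated hamiltonian pair. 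A direct attempt to realise all $m$ pairs inside $\Pi_m\times\Pi_m$ meets a parity obstruction: a hamiltonian pair $(\pi_a,\pi_b)$ forces $a+b$ to be odd, i.e.\ one index even and one odd, whereas the truncated blocks above reuse an index of each parity, and for even $m$ one cannot balance these competing demands across a single bijection of $\mathds{Z}_m$. I would therefore keep $R_1=\Pi_m$ but take $R_2=\mu\cdot\Pi_m$ for a fixed parity-breaking $\mu\in S_m$, which is again a regular permutation set by Lemma~\ref{lem:dec}, and then specify an explicit index-bijection $k\mapsto(\sigma_k,\mu_k)$. The intended design uses evenness of $c$ to organise the $c$ truncated indices into $c/2$ blocks whose truncation-products are forced to be single $(m-1)$-cycles, while the remaining $m-c$ indices are assigned so that each $\sigma_k\mu_k$ is a single $m$-cycle.

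With the pairs in hand, the verification proceeds in the order suggested by the outline. Regularity of $R_1$ is immediate and that of $R_2$ follows from Lemma~\ref{lem:dec}; checking that the assignment $k\mapsto(\sigma_k,\mu_k)$ is a bijection confirms that each permutation of $R_1$ and of $R_2$ occurs exactly once, so that $D=D_T\cup D_H$ is a genuine $2$-factorization of $\vec{C}_2\wr\overline{K}_m$ and $D_T$, $D_H$ are disjoint. It then remains to confirm the cycle types: for every pair placed in $D_H$ one computes $\sigma_k\mu_k$ and checks that it is a single $m$-cycle, and for every pair placed in $D_T$ one checks that neither entry is an $(m-1)$-stabilizer and that $\hat\sigma_k\hat\mu_k$ is a single $(m-1)$-cycle.

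I expect this last verification to be the main obstacle. It must be carried out uniformly over the whole family for all even $m\geqslant4$ and all even $c$ with $2\leqslant c\leqslant m-2$, and it is exactly here that the hypotheses are used: evenness of $m$ makes $m-1$ odd, which is what keeps the relevant products of truncations single cycles, while evenness of $c$ is what lets the truncated indices be paired up so that the parity obstruction is defeated and the counts $c$ and $m-c$ come out exactly right. The constraints $2\leqslant c\leqslant m-2$ guarantee that both $D_T$ and $D_H$ are non-empty and keep us away from the extremal cases $c\in\{1,m-1\}$, corresponding to the directed cycle and the complete symmetric digraph, which are already settled in \cite{Me}.
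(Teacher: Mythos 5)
Your setup is sound and in fact coincides with the paper's framework: for $n=2$ the task is exactly to produce two regular permutation sets $R_1,R_2$ (cosets of $\Pi_m$ via Lemma \ref{lem:dec}), a bijective pairing of their elements, and a partition into $c$ truncated hamiltonian pairs and $m-c$ hamiltonian pairs; your observation that $T(\hat\sigma\hat\mu)=2$ amounts to the fixed point $m-1$ together with a single $(m-1)$-cycle is also correct. But the proposal stops precisely where the proof begins. You never specify the ``parity-breaking'' $\mu$, never write down the index assignment $k\mapsto(\sigma_k,\mu_k)$, and never verify a single cycle type; you say yourself that this verification is ``the main obstacle.'' That obstacle \emph{is} the proposition: the entire content of the paper's proof is the explicit choice of cosets, the explicit lists $D_T$ and $D_H$, and the disjoint-cycle computations certifying each pair.

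The gap is not merely one of bookkeeping, because your parity analysis understates the difficulty. For pairs of rotations, $(\pi_a,\pi_b)$ is hamiltonian iff $\gcd(a+b,m)=1$, which for $m$ with an odd prime factor is strictly stronger than $a+b$ being odd; a parity-balanced bijection on the non-truncated indices therefore does not by itself deliver $m-c$ hamiltonian pairs, and passing to $R_2=\mu\cdot\Pi_m$ only relocates this problem rather than solving it. The paper's resolution is a specific trick your sketch does not reach: taking $R_1=(1,m-2)\cdot\Pi_m$, $R_2=\Pi_m$ for $2\leqslant c\leqslant m-4$, the hamiltonian pairs have products $(1,m-2)\pi_{\pm 2}$, which are single $m$-cycles \emph{despite} $\pi_{\pm2}$ splitting into two cycles, because the transposition merges the two cycles; the truncated pairs $(\mu_i,\pi_{m-i+1})$, $(\mu_{i+1},\pi_{m-i-2})$ are then certified by direct computation. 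Moreover one coset does not suffice for the whole range: the extremal case $c=m-2$ requires a different coset, $R_1=(0,m-1)\cdot\Pi_m$, with its own computations. (Your diagonal idea $(\pi_a,\pi_a)$ is fine for $\gcd(a,m)=1$, but it is never needed in the paper and does not resolve the unit-sum constraint on the remaining indices.) Until an explicit $\mu$, an explicit pairing, and the cycle-structure verifications are supplied, the statement remains unproved.
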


\begin{proof} 
\noindent \underline{Case 1:} $2 \leqslant c \leqslant m-4$.  We will construct a set of $m$ pairs by using the following two regular permutation sets of order $m$:

   \smallskip
{\centering
  $ \displaystyle
    \begin{aligned} 
&R_1=(1, m-2)\cdot \Pi_m=\{ \mu_i=(1,m-2) \pi_i\ | \ i=0,1, \ldots, m-1 \};&R_2=\Pi_m=\{\pi_0, \pi_1, \ldots, \pi_{m-1}\}. 
      \end{aligned}
  $ 
\par}
   \smallskip 

Refer to Definition \ref{def:cyc} for the definition of the cyclic group $\Pi_m$. Note that $\mu_0$ and $\pi_0$ are the $(m-1)$-stabilizers of $R_1$ and $R_2$, respectively.  Otherwise, if $i \neq 0$, then $(m-1)^{\mu_i}=(m-1)^{\pi_i}=i-1$.  It follows that, if $i \neq 0$, then  $\hat{\mu}_i=\mu_i (m-1, i-1)$ and $\hat{\pi}_i=\pi_i \, (m-1, i-1)$. 
 
Let $c=2t$ where $1 \leqslant t \leqslant \frac{m-4}{2}$, $\mathds{I}=\{3,5,7, \ldots, m-3\}$, and let $M_t$ be a subset of size $t$ of $\mathds{I}$. Observe that $\mathds{I}$ contains $\frac{m-4}{2}$ elements. Below, we construct two sets of pairs, $D_{T}$ and $D_H$, that contain $c$ pairs and $m-c$ pairs, respectively:

   \smallskip
{\centering
  $ \displaystyle
    \begin{aligned} 
&D_{T}&&=&&\{(\mu_i, \pi_{m-i+1}), (\mu_{i+1}, \pi_{m-i-2})\ | \ i \in M_t\};\\
&D_H&&=&&\{(\mu_i, \pi_{m-i-2}), (\mu_{i+1}, \pi_{m-i+1}) \ |\ i \ \textrm{odd}, i \in \mathds{I}/M_t\}\cup\{(\mu_0, \pi_{2}), (\mu_{m-1}, \pi_{m-1})\}.
      \end{aligned}
  $ 
\par}
   \smallskip 

Let $D=D_T\cup D_H$. Each permutation in $R_1$ appears exactly once as the first permutation of a pair in $D$. Similarly, each permutation in $R_2$ appears exactly once as the second permutation of a pair in $D$. Therefore, the set $D$ is a 2-factorization of $\vec{C}_2\wr\overline{K}_m$. We now verify that each pair in $D_T$ and $D_H$ is a truncated hamiltonian pair and a hamiltonian pair, respectively.  We do so in two steps. 

\noindent STEP 1.  We show that each of $(\mu_i, \pi_{m-i+1})$ and $(\mu_{i+1}, \pi_{m-i-2})$ is a truncated hamiltonian pairs. Note that $i$ is necessarily odd. Therefore:

   \smallskip
{\centering
  $ \scriptstyle
    \begin{aligned} 
&\hat{\mu}_{i}\hat{\pi}_{m-i+1}&&=&&(1,m-2) \pi_i (m-1, i-1)  \, \pi_{-i+1} (m-1, m-i)\\
&&&=&&(0,1,m-i, m-i+1, \ldots, m-2, 2, 3, 4, \ldots, m-i-1)(m-1);\\
&\hat{\mu}_{i+1}\hat{\pi}_{m-i-2}&&=&&(1, m-2)\, \pi_{i+1}\, (m-1, i)  \, \pi_{-i-2}\, (m-1, m-i-3)\\
&&&=&&(0,m-i-3, m-i-4, m-i-5, \ldots, 2, 1, m-3, m-4, m-5,\\
&&&&&  \ldots, m-i-2, m-2)(m-1). \\
      \end{aligned}
  $ 
\par}
   \smallskip

\noindent Observe that $T(\hat{\mu}_{i}\hat{\pi})=T(\hat{\mu}_{i+1}\hat{\pi}_{m-i-2})=2$. Therefore, the set $D_T$ is comprised of $2t$ truncated hamiltonian pairs.  

\noindent STEP 2. We show that each pair in $D_H$ is a hamiltonian pair. See below:

   \smallskip
{\centering
  $ \displaystyle
    \begin{aligned} 
&\mu_i\, \pi_{m-i-2} &&=&&(1, m-2) \pi_{i}\pi_{m-i-2}=(1, m-2) \pi_{-2}\\
&&&=&&(0, m-2, m-1, m-3, m-5, \ldots, 3, 1, m-4, m-6, \ldots, 2);\\
&\mu_{i+1}\pi_{m-i+1}&&=&&(1, m-2) \pi_{i+1}\pi_{m-i+1}=(1, m-2) \pi_2\\
&&&=&&( 0, 2, 4, 6, \ldots, m-2, 3, 5, 7, \ldots, m-1, 1);\\
&\mu_{0}\pi_{2}&&=&&( 0, 2, 4, 6, \ldots, m-2, 3, 5, 7, \ldots, m-1, 1);\\
&\mu_{m-1}\pi_{m-1}&&=&&(0, m-2, m-1, m-3, m-5, \ldots, 3, 1, m-4, m-6, \ldots, 2\\
      \end{aligned}
  $ 
\par}
   \smallskip 
 
 \noindent Since $T(\mu_i \pi_{m-i-2} )=T(\mu_{i+1}\pi_{m-i+1})=T(\mu_0\pi_2)=1$, it follows that each pair of permutations in $D_H$ is a hamiltonian pair. 
 
Since $D_T$ and $D_H$ are disjoint, the set $D$ is the desired $c$-twined 2-factorization of $\vec{C}_2\wr \overline{K}_m$.

\noindent \underline{Case 2}: $c=m-2$. We will use the following two regular permutation sets of order $m$:

   \smallskip
{\centering
  $ \displaystyle
    \begin{aligned} 
&R_1=(0,m-1) \cdot \Pi_m=\{ \omega_i=(0,m-1) {\pi}_i\ | \ i=0,1, \ldots, m-1\}; &R_2= \Pi_m=\{\pi_0, \pi_1, \ldots, \pi_{m-1}\}. 
      \end{aligned}
  $ 
\par}
   \smallskip 
 
\noindent Permutations $\omega_{m-1}$ and $\pi_0$ are the $(m-1)$-stabilizers of $R_1$ and $R_2$, respectively. Otherwise, if $i \neq m-1$, then $\hat{\omega}_i=\omega_i (m-1, i)$.  If $i \neq 0$, then  $\hat{\pi}_i=\pi_i\, (m-1, i-1)$. Below, we construct two sets of pairs, $D_{T}$ and $D_H$, that contain $c$ pairs and $m-c$ pairs, respectively:

   \smallskip
{\centering
  $ \displaystyle
    \begin{aligned} 
&D_T&&=&&\{(\omega_i, \pi_{m-i-1})\ |\ i \in \mathds{Z}_m/\{0, m-2, m-1 \}\} \cup \{(\omega_{0}, \pi_{1})\}; \\
&D_H&&=&&\{(\omega_{m-1}, \pi_{m-1}), (\omega_{m-2}, \pi_{0})\}.
      \end{aligned}
  $ 
\par}
   \smallskip 
 
\noindent It is not too hard to see that $D=D_T \cup D_H$ is a 2-factorization of $\vec{C}_2\wr \overline{K}_m$. 

\noindent STEP 1. We show that all pairs in $D_T$ are truncated hamiltonian pairs. To do so, we consider two sub-cases. 

\noindent SUB-CASE 1: $(\omega_{0}, \pi_{1})$.  We see that $\hat{\omega}_0=id$ and $T(\hat{\pi}_1)=2$.  This means that $(\omega_{0}, \pi_{1})$ is a truncated hamiltonian pair.

\noindent SUB-CASE 2: $(\omega_i, \pi_{m-i-1})$ where  $i \in \mathds{Z}_m/\{0, m-2, m-1 \}$. Then:

  \smallskip
{\centering
  $ \displaystyle
    \begin{aligned} 
    &\hat{\omega}_{i}\hat{\pi}_{m-i-1}&&=(0,m-1) \,  \pi_{i}\, (m-1, i) \ \pi_{-i-1}\, (m-1, -i-2) \\\
    &&&=(0,m-2, m-3, \ldots, 2,1)(m-1). 
      \end{aligned}
  $ 
\par}
   \smallskip 
 
In conclusion, all pairs in  $D_T$ are truncated hamiltonian pairs. 

\noindent STEP 2. We show that each pair in  $D_H$ is a hamiltonian pair. See below:
 
    \smallskip
{\centering
  $ \displaystyle
    \begin{aligned} 
&\omega_{m-1}\pi_{m-1}&&=\omega_{m-2}\pi_{0}=&&(0,m-3, m-5, m-7, \ldots, 1, m-1, m-2, m-4, \ldots, 2).\\
      \end{aligned}
  $ 
\par}
   \smallskip 

\noindent Therefore, both pairs in $D_H$ are hamiltonian. 

Since $D_H$ and $D_H$ are disjoint, the set $D$ is the desired $(m-2)$-twined 2-factorization. \end{proof}

We are not able to establish the existence of a $c$-twined 2-factorization of  $\vec{C}_2 \wr \overline{K}_m$ when $m$ is even and $c$ is odd. However, in the second case of the proof of Theorem \ref{thm:geneven}, we show that $G \wr H$ is hamiltonian decomposable when $|V(G)|$ and $|V(H)|$ are even, $c$ is odd, and $G$ is not a directed cycle. 

\begin{theorem}
\label{thm:geneven}
Let $G$ and $H$ be hamiltonian decomposable digraphs such that $|V(G)|=n$, where $n$ is even, and $|V(H)|=m$, where $m \geqslant 4$ and $m$ is even. Furthermore, assume that $H$ admits a decomposition into $c$ directed hamiltonian cycles where $2\leqslant c\leqslant m-2$. The digraph $G \wr H$ is hamiltonian decomposable except possibly when $G=\vec{C}_n$ and $c$ is odd. 
\end{theorem}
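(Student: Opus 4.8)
The plan is to split the argument according to the parity of $c$, treating the even case as an immediate consequence of the machinery already assembled and reserving the real work for the odd case.

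When $c$ is even, the conclusion is essentially automatic. Since $m\geqslant 4$ and $c$ are both even with $2\leqslant c\leqslant m-2$, Proposition~\ref{prop:evenx2} supplies a $c$-twined 2-factorization of $\vec{C}_2\wr\overline{K}_m$. Corollary~\ref{cor:main} then yields that $\vec{C}_n\wr H$ is hamiltonian decomposable, and because $|V(H)|=m\geqslant 4>2$, Lemma~\ref{lem:redCn} upgrades this to $G\wr H$ being hamiltonian decomposable. Note this imposes no restriction on $G$, so the exceptional clause is genuinely only about odd $c$.

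The substance is the case $c$ odd with $G\neq\vec{C}_n$. The key observation is that a hamiltonian decomposable digraph of even order $n$ that is not $\vec{C}_n$ must decompose into $d\geqslant 2$ directed hamiltonian cycles, and a quick check forces $n\geqslant 4$ (the only hamiltonian decomposable digraph on two vertices is $\vec{C}_2$). Fix decompositions $G=\vec{C}_n^{(1)}\oplus\cdots\oplus\vec{C}_n^{(d)}$ and $H=\vec{H}^{(1)}\oplus\cdots\oplus\vec{H}^{(c)}$. I would then absorb one hamiltonian cycle of $H$ into one hamiltonian cycle of $G$: let $X$ be the spanning subdigraph of $G\wr H$ whose arcs are all the non-horizontal arcs arising from $\vec{C}_n^{(1)}$ together with the copy of $\vec{H}^{(1)}$ embedded in each $V_i$. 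By construction $X\cong\vec{C}_n^{(1)}\wr\vec{H}^{(1)}\cong\vec{C}_n\wr\vec{C}_m$, which is hamiltonian decomposable by Theorem~\ref{thm:2} (case S1), since $m\geqslant 4$ is even and $n\geqslant 4$. The complementary spanning subdigraph $Y=(G\wr H)\setminus X$ is exactly $G'\wr H'$, where $G'=\vec{C}_n^{(2)}\oplus\cdots\oplus\vec{C}_n^{(d)}$ is hamiltonian decomposable and $H'=\vec{H}^{(2)}\oplus\cdots\oplus\vec{H}^{(c)}$ decomposes into $c-1$ directed hamiltonian cycles.

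The whole point of this split is that $c$ odd (hence $c\geqslant 3$) forces $c-1$ to be even with $2\leqslant c-1\leqslant m-2$, so $Y=G'\wr H'$ falls under the already-settled even case: applying Proposition~\ref{prop:evenx2}, Corollary~\ref{cor:main}, and Lemma~\ref{lem:redCn} to $G'$ and $H'$ shows $Y$ is hamiltonian decomposable. Since $A(X)$ and $A(Y)$ partition $A(G\wr H)$ and both $X$ and $Y$ are spanning and hamiltonian decomposable, the union of their hamiltonian decompositions is a hamiltonian decomposition of $G\wr H$. I expect the main obstacle to be precisely the conceptual leap in the odd case: one should \emph{not} attempt to build an odd $c$-twined 2-factorization directly (which appears not to exist), but instead peel off a single $\vec{C}_n\wr\vec{C}_m$ block to reduce the parity of $c$ by one. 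That maneuver consumes one hamiltonian cycle of $G$, which is exactly why it needs $G\neq\vec{C}_n$: a directed cycle has no spare hamiltonian cycle to pair with $\vec{H}^{(1)}$, leaving no residual $G'$ to carry the even remainder. This is the structural reason behind the ``except possibly'' clause.
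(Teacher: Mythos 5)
Your proposal is correct and takes essentially the same approach as the paper: you split on the parity of $c$, handle even $c$ via Proposition~\ref{prop:evenx2}, Corollary~\ref{cor:main}, and Lemma~\ref{lem:redCn}, and for odd $c$ with $G\neq\vec{C}_n$ you peel off one hamiltonian cycle of $G$ paired with one of $H$ to form a $\vec{C}_n\wr\vec{C}_m$ block (Theorem~\ref{thm:2}, case S1, using $n\geqslant 4$), reducing the remainder to the even case. The only cosmetic difference is bookkeeping: the paper writes the complement as $(\vec{C}_n\wr\Gamma)\oplus(\vec{C}_n\wr\overline{K}_m)\oplus\cdots\oplus(\vec{C}_n\wr\overline{K}_m)$, invoking Lemma~\ref{lem:empty} for the empty-fiber factors, whereas you keep it as the single wreath product $G'\wr H'$ and feed it back through the even case, which is equivalent.
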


\begin{proof}  \underline{Case 1}: $c$ is even.  Proposition \ref{prop:evenx2}, in conjunction with Corollary \ref{cor:main}, implies that $\vec{C}_n \wr H$ is hamiltonian decomposable.  Lemma \ref{lem:redCn} then implies that $G \wr H$ is hamiltonian decomposable. 

\noindent \underline{Case 2}: $c$ is odd and $G \neq \vec{C}_n$. This means that $G$ admits a decomposition into at least two directed hamiltonian cycles. Let $H=\Gamma \oplus \vec{C}_m$ where $\Gamma$ is the digraph obtained from $H$ by removing the arcs of a single directed hamiltonian cycle. Hence, the digraph $\Gamma$ admits a decomposition into $c-1$ directed hamiltonian cycles, where $c-1$ is even. By our assumption, we have that $G=\vec{C}_n \oplus \vec{C}_n\oplus \cdots \oplus \vec{C}_n$. It follows from the properties of the wreath product  that  

    \smallskip
{\centering
  $ \displaystyle
    \begin{aligned} 
G\wr H&=(\vec{C}_n \oplus \vec{C}_n\oplus \cdots \oplus \vec{C}_n)\wr (\Gamma \oplus\vec{C}_m)\\
&=(\vec{C}_n\wr \Gamma)\oplus( \vec{C}_n \wr \vec{C}_m) \oplus (\vec{C}_n\wr\overline{K}_m)\oplus \cdots \oplus (\vec{C}_n\wr\overline{K}_m).
      \end{aligned}
  $ 
\par}
   \smallskip

If $n=2$, then $G=\vec{C}_2$ since $G$ is strict. Since $G \neq \vec{C}_n$, we may assume that $n\geqslant 4$. Theorem \ref{thm:2} implies that $\vec{C}_n \wr \vec{C}_m$ is hamiltonian decomposable when $m \geqslant 4$ while Lemma \ref{lem:empty} implies that $\vec{C}_n \wr \overline{K}_m$ is hamiltonian decomposable for all $m, n \geqslant 2$. Since we established that $\vec{C}_n \wr\Gamma$ is hamiltonian decomposable in Case 1, all spanning subdigraphs in the above decomposition of $G\wr H$ are hamiltonian decomposable and thus, $G\wr H$ is also hamiltonian decomposable. \end{proof}

\section{The case $m$ is odd}
\label{sec:modd}

In this section, we establish the existence of a $c$-twined 2-factorization of $\vec{C}_2\wr \overline{K}_m$ for all odd $m\geqslant 5$ and $3\leqslant c \leqslant m-2$. Note that, if $m=3$, then $H$ must be a directed 3-cycle or the complete symmetric digraph on three vertices and thus, we do not consider the case $m=3$. First, we construct a regular permutation set of order $m$ for all odd $m \geqslant 5$ that we will use in all of our constructions. To construct this set, we first introduce the following set of $m-1$ permutations. 

\begin{definition}
\label{def:gammas} \rm
Let $\gamma_1$ be the permutation $(0,1,\ldots, m-2)(m-1)$ in $S_m$. We call the permutation group $G_{m-1}=\langle \gamma_1 \rangle$ on $\mathds{Z}_m$ the \textit{truncated cyclic group of order $m-1$}. In addition, for any $i \in \mathds{Z}$, we let $\gamma_i= \gamma_1^i$. 
\end{definition}

See \ref{App2} for an example of $G_{m-1}$ when $m=13$. In Lemma \ref{lem:compgam} below, we establish some properties of the truncated cyclic group of order $m-1$ that will be key in our computations.

\begin{lemma} 
\label{lem:compgam}

Let $m$ be an odd integer such that $m\geqslant 5$, and let $G_{m-1}$ be the truncated cyclic group of order $m-1$. Then
\begin{enumerate} [label=\textbf{(S\arabic*)}]
\item $\gamma_{m-i}=\gamma_{-i+1}$ for all $i \in \mathds{Z}$;
\item If $i \in\{\pm 1,\pm 2, \ldots, \pm (m-2)\}$ and $s\in \mathds{Z}_m$, then 

$$
s^{\gamma_i}=
\begin{cases}
s+i, &\textrm{if}\ 0\leqslant s+i<m-1;\\
s+i-(m-1), &\textrm{if}\  s+i\geqslant m-1;\,\\
s+i+(m-1), &\textrm{if}\ s+i<0;\\
m-1, & \textrm{if}\ s=m-1.
\end{cases}
$$
\end{enumerate}
\end{lemma}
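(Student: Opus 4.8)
The plan is to exploit the fact that $\gamma_1=(0,1,\ldots,m-2)(m-1)$ acts on the set $\{0,1,\ldots,m-2\}$ exactly as the cyclic shift $s\mapsto s+1$ taken modulo $m-1$, while fixing the point $m-1$. Consequently the whole group $G_{m-1}=\langle\gamma_1\rangle$ acts on $\{0,\ldots,m-2\}$ as the additive group $\mathds{Z}_{m-1}$, with $m-1$ as a universal fixed point, and both statements then reduce to elementary modular arithmetic.

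For (S1), I would first record that $\gamma_1$ is a single cycle of length $m-1$, so it has order $m-1$ and hence $\gamma_1^{m-1}=id$; this makes $\gamma_i=\gamma_1^i$ depend only on $i \bmod (m-1)$. Since $(m-i)-(-i+1)=m-1\equiv 0\pmod{m-1}$, we obtain $\gamma_{m-i}=\gamma_1^{m-i}=\gamma_1^{-i+1}=\gamma_{-i+1}$ for every $i\in\mathds{Z}$, which is precisely (S1).

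For (S2), I would begin by confirming from the cycle structure of $\gamma_1$ that, for $s\in\{0,\ldots,m-2\}$, one has $s^{\gamma_i}=(s+i)\bmod (m-1)\in\{0,\ldots,m-2\}$ for every integer $i$, whereas $s=m-1$ is fixed by all $\gamma_i$; the latter yields the final case of the displayed formula at once. The substance is to identify which representative in $\{0,\ldots,m-2\}$ the integer $s+i$ reduces to. Here I would use the hypothesis $i\in\{\pm 1,\pm 2,\ldots,\pm(m-2)\}$ together with $0\leqslant s\leqslant m-2$ to bound $s+i$ strictly between $-(m-1)$ and $2(m-1)$. These bounds are the crux: they guarantee that $s+i$ falls into exactly one of the three intervals $[0,m-1)$, $[m-1,2(m-1))$, or $(-(m-1),0)$, so that recovering the residue requires adding or subtracting $(m-1)$ at most once, giving precisely the three stated cases.

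I do not expect a genuine obstacle in this lemma; the only point demanding care is checking that the restriction $|i|\leqslant m-2$ (rather than an arbitrary $i$) is exactly what forces at most a single adjustment by $\pm(m-1)$, so that the piecewise description is simultaneously well-defined and exhaustive. Everything else is routine interval arithmetic.
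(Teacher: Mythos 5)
Your proposal is correct and follows essentially the same route as the paper: for (S1) you use that $\gamma_i$ depends only on $i \bmod (m-1)$ (the paper phrases this as $\gamma_1^m=\gamma_1$, an equivalent restatement of $\gamma_1^{m-1}=id$), and for (S2) you fix $m-1$, compute $s^{\gamma_i}=(s+i) \bmod (m-1)$ on $\{0,\ldots,m-2\}$, and use the bounds $-(m-2)\leqslant s+i\leqslant 2(m-2)$ to conclude that at most one adjustment by $\pm(m-1)$ is needed, exactly as in the paper's argument. Your write-up is if anything slightly more explicit about why the restriction $|i|\leqslant m-2$ makes the three-case description exhaustive, but there is no substantive difference.
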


\begin{proof} \noindent {\bf(S1)} Observe that $\gamma_1^m=\gamma_1$. It follows that  $\gamma_{m-i}=\gamma_1^{m-i}=\gamma_1^{m}\gamma_1^{-i}=\gamma_1^{-i+1}=\gamma_{-i+1}$.

\noindent {\bf(S2)} Clearly, $(m-1)^{\gamma_i}=m-1$. Therefore, we assume that $s \in \{0, \ldots, m-2\}$, and let $t=s^{\gamma_i}$. By definition, $t=s+i \ (\textrm{mod}\ m-1)$. Since $-(m-2) \leqslant s+i\leqslant 2(m-2)$, we know that $t \in \{s+i-(m-1), s+i, s+i+(m-1)\}$ and the result follows.  \end{proof}

In Construction \ref{cons:Fodd} below, we use Definition \ref{def:gammas} to construct a set of $m$ permutations for all odd $m\geqslant 5$.

\begin{construction}
\label{cons:Fodd}
\rm
Let $m=2k+1$ where $k \geqslant 2$. We construct the set of permutations $\mathcal{F}_{m}=\{\sigma_0, \sigma_1, \ldots, \sigma_{m-1}\}$ as follows. If $i \not\in\{0, m-3, m-1\}$, then $\sigma_i$ is defined as described according to the following two cases.\\

\noindent Case 1: $i=2j+1$ and $0\leqslant j \leqslant k-1$: $\sigma_i=\gamma_i (m-1, j+2)$.\\

\noindent Case 2:  $i=2j$ and $1\leqslant j \leqslant k-2$ : $\sigma_i=\gamma_i (m-1, k+j+1)$ .\\

We also let $\sigma_{m-3}=\gamma_{m-3} (m-1, 0)$ and $\sigma_0=id$.  Lastly, we define $\sigma_{m-1}$ as follows. For $a\in \mathds{Z}_m$, we let

\vspace{-0.5cm}
\begin{center}
 $$  a^{\sigma_{m-1}}=\begin{cases}
m-a+1, &\textrm{if}\  3\leqslant a \leqslant k  ;\\
m-a+2, &\textrm{if}\  k+2\leqslant a \leqslant m-2  ;\\
3, &\textrm{if}\ a=0  ;\\
2, & \textrm{if}\ a=1;\\
0,& \textrm{if}\ a=2 ;\\
m-1, &\textrm{if}\ a=k+1;\\
1, &\textrm{if}\ a=m-1.\\
\end{cases} $$
\end{center}

\noindent We note that, if $m \geqslant 7$, then $\sigma_{m-1}$ can be written in cycle notation as follows: $\sigma_{m-1}=(0, 3, m-2, 4, m-3, 5, \ldots, k+3, k, k+2, k+1, m-1, 1, 2)$. If $m=5$, then $\sigma_{4}=(0,3, 4,1,2)$. Hence $\sigma_{m-1}$ is comprised of a single cycle.  \hfill $\square$
\end{construction}

See \ref{App2} for an example of Construction \ref{cons:Fodd} when $m=13$. We make the following remark, which is key to our computations. 

\begin{remark}
\label{rem:ned}
If $\sigma_i \in \mathcal{F}_m$ such that $i\not\in \{0, m-1\}$, then $\hat{\sigma}_i=\gamma_i$. 
\end{remark}

Next, we show that $\mathcal{F}_m$ is a regular permutation set of order $m$.

\begin{lemma}
\label{lem:fodd2}
Let $m\geqslant 5$ be an odd integer. The set of permutation $\mathcal{F}_{m}$ is a regular permutation set of order $m$. 
\end{lemma}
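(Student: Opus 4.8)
The plan is to verify the defining condition of Definition \ref{defn:decof} columnwise. Arrange the permutations of $\mathcal{F}_m$ as the rows of an $m\times m$ array whose $(i,j)$ entry is $j^{\sigma_i}$. Since each $\sigma_i$ is a permutation, every row is already a bijection, so $\mathcal{F}_m$ is a regular permutation set of order $m$ if and only if every \emph{column} is a bijection, i.e. for each fixed $j\in\mathds{Z}_m$ the values $j^{\sigma_0},\ldots,j^{\sigma_{m-1}}$ are pairwise distinct. The key structural fact I would exploit is that, by Remark \ref{rem:ned}, every $\sigma_i$ with $i\notin\{0,m-1\}$ has the form $\sigma_i=\gamma_i\,(m-1,a_i)$ with $a_i=(m-1)^{\sigma_i}$ read directly from Construction \ref{cons:Fodd}. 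Such a $\sigma_i$ agrees with $\gamma_i$ everywhere except that it sends $m-1\mapsto a_i$ and sends the unique $\gamma_i$-preimage of $a_i$ to $m-1$; and $\sigma_0=\gamma_0=id$. I would then split the columns into the single column $j=m-1$ and the remaining columns $j\neq m-1$.

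For the column $j=m-1$, the entries are $(m-1)^{\sigma_0}=m-1$, the values $a_i$ for $i\notin\{0,m-1\}$, and $(m-1)^{\sigma_{m-1}}=1$. Reading the $a_i$ off Construction \ref{cons:Fodd}, the odd indices contribute $\{2,3,\ldots,k+1\}$, the even indices other than $m-3$ contribute $\{k+2,\ldots,m-2\}$, and $\sigma_{m-3}$ contributes $0$; together with $\{1,m-1\}$ these exhaust $\mathds{Z}_m$. This step is a routine tabulation and shows that the column $j=m-1$ is a bijection.

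For a column $j\neq m-1$ I would invoke Lemma \ref{lem:compgam}(S2): on $\{0,\ldots,m-2\}$ the group $G_{m-1}$ acts as translation modulo $m-1$, so as $i$ runs over $\{0,\ldots,m-2\}$ the values $j^{\gamma_i}$ run through $\{0,\ldots,m-2\}$ exactly once. Passing from $\gamma_i$ to $\sigma_i$ changes the entry in row $i$ to $m-1$ precisely for those $i\in\{1,\ldots,m-2\}$ with $j^{\gamma_i}=a_i$, equivalently $j+i\equiv a_i\pmod{m-1}$, and leaves every other entry equal to $j^{\gamma_i}$. Letting $R_j$ denote the set of values $a_i$ thereby vacated, the column contains $m-1$ with multiplicity $|R_j|$ and omits exactly the elements of $R_j$ from $\{0,\ldots,m-2\}$; the final row then adds $j^{\sigma_{m-1}}$. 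Hence the column is a bijection if and only if $R_j=\{j^{\sigma_{m-1}}\}$ when $j^{\sigma_{m-1}}\neq m-1$, and $R_j=\varnothing$ when $j^{\sigma_{m-1}}=m-1$.

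The crux, and the step I expect to be the main obstacle, is to solve the congruence $j+i\equiv a_i\pmod{m-1}$ inside each of the three families of Construction \ref{cons:Fodd} and to match the outcome against the piecewise definition of $\sigma_{m-1}$, which is essentially reverse-engineered for exactly this purpose. Writing $i=2t+1$ (odd), $i=2t$ (even, $i\neq m-3$), and treating $i=m-3$ separately, substitution of the formulas $a_i=t+2$ and $a_i=k+t+1$ reduces the congruence to $t\equiv 1-j$, to $t\equiv k+1-j\pmod{m-1}$, and to $j\equiv 2\pmod{m-1}$ respectively, each admitting at most one admissible $t$ in the stated range. A short count then shows the three resulting families of solvable $j$ are pairwise disjoint, cover $\{0,\ldots,m-2\}\setminus\{k+1\}$, and miss $j=k+1$; and evaluating $a_i$ at the unique solution reproduces exactly $j^{\sigma_{m-1}}$ in every case, while $j=k+1$ is precisely the value with $(k+1)^{\sigma_{m-1}}=m-1$ and $R_{k+1}=\varnothing$. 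This confirms that every column is a bijection, so $\mathcal{F}_m$ is a regular permutation set of order $m$. (The small cases, such as $m=5$ where the even family is empty, are handled by observing that the ranges degenerate gracefully.)
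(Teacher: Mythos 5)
Your proposal is correct and follows essentially the same route as the paper's own proof: both fix an element $j\in\mathds{Z}_m$ and show the column of images $j^{\sigma_0},\ldots,j^{\sigma_{m-1}}$ is a bijection by exploiting that each $\sigma_i$ with $i\notin\{0,m-1\}$ deviates from the translation $\gamma_i$ only by the transposition $(m-1,(m-1)^{\sigma_i})$, so that exactly one value is vacated and $\sigma_{m-1}$ is engineered to supply it (or to supply $m-1$ itself in the single case $j=k+1$, where nothing is vacated). The only difference is bookkeeping: you solve the congruence $j+i\equiv a_i \pmod{m-1}$ within each family of Construction \ref{cons:Fodd} and check that the solvable $j$'s partition $\{0,\ldots,m-2\}\setminus\{k+1\}$, whereas the paper enumerates seven ranges of $j$ and exhibits the exceptional index in each range directly --- the same computation, organized in the transposed order.
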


\begin{proof}
For each $a \in \mathds{Z}_m$, we show that $\{a^{\sigma_i}\ | \ i=0,1,\ldots, m-1\}=\mathds{Z}_m$. We assume that $\gamma_i \in G_{m-1}$ where $\gamma_i$ is defined in Definition \ref{def:gammas}. 

\noindent \underline{Case 1}: $3 \leqslant a\leqslant k$. Using Construction \ref{cons:Fodd}, it can be verified that $a^{\sigma_i}=a^{\gamma_i}$ except if $i=m-1$ or $i=2j$, where $j=k-a+1$. In the first exception, we have that $a^{\sigma_{m-1}}=m-a+1$, and in the second exception, $a^{\sigma_{2j}}=m-1$. Since $\{a^{\gamma_i} \ |\ i=0,1,\ldots, m-2\}=\{0,1,\ldots, m-2\}$ and $a^{\gamma_i}=m-a+1$ for $i=2(k-a+1)$, we conclude that $\{a^{\sigma_i} \ | \ i=0,1,\ldots, m-1\}=\mathds{Z}_m$.

\noindent\underline{Case 2}: $k+2 \leqslant a\leqslant m-2$. Similarly to Case 1,  Construction \ref{cons:Fodd} implies that $a^{\sigma_i}=a^{\gamma_i}$ except if $i=m-1$ or $i=2j+1$, where $j=m-a$. In the first exception, we have that $a^{\sigma_{m-1}}=m-a+2$, and in the second exception, $a^{\sigma_{2j+1}}=m-1$. Since $\{a^{\gamma_i} \ |\ i=0,1,\cdots, m-2\}=\{0,1,\ldots, m-2\}$ and $a^{\gamma_i}=m-a+2$ for $i=2(m-a)+1$, we conclude that $\{a^{\sigma_i} \ | \ i=0,1,\ldots, m-1\}=\mathds{Z}_m$. 

\noindent \underline{Case 3}: $a=0$. Using Construction \ref{cons:Fodd}, it can be verified that $0^{\sigma_i}=0^{\gamma_i}$ except if $i =3$ or $i=m-1$.  In the first exception, we see that $0^{\sigma_3}=m-1$ and in the second exception $0^{\sigma_{m-1}}=3$. 

\noindent \underline{Case 4}:  $a=1$.  Similarly it can be verified that $1^{\sigma_i}=1^{\gamma_i}$ except if $i =1$ or $i=m-1$.  In the first exception, we see that $1^{\sigma_1}=m-1$ and in the second exception $1^{\sigma_{m-1}}=2$. 

\noindent \underline{Case 5}: $a=2$. It can be verified that $2^{\sigma_i}=2^{\gamma_i}$ except if $i =m-3$ or $i=m-1$.  In the first exception, we see that $2^{\sigma_{m-3}}=m-1$ and in the second exception $2^{\sigma_{m-1}}=0$. 

\noindent \underline{Case 6}:  $a=k+1$.  It can be verified that $(k+1)^{\sigma_i}=(k+1)^{\gamma_i}$ except if $i=m-1$.  In this one exception, we see that $(k+1)^{\sigma_{m-1}}=m-1$.

\noindent \underline{Case 7}: $a=m-1$. We see that, if $i=2j$, where $1\leqslant j\leqslant k-2$,  Construction \ref{cons:Fodd} implies that $(m-1)^{\sigma_i}=k+j+1$. Furthermore, if $i=2j+1$, where $0\leqslant j\leqslant k-1$,  then Construction \ref{cons:Fodd} implies that $(m-1)^{\sigma_i}=j+2$. Lastly, we have $(m-1)^{\sigma_{m-3}}=0$, $(m-1)^{\sigma_0}=m-1$, and  $(m-1)^{\sigma_{m-1}}=1$.  In conclusion, we see that $\{a^{\sigma_i} \ | \ i=0,1,\cdots, m-1\}=\mathds{Z}_m$. \end{proof}

To construct the desired $c$-twined 2-factorizations of $\vec{C}_2 \wr \overline{K}_m$, we will use permutations of $\mathcal{F}_m$ of the form $\sigma_{m-i+t}$ where $t \in \{0, 1, \pm 2, -3, -4\}$. To facilitate some of our computations, we will express $\sigma_{m-i+t}$  as a product of specific permutations.

\begin{lemma}
\label{lem:comput}
Let $m=2k+1$ where $k \geqslant 2$, $t \in \{0, 1, \pm 2, -3, -4\}$, and $\sigma_i \in \mathcal{F}_m$, where $i \in \mathds{Z}_m$ such that $i \not\in \{ 0, m-1\}$. If $m-i+t\not\in \{0, m-3, m-1\}$, then we express $\sigma_{m-i+t}$ as a product of certain permutations described in Table \ref{tab:2}. 
\end{lemma}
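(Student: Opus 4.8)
The plan is to treat the lemma as a structured computation: the hypothesis $m-i+t\notin\{0,m-3,m-1\}$ is precisely what guarantees that $\sigma_{m-i+t}$ is a \emph{generic} permutation of $\mathcal{F}_m$, namely one produced by Case 1 or Case 2 of Construction \ref{cons:Fodd}, rather than one of the exceptional permutations $\sigma_0=id$, $\sigma_{m-3}=\gamma_{m-3}(m-1,0)$, or the irregular $\sigma_{m-1}$. For such a generic index, every $\sigma_{m-i+t}$ factors uniformly as a power of $\gamma_1$ composed with a single transposition that moves $m-1$, and the whole task reduces to pinning down these two factors for each admissible $t$.

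The cleanest route I would take starts from the truncation identity. Since the transposition in Definition \ref{defn:trunc} is an involution, $\hat\sigma=\sigma\,(m-1,(m-1)^\sigma)$ rearranges to $\sigma=\hat\sigma\,(m-1,(m-1)^\sigma)$, so that
\[
\sigma_{m-i+t}=\hat\sigma_{m-i+t}\,\bigl(m-1,\,(m-1)^{\sigma_{m-i+t}}\bigr).
\]
Because $m-i+t\notin\{0,m-1\}$, Remark \ref{rem:ned} gives $\hat\sigma_{m-i+t}=\gamma_{m-i+t}$, and part (S1) of Lemma \ref{lem:compgam}, applied with index $i-t$, collapses this to $\gamma_{m-i+t}=\gamma_{t-i+1}$. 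This already isolates the cyclic factor as $\gamma_{t-i+1}$, the compact index that makes later products of these permutations tractable; all that remains is to evaluate the moved point $(m-1)^{\sigma_{m-i+t}}$.

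To evaluate that moved point I would reduce $m-i+t$ modulo $m$ to a residue $\ell\in\{0,\ldots,m-1\}$ and read off its value from the computation of $(m-1)^{\sigma_\ell}$ already carried out in Case 7 of the proof of Lemma \ref{lem:fodd2}: if $\ell=2j+1$ then $(m-1)^{\sigma_\ell}=j+2$, while if $\ell=2j$ then $(m-1)^{\sigma_\ell}=k+j+1$. Substituting each of the six values $t\in\{0,1,\pm2,-3,-4\}$ and simplifying with $m=2k+1$ then produces, row by row, the factorisations recorded in Table \ref{tab:2}.

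The step I expect to be the main obstacle is not conceptual but a matter of careful bookkeeping in this last evaluation. Since $m$ is odd, reducing $m-i+t$ modulo $m$ can flip its parity near the wrap-around (for instance $m-i+t=m+1$, which is even, reduces to $1$, which is odd), so the determination of whether $\ell$ is even or odd, and hence which of the two formulas for $(m-1)^{\sigma_\ell}$ applies, must be made on the reduced residue rather than on the integer $m-i+t$. I would therefore split according to the parity of $i$ and, for each value of $t$, verify both that the induced residue avoids the excluded set $\{0,m-3,m-1\}$ under the standing hypothesis and that the resulting index $j$ lands in the admissible range of its case ($0\leqslant j\leqslant k-1$ for Case 1, $1\leqslant j\leqslant k-2$ for Case 2). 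Managing these parity and range checks consistently across the boundary values of $i$ is where the care is required, but no difficulty beyond careful arithmetic should arise.
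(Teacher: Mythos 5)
Your proposal is correct and takes essentially the same route as the paper: the paper likewise splits on the parity of $i$, writes $m-i+t$ as $2j'$ or $2j'+1$, reads the factorization $\sigma_{m-i+t}=\gamma_{m-i+t}\,(m-1,\cdot)$ off Construction \ref{cons:Fodd}, and converts the index via Lemma \ref{lem:compgam}(S1), with the same substitution and range bookkeeping over the six values of $t$. Your detour through the truncation identity $\sigma_\ell=\hat{\sigma}_\ell\,(m-1,(m-1)^{\sigma_\ell})$ together with Remark \ref{rem:ned} and Case 7 of Lemma \ref{lem:fodd2} is only a repackaging of citing the construction directly, and your explicit insistence that parity and the excluded set be checked on the residue of $m-i+t$ modulo $m$ is, if anything, more careful than the paper's proof, which treats $m-i+t$ as a non-wrapping integer index throughout.
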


\begin{table} [htpb!]
\begin{center}
\begin{tabular}{ |l|l|l| } 
 \hline
 & $i=2j+1$& $i=2j$ \\ \hline
 Case 1: $\sigma_{m-i+2}$ & $\gamma_{-i+3}(m-1, m-j+1)$ &$\gamma_{-i+3}(m-1, k-j+3)$  \\  \hline
Case 2: $\sigma_{m-i+1}$ & $\gamma_{-i+2}(m-1, k-j+2)$ &$\gamma_{-i+2}(m-1, m-j+1)$ \\  \hline
Case 3: $\sigma_{m-i}$ & $\gamma_{-i+1}(m-1, m-j)$ &$\gamma_{-i+1}(m-1, k-j+2)$ \\  \hline
Case 4: $\sigma_{m-i-2}$ & $\gamma_{-i-1}(m-1, m-j-1)$ &$\gamma_{-i-1}(m-1, k-j+1)$  \\  \hline
Case 5: $\sigma_{m-i-3}$ & $\gamma_{-i-2}(m-1, k-j)$ &$\gamma_{-i-2}(m-1, m-j-1)$   \\  \hline
Case 6: $\sigma_{m-i-4}$ & $\gamma_{-i-3}(m-1, m-j-2)$ &$\gamma_{-i-3}(m-1, k-j)$ \\  \hline
\end{tabular}
\end{center}
\caption{Computation of $\sigma_{m-i+t}$ for $t \in \{0, 1, \pm 2, -3, -4\}$.}
\label{tab:2}
\end{table}

\begin{proof} Recall that $m=2k+1$, where $k \geqslant 2$. For Case 1, we consider two sub-cases.

\noindent SUB-CASE 1.1: $i=2j+1$. Lemma \ref{lem:compgam} implies that $\gamma_{m-i+1}=\gamma_{-i+2}$. We see that $m-i+2=2k+1-(2j+1)+2=2(k-j+1)$. Since $2(k-j+1)$ is even, and  $m-i+2 \not\in\{0, m-3, m-1\}$, it follows from Construction \ref{cons:Fodd} that:

   \smallskip
{\centering
  $ \displaystyle
    \begin{aligned} 
&\sigma_{m-i+1}&&=&&\sigma_{2(k-j+1)}=\gamma_{m-i+1}(m-1, k+(k-j+1)+1)\\
&&&=&&\gamma_{-i+2}(m-1, 2k+1-j+1)=\gamma_{-i+2}(m-1, m-j+1).
      \end{aligned}
  $ 
\par}
   \bigskip

\noindent SUB-CASE 1.2: $i=2j$. We see that $m-i+2=2k+1-(2j)+2=2(k-j+1)+1$. Since $2(k-j+1)+1$ is odd,  $m-i+2 \not\in\{0, m-3, m-1\}$,  it follows from Construction \ref{cons:Fodd} that:

\begin{center}
$\sigma_{2(k-j+1)+1}=\gamma_{m-i+1}(m-1, (k-j+1)+2)=\gamma_{-i+2}(m-1, k-j+3)$.
\end{center}

A similar reasoning applies to the remaining five cases. \end{proof}

\begin{theorem}
\label{thm:oddmc}
Let $G$ and $H$ be hamiltonian decomposable digraphs such that  $|V(H)|=m$ is odd and $m\geqslant 5$. Furthermore, assume that $H$  admits a decomposition into $c$ directed hamiltonian cycles where $3 \leqslant c \leqslant m-2$. The digraph $G \wr H$ is hamiltonian decomposable.
\end{theorem}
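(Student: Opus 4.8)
The plan is to reduce Theorem~\ref{thm:oddmc} to the single combinatorial task of exhibiting, for every odd $m \geqslant 5$ and every $c$ with $3 \leqslant c \leqslant m-2$, a $c$-twined 2-factorization of $\vec{C}_2 \wr \overline{K}_m$. Once such a 2-factorization is in hand, Corollary~\ref{cor:main} upgrades it to hamiltonian decomposability of $\vec{C}_n \wr H$ for all even $n$, and Lemma~\ref{lem:redCn} then removes the hypothesis that $G$ is a directed cycle, yielding $G \wr H$ hamiltonian decomposable for an arbitrary hamiltonian decomposable $G$. Note that Theorem~\ref{thm:ng} already handles $G$ of odd order, so I would observe that it suffices to treat $G = \vec{C}_n$ with $n$ even (the odd-order case being subsumed), exactly the regime to which Corollary~\ref{cor:main} applies. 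The entire substance of the argument therefore lives in the construction of $c$-twined 2-factorizations for odd $m$.

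To build these, I would take as the common backbone the regular permutation set $\mathcal{F}_m = \{\sigma_0, \ldots, \sigma_{m-1}\}$ of Construction~\ref{cons:Fodd}, which Lemma~\ref{lem:fodd2} confirms is a regular permutation set of order $m$. The key structural fact is Remark~\ref{rem:ned}: for $i \notin \{0, m-1\}$ the truncation $\hat{\sigma}_i$ equals $\gamma_i$, an element of the truncated cyclic group $G_{m-1}$, so truncated products collapse to clean products of $\gamma$'s whose cycle structure is governed by Lemma~\ref{lem:compgam}. I would form the 2-factorization $D \subset \mathcal{F}_m \times \mathcal{F}_m$ by pairing each $\sigma_i$ (as first coordinate) with a second coordinate of the form $\sigma_{m-i+t}$ for suitable shifts $t \in \{0, 1, \pm 2, -3, -4\}$; Lemma~\ref{lem:comput} and Table~\ref{tab:2} give the explicit factorizations needed to compute these products. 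For a pairing to land in $D_H$ (a hamiltonian pair) I need $T(\sigma_i \sigma_{m-i+t}) = 1$, and for $D_T$ (a truncated hamiltonian pair) I need no $(m-1)$-stabilizer in the pair together with $T(\hat{\sigma}_i \hat{\sigma}_{m-i+t}) = T(\gamma_i \gamma_{m-i+t}) = 2$.

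The construction must be organized so that each permutation of $\mathcal{F}_m$ appears exactly once as a first coordinate and exactly once as a second coordinate (guaranteeing $D$ is a genuine 2-factorization and that $D_T, D_H$ partition it), while the number of truncated hamiltonian pairs equals the prescribed $c$. Since $c$ ranges over all of $3 \leqslant c \leqslant m-2$ and $m$ is odd, I expect to split into residue classes of $m$ modulo $6$ (this is signalled by the proof-outline reference to Propositions~\ref{prop:m1even}--\ref{prop:5mod6}) and, within each class, to tune the shift pattern so that the count of truncated pairs can be dialed to any admissible $c$. The routine-but-voluminous heart is verifying the two cycle-count conditions $T(\gamma_i \gamma_{m-i+t}) = 2$ and $T(\sigma_i \sigma_{m-i+t}) = 1$ for each family of pairs; these reduce, via Lemma~\ref{lem:compgam}, to checking that certain shift-and-transposition products trace out the right number of cycles on $\mathds{Z}_m$.

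The main obstacle I anticipate is precisely the bookkeeping of parity and coverage: ensuring a single uniform family of shifts realizes every value of $c$ in the range, that the second coordinates $\sigma_{m-i+t}$ hit each element of $\mathcal{F}_m$ exactly once with no collisions, and that the exceptional indices $i \in \{0, m-3, m-1\}$ (where $\sigma_i$ departs from the generic $\gamma_i(m-1, \ast)$ form, and where Remark~\ref{rem:ned} does not apply to $\sigma_0$ or $\sigma_{m-1}$) are absorbed into the construction without breaking either the regularity of the coordinate projections or the cycle counts. Handling these boundary permutations, together with making the whole scheme work simultaneously across the residue classes of $m \bmod 6$, is where the real care is required; the individual cycle-structure computations, while lengthy, are mechanical once the factorizations of Table~\ref{tab:2} are invoked.
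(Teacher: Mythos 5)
Your reduction step is exactly the paper's (Corollary~\ref{cor:main} plus Lemma~\ref{lem:redCn}, with Theorem~\ref{thm:ng} disposing of odd $|V(G)|$), but the constructive core of your plan fails for odd $c$, which is the harder half of the theorem. There is a parity obstruction you have not noticed. Suppose $D$ is a $c$-twined 2-factorization of $\vec{C}_2\wr\overline{K}_m$ built from regular permutation sets $R_1$ and $R_2$. Since each element of $R_1$ and $R_2$ occurs in exactly one pair, $\prod_{(\sigma,\tau)\in D}\mathrm{sgn}(\sigma\tau)=\bigl(\prod_{\sigma\in R_1}\mathrm{sgn}(\sigma)\bigr)\bigl(\prod_{\tau\in R_2}\mathrm{sgn}(\tau)\bigr)$ is determined by $R_1$ and $R_2$ alone. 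For a hamiltonian pair, $\sigma\tau$ is an $m$-cycle, hence an even permutation because $m$ is odd. For a truncated hamiltonian pair, $\hat{\sigma}=\sigma\,(m-1,(m-1)^\sigma)$ and $\hat{\tau}=\tau\,(m-1,(m-1)^\tau)$, so $\mathrm{sgn}(\sigma\tau)=\mathrm{sgn}(\hat{\sigma}\hat{\tau})$, and $\hat{\sigma}\hat{\tau}$ is an $(m-1)$-cycle together with the fixed point $m-1$, hence an odd permutation. Therefore $(-1)^c=\bigl(\prod_{R_1}\mathrm{sgn}\bigr)\bigl(\prod_{R_2}\mathrm{sgn}\bigr)$, and your choice $R_1=R_2=\mathcal{F}_m$ forces the right-hand side to be $+1$, i.e.~$c$ even. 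No family of pairs drawn from $\mathcal{F}_m\times\mathcal{F}_m$, however the shifts $t$ are tuned, can yield an odd number of truncated hamiltonian pairs. This is precisely why the paper's odd-$c$ constructions replace $R_1$ (or $R_2$) by $\mu\cdot\mathcal{F}_m$ for an \emph{odd} permutation $\mu$ --- the transpositions $(1,m-1)$ and $(0,m-1)$ in Propositions~\ref{prop:1mod4<} and~\ref{prop:3mod4}, the 4-cycles $(m-1,1,2,3)$ and $(1,2,3,4)$ in Propositions~\ref{prop:1mod6}--\ref{prop:5mod6}, and the 6-cycle in Lemma~\ref{lem:m11} --- while even the even-$c$ cases with $m\not\equiv 1 \ (\textrm{mod}\ 4)$ use $\gamma_1\cdot\mathcal{F}_m$ and $\gamma_{-1}\cdot\mathcal{F}_m$ rather than $\mathcal{F}_m$ itself.

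A second missing piece is coverage for odd $c$: the paper has no single construction realizing all of $3\leqslant c\leqslant m-2$. It uses one regime of constructions for small $c$ (Propositions~\ref{prop:1mod4<} and~\ref{prop:3mod4}, reaching up to roughly $m/2$) and a different one for large $c$ (Propositions~\ref{prop:1mod6}--\ref{prop:5mod6}, starting at roughly $m/3$), and the proof of Theorem~\ref{thm:oddmc} must verify case by case that the two ranges overlap for every residue class --- they fail to do so exactly at $m=11$, $c\in\{5,7\}$, which is patched by the ad hoc Lemma~\ref{lem:m11}. Your proposed split by $m$ modulo $6$ alone also does not match the actual case structure (the even-$c$ cases split modulo $4$ and $12$, and the odd-$c$ cases need the joint residues modulo $4$, $6$, and $8$). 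These are not absorbable ``bookkeeping'' items: the choice of the auxiliary odd permutation $\mu$, the two overlapping $c$-ranges, and the $m=11$ repair are where the proof actually lives, and your outline as written would stall at the parity obstruction before reaching any of them.
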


\begin{proof} We partition our constructions into eleven cases. Each case is described in Table \ref{tab:1} where we refer the reader to the appropriate result(s) that imply that $\vec{C}_2 \wr \overline{K}_m$ admits a $c$-twined 2-factorization for all corresponding $c$ and $m$ values. 

\begin{table} [htpb!]
\begin{center}
\begin{tabular}{ |l|c|p{5cm}| } 
 \hline
 Conditions on $m$ & $c$& Reference \\ \hline
$m \equiv 1 \ (\textrm{mod} \ 4)$& even & Proposition \ref{prop:m1even}\\  \hline
$m \equiv 7,11 \ (\textrm{mod} \ 12)$ & even & Proposition \ref{prop:m7even}\\  \hline
$m \equiv 3 \ (\textrm{mod} \ 12)$ & even & Proposition \ref{prop:m3even}\\  \hline
$m \equiv 1\ (\textrm{mod}\ 6)$ and  $m \equiv 1 \ (\textrm{mod}\ 4)$ & odd & Propositions \ref{prop:1mod4<} and  \ref{prop:1mod6}\\  \hline
$m \equiv 1\ (\textrm{mod}\ 6)$ and $m \equiv 3 \ (\textrm{mod}\ 4)$ & odd & Propositions \ref{prop:3mod4} and \ref{prop:1mod6} \\  \hline
$m \equiv 3\ (\textrm{mod}\ 6)$ and $m \equiv 1 \ (\textrm{mod}\ 4)$ & odd & Propositions \ref{prop:1mod4<} and  \ref{prop:m3cong6}\\  \hline
$m \equiv 3\ (\textrm{mod}\ 6)$ and $m \equiv 3 \ (\textrm{mod}\ 4)$& odd & Propositions \ref{prop:3mod4} and \ref{prop:m3cong6} \\  \hline
$m \equiv 5\ (\textrm{mod}\ 6)$ and $m \equiv 1 \ (\textrm{mod}\ 4)$ & odd & Propositions \ref{prop:1mod4<} and  \ref{prop:5mod6}\\  \hline
$m \equiv 5\ (\textrm{mod}\ 6)$ and $m \equiv 3 \ (\textrm{mod}\ 4)$ & odd & Propositions \ref{prop:3mod4} and \ref{prop:5mod6} and Lemma \ref{lem:m11}\\  \hline
\end{tabular}
\end{center}
\caption{A table of all cases considered in the proof of Theorem \ref{thm:oddmc}.}
\label{tab:1}
\end{table}

It is easy to see that Propositions \ref{prop:m1even}- \ref{prop:m3even} jointly imply the existence of a $c$-twined 2-factorization of $\vec{C}_2\wr \overline{K}_m$ for all even $c$ such that $2\leqslant c\leqslant m-3$ and odd $m \geqslant 5$.  

For $c$ is odd, an additional step is required to verify that the given results apply to all applicable $c$ and $m$ values. As an example, we will consider the case $m \equiv 1\ (\textrm{mod}\ 6)$ and $m \equiv 3 \ (\textrm{mod}\ 4)$. The digraph $\vec{C}_2 \wr \overline{K}_m$ admits a $c$-twined 2-factorization if $ c\leqslant \frac{m-5}{2}$ or $c\geqslant \frac{m+2}{3}$ by Propositions \ref{prop:3mod4} and \ref{prop:1mod6}, respectively. If $\frac{m-5}{2} <c< \frac{m+2}{3}$, then $m=7$ and $1<c<3$, thus contradicting the assumption that $c$ is odd.  

A similar reasoning applies to all cases where $c$ is odd. Therefore, we have a $c$-twined 2-factorization of $\vec{C}_2\wr \overline{K}_m$ for all odd $c$ such that $3\leqslant c\leqslant m-2$. 

In conclusion, Corollary \ref{cor:main} implies that  $\vec{C}_n \wr H$ is hamiltonian decomposable. Lemma \ref{lem:redCn} then implies that $G \wr H$ is hamiltonian decomposable.  \end{proof}

\subsection{The sub-case $c$ is even}

In this subsection, we construct a $c$-twined 2-factorization of $\vec{C}_2\wr \overline{K}_m$ for all odd $m \geqslant 5$ and all even $c$ such that $2\leqslant c \leqslant m-3$.  We begin with Proposition \ref{prop:m1even}, stated below,  which addresses the case $m \equiv 1\ (\textrm{mod} \ 4)$.

\begin{proposition}
\label{prop:m1even}
Let $m \equiv 1\ (\textrm{mod} \ 4)$ such that $m \geqslant 5$, and $c$ be an even integer such that $2 \leq c \leq m-3$. The digraph $\vec{C}_2 \wr \overline{K}_m$ admits a $c$-twined 2-factorization. 
\end{proposition}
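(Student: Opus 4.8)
The plan is to construct two explicit regular permutation sets of order $m$ and then pair their elements into $c$ truncated hamiltonian pairs and $m-c$ hamiltonian pairs, following exactly the template described in the outline preceding Proposition \ref{prop:evenx2}. Since $m \equiv 1 \pmod 4$ and we need the construction to work for all even $c$ with $2 \leqslant c \leqslant m-3$, I would take $R_2 = \mathcal{F}_m$ (the regular permutation set from Construction \ref{cons:Fodd}, which we know is regular by Lemma \ref{lem:fodd2}), and take $R_1 = \Pi_m$ or some coset $\mu \cdot \Pi_m$; by Lemma \ref{lem:dec} any such coset is again regular of order $m$. The set $D$ will be a subset of $R_1 \times R_2$ in which each element of $R_1$ appears exactly once as a first coordinate and each element of $R_2$ appears exactly once as a second coordinate, so that $D$ is automatically a $2$-factorization by Definition \ref{defn:decof}.

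First I would fix the pairing. Writing $c = 2t$, I would designate $t$ ``blocks'' of index pairs $\{\sigma_i, \sigma_{i+1}\}$ (or indices of the form $\sigma_{m-i+t}$ as anticipated in Lemma \ref{lem:comput}) to sit in $D_T$ and the remaining indices to sit in $D_H$, arranging the second-coordinate permutations $\pi_{?}$ from $\Pi_m$ so that within $D_T$ the pairing forces two cycles after truncation, while within $D_H$ the pairing forces a single cycle. The key computational device is Remark \ref{rem:ned}, which tells us that $\hat{\sigma}_i = \gamma_i$ for $i \notin \{0, m-1\}$, so that a truncated product $\hat{\sigma}_{m-i+t}\hat{\pi}_j$ reduces to a product $\gamma_{m-i+t}\,\hat{\pi}_j$ of a truncated-cyclic element with a (near-)power of the basic $m$-cycle. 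The arithmetic of such products is governed entirely by Lemmas \ref{lem:compgam} and \ref{lem:comput}, which express $\sigma_{m-i+t}$ in the convenient form $\gamma_{-i+s}(m-1,\ast)$.

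The two verifications are then: (i) for each pair placed in $D_T$, compute the product of the two truncations and check via the explicit cycle arithmetic that $T = 2$, i.e.\ that the product splits into exactly two cycles (one of which, as in Lemma \ref{lem:central}, should be the fixed point $m-1$ paired off so that the remaining $(m-1)$ elements form a single long cycle); and (ii) for each pair placed in $D_H$, compute the ordinary (untruncated) product and check $T = 1$, a single $m$-cycle. Because $R_2 = \mathcal{F}_m$ has a genuinely complicated permutation $\sigma_{m-1}$ and the special indices $\{0, m-3, m-1\}$ behave exceptionally in Construction \ref{cons:Fodd}, these boundary pairs will need to be handled separately from the generic ``Case $i = 2j+1$ / $i = 2j$'' computation.

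The main obstacle I anticipate is precisely the bookkeeping that guarantees surjectivity of both coordinate projections \emph{simultaneously} across the entire range $2 \leqslant c \leqslant m-3$ as $t$ varies. Unlike the even-$m$ situation in Proposition \ref{prop:evenx2}, where $\Pi_m$ is a clean abelian group and truncations of $\pi_i$ are uniform, here the irregular permutation $\sigma_{m-1}$ and the three exceptional indices mean that the ``block'' assignment determining $D_T$ versus $D_H$ cannot be fully uniform: some small number of boundary blocks must be treated by hand, and one must check that no index of $\mathcal{F}_m$ is used twice or left out as $t$ ranges. Concretely, I expect the proof to proceed by writing the generic pairs using Lemma \ref{lem:comput} (so the $T=2$ and $T=1$ checks become mechanical cycle-tracing via Lemma \ref{lem:compgam}), and then to isolate one or two ``seam'' pairs involving $\sigma_0$, $\sigma_{m-3}$, or $\sigma_{m-1}$ whose products must be computed directly in cycle notation, exactly as the boundary pairs $(\mu_0,\pi_2)$ and $(\mu_{m-1},\pi_{m-1})$ were handled in Case 1 of Proposition \ref{prop:evenx2}.
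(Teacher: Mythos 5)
Your high-level template (two regular permutation sets, a tunable subset $M_t$ controlling how many pairs land in $D_T$, truncation computations for $T=2$, explicit cycle tracing for $T=1$, special treatment of seam pairs) does match the paper's announced strategy, but your concrete choice of regular sets is a genuine gap, not a cosmetic difference. The paper's proof of Proposition \ref{prop:m1even} takes $R_1 = R_2 = \mathcal{F}_m$, and this is the load-bearing decision: by Remark \ref{rem:ned}, every non-exceptional $\sigma_i \in \mathcal{F}_m$ truncates to $\gamma_i$, so every truncated product in $D_T$ collapses into the abelian group $G_{m-1}$, where $\hat{\sigma}_i\hat{\sigma}_j$ is computed by adding exponents (Lemma \ref{lem:compgam}), and the pairing is engineered so that each such product is exactly $\gamma_1$ or $\gamma_{-1}$, for which $T=2$ is immediate. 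With your $R_1 = \Pi_m$ (or a coset $\mu\cdot\Pi_m$), this collapse is unavailable: the truncation of $\pi_a$ is $\pi_a\,(m-1, a-1)$, which is in general \emph{not} an element of $G_{m-1}$ --- for instance, with $m=5$ one computes $\hat{\pi}_2 = (0,2,1,3)(4)$, which is not among $\gamma_1, \gamma_2, \gamma_3$. Consequently the mixed truncated products $\hat{\pi}_a\hat{\sigma}_b$ lie in no tractable subgroup, and your claim that their arithmetic ``is governed entirely by Lemmas \ref{lem:compgam} and \ref{lem:comput}'' is false: both lemmas concern only the $\gamma_i$ and elements of $\mathcal{F}_m$, and say nothing about products involving powers of the full $m$-cycle, whose arithmetic runs modulo $m$ rather than modulo $m-1$. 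The same objection applies to the untruncated products $\pi_a\sigma_b$ needed for the $D_H$ verification.

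The second gap is that you never exhibit the pairing, which is the entire content of the proposition. One must produce, for \emph{every} even $c$ with $2 \leqslant c \leqslant m-3$, an explicit bijective pairing in which exactly $c$ designated pairs are truncated hamiltonian; the paper achieves this with two base truncated pairs $(\sigma_{m-2},\sigma_2), (\sigma_{m-3},\sigma_3)$, toggled families $(\sigma_i,\sigma_{m-i-2}), (\sigma_{i+3},\sigma_{m-i-3})$ for $i \equiv 1 \pmod 4$ and $(\sigma_i,\sigma_{m-i}), (\sigma_{i+1},\sigma_{m-i-3})$ for $i \equiv 2 \pmod 4$ (the same indices re-paired the other way yield hamiltonian pairs, which is what makes $|D_T|$ tunable via $M_t$), together with the anchors $(\sigma_0,\sigma_{m-1}), (\sigma_{m-1},\sigma_0), (\sigma_{m-4},\sigma_1)$ in $D_H$, followed by five cases of explicit $m$-cycle tracing using Lemma \ref{lem:comput}. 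You correctly anticipate the shape of the difficulty --- the tunable block structure, the exceptional indices $\{0, m-3, m-1\}$ of Construction \ref{cons:Fodd}, the need for hand-checked seam pairs --- but you supply neither the pairing nor any of the cycle-count verifications, and your choice of $R_1$ makes the $D_T$ verification strictly harder than in the paper rather than mechanical. As written, the proposal is a plan whose central computational claim fails, so it does not establish the proposition.
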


\begin{proof}
Let $m=2k+1$ where $k$ is even. For each $i \in \mathds{Z}_{m-1}$, $\gamma_i \in G_{m-1}$, where $G_{m-1}$ is defined in Definition \ref{def:gammas}. We will construct a $c$-twined 2-factorization of  $\vec{C}_2 \wr \overline{K}_m$ by using the following regular permutation sets of order $m$:

   \smallskip
{\centering
  $ \displaystyle
    \begin{aligned} 
&R_1=\mathcal{F}_m=\{\sigma_0, \sigma_1, \ldots, \sigma_{m-1}\};&&&& R_2=\mathcal{F}_m=\{\sigma_0, \sigma_1, \ldots, \sigma_{m-1}\}.
      \end{aligned}
  $ 
\par}
   \smallskip 

\noindent In each case, the $m$ pairs constructed are elements of $R_1\times R_2$.  We point out that $\sigma_0$ is the $(m-1)$-stabilizer of $R_1$ and $R_2$. 

Let $c=2+2t$, where $0 \leqslant t \leqslant \frac{m-7}{2}$, $\mathds{I}=\{i \ | \ i\equiv 1,2 \ (\textrm{mod}\ 4), 1\leqslant i \leqslant m-7\}$, and $M_t$ be a subset of order $t$ of  $\mathds{I}$. Note that $\mathds{I}$ contains $\frac{m-7}{2}$ elements.  Below, we construct two sets of pairs, $D_{T}$ and $D_H$, that contain $c$ pairs and $m-c$ pairs, respectively:

\smallskip
{\centering
  $ \displaystyle
    \begin{aligned} 
&D_{T}&&=&& \{(\sigma_{m-2}, \sigma_2), (\sigma_{m-3}, \sigma_3)\} \cup \{(\sigma_i, \sigma_{m-i-2}), (\sigma_{i+3}, \sigma_{m-i-3}) \ |\  i\equiv 1 \ (\textrm{mod}\ 4)\ \textrm{and}\ i \in M_t\}\cup \\
&&&&& \{(\sigma_i, \sigma_{m-i}), (\sigma_{i+1}, \sigma_{m-i-3}) \ | \  i\equiv 2 \ (\textrm{mod}\ 4)\ \textrm{and}\ i \in M_t\}; \\
&D_{H}&&=&&\{(\sigma_0, \sigma_{m-1}), (\sigma_{m-1}, \sigma_0), (\sigma_{m-4}, \sigma_1)\} \cup\\
&&&&&\{(\sigma_i, \sigma_{m-i-3}), (\sigma_{i+3}, \sigma_{m-i-2} ) \ | \ i \equiv 1\ (\textrm{mod}\ 4)\ \textrm{and}\  i \in \mathds{I}/M_t\}\cup \\
&&&&&  \{(\sigma_i, \sigma_{m-i-3}), (\sigma_{i+1}, \sigma_{m-i} ) \ | \ i \equiv 2\ (\textrm{mod}\ 4)\ \textrm{and}\ i \in \mathds{I} / M_t\}.
      \end{aligned}
  $ 
\par}
   \bigskip
   
\noindent Observe that $D_T$ contains precisely $2t+2$ pairs of permutations. Let $D=D_T\cup D_H$. Each permutation in $R_1$ appears exactly once as the first permutation of a pair in $D$. Similarly, each permutation in $R_2$ appears exactly once as the second permutation of a pair in $D$. Therefore, the set $D$ is a 2-factorization of $\vec{C}_2\wr\overline{K}_m$. 

Next, to show that $D$ is a $c$-twined 2-factorization of $\vec{C}_2\wr\overline{K}_m$, we must now show that all pairs in $D_T$ and $D_H$ are truncated hamiltonian pairs and hamiltonian pairs, respectively. We do so in two steps. 

\noindent STEP 1. We show that each pair in $D_T$ is a truncated hamiltonian pair by considering three cases. In each case, we show that the product of the truncation of the two permutations is $\gamma_1$ or $\gamma_{-1}$. Since $T(\gamma_1)=T(\gamma_{-1})=2$, the corresponding pair is truncated hamiltonian. 

\noindent \underline{Case 1}: $\{(\sigma_{m-2}, \sigma_2), (\sigma_{m-3}, \sigma_3)\}$. By Lemma \ref{lem:compgam}, we have $\gamma_{m-2}=\gamma_{-1}$ and $\gamma_{m-3}=\gamma_{-2}$. Therefore:
 
 \smallskip
{\centering
  $ \scriptstyle
    \begin{aligned} 
    \hat{\sigma}_{m-2}\hat{\sigma}_{2}=\gamma_{-1}\gamma_{2}=\gamma_1;&&&& \hat{\sigma}_{m-3}\hat{\sigma}_{3}=\gamma_{-2}\gamma_{3}=\gamma_1.
      \end{aligned}
  $ 
\par}
   \smallskip

\noindent \underline{Case 2}: $(\sigma_i, \sigma_{m-i-2})$ and  $(\sigma_{i+3}, \sigma_{m-i-3})$ where $i \in \mathds{I}$ and $i \equiv 1\ (\textrm{mod}\ 4)$.  By Remark \ref{rem:ned}, we have $\hat{\sigma}_{i}=\gamma_{i}$ for all $i \in \{1, \ldots, m-2\}$. Additionally, Lemma \ref{lem:compgam} implies that $\gamma_{m-i-2}=\gamma_{-i-1}$ and $\gamma_{m-i-3}=\gamma_{-i-2}$. Therefore:

   \smallskip
{\centering
  $ \scriptstyle
    \begin{aligned} 
&\hat{\sigma}_{i}\hat{\sigma}_{m-i-2}=\gamma_i\ \gamma_{-i-1}=\gamma_{-1}; &&&&\hat{\sigma}_{i+3}\hat{\sigma}_{m-i-3} =\gamma_{i+3}\ \gamma_{-i-2}=\gamma_{1}.\\
      \end{aligned}
  $ 
\par}
   \smallskip

\noindent \underline{Case 3}: $(\sigma_i, \sigma_{m-i})$ and $(\sigma_{i+1}, \sigma_{m-i-3})$ where $i \in \mathds{I}$ and $i \equiv 2\ (\textrm{mod}\ 4)$. We have:

   \smallskip
{\centering
  $ \scriptstyle
    \begin{aligned} 
&\hat{\sigma}_{i}\hat{\sigma}_{m-i}=\gamma_i\ \, \gamma_{-i+1}=\gamma_{1};&&&& \hat{\sigma}_{i+1}\hat{\sigma}_{m-i-3}=\gamma_{i+1}\ \, \gamma_{-i-2}=\gamma_{-1}.\\
      \end{aligned}
  $ 
  \par}
   \smallskip

\noindent In conclusion, all pairs in $D_T$ are truncated hamiltonian. 

\noindent STEP 2. We now show that each pair of $D_H$ is a hamiltonian pair. 

\noindent \underline{Case 1}: $(\sigma_0, \sigma_{m-1})$ and $(\sigma_{m-1}, \sigma_0)$. Clearly, both are hamiltonian pairs since $\sigma_0=id$ and $T(\sigma_{m-1})=1$.

\noindent \underline{Case 2}: pairs of the form $(\sigma_i, \sigma_{m-i-3})$ where $i \equiv 1 \ (\textrm{mod}\ 4)$ and $1 \leqslant i \leqslant m-4$. Then $i=2j+1$, where $j$ is even and $0 \leqslant j \leqslant k-2$. Since $i$ is odd, and $i \neq m-2$, Lemma \ref{lem:comput} implies that $\sigma_{m-i-3}=\gamma_{-i-2}\, (m-1, k-j)$. Furthermore, Lemma \ref {lem:compgam} implies that $(j+2)^{\gamma_{-i-2}}=m-j-2$ and $(m-j)^{\gamma_i}=j+2$. Therefore:

   \medskip
{\centering
  $ \scriptstyle
    \begin{aligned} 
&\sigma_{i}\sigma_{m-i-3}&&=&&\gamma_i\ (m-1, j+2) \, \gamma_{-i-2}\, (m-1, k-j)\\
&&& =&&(0, m-3, m-5, \ldots, k-j+2, m-1, m-j-2, m-j-4, \ldots, 3,1, m-2, m-4, \\ 
&&&&&\ldots, m-j, k-j, k-j-2, k-j-4, \ldots, 4, 2).\\
      \end{aligned}
  $ 
\par}
\medskip

\noindent Since $T( \sigma_i \sigma_{m-i-3})=1$, the pair $(\sigma_i, \sigma_{m-i-3})$ is a hamiltonian pair.

\noindent \underline{Case 3}: pairs of the form $(\sigma_i, \sigma_{m-i-3})$ where $i \equiv 2 \ (\textrm{mod}\ 4)$ and $2 \leqslant i \leqslant m-7$. Then $i=2j$, where $j$ is odd and $1 \leqslant j \leqslant k-2$. Since $i$ is even, Lemma \ref{lem:comput} implies that $\sigma_{m-i-3}=\gamma_{-i-2}\, (m-1, m-j-1)$. Note that $k$ is even and $j$ is odd. This means that $k-j+1$ is even and $m-j-1$ is odd. Therefore:

   \medskip
{\centering
  $ \scriptstyle
    \begin{aligned} 
&\sigma_{i}\sigma_{m-i-3}&&=&&\gamma_i\ (m-1, k+j+1) \, \gamma_{-i-2}\, (m-1,m-j-1)\\
&&& =&&(0,  m-3, m-5, m-7, \ldots, k-j+1, m-j-1,m-j-3, \ldots, 3, 1, m-2, m-4,  \\
&&&&&  \ldots, m-j+1, m-1, k-j-1, k-j-3, \ldots, 4,2).\\
      \end{aligned}
  $ 
\par}
\medskip

\noindent Since $T( \sigma_i \sigma_{m-i-3})=1$, the pair $(\sigma_i, \sigma_{m-i-3})$ is a hamiltonian pair.

We must now verify that pairs in the set below are hamiltonian pairs:

\begin{center}
$A=\{(\sigma_{i+3}, \sigma_{m-i-2} ) \ | \ i \equiv 1\ (\textrm{mod}\ 4)\ \textrm{and} \ i \in\mathds{I}/M_t\} \cup\{(\sigma_{i+1}, \sigma_{m-i} ) \ | \ i \equiv 2\ (\textrm{mod}\ 4)\ \textrm{and}\ i \in \mathds{I}/M_t\}$.
\end{center}

\noindent It is not too difficult to see that $A \subseteq \{(\sigma_i, \sigma_{m-i+1})\ | \ i \equiv 0,3 \ (\textrm{mod}\ 4)\ \textrm{and}\ 3\leqslant i \leqslant m-5\}.$ It suffices to show that all pairs in $ \{(\sigma_i, \sigma_{m-i+1})\ | \ i \equiv 0,3 \ (\textrm{mod}\ 4)\ \textrm{and}\ 3\leqslant i \leqslant m-5\}$ are hamiltonian pairs. Below, we consider two additional cases.

\noindent \underline{Case 4}: pairs of the form $(\sigma_i, \sigma_{m-i+1})$ where $i \equiv 3 \ (\textrm{mod}\ 4)$ and $3 \leqslant i \leqslant m-5$. 

\noindent SUB-CASE 4.1: $i\in \{3,7\}$. See below:

   \smallskip
{\centering
  $ \scriptstyle
    \begin{aligned} 
&\sigma_{3}\sigma_{m-2}&&=&&\gamma_3\ (m-1, 3) \, \gamma_{-1}\, (m-1,k+1)\\
&&& =&&(0, k+1, k+3, \ldots, m-2, 1, 3, 5, \ldots, k-1, m-1, 2, 4, 6, \ldots, m-3 ); \\
&\sigma_{7}\sigma_{m-6}&&=&&\gamma_7\ (m-1, 5) \, \gamma_{-5}\, (m-1,k-1)\\
&&& =&&(0, 2, 4, \ldots, m-3,  k-1, k+1, k+3, \ldots, m-2, 1, 3, 5, \ldots, k-3, m-1).
      \end{aligned}
  $ 
\par}
   \smallskip

\noindent Since $T( \sigma_3 \sigma_{m-2})=T(\sigma_{7}\sigma_{m-6})=1$, both pairs are hamiltonian pairs.

\noindent SUB-CASE 4.2: $i \equiv 3 \ (\textrm{mod}\ 4)$ and $11 \leqslant i \leqslant m-5$. Let $i=2j+1$ where $5 \leqslant j \leqslant k-2$ and $j$ is odd. Since $i$ is odd, Lemma \ref{lem:comput} implies that $\sigma_{m-i+1}=\gamma_{-i+2}\, (m-1, k-j+2)$. Furthermore, since $i=2j+1$, we have $(m-j)^{\gamma_i}=j+2$ and that $(j+2)^{\gamma_{-i+2}}=m-j+2$. This means that

   \smallskip
{\centering
  $ \scriptstyle
    \begin{aligned} 
&\sigma_{i}\sigma_{m-i+1}&&=&&\gamma_i\ (m-1, j+2) \, \gamma_{-i+2}\, (m-1,k-j+2)\\
&&& =&&(0, 2, 4, \ldots, m-j, k-j+2, k-j+4, k-j+6, \ldots, m-2, 1, 3,5, \ldots, \\
&&&&&  k-j, m-1, m-j+2,m-j+4, \ldots, m-3). 
      \end{aligned}
  $ 
\par}
   \smallskip

\noindent Since $T( \sigma_i \sigma_{m-i+1})=1$, the pair $(\sigma_i, \sigma_{m-i+1})$ is a hamiltonian pair.

\noindent \underline{Case 5}: pairs of the form $(\sigma_i, \sigma_{m-i+1})$ where $i \equiv 0 \ (\textrm{mod}\ 4)$ and $4 \leqslant i \leqslant m-5$. If $m=5$, we have no pairs of the form $(\sigma_i, \sigma_{m-i+1})$. We now consider two sub-cases below.

\noindent SUB-CASE 5.1: $i=4$. Since $m \geqslant 9$, we have $k \geqslant 4$. Therefore

   \smallskip
{\centering
  $ \scriptstyle
    \begin{aligned} 
&\sigma_{4}\sigma_{m-3}&&=\gamma_4\ (m-1, k+3) \, \gamma_{-2}\, (m-1, 0)\\
&&& =(0, 2, 4, \ldots, m-3, m-1, k+1, k+3, \ldots, m-2, 1, 3,5, \ldots , k-1). \\
      \end{aligned}
  $ 
\par}
   \smallskip

\noindent Since $T(\sigma_{4}\sigma_{m-3})=1$, the pair $(\sigma_{4}, \sigma_{m-3})$ is a hamiltonian pair.

\noindent SUB-CASE 5.2: $i\neq 4$.  Observe that $i=2j$ where $j$ is even and $4 \leqslant j \leqslant k-2$. Note that, since $i$ is even, Lemma \ref{lem:comput} implies that $\sigma_{m-i+1}=\gamma_{-i+2}\, (m-1, m-j+1)$.  Moreover, since $k$ and $j$ are even, we see that $k+j+1$ is odd and $m-j-1$ is even. These observations jointly imply that

   \smallskip
{\centering
  $ \scriptstyle
    \begin{aligned}
&\sigma_{i}\sigma_{m-i+1}&&=&&\gamma_i\ (m-1, k+j+1) \, \gamma_{-i+2}\, (m-1, m-j+1)\\
&&&=&&(0, 2, 4, \ldots, m-j-1, m-1, k-j+3, k-j+5, \ldots, m-2, 1, 3, 5, \ldots, \\
&&&&& k-j+1, m-j+1, m-j+3, \ldots, m-3).\\
      \end{aligned}
  $ 
\par}
   \smallskip

\noindent Since $T( \sigma_i \sigma_{m-i+1})=1$, the pair $(\sigma_i, \sigma_{m-i+1})$ is a hamiltonian pair.

In summary, all pairs in $D_H$ are hamiltonian pairs. Since $D_T$ and $D_H$ are disjoint, it follows that $\vec{C}_2 \wr \overline{K}_m$ admits the desired $c$-twined 2-factorization.  \end{proof}

In Proposition \ref{prop:m7even} below, we consider the case  $m \equiv 7$ or $11\  (\textrm{mod} \ 12)$. Henceforth, we will refer the reader to \ref{App} for all computations. 

\begin{proposition}
\label{prop:m7even}
Let $m \equiv 7$ or $11\  (\textrm{mod} \ 12)$ such that $m \geqslant 7$, and let $c$ be an even integer such that $0 \leq c \leq m-3$. The digraph $\vec{C}_2 \wr \overline{K}_m$ admits a $c$-twined 2-factorization.  
\end{proposition}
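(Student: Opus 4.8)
The plan is to reuse, essentially verbatim, the template established for Proposition \ref{prop:m1even}, taking both regular permutation sets equal to the set $\mathcal{F}_m$ of Construction \ref{cons:Fodd}, which is a regular permutation set of order $m$ by Lemma \ref{lem:fodd2}. Thus I set $R_1=R_2=\mathcal{F}_m=\{\sigma_0,\sigma_1,\ldots,\sigma_{m-1}\}$ and build the desired $c$-twined $2$-factorization $D=D_T\cup D_H\subseteq R_1\times R_2$, where $|D_T|=c$ and $|D_H|=m-c$. Writing $m=2k+1$, the hypothesis $m\equiv 7,11\ (\textrm{mod}\ 12)$ forces $m\equiv 3\ (\textrm{mod}\ 4)$, so that $k$ is odd, and $\gcd(m,3)=1$; these parity and divisibility facts are precisely what make the explicit index formulas close up, and they distinguish this case from the $k$-even situation of Proposition \ref{prop:m1even}.

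First I would write $c=2t$ (now allowing $t=0$, since the stated range begins at $c=0$), fix an index set $\mathds{I}$ consisting of the appropriate residue classes modulo $4$ inside a range of the form $\{1,\ldots,m-\text{const}\}$, and choose $M_t\subseteq\mathds{I}$ with $|M_t|=t$. The truncated set $D_T$ is then assembled from pairs $(\sigma_a,\sigma_{m-a})$ and $(\sigma_a,\sigma_{m-a-2})$ with $a$ ranging over indices determined by $M_t$, while the hamiltonian set $D_H$ collects the complementary pairs (indexed by $\mathds{I}\setminus M_t$, and of the form $(\sigma_a,\sigma_{m-a+t})$ for suitable shifts $t\in\{1,2,-3,-4\}$) together with a few fixed boundary pairs involving $\sigma_0=id$ and $\sigma_{m-1}$. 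Because $\sigma_0$ is the unique $(m-1)$-stabilizer of $\mathcal{F}_m$, I must arrange that $\sigma_0$ appears only inside $D_H$.

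The verification proceeds in three stages. \textbf{(1)} To see that $D$ is a $2$-factorization, I check that each $\sigma_a$ occurs exactly once as a first coordinate and exactly once as a second coordinate, using that the index maps $a\mapsto a$ and $a\mapsto m-a+t$ are bijections on the relevant residue classes; this also forces $D_T\cap D_H=\varnothing$. \textbf{(2)} To see that every pair in $D_T$ is truncated hamiltonian, I invoke Remark \ref{rem:ned}, giving $\hat{\sigma}_a=\gamma_a$ for $a\notin\{0,m-1\}$, so that $\hat{\sigma}_a\hat{\sigma}_{m-a}=\gamma_a\gamma_{-a+1}=\gamma_1$ and $\hat{\sigma}_a\hat{\sigma}_{m-a-2}=\gamma_a\gamma_{-a-1}=\gamma_{-1}$ by Lemma \ref{lem:compgam}(S1); since $T(\gamma_1)=T(\gamma_{-1})=2$ and no coordinate is an $(m-1)$-stabilizer, each such pair is truncated hamiltonian. \textbf{(3)} To see that every pair in $D_H$ is hamiltonian, I use Lemma \ref{lem:comput} to rewrite each $\sigma_{m-a+t}$ as $\gamma_{\bullet}\,(m-1,\bullet)$, multiply out $\sigma_a\sigma_{m-a+t}$, and exhibit the product as a single $m$-cycle, so that $T(\sigma_a\sigma_{m-a+t})=1$.

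The genuine obstacle is stage \textbf{(3)}. Unlike the truncated case, the full products retain the transpositions $(m-1,\bullet)$ coming from Construction \ref{cons:Fodd}, so the result is not merely a power of $\gamma_1$; its cycle structure depends delicately on the parities of $j$ and $k$ and on $a\bmod 4$, and it is exactly here that the oddness of $k$ (versus its evenness in Proposition \ref{prop:m1even}) shifts the interleaving of the two arithmetic progressions that splice together into one cycle. I therefore expect to split stage \textbf{(3)} into subcases according to $a\bmod 4$, and within each to track how the two transpositions join the blocks of $\gamma_{\bullet}$ into a single cycle. These expansions are routine but lengthy, so, following the convention announced before the statement, I would relegate the explicit one-cycle computations to \ref{App} and keep in the main text only the case bookkeeping and the reduction of each hamiltonian verification to Lemmas \ref{lem:compgam} and \ref{lem:comput}.
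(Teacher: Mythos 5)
Your stages \textbf{(1)} and \textbf{(2)} are fine, but stage \textbf{(3)} hides a fatal gap: with $R_1=R_2=\mathcal{F}_m$, the hamiltonian pairs of the Proposition \ref{prop:m1even} shape are genuinely \emph{not} hamiltonian when $m\equiv 3\ (\textrm{mod}\ 4)$, so the template cannot be reused ``essentially verbatim'' no matter how carefully you re-track the interleaving. To see this structurally, take a pair $(\sigma_i,\sigma_{m-i-3})$ with $i=2j+1$ odd. By Construction \ref{cons:Fodd} and Lemma \ref{lem:comput}, $\sigma_i\sigma_{m-i-3}=\gamma_i\,(m-1,j+2)\,\gamma_{-i-2}\,(m-1,k-j)$, and pushing the first transposition through $\gamma_{-i-2}$ gives $\sigma_i\sigma_{m-i-3}=\gamma_{-2}\,(m-1,\,m-j-2,\,k-j)$. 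The permutation $\gamma_{-2}$ has exactly three cycles: the evens, the odds, and the fixed point $m-1$; multiplying by the $3$-cycle yields a single $m$-cycle precisely when $m-j-2$ and $k-j$ have \emph{opposite} parities, i.e.\ when $m+k$ is odd. With $m=2k+1$ and $k$ even (Proposition \ref{prop:m1even}) this always holds; with $k$ odd (your case, since $m\equiv 7,11\ (\textrm{mod}\ 12)$ forces $m\equiv 3\ (\textrm{mod}\ 4)$) the two entries always share a parity, both land in the same cycle of $\gamma_{-2}$, and the product has exactly three cycles for \emph{every} admissible $j$. Concretely, for $m=7$: $\sigma_1=\gamma_1(6,2)=(0,1,6,2,3,4,5)$, $\sigma_3=\gamma_3(6,3)=(0,6,3)(1,4)(2,5)$, and $\sigma_1\sigma_3=(0,4,2)(1,3)(5,6)$, so $T(\sigma_1\sigma_3)=3\neq 1$. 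No splitting into subcases by $a\bmod 4$ can repair this, since the failure is uniform in $j$; and varying the shift does not obviously help either (e.g.\ the shift $t=2$ gives base permutation $\gamma_3$, which for $m\equiv 7\ (\textrm{mod}\ 12)$ has $\gcd(3,m-1)=3$ cycles on $\mathds{Z}_{m-1}$ plus the fixed point, so two transpositions can never splice it down to one cycle).

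The paper's actual proof sidesteps exactly this obstruction by changing the regular permutation sets rather than the index bookkeeping: it takes $R_1=\gamma_1\cdot\mathcal{F}_m=\{\mu_i=\gamma_1\sigma_i\}$ and $R_2=\gamma_{-1}\cdot\mathcal{F}_m=\{\tau_i=\gamma_{-1}\sigma_i\}$, which are regular by Lemma \ref{lem:dec}, with boundary pairs $(\mu_0,\tau_1)$, $(\mu_{m-2},\tau_0)$, $(\mu_{m-1},\tau_{m-1})$ in $D_H$. The truncated verification is then just as easy as yours ($\hat{\mu}_i=\gamma_{i+1}$, $\hat{\tau}_i=\gamma_{i-1}$, so the truncation products are again $\gamma_{\pm 1}$), but the crucial effect of premultiplying by $\gamma_{\pm 1}$ is on the \emph{full} products: it shifts where the two transpositions from Construction \ref{cons:Fodd} sit relative to the cycles of the underlying power of $\gamma_1$, so that they land in distinct cycles and splice everything into a single $m$-cycle when $k$ is odd (this is verified case by case in \ref{App}). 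If you rewrite your stage \textbf{(3)} with these two sets, your stages \textbf{(1)} and \textbf{(2)} go through essentially as written and the argument matches the paper's.
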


\begin{proof} We will construct the desired set of pairs by using the following two regular permutation sets:

   \smallskip
{\centering
  $ \displaystyle
    \begin{aligned} 
&R_1&=&&\gamma_1 \cdot \mathcal{F}_m&&=&&\{\mu_i=\gamma_1 \sigma_i\ | \ i=0,1, \ldots, m-1\};\\
&R_2&=&&\gamma_{-1} \cdot \mathcal{F}_m&&=&&\{\tau_i=\gamma_{-1} \sigma_i \ | \ i=0,1, \ldots, m-1\}. 
      \end{aligned}
  $ 
\par}
   \smallskip 

Let $c=2t$ where $1 \leqslant t \leqslant \frac{m-3}{2}$, $\mathds{I}=\{i \ | \ i \equiv 1,2\ (\textrm{mod}\ 4)\ \textrm{and}\ 1 \leqslant i \leqslant m-3 \}$ and $M_t$ be a subset of $\mathds{I}$ of size $t$.  Below, we construct two sets of pairs of permutations from $R_1\times R_2$, $D_{T}$ and $D_H$, that contain $c$ pairs and $m-c$ pairs, respectively:

   \smallskip
{\centering
  $ \displaystyle
    \begin{aligned} 
&D_{T}=&&\{(\mu_i, \tau_{m-i-2}), (\mu_{i+3}, \tau_{m-i-3})\ | \ i\equiv 1 \ (\textrm{mod}\ 4) \ \textrm{and}\ i \in M_t\}\cup\\
&&& \{ (\mu_i, \tau_{m-i}), (\mu_{i+1}, \tau_{m-i-3})\ | \ i\equiv 2 \ (\textrm{mod}\ 4)\ \textrm{and}\ i \in M_t\};\\
    &D_{H}=&& \{(\mu_0, \tau_1), (\mu_{m-2}, \tau_{0}), (\mu_{m-1}, \tau_{m-1})\}\cup\\
    &&&\{(\mu_i, \tau_{m-i-3}), (\mu_{i+3}, \tau_{m-i-2} ) \ | \ i \equiv 1\ (\textrm{mod}\ 4) \ \textrm{and}\  i \in  \mathds{I}/M_t \}\cup \\
    &&& \{(\mu_i, \tau_{m-i-3}), (\mu_{i+1}, \tau_{m-i} ) \ | \ i \equiv 2\ (\textrm{mod}\ 4) \ \textrm{and}\   i \in \mathds{I}/M_t  \}. 
      \end{aligned}
  $ 
\par}
   \smallskip 

See \ref{App} for proof that all pairs of $D_T$ and $D_H$ are truncated hamiltonian and hamiltonian, respectively. The set $D_T\cup D_H$ is a $c$-twined  2-factorization of $\vec{C}_2\wr \overline{K}_m$. \end{proof}

Next, we consider the case $m \equiv 3\ (\textrm{mod} \ 12)$. Although we use the same regular permutation sets as in the proof of Proposition \ref{prop:m7even}, the constructions given in the proof of Proposition \ref{prop:m3even} differ from those of Proposition  \ref{prop:m7even} because $m$ is not relatively prime with 3. Instead, we will use the fact that $m-1$ is relatively prime with 3.

\begin{proposition}
\label{prop:m3even}
Let $m \equiv 3\ (\textrm{mod} \ 12)$ such that $m \geqslant 15$, and let $c$ be an even integer such that $2 \leqslant c \leqslant m-3$. The digraph $\vec{C}_2 \wr \overline{K}_m$ admits a $c$-twined 2-factorization. 
\end{proposition}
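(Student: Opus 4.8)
The plan is to reuse the framework of Propositions \ref{prop:m1even} and \ref{prop:m7even}. I take the two regular permutation sets $R_1=\gamma_1\cdot\mathcal{F}_m=\{\mu_i=\gamma_1\sigma_i\}$ and $R_2=\gamma_{-1}\cdot\mathcal{F}_m=\{\tau_i=\gamma_{-1}\sigma_i\}$, which are regular permutation sets of order $m$ by Lemmas \ref{lem:dec} and \ref{lem:fodd2}. I then select a set $D\subseteq R_1\times R_2$ of $m$ pairs, partition it into $D_T$ of size $c$ and $D_H$ of size $m-c$, verify that every $\mu_i$ occurs exactly once as a first coordinate and every $\tau_j$ exactly once as a second coordinate (so that $D$ is a directed $2$-factorization of $\vec{C}_2\wr\overline{K}_m$, and in particular $D_T$ and $D_H$ are disjoint), and finally confirm that every pair of $D_T$ is a truncated hamiltonian pair and every pair of $D_H$ is a hamiltonian pair.

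The truncation bookkeeping is uniform, so I record it first. Since $\gamma_{\pm1}$ fixes $m-1$, one has $(m-1)^{\mu_i}=(m-1)^{\sigma_i}$, so the same transposition truncates $\mu_i$ as truncates $\sigma_i$; combined with Remark \ref{rem:ned} this gives $\hat\mu_i=\gamma_1\hat\sigma_i=\gamma_{i+1}$ and, likewise, $\hat\tau_i=\gamma_{i-1}$ whenever $i\notin\{0,m-1\}$. Consequently, for a pair $(\mu_i,\tau_j)$ the product of truncations is $\hat\mu_i\hat\tau_j=\gamma_{i+j}$, and by Lemma \ref{lem:compgam} this acts as addition by $i+j$ modulo $m-1$ on $\{0,\ldots,m-2\}$ while fixing $m-1$, so its number of cycles is $\gcd(i+j,m-1)+1$. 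Thus a pair is truncated hamiltonian precisely when $\gcd(i+j,m-1)=1$, and I will place in $D_T$ only pairs with $i+j\in\{m-2,m\}$, for which $\gamma_{i+j}\in\{\gamma_{-1},\gamma_{1}\}$ and hence $T(\gamma_{i+j})=2$ automatically; these verifications are identical to those in Proposition \ref{prop:m7even} and are independent of $m\bmod 3$.

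The substantive new work is the construction and verification of $D_H$. Because $m\equiv 3\ (\textrm{mod}\ 12)$ we have $3\mid m$, and I expect the specific hamiltonian pairings used for $m\equiv 7,11\ (\textrm{mod}\ 12)$ to fail here: the explicit $m$-cycles exhibited in that proof rely on quantities coprime to $m$, and these degenerate once $3\mid m$, so that the full product $\mu_i\tau_j=\gamma_{i+1}(m-1,\cdot)\,\gamma_{j-1}(m-1,\cdot)$ closes up prematurely into three shorter cycles rather than a single $m$-cycle. The remedy, signalled by the hypothesis that $m-1$ is coprime to $3$, is to re-index the hamiltonian pairs so that the governing arithmetic is taken modulo $m-1$ rather than modulo $m$: I organise the index set around an arithmetic progression of common difference $3$, which still sweeps out all residues modulo $m-1$ since $\gcd(3,m-1)=1$, and I tune the size of the free subset (the analogue of $M_t$) so that $D_T$ receives exactly $c$ pairs for every even $c$ with $2\leqslant c\leqslant m-3$. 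For each resulting family I substitute the product-forms from Lemma \ref{lem:comput} and Table \ref{tab:2}, expand $\mu_i\tau_j$, and exhibit the result as a single $m$-cycle, deferring these display computations to \ref{App} exactly as in Proposition \ref{prop:m7even}.

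The main obstacle is precisely this last verification: I must find a pairing scheme whose product permutations are provably $m$-cycles when $3\mid m$, and then separately treat the few boundary pairs built from the exceptional permutations $\sigma_0=id$ and $\sigma_{m-1}$ (the analogues of $(\mu_0,\tau_1)$, $(\mu_{m-2},\tau_0)$ and $(\mu_{m-1},\tau_{m-1})$), since these are the only permutations whose truncations are not of the clean form $\gamma_{i\pm1}$. A secondary difficulty is the bookkeeping: because the mod-$3$ re-indexing alters the available index ranges, I must check that every subscript stays within $\{0,\ldots,m-1\}$, that no permutation of $R_1$ or $R_2$ is used twice, and that the smallest admissible case $m=15$ already supplies enough pairs to realise every even $c$ in the required range.
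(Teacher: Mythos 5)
Your setup is sound and matches the paper's: the same regular sets $R_1=\gamma_1\cdot\mathcal{F}_m$, $R_2=\gamma_{-1}\cdot\mathcal{F}_m$, the identities $\hat{\mu}_i=\gamma_{i+1}$, $\hat{\tau}_i=\gamma_{i-1}$ for $i\notin\{0,m-1\}$, and the count $T(\gamma_{i+j})=\gcd(i+j,m-1)+1$ are all correct, as is your instinct that the hypothesis $\gcd(3,m-1)=1$ is the lever for $m\equiv 3\ (\textrm{mod}\ 12)$. But as a proof the proposal has a genuine gap: the entire second half is a declaration of intent. You never exhibit the pairing $D=D_T\cup D_H$, never verify that each index occurs once per coordinate for every even $c$ with $2\leqslant c\leqslant m-3$, and never carry out the $m$-cycle computations for $D_H$ --- you yourself label this ``the main obstacle.'' In a proposition of this type the explicit construction plus those cycle computations \emph{are} the proof; the scheme you gesture at (``an arithmetic progression of common difference $3$'' governing the hamiltonian pairs) is unspecified, so there is no way to check that it closes up, and the statement is not established.

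It is also worth noting that you have inverted where the paper spends the coprimality hypothesis. You restrict $D_T$ to pairs with $i+j\in\{m-2,m\}$ (truncation products $\gamma_{\pm 1}$) and plan to redesign $D_H$ around $\gcd(3,m-1)=1$. The paper does the opposite: it keeps the hamiltonian family uniform, $D_H=\{(\mu_0,\tau_{m-1}),(\mu_{m-1},\tau_0)\}\cup\{(\mu_i,\tau_{m-i+1})\ |\ 3\leqslant i\leqslant m-2,\ i,i-1\notin M_t\}$ (with $i+j=m+1$; the appendix verifies these products are $m$-cycles in five sub-cases, and the two exceptional pairs have products $\sigma_{m-1}$ and $\gamma_1\sigma_{m-1}\gamma_{-1}$, trivially $m$-cycles), and instead enlarges the admissible truncated pairs: $D_T=\{(\mu_1,\tau_2),(\mu_2,\tau_1)\}\cup\{(\mu_i,\tau_{m-i}),(\mu_{i+1},\tau_{m-i+1})\ |\ i\in M_t\}$, whose truncation products are $\gamma_3$ and $\gamma_1$, with $T(\gamma_3)=2$ precisely because $3\nmid m-1$. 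Your own observation that a pair is truncated hamiltonian whenever $\gcd(i+j,m-1)=1$ already licenses these $\gamma_3$ pairs, so your self-imposed restriction to $\gamma_{\pm 1}$ is what forces the unbuilt redesign of $D_H$; dropping it would have led you back to a construction of the paper's shape, where the only remaining work is the (tedious but routine) appendix-style cycle computations.
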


\begin{proof} We use the following two regular permutation sets:

   \smallskip
{\centering
  $ \displaystyle
    \begin{aligned} 
&R_1= \gamma_1\,\cdot \mathcal{F}_m&=&&\{ \mu_i=\gamma_1 \sigma_i \ | \ i=0,1, \ldots, m-1\};\\
&R_2=\gamma_{-1}\, \cdot \mathcal{F}_m&=&&\{\tau_i=\gamma_{-1} \sigma_i\ | \ i=0,1, \ldots, m-1\}. 
      \end{aligned}
  $ 
\par}
   \smallskip 

Let $c=2t+2$ where $0\leqslant t \leqslant \frac{m-5}{2}$.  Let $\mathds{I}=\{i \ |\ i\  \textrm{is odd}\ \textrm{and} \ 3 \leqslant i \leqslant m-4\}$ and let $M_t$ be a subset of size $t$ of $\mathds{I}$. Below, we construct two subsets of pairs of permutations from $R_1\times R_2$, $D_{T}$ and $D_H$, that contain $c$ pairs and $m-c$ pairs, respectively:

   \smallskip
{\centering
  $ \displaystyle
    \begin{aligned} 
&D_T=\{(\mu_1, \tau_2), (\mu_2, \tau_1)\}\cup\{(\mu_i, \tau_{m-i}), (\mu_{i+1}, \tau_{m-i+1})\ | \ i\in M_t\}; \\
&D_H= \{(\mu_0, \tau_{m-1}), (\mu_{m-1}, \tau_0)\}\cup\{(\mu_i, \tau_{m-i+1})\ |\ 3\leqslant i \leqslant m-2\ \textrm{and}\ i, i-1 \not\in M_t\}.
      \end{aligned}
  $ 
\par}
   \smallskip 

It is straightforward to verify that $D=D_T\cup D_H$ is indeed a 2-factorization of $\vec{C}_2\wr \overline{K}_m$. See \ref{App} for proof that all pairs of $D_T$ and $D_H$ are truncated hamiltonian and hamiltonian, respectively. \end{proof}

\subsection{The sub-case $c$ is odd}

In this subsection, we construct a $c$-twined 2-factorization of $\vec{C}_2\wr \overline{K}_m$ for all odd $m \geqslant 5$ and all odd $c$ such that $3\leqslant c \leqslant m-2$.   We consider five cases. In the first two cases, addressed in Propositions \ref{prop:1mod4<} and \ref{prop:3mod4} below, we create two regular permutation sets that allow us to form up to $\frac{m+1}{2}$ truncated pairs when $m \equiv 1\ (\textrm{mod}\ 4)$, $\frac{m-5}{2}$ truncated hamiltonian pairs when $m \equiv 3\ (\textrm{mod}\ 8)$, and up to $\frac{m-1}{2}$ truncated hamiltonian pairs when $m \equiv 7\  (\textrm{mod}\ 8)$.  Proposition \ref{prop:1mod4<}  addresses the two cases where $m \equiv 1\ (\textrm{mod}\ 4)$ and Proposition \ref{prop:3mod4} addresses the case $m \equiv 3\ (\textrm{mod}\ 4)$.

\begin{proposition}
\label{prop:1mod4<}
Let $m \equiv 1\ (\textrm{mod} \ 4)$ where $m\geqslant 5$, and let $c$ be an odd integer such that $3\leqslant c \leqslant \frac{m+1}{2}$. The digraph $\vec{C}_2 \wr \overline{K}_m$ admits a $c$-twined 2-factorization. 
\end{proposition}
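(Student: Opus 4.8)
\textbf{Proof plan for Proposition \ref{prop:1mod4<}.}
The plan is to follow exactly the two-step template established in Propositions \ref{prop:evenx2} and \ref{prop:m1even}: produce two regular permutation sets of order $m$, partition a set $D$ of $m$ pairs drawn from $R_1 \times R_2$ into $D_T$ (of size $c$) and $D_H$ (of size $m-c$), and then verify the cycle-count conditions $T(\hat\sigma\hat\tau)=2$ for pairs in $D_T$ and $T(\sigma\tau)=1$ for pairs in $D_H$. Since $m \equiv 1 \pmod 4$ means $m=2k+1$ with $k$ even, and since the regular permutation set $\mathcal{F}_m$ from Construction \ref{cons:Fodd} satisfies $\hat\sigma_i=\gamma_i$ for $i \notin\{0,m-1\}$ by Remark \ref{rem:ned}, I expect to take $R_1=R_2=\mathcal{F}_m$ (as in Proposition \ref{prop:m1even}) so that truncated products collapse to products $\gamma_a\gamma_b=\gamma_{a+b}$ inside the truncated cyclic group $G_{m-1}$. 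The key arithmetic fact driving the construction is that $\gamma_i\gamma_{m-i-j}=\gamma_{-j+1}$ by Lemma \ref{lem:compgam}(S1), so choosing index offsets that sum to $\pm 1 \pmod{m-1}$ automatically yields $T(\hat\sigma_a\hat\sigma_b)=T(\gamma_{\pm 1})=2$; this is precisely why pairs like $(\sigma_i,\sigma_{m-i-2})$ and $(\sigma_i,\sigma_{m-i})$ were truncated hamiltonian in Proposition \ref{prop:m1even}.

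The structural difference here is that $c$ is \emph{odd} rather than even, and the cap is $c \leqslant \frac{m+1}{2}$ rather than $c \leqslant m-3$; this restricted range is deliberate, since Proposition \ref{prop:1mod6} (and its siblings \ref{prop:m3cong6}, \ref{prop:5mod6}) supplies the complementary large-$c$ range, and Table \ref{tab:1} shows the two results are meant to overlap so that every odd $c$ with $3 \leqslant c \leqslant m-2$ is covered. Because the total count of truncated pairs must be odd, I anticipate a construction with one distinguished ``seed'' truncated pair (contributing the odd parity) together with $\frac{c-1}{2}$ matched blocks of two truncated pairs each, mirroring the $\{(\sigma_{m-2},\sigma_2),(\sigma_{m-3},\sigma_3)\}$ seed plus paired families in Proposition \ref{prop:m1even} — except that here the seed is a single pair rather than a pair of pairs. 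The indexing will again be organized by residue classes modulo $4$, with a subset $M_t$ of an index set $\mathds{I}$ selecting which blocks go into $D_T$ versus $D_H$, and with the complementary ``leftover'' pairs in $D_H$ rewritten (as in the set $A$ of Proposition \ref{prop:m1even}) into a uniform family $(\sigma_i,\sigma_{m-i+1})$ whose products are already known to be single cycles.

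The main obstacle will be the same as in Proposition \ref{prop:m1even}: verifying that every pair placed in $D_H$ is genuinely hamiltonian, i.e.\ that the full (untruncated) product is a single $m$-cycle. Unlike the truncated products, these do not collapse into $G_{m-1}$, because the transpositions $(m-1,\cdot)$ attached to each $\sigma_i$ in Construction \ref{cons:Fodd} interact nontrivially; each such product must be written out explicitly in disjoint-cycle form and checked to have $T=1$, and this is where careful bookkeeping of the positions $j+2$, $k+j+1$, and their images under $\gamma_{\pm i}$ (via Lemma \ref{lem:comput} and Lemma \ref{lem:compgam}(S2)) becomes essential. I would first fix the explicit definitions of $R_1,R_2,D_T,D_H$ and confirm by a counting argument that each permutation of $R_1$ appears exactly once as a first coordinate and each permutation of $R_2$ exactly once as a second coordinate (guaranteeing $D$ is a $2$-factorization and that $D_T,D_H$ are disjoint). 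I would then dispatch the truncated pairs cheaply via the $\gamma_{\pm1}$ identity, and finally grind through the single-cycle verifications for $D_H$ case-by-case on the residue of the index modulo $4$ — deferring, as the paper does for the harder congruence classes, the longest of these computations to \ref{App}.
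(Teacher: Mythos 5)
There is a genuine gap, and it sits at the heart of your plan: the choice $R_1=R_2=\mathcal{F}_m$ cannot yield a $c$-twined 2-factorization with $c$ odd, for a parity reason. Since $m$ is odd, a hamiltonian pair $(\mu,\tau)$ satisfies $T(\mu\tau)=1$, so $\mathrm{sgn}(\mu\tau)=(-1)^{m-1}=+1$, i.e.\ $\mathrm{sgn}(\mu)\,\mathrm{sgn}(\tau)=+1$. A truncated hamiltonian pair satisfies $T(\hat{\mu}\hat{\tau})=2$, so $\mathrm{sgn}(\hat{\mu}\hat{\tau})=(-1)^{m-2}=-1$; and since each truncation is multiplication by a single transposition, $\mathrm{sgn}(\hat{\mu})\,\mathrm{sgn}(\hat{\tau})=\mathrm{sgn}(\mu)\,\mathrm{sgn}(\tau)$, whence $\mathrm{sgn}(\mu)\,\mathrm{sgn}(\tau)=-1$. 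Multiplying over all $m$ pairs of $D$, in which each element of $R_1$ and each element of $R_2$ occurs exactly once, gives $(-1)^{|D_T|}=\bigl(\prod_{\mu\in R_1}\mathrm{sgn}(\mu)\bigr)\bigl(\prod_{\tau\in R_2}\mathrm{sgn}(\tau)\bigr)$. If $R_1=R_2$ the right-hand side is a square, hence $+1$, forcing $c=|D_T|$ to be even. This is exactly why $R_1=R_2=\mathcal{F}_m$ works in Proposition \ref{prop:m1even} (even $c$) and why no choice of $D_T$, $D_H$, $\mathds{I}$, or $M_t$ can rescue your setup for odd $c$; in particular your ``single seed truncated pair plus matched blocks'' scheme is blocked before any computation starts.

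The paper's proof evades the obstruction by twisting one side: it takes $R_1=(1,m-1)\cdot\mathcal{F}_m=\{\mu_i=(1,m-1)\sigma_i\}$ and $R_2=\mathcal{F}_m$, which flips the sign of all $m$ (odd many) elements of $R_1$ and makes the sign product $-1$, as needed for odd $c$. This has downstream consequences your plan misses: the $(m-1)$-stabilizer of $R_1$ becomes $\mu_1$ rather than $\mu_0$; on the $R_1$ side the truncations are $\hat{\mu}_i=\mu_i\,(m-1,i+1)$ for $i\notin\{1,m-2,m-1\}$ and no longer lie in $G_{m-1}$, so the truncated products do not collapse to $\gamma_a\gamma_b$ and must be computed with the transposition $(1,m-1)$ carried along (this is what the appendix computations for Proposition \ref{prop:1mod4<} actually do). Moreover the seed of $D_T$ consists of three fixed truncated pairs $(\mu_{0},\sigma_{1})$, $(\mu_{m-2},\sigma_{m-2})$, $(\mu_{m-1},\sigma_{m-1})$, with $c=3+2t$ and the $2t$ remaining truncated pairs drawn as blocks $(\mu_{i},\sigma_{m-i-3})$, $(\mu_{i+3},\sigma_{m-i-2})$ for $i\in M_t\subseteq\mathds{I}$. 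Your remaining scaffolding — the counting check that each element of $R_1$ and $R_2$ appears exactly once, cheap verifications for $D_T$, and explicit single-cycle computations for $D_H$ organized by residue classes, with the long computations deferred to the appendix — does match the paper's template and would go through once the regular permutation sets are corrected as above.
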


\begin{proof} To obtain the desired $c$-twined 2-factorization of $\vec{C}_2 \wr \overline{K}_m$, we use the following two regular sets of permutations of order $m$:

   \smallskip
{\centering
  $ \displaystyle
    \begin{aligned} 
&R_1=(1,m-1)\cdot \mathcal{F}_m=\{\mu_i=(1, m-1)\sigma_i \ | \ i=0,1, \ldots, m-1\}; && R_2=\mathcal{F}_m=\{\sigma_0, \sigma_1, \ldots, \sigma_{m-1}\}. 
      \end{aligned}
  $ 
\par}
   \smallskip 

Let $c=3+2t$ where $0\leqslant t \leqslant \frac{m-5}{4}$, $\mathds{I}=\{i \ |\ i \equiv 3\ (\textrm{mod}\ 4)\ \textrm{and} \ 3 \leqslant i \leqslant m-6\}$, and $M_t$ be a subset of size $t$ of $\mathds{I}$. Below, we construct two sets of pairs of permutations from $R_1\times R_2$, $D_{T}$ and $D_H$, that contain $c$ pairs and $m-c$ pairs, respectively:

   \smallskip
{\centering
  $ \displaystyle
    \begin{aligned} 
&D_T&&=&& \{(\mu_{0}, \sigma_{1}), (\mu_{m-2}, \sigma_{m-2}), (\mu_{m-1}, \sigma_{m-1})\}\cup\{(\mu_{i}, \sigma_{m-i-3}), (\mu_{i+3}, \sigma_{m-i-2})\ |\ i \in M_t\};\\
&D_H&&=&& \{(\mu_2, \sigma_0)\}\cup \{(\mu_i, \sigma_{m-i-2})\ | \ i\ \textrm{odd}, \ 1 \leqslant i \leqslant m-4, \textrm{and}\ i\not\in M_t\}\cup\\
&&&&&\{(\mu_i, \sigma_{m-i})\ | \ i\ \textrm{even},\ 4 \leqslant i \leqslant m-3,  \textrm{and}\ i, i-3\not\in M_t\}.
      \end{aligned}
  $ 
\par}
   \smallskip 
   
It is straightforward to verify that $D=D_T\cup D_H$ is indeed a 2-factorization of $\vec{C}_2\wr \overline{K}_m$. See \ref{App} for proof that all pairs of $D_T$ and $D_H$ are truncated hamiltonian and hamiltonian, respectively. \end{proof}
\begin{proposition}
\label{prop:3mod4}
Let $m \geqslant 7$ and $c$ be odd. The digraph $\vec{C}_2 \wr \overline{K}_m$ admits a $c$-twined 2-factorization in each of the following cases:
\begin{enumerate} [label=\textbf{(S\arabic*)}]
\item $m\equiv 3 \ (\textrm{mod}\ 8)$ and $3\leqslant c \leqslant \frac{m-5}{2}$;
\item $m\equiv 7 \ (\textrm{mod}\ 8)$ and $3\leqslant c \leqslant \frac{m-1}{2}$.
\end{enumerate}
\end{proposition}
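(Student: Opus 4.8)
The plan is to reuse the template from Proposition \ref{prop:1mod4<} and the rest of this section: construct two regular permutation sets of order $m$ from $\mathcal{F}_m$ (Construction \ref{cons:Fodd}) by left-multiplication, pair their elements into $m$ pairs, and split these into a set $D_T$ of $c$ truncated hamiltonian pairs and a set $D_H$ of $m-c$ hamiltonian pairs, as required by Definition \ref{defn:twined}. Concretely I would take $R_1 = \mu \cdot \mathcal{F}_m$ for a suitable transposition $\mu$ (the $m\equiv 1\ (\mathrm{mod}\ 4)$ case used $\mu=(1,m-1)$) and $R_2=\mathcal{F}_m$, so that Lemma \ref{lem:dec} guarantees $R_1$ is again a regular permutation set. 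Writing $c=3+2t$ and selecting an index set $\mathds{I}$ with a size-$t$ subset $M_t$ lets me realize any odd $c$ in the allowed range, the upper bound on $c$ being dictated by how many disjoint truncated pairs the chosen sets support.

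The mechanism producing truncated hamiltonian pairs is the fact, noted after Definition \ref{def:gammas}, that a power $\gamma_k$ of the generator of the truncated cyclic group $G_{m-1}$ fixes $m-1$ and acts as a single cycle on the remaining $m-1$ points exactly when $\gcd(k,m-1)=1$, giving $T(\gamma_k)=2$. By Remark \ref{rem:ned} we have $\hat{\sigma}_i=\gamma_i$ for $i\notin\{0,m-1\}$, so I would pair indices (e.g.\ $i$ with $i+3$, as in Proposition \ref{prop:1mod4<}) so that the two truncations multiply, via Lemma \ref{lem:compgam}, to such a $\gamma_k$; without the transposition this product is exactly $\gamma_{\pm1}$ (cf.\ Proposition \ref{prop:m1even}), and the role of $\mu$ is precisely to keep the truncated products inside $G_{m-1}$ at an exponent coprime to $m-1$ even after the transposition correction to the first coordinate.

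For the hamiltonian pairs in $D_H$, I would use Lemma \ref{lem:comput} to rewrite each $\sigma_{m-i+t}$ as $\gamma_{-i+t'}(m-1,\ast)$ and then compute the product $\mu_i\sigma_{m-i+t}$ explicitly, tracing out its single $m$-cycle exactly as in Cases 2--5 of Proposition \ref{prop:m1even}. Before that I would dispatch the routine bookkeeping: verifying that $D=D_T\cup D_H$ has each element of $R_1$ appearing once as a first coordinate and each element of $R_2$ once as a second coordinate, so that $D$ is a $2$-factorization. The residue split into $m\equiv 3$ and $m\equiv 7\ (\mathrm{mod}\ 8)$ enters here: the parities of the indices occurring inside the explicit cycles depend on $m\bmod 8$, and this is what forces a slightly different pairing and the two different upper bounds $\frac{m-5}{2}$ and $\frac{m-1}{2}$ on $c$.

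The main obstacle I anticipate is the hamiltonian verification for $D_H$, namely showing each product is a \emph{single} $m$-cycle rather than several cycles. The truncated side is essentially automatic once the indices telescope, whereas for $D_H$ one must interleave the two truncated-cyclic actions with the transposition corrections and confirm that the resulting trace visits every residue before closing up. The hypotheses $m\equiv 3,7\ (\mathrm{mod}\ 8)$ are exactly the parity conditions that prevent premature closure, so the heart of the argument is this cycle bookkeeping---and it is precisely the reason the two subcases cannot be merged. As in the preceding propositions, I would relegate these explicit computations to \ref{App}.
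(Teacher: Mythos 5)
Your general template is the right one---two regular permutation sets $\mu\cdot\mathcal{F}_m$ and $\mathcal{F}_m$, a partition $D=D_T\cup D_H$ with $c=3+2t$ realized through a size-$t$ subset $M_t$ of an index set $\mathds{I}$, and computations deferred to \ref{App}---and you correctly anticipate that the two residues modulo $8$ yield index sets of different sizes and hence two different bounds on $c$. But the one concrete mechanism you commit to for the truncated pairs is wrong, and it is exactly the point where this proposition differs from its predecessors. You propose pairing indices so that the truncated products telescope to an element $\gamma_k$ of $G_{m-1}$ with $\gcd(k,m-1)=1$, the transposition being chosen ``to keep the truncated products inside $G_{m-1}$.'' That picture is accurate only in Proposition \ref{prop:m1even}; already in Proposition \ref{prop:1mod4<} the truncated products $\hat{\mu}_i\hat{\sigma}_{m-i-3}$ are not powers of $\gamma_1$ but long cycles traced element-by-element in \ref{App}. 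In the paper's actual proof of Proposition \ref{prop:3mod4}, one takes $R_1=(0,m-1)\cdot\mathcal{F}_m$ (not $(1,m-1)$) and pairs with index gaps of \emph{five}: besides three special pairs, $D_T$ consists of $(\mu_i,\sigma_{m-i-5}),(\mu_{i+5},\sigma_{m-i-2})$ for $i\equiv 5\pmod 8$ and $(\mu_i,\sigma_{m-i-5}),(\mu_{i+3},\sigma_{m-i})$ for $i\equiv 6\pmod 8$. The exponent sums in these pairs are $\pm 4$, so the underlying permutations are step-$4$ traversals; since $m\equiv 3\pmod 4$ forces $\gcd(4,m-1)=2$, a pure $\gamma_{\pm 4}$ would have \emph{three} cycles, and the required two-cycle structure is obtained only because the transposition $(0,m-1)$ together with the two truncation corrections merges the two step-$4$ orbits into a single $(m-1)$-cycle. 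Your coprimality criterion can never certify this, and the failure of the $\gamma_{\pm1}$-telescoping scheme when $k$ is odd is precisely why the attainable $c$ drops from $\frac{m+1}{2}$ in Proposition \ref{prop:1mod4<} to $\frac{m-5}{2}$, respectively $\frac{m-1}{2}$, here---a drop your outline has no way to predict or explain.

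You also locate the mod-$8$ dichotomy in the wrong step: you claim the hamiltonian verification is where parity ``prevents premature closure'' and is the reason the subcases cannot be merged, with the truncated side ``essentially automatic.'' The paper does the opposite: STEP 2 (hamiltonian pairs) is literally shared between the two cases, while the split $m\equiv 3$ versus $m\equiv 7\pmod 8$ (equivalently $k\equiv 1$ versus $k\equiv 3\pmod 4$) enters in the truncated-pair tracings in \ref{App} and in the size of $\mathds{I}$ ($\frac{m-5}{4}$ versus $\frac{m-1}{4}$ elements, with $5\leqslant i\leqslant m-13$ versus $5\leqslant i\leqslant m-7$), which is what produces the two upper bounds. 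Since no structural shortcut of the kind you describe is available, the explicit pairing and the cycle-merging computations constitute essentially the entire proof, and your proposal supplies neither; as it stands, both of its guiding claims are inverted for this case.
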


\begin{proof} We use the following two regular permutation sets of order $m$:

     \smallskip
{\centering
  $ \displaystyle
    \begin{aligned} 
&R_1=(0,m-1)\cdot \mathcal{F}_m=\{ \mu_i=(0, m-1)\sigma_i\ |\ i=0,1,\cdots, m-1 \}; && R_2=\mathcal{F}_m=\{\sigma_0, \sigma_1, \ldots, \sigma_{m-1}\}. 
      \end{aligned}
  $ 
\par}
   \smallskip 

\noindent \underline{Case 1}:  $m\equiv 3 \ (\textrm{mod}\ 8)$ and $3\leqslant c \leqslant \frac{m-5}{2}$. Let $c=3+2t$ where $0 \leqslant t \leqslant \frac{m-11}{4}$. Let $\mathds{I}=\{i \ |\ i \equiv 5, 6\ (\textrm{mod}\ 8)\ \textrm{and} \ 5\leqslant i\leqslant m-13\}$ and let $M_t$ be a subset of size $t$ of $\mathds{I}$. Note that $\mathds{I}$ contains $\frac{m-5}{4}$ elements. Below, we construct two sets of pairs of permutations from $R_1\times R_2$, $D_{T}$ and $D_H$, that contain $c$ pairs and $m-c$ pairs, respectively:

   \smallskip
{\centering
  $ \displaystyle
    \begin{aligned} 
    &D_T&&=&& \{(\mu_{m-2}, \sigma_{m-2}), (\mu_{m-1}, \sigma_{m-1}), (\mu_{0}, \sigma_1)\}\cup \\
    &&&&&\{(\mu_i, \sigma_{m-i-5}), (\mu_{i+5}, \sigma_{m-i-2})\ |\ i\in M_t\ \textrm{and}\ i \equiv 5\ (\textrm{mod}\ 8)\}\cup \\ 
   &&&&& \{(\mu_i, \sigma_{m-i-5}), (\mu_{i+3}, \sigma_{m-i})\ |\ i\in M_t \ \textrm{and} \ i \equiv 6\ (\textrm{mod}\ 8)\};\\
&D_H&&=&&\{(\mu_2, \sigma_0)\}\cup \{(\mu_i, \sigma_{m-i-2})\ | \ i, i-3 \not \in M_t,  i\ \textrm{odd}, \ \textrm{and}\ 1 \leqslant i \leqslant m-4\} \cup \\
&&&&&\{(\mu_i, \sigma_{m-i}) \ |\ i, i-5 \not\in M_t,  i\ \textrm{even}, \ \textrm{and}\ 4 \leqslant i \leqslant m-3\}.
      \end{aligned}
  $ 
\par}
   \smallskip

It is straightforward to verify that $D=D_T\cup D_H$ is indeed a 2-factorization of $\vec{C}_2\wr \overline{K}_m$. See \ref{App} for proof that all pairs of $D_T$ and $D_H$ are truncated hamiltonian and hamiltonian, respectively. 

\noindent \underline{Case 2}: $m \equiv 7\ (\textrm{mod}\ 8)$ and $3 \leqslant c \leqslant \frac{m-1}{2}$. Let $c=2t+3$ where $0 \leqslant t \leqslant \frac{m-7}{4}$. In this case, let $\mathds{I}=\{i \ |\ i \equiv 5, 6\ (\textrm{mod}\ 8)\ \textrm{and}\ 5 \leqslant i \leqslant m-7\}$. This time, the set $\mathds{I}$ contains $\frac{m-1}{4}$ elements. Let $M_t$ be a subset of size $t$ of $\mathds{I}$ and construct $D_T$ and $D_H$ as in Case 1. \end{proof}

Next, we construct a $c$-twined 2-factorization of $\vec{C}_2 \wr \overline{K}_m$ when $\frac{m-5}{2}\leqslant c \leqslant m-2$. In fact, the constructions given in Propositions \ref{prop:1mod6}-\ref{prop:5mod6} apply to $\lceil \frac{m}{3}\rceil+5 \leqslant c \leqslant m-2$. Therefore, there exists some overlap with the constructions given in Propositions \ref{prop:1mod4<} and \ref{prop:3mod4}. 

To address the case $\frac{m-5}{2}\leqslant c \leqslant m-2$, we consider three cases according to the congruency class modulo 6 of $m$. The most complicated construction is given for  $m \equiv 1\ (\textrm{mod}\ 6)$ because, in this case, $m-1$ is not relatively prime with 3.  As for the cases  $m \equiv 3$ or $5\ (\textrm{mod}\ 6)$, we rely on the fact that  $m-1$ is relative prime with 3 to obtain the desired $c$-twined 2-factorizations of $\vec{C}_2 \wr \overline{K}_m$.

\begin{proposition}
\label{prop:1mod6}
Let $m \equiv 1\ (\textrm{mod} \ 6)$ such that $m \geqslant 7$, and let $c$ be an odd integer such that $\frac{m+2}{3} \leq c \leq m-2$. The digraph $\vec{C}_2 \wr \overline{K}_m$ admits a $c$-twined 2-factorization.  
\end{proposition}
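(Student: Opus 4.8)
The plan is to follow the template laid out in the overview preceding Proposition~\ref{prop:evenx2} and applied in Proposition~\ref{prop:m1even}: construct two explicit regular permutation sets $R_1, R_2$ of order $m$, partition a set $D\subset R_1\times R_2$ of $m$ pairs into $D_T$ (with $c$ pairs) and $D_H$ (with $m-c$ pairs), verify that $D$ is a $2$-factorization by checking each permutation of $R_1$ and of $R_2$ occurs exactly once, and then certify that every pair in $D_T$ is truncated hamiltonian and every pair in $D_H$ is hamiltonian. Since $m\equiv 1\ (\textrm{mod}\ 6)$ means $m-1$ \emph{is} divisible by $3$ (so the convenient "shift by a generator of the truncated cyclic group" trick degenerates), I expect the base permutation sets here to be built directly from $\mathcal{F}_m$, likely as $R_1=\mu\cdot\mathcal{F}_m$ and $R_2=\nu\cdot\mathcal{F}_m$ for carefully chosen transpositions or short products $\mu,\nu\in S_m$, so that Lemma~\ref{lem:dec} immediately gives regularity of $R_1$ and $R_2$.

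First I would fix $m=2k+1$ and record, via Remark~\ref{rem:ned}, that $\hat{\sigma}_i=\gamma_i$ for $i\notin\{0,m-1\}$, so that for a truncated hamiltonian pair the relevant product is $\hat\mu\hat\sigma_{i_1}\hat\nu\hat\sigma_{i_2}$, which collapses (using Lemma~\ref{lem:compgam}(S1) to rewrite indices of the form $m-i+t$ as $-i+t+1$) to a single power $\gamma_{\pm 1}$ of the generator whenever the two transpositions and the two $\gamma$-factors cancel appropriately; since $T(\gamma_{\pm1})=2$, that certifies the truncated hamiltonian property. Next I would set $c=3+2t$ (or the appropriate odd parametrization covering $\tfrac{m+2}{3}\le c\le m-2$), introduce an index set $\mathds{I}$ and a size-$t$ subset $M_t\subseteq\mathds{I}$ to toggle individual pairs between $D_T$ and $D_H$, and lay out $D_T$ and $D_H$ as explicit unions of paired families indexed by the parity or residue class of $i$, exactly as in Propositions~\ref{prop:m1even} and~\ref{prop:3mod4}. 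The bookkeeping that each of $R_1,R_2$ is hit exactly once is then a routine counting argument over these residue classes, so I would state it and defer the per-pair product computations to the appendix (\ref{App}), following the convention already established for Propositions~\ref{prop:m7even} and~\ref{prop:m3even}.

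The main obstacle is the hamiltonian verification in $D_H$, i.e.\ showing $T(\mu_{i_1}\sigma_{i_2})=1$ for each pair. Unlike the truncated case, the two extra transpositions coming from the truncations of $\sigma_{i_1},\sigma_{i_2}$ (Construction~\ref{cons:Fodd} places $(m-1,\cdot)$ factors whose second entries depend intricately on $j=\lfloor i/2\rfloor$ and on the parity of $i$) do \emph{not} cancel, so one must compute the full cycle structure of products of the form $\gamma_{i}(m-1,\ast)\,\gamma_{-i+t}(m-1,\ast)$ and check that the result is a single $m$-cycle. Because $3\mid(m-1)$, the "stride" argument that makes such a product a single cycle for $m\equiv 5,7,11\ (\textrm{mod}\ 12)$ fails for the naive shift, which is precisely why a bespoke construction is needed here; the delicate part is choosing $\mu,\nu$ and the pairing of indices so that the composite still traverses all of $\mathds{Z}_m$ in one cycle. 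I would handle this by explicitly writing out the resulting permutation in cycle notation for each residue class of $i$ (as in Cases 2--5 of Proposition~\ref{prop:m1even}), reading off that $T=1$, and organizing the cases so that the finitely many small-$m$ or boundary indices ($i$ near $0$, $m-3$, or $m-1$, where Construction~\ref{cons:Fodd} has exceptional definitions) are treated as separate sub-cases.
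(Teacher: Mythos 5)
Your scaffolding and your diagnosis of the obstruction are both on target: the paper does work inside the framework of Corollary \ref{cor:main}, does take $R_1$ to be a short permutation times $\mathcal{F}_m$ (namely $R_1=(m-1,1,2,3)\cdot\mathcal{F}_m$ and $R_2=\mathcal{F}_m$), and the reason this residue class needs its own proposition is exactly that $\gcd(3,m-1)=3$. But there is a genuine gap: you never produce a pairing, and the one concrete mechanism you commit to for $D_T$ --- cancelling the truncated products down to $\gamma_{\pm 1}$ --- cannot carry this case. To get $\hat{\mu}_{i_1}\hat{\sigma}_{i_2}$ to collapse to a power of $\gamma_1$, the multipliers must fix $m-1$ (as $\gamma_{\pm1}$ do in Propositions \ref{prop:m7even} and \ref{prop:m3even}); but then $\mu_0=\gamma_{\pm1}\sigma_0$ is an $(m-1)$-stabilizer, so by Definition \ref{defn:hp} the exceptional pairs built on $\sigma_0$ and $\sigma_{m-1}$ are locked into $D_H$, and since the $M_t$-toggle exchanges pairs between $D_T$ and $D_H$ two at a time, $|D_T|$ retains the parity of its baseline, which is even in those constructions. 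That is precisely why the $\gamma_{\pm1}$-shift propositions deliver only even $c$: your plan, as stated, produces no odd $c$ at all.

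The paper's fix is structural rather than computational: the multiplier $(m-1,1,2,3)$ deliberately \emph{moves} $m-1$, so $\mu_0$ and $\mu_{m-1}$ are not stabilizers and the two exceptional pairs $(\mu_{m-1},\sigma_{m-1})$ and $(\mu_0,\sigma_1)$ can be placed in $D_T$; on top of these sits a \emph{fixed} truncated family $\{(\mu_i,\sigma_{m-i+1})\ :\ i\equiv 2,3 \pmod 6,\ 3\leqslant i\leqslant m-4\}$, which yields the odd baseline $c_{\min}=\frac{m+2}{3}$, and $M_t\subseteq\{i\ :\ i\equiv 0,4\pmod 6\}$ then toggles additional pairs in twos up to $c=m-2$. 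Your parametrization $c=3+2t$ has no counterpart to this fixed family; a baseline of $3$ with pairwise toggles is the design of Propositions \ref{prop:1mod4<} and \ref{prop:3mod4}, which top out near $c\approx m/2$ and cannot reach $c=m-2$ --- yet the large-$c$ range is the entire burden of this proposition. Note finally that the price of moving $m-1$ is that \emph{none} of the truncated products here is a $\gamma$-power: contrary to your expectation that the truncated side collapses and only the hamiltonian side needs longhand work, STEP 1 of the paper's verification is the heavy one, with products such as $\hat{\mu}_{m-1}\hat{\sigma}_{m-1}$, $\hat{\mu}_0\hat{\sigma}_1=(1,2,3)\gamma_1$, and $\hat{\mu}_i\hat{\sigma}_{m-i+1}=(m-1,1,2,3)\,\gamma_i\,(m-1,j+2)\,(m-1,i+1)\,\gamma_{-i+2}$ computed in full across sub-cases split by the residues of $i$ and $m$ modulo $12$.
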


\begin{proof} We use the following two regular sets of permutations:

\smallskip
{\centering
  $ \scriptstyle
    \begin{aligned}
    &R_1=(m-1,1,2,3) \cdot \mathcal{F}_m=\{\mu_i=(m-1, 1,2,3) \sigma_i\ | \ i=0,1, \ldots, m-1 \}; \\
    &R_2= \mathcal{F}_m=\{\sigma_0, \sigma_1, \ldots, \sigma_{m-1}\}.\\
          \end{aligned}
  $ 
\par}
\smallskip

Let $c=\frac{m+2}{3}+2t$ where $0 \leqslant 2t \leqslant \frac{2m-8}{3}$, $\mathds{I}=\{i \ |\ i \equiv 0, 4\ (\textrm{mod}\ 6)\ \textrm{and}\ 4 \leqslant i \leqslant m-3\}$, and $M_t$ be a subset of size $t$ of $\mathds{I}$.  Below, we construct two sets of pairs of permutations from $R_1\times R_2$, $D_{T}$ and $D_H$, that contain $c$ pairs and $m-c$ pairs, respectively:

 \smallskip
{\centering
  $ \scriptstyle
    \begin{aligned} 
   &D_{T}=&& \{ (\mu_{m-1}, \sigma_{m-1}), (\mu_0, \sigma_1)\} \cup \{(\mu_i, \sigma_{m-i+1}), (\mu_{i+1}, \sigma_{m-i})\ | \ i \in M_t\}\cup\\
   &&&\{(\mu_i, \sigma_{m-i+1}) \ |\ i\equiv 2, 3\ (\textrm{mod}\ 6)\ \textrm{and}\ 3 \leqslant i \leqslant m-4\};\\
&D_H=&&\{(\mu_1, \sigma_2), (\mu_2, \sigma_0)\} \cup \{(\mu_i, \sigma_{m-i})\ |\  i \not\in M_t, \  i\equiv 0, 4\ (\textrm{mod}\ 6),\textrm{and}\  4 \leqslant i \leqslant m-3\}\cup\\
&&&\{(\mu_i, \sigma_{m-i+2}) \ |\  i-1 \not\in M_t,  i\equiv 1, 5\ (\textrm{mod}\ 6), \ \textrm{and}\ 5 \leqslant i \leqslant m-2 \}.
      \end{aligned}
  $ 
\par}
   \smallskip

It is tedious but straightforward to verify that $D=D_T\cup D_H$ is indeed a 2-factorization of $\vec{C}_2\wr \overline{K}_m$. See \ref{App} for proof that all pairs of $D_T$ and $D_H$ are truncated hamiltonian and hamiltonian, respectively. \end{proof}

\begin{proposition}
\label{prop:m3cong6}
Let $m \equiv 3\ (\textrm{mod} \ 6)$ such that $m \geqslant 9$,  and let $c$ be an odd integer such that $\frac{m}{3}+4 \leqslant c \leqslant m-2$. The digraph $\vec{C}_2 \wr \overline{K}_m$ admits a $c$-twined 2-factorization.  
\end{proposition}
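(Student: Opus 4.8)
The plan is to follow the template established in Propositions \ref{prop:1mod4<}--\ref{prop:1mod6}: fix two regular permutation sets of order $m$, namely $R_2 = \mathcal{F}_m$ and $R_1 = \mu \cdot \mathcal{F}_m$ for a suitably chosen short-cycle prefix $\mu$ supported near the special indices $\{0,1,2,3,m-1\}$ (as in Proposition \ref{prop:1mod6}, a cycle of the form $(m-1,1,2,3)$), so that by Lemma \ref{lem:dec} both $R_1$ and $R_2$ are regular. I would then write $c = \frac{m}{3} + 4 + 2t$, noting that $m \equiv 3 \pmod 6$ forces $m/3$ to be an odd integer, so $\frac{m}{3}+4$ is odd and $t$ ranges over $0 \leqslant t \leqslant \frac{m-9}{3}$ to cover every odd $c$ in $[\frac{m}{3}+4,\, m-2]$. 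The pairs of $D = D_T \cup D_H \subseteq R_1 \times R_2$ are organized by the residue of the first index modulo $6$ (matching the companion Proposition \ref{prop:1mod6}): a fixed ``base'' family supplies the $\frac{m}{3}+4$ truncated pairs obtained when $t=0$, and a size-$t$ subset $M_t$ of an index set $\mathds{I}$, defined by two residue classes modulo $6$ inside a suitable range, promotes $2t$ further pairs from $D_H$ into $D_T$.

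Once the families are written down, the routine verifications proceed exactly as in the earlier propositions. First I would check that $D$ is a $2$-factorization by confirming that each $\mu_i$ occurs once as a first coordinate and each $\sigma_j$ once as a second coordinate across $D_T \cup D_H$; this simultaneously shows $D_T$ and $D_H$ are disjoint. For the truncated pairs I would use Remark \ref{rem:ned} together with the prefix computation to reduce each truncation product $\hat{\mu}_i \hat{\sigma}_j$ to a single power $\gamma_{i+j+\delta}$ of $\gamma_1$, choosing the index offsets so that $i+j+\delta \equiv \pm 1 \pmod{m-1}$; since $T(\gamma_{\pm 1}) = 2$ by Lemma \ref{lem:compgam}, each such pair is truncated hamiltonian. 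The bulk of the work is the hamiltonian-pair verification in $D_H$: here I would expand each product $\mu_i \sigma_j$ using the factorizations $\sigma_{m-i+t} = \gamma_{k}\,(m-1, \ell)$ tabulated in Lemma \ref{lem:comput} (Table \ref{tab:2}) and the action formulas of Lemma \ref{lem:compgam}, and exhibit the resulting permutation as a single cycle of length $m$.

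The main obstacle is precisely this single-cycle check for $D_H$, and it is here that the hypothesis $m \equiv 3 \pmod 6$ is exploited rather than fought against. Since $m \equiv 3 \pmod 6$ gives $\gcd(m-1,3) = 1$, the element $\gamma_3$ (and more generally the step-$3$ substructure of the truncated cyclic group $G_{m-1}$ of Definition \ref{def:gammas}) is a single $(m-1)$-cycle rather than splitting into three shorter cycles; this is exactly what keeps the products $\mu_i \sigma_j$ in $D_H$ from decomposing and is what makes the construction markedly simpler than the $m \equiv 1 \pmod 6$ case of Proposition \ref{prop:1mod6}, where $3 \mid (m-1)$. I expect the verification to break into a handful of congruence cases for $i$ modulo $6$, plus separate treatment of the finitely many special indices $\{0, m-3, m-1\}$ where $\mathcal{F}_m$ is defined by hand in Construction \ref{cons:Fodd} and where Lemma \ref{lem:comput} does not directly apply. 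The lengthy explicit cycle computations would be deferred to \ref{App}, as in the companion propositions.

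Finally, since $D_T$ and $D_H$ are disjoint with $|D_T| = c$ and $|D_H| = m-c$, the set $D$ is the desired $c$-twined $2$-factorization. The remaining low values $c < \frac{m}{3}+4$ are covered by Propositions \ref{prop:1mod4<} and \ref{prop:3mod4}, so that together these results settle all odd $c$ with $3 \leqslant c \leqslant m-2$, consistent with the case analysis of Theorem \ref{thm:oddmc}.
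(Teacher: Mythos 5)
Your proposal reproduces the scaffolding that the paper shares across all of Propositions \ref{prop:1mod4<}--\ref{prop:5mod6} (take $R_2=\mathcal{F}_m$ and $R_1=\mu\cdot\mathcal{F}_m$ via Lemma \ref{lem:dec}, split $D=D_T\cup D_H$, promote $2t$ pairs via a subset $M_t$ of a residue-defined index set, check the first/second-coordinate bijection, defer computations to the appendix), and your counting is right ($\frac{m}{3}+4$ is odd, $t$ ranges up to $\frac{m-9}{3}$, the top value is $m-2$). But the actual content of the proposition --- which pairs go into $D_T$ and $D_H$, and why they are truncated hamiltonian and hamiltonian --- is left generic, and the one concrete verification mechanism you do commit to is wrong for this regime. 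You claim each truncation product $\hat{\mu}_i\hat{\sigma}_j$ reduces to a pure power $\gamma_{\pm 1}$. With $\mu_i=\mu\sigma_i$ for a prefix $\mu$ fixing $m-1$, Remark \ref{rem:ned} gives $\hat{\mu}_i\hat{\sigma}_j=\mu\,\gamma_{i+j}$ (indices read via Lemma \ref{lem:compgam}(S1)), so this product is a power of $\gamma_1$ only if $\mu$ itself lies in $G_{m-1}$ --- which your proposed $4$-cycle prefix does not, and cannot, since no nonidentity power of $\gamma_1$ fixes more than the single point $m-1$. Prefixes inside $G_{m-1}$ are exactly the even-$c$ constructions (Propositions \ref{prop:m7even} and \ref{prop:m3even}), whose pairing structure yields an even number of truncated pairs; for odd $c$ one needs something else.

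What the paper actually does is choose $\mu=(1,2,3,4)$ (not $(m-1,1,2,3)$, which is reserved for the harder case $m\equiv 1\pmod 6$ of Proposition \ref{prop:1mod6}), take three hand-built special pairs $(\mu_{m-4},\sigma_{m-4})$, $(\mu_{m-2},\sigma_{m-1})$, $(\mu_{m-1},\sigma_{m-2})$ to make the truncated count odd, and arrange the remaining truncated pairs in twos of the form $(\mu_i,\sigma_{m-i-5})$, $(\mu_{i+1},\sigma_{m-i-4})$ so that the truncation products are $(1,2,3,4)\gamma_{-4}$ and $(1,2,3,4)\gamma_{-2}$ (and $(1,2,3,4)\gamma_{-6}$ for the first special pair). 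None of these is $\gamma_{\pm 1}$; each must be expanded explicitly and shown to be a single $(m-1)$-cycle fixing $m-1$, and it is precisely here --- in the $4$-cycle merging the $\gcd(m-1,2t)$ cycles of $\gamma_{-2t}$ --- that $\gcd(m-1,3)=1$ is used, rather than through $\gamma_3$ being a long cycle as you suggest. The hamiltonian pairs $(\mu_i,\sigma_{m-i-4})$ likewise require case-by-case single-cycle expansions via Lemma \ref{lem:comput}. So while your plan points in the right direction and would survive the routine bookkeeping, it has not identified the construction, and the verification step as you describe it would fail if carried out literally; for a proposition whose entire substance is the explicit construction and its cycle-structure checks, that is a genuine gap rather than a deferrable detail.
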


\begin{proof} We will construct a $c$-twined 2-factorization of  $\vec{C}_2 \wr \overline{K}_m$ by using the following two regular sets of $m$ permutations:

      \smallskip
{\centering
  $ \scriptstyle
    \begin{aligned}
    &R_1=(1,2,3,4) \cdot \mathcal{F}_m=\{\mu_i=(1,2,3,4) \sigma_i\ | \ i=0,1,\cdots, m-1\}; R_2= \mathcal{F}_m=\{\sigma_0, \sigma_1, \ldots, \sigma_{m-1}\}.\\
          \end{aligned}
  $ 
\par}
\smallskip

Let $c=\frac{m}{3}+4+2t$, where $0\leqslant 2t \leqslant \frac{2m}{3}-6$ and let $\mathds{I}=\{i \ |\ i \equiv 3, 5\ (\textrm{mod}\ 6)\ \textrm{and}\ 3\leqslant i \leqslant m-10\}$ and let $M_t$ be a subset of size $t$ of $\mathds{I}$. Note that $\mathds{I}$ contains $\frac{m}{3}-3$ elements. Below, we construct two sets of pairs of permutations from $R_1\times R_2$, $D_{T}$ and $D_H$, that contain $c$ pairs and $m-c$ pairs, respectively:

 \smallskip
{\centering
  $ \scriptstyle
    \begin{aligned} 
   &D_{T}&&=&&\{(\mu_{m-4}, \sigma_{m-4}),  (\mu_{m-2}, \sigma_{m-1}), (\mu_{m-1}, \sigma_{m-2})\}\cup\\
   &&&&&\{(\mu_i, \sigma_{m-i-5}), (\mu_{i+1}, \sigma_{m-i-4})\ | \ i \equiv 2 \ (\textrm{mod} \ 6)\  \textrm{and}\  2\leqslant i \leqslant m-15\}\cup \\
   &&&&& \{(\mu_i, \sigma_{m-i-5}), (\mu_{i+1}, \sigma_{m-i-4})\ | \ i\in \{m-10, m-8, m-6\}\}\cup \\
   &&&&&\{(\mu_i, \sigma_{m-i-5}), (\mu_{i+1}, \sigma_{m-i-4}) \ | \ i \in M_t\};\\
   &D_H&&=&&\{ (\mu_{m-3}, \sigma_0), (\mu_0, \sigma_{m-3})\}\cup \\ 
   &&&&&\{(\mu_i, \sigma_{m-i-4})\ |\ i\equiv 0,3,4,5\ (\textrm{mod}\ 6), 3\leqslant i \leqslant m-9, \textrm{and}\ i, i-1 \not\in M_t\}.
   \end{aligned}
  $ 
\par}
   \smallskip
   
It is tedious but straightforward to verify that $D=D_T\cup D_H$ is indeed a 2-factorization of $\vec{C}_2\wr \overline{K}_m$. See \ref{App} for proof that all pairs of $D_T$ and $D_H$ are truncated hamiltonian and hamiltonian, respectively. \end{proof}

\begin{proposition}
\label{prop:5mod6}
Let $m \equiv 5\ (\textrm{mod}\ 6)$ such that $m \geqslant 11$, and let $c$ be an odd integer such that $\frac{m+1}{3}+5 \leq c \leq m-2$. The digraph $\vec{C}_2 \wr \overline{K}_m$ admits a $c$-twined 2-factorization.  
\end{proposition}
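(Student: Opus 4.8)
The plan is to follow the same template as Propositions~\ref{prop:m3cong6} and \ref{prop:1mod6}, exploiting the fact that $m \equiv 5\ (\textrm{mod}\ 6)$ forces $m-1 \equiv 4\ (\textrm{mod}\ 6)$, so that $\gcd(m-1,3)=1$. This is exactly what makes the $m\equiv 5\ (\textrm{mod}\ 6)$ case behave like the $m\equiv 3\ (\textrm{mod}\ 6)$ case of Proposition~\ref{prop:m3cong6} rather than the harder $m\equiv 1\ (\textrm{mod}\ 6)$ case. I would take $R_2=\mathcal{F}_m$ and $R_1=\mu\cdot\mathcal{F}_m$ for a suitably chosen short permutation $\mu$ (a transposition or a $4$-cycle supported on $\{1,2,3,4\}$, analogous to the choice $\mu=(1,2,3,4)$ in Proposition~\ref{prop:m3cong6}); both are regular permutation sets of order $m$ by Lemma~\ref{lem:fodd2} together with Lemma~\ref{lem:dec}. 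Writing $\mu_i=\mu\sigma_i$, every pair in the construction will lie in $R_1\times R_2$.

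Next I would parametrize $c=\frac{m+1}{3}+5+2t$ with $0\leqslant t\leqslant \frac{m-11}{3}$. Note that $m-11\equiv 0\ (\textrm{mod}\ 6)$, so this range is an integer interval, and since $\frac{m+1}{3}$ is even the value $c$ runs over exactly the odd integers in $[\frac{m+1}{3}+5,\,m-2]$. The set $D_T$ would consist of a fixed collection of $\frac{m+1}{3}+5$ ``seed'' truncated pairs, together with $2t$ further truncated pairs grouped into $t$ two-element blocks of the form $\{(\mu_i,\sigma_{m-i+s_1}),(\mu_{i+1},\sigma_{m-i+s_2})\}$ indexed by $i\in M_t$, where $M_t$ is a size-$t$ subset of an index set $\mathds{I}$ of residues modulo $6$ with $|\mathds{I}|=\frac{m-11}{3}$; the block size $2$ is precisely what keeps the number of truncated pairs odd. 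The set $D_H=D\setminus D_T$ would gather the complementary pairs, typically of the form $(\mu_i,\sigma_{m-i+s})$ over the indices not selected into $M_t$, arranged so that $D_T$ and $D_H$ jointly exhaust each of $R_1$ and $R_2$ exactly once.

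The verification splits into the four checks used throughout Section~\ref{sec:modd}. First, that $D=D_T\cup D_H$ is a $2$-factorization of $\vec{C}_2\wr\overline{K}_m$ amounts to confirming that each $\sigma_i$ (respectively each $\mu_i$) occurs once as a second (respectively first) coordinate; this is residue bookkeeping modulo $6$ over the stated index ranges. Second, for each pair in $D_T$ I would invoke Remark~\ref{rem:ned} to replace $\hat{\sigma}_i$ by $\gamma_i$, use Lemma~\ref{lem:comput} to write $\sigma_{m-i+s}$ as $\gamma_{\,\cdot}\,(m-1,\cdot)$, and compute the product $\hat{\mu}_i\hat{\sigma}_{m-i+s}$, aiming to show it equals $\gamma_1$ or $\gamma_{-1}$; since $T(\gamma_1)=T(\gamma_{-1})=2$, such a pair is a truncated hamiltonian pair by Definition~\ref{defn:hp}. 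Third, for each pair in $D_H$ I would compute $\mu_i\sigma_{m-i+s}$ directly via Lemmas~\ref{lem:comput} and \ref{lem:compgam} and exhibit its disjoint-cycle form as a single $m$-cycle, so that $T=1$. As elsewhere in the paper, these explicit products would be deferred to \ref{App}.

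The main obstacle is the hamiltonian-pair step. The permutation $\mu$ introduced to realize the seed pairs perturbs the images of the few points in its support, so although most of each product $\mu_i\sigma_{m-i+s}$ behaves like the clean truncated-cyclic product $\gamma_i\gamma_{-i+s}$, one must thread the cycle carefully through the handful of perturbed positions and confirm that it closes into exactly one $m$-cycle rather than splitting. This is delicate precisely for the seed pairs near indices $m-1,m-2,m-4$ and at the two boundary residues of $\mathds{I}$, and one must check that the smallest admissible order $m=11$ already fits the generic pattern rather than degenerating. Keeping the index ranges for $\mathds{I}$, the seed pairs, and $D_H$ mutually disjoint modulo $6$ is the bookkeeping that reconciles the $2$-factorization check with the single-cycle check; the one residual small case left after combining this proposition with Proposition~\ref{prop:3mod4} is exactly what Lemma~\ref{lem:m11} is designed to patch.
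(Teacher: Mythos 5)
Your strategy reproduces the paper's proof almost exactly: the paper likewise takes $R_2=\mathcal{F}_m$ and $R_1=(1,2,3,4)\cdot\mathcal{F}_m$ (your suggested $4$-cycle on $\{1,2,3,4\}$ is precisely its choice), parametrizes $c=\frac{m+1}{3}+5+2t$, builds $D_T$ from a fixed family of $\frac{m+1}{3}+5$ seed truncated pairs (the three pairs $(\mu_{m-4},\sigma_{m-4})$, $(\mu_{m-2},\sigma_{m-1})$, $(\mu_{m-1},\sigma_{m-2})$ together with two-pair blocks $\{(\mu_i,\sigma_{m-i-5}),(\mu_{i+1},\sigma_{m-i-4})\}$ at $i\equiv 2\ (\textrm{mod}\ 6)$ and at $i\in\{m-10,m-8,m-6\}$), adds $t$ further such blocks indexed by $M_t\subseteq\mathds{I}$ with $\mathds{I}=\{i\ |\ i\equiv 0,4\ (\textrm{mod}\ 6),\ 4\leqslant i\leqslant m-13\}$, lets $D_H$ consist of $(\mu_{m-3},\sigma_0)$, $(\mu_0,\sigma_{m-3})$ and pairs $(\mu_i,\sigma_{m-i-4})$, and defers the cycle computations to \ref{App}; your observation that Lemma \ref{lem:m11} patches the residual $m=11$ cases is also correct.

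However, the proposal has genuine gaps. First, the one concrete verification criterion you commit to is impossible here: since $(m-1)^{\mu_i}=(m-1)^{\sigma_i}$ and $\hat{\sigma}_i=\gamma_i$ (Remark \ref{rem:ned}), every truncated product has the form $\hat{\mu}_i\hat{\sigma}_j=(1,2,3,4)\,\gamma_{i+j}$, which can never equal $\gamma_1$ or $\gamma_{-1}$; that test is the mechanism of Propositions \ref{prop:m7even} and \ref{prop:m3even}, where $R_1,R_2$ are $\gamma_{\pm 1}\cdot\mathcal{F}_m$. In the present proposition the paper must instead trace the full cycle structure of $(1,2,3,4)\gamma_{-2}$, $(1,2,3,4)\gamma_{-4}$ and $(1,2,3,4)\gamma_{-6}$ (splitting into sub-cases on the parity of $k$, i.e.\ on $m$ modulo $4$) and of the two seed products involving $\sigma_{m-1}$ and $\sigma_{m-2}$ — which Remark \ref{rem:ned} does not even cover — to conclude $T=2$; your sketch supplies no argument at exactly the points where the construction could fail. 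Second, the seed pairs and the shifts $s_1,s_2$ are never exhibited, and in a proposition whose entire content is an explicit construction plus its verification, the template is not yet a proof: the regularity bookkeeping, the collision-freeness of $M_t$ with the seed blocks, and the single-$m$-cycle checks for $D_H$ all depend on the precise choices. Third, your ranges $|\mathds{I}|=\frac{m-11}{3}$ and $0\leqslant t\leqslant\frac{m-11}{3}$ disagree with the paper's $\mathds{I}$, which has $\frac{m-2}{3}-4$ elements, so that the paper's stated bound $0\leqslant 2t\leqslant\frac{2m-1}{3}-9$ reaches only $c=m-4$ as literally written; reaching the claimed endpoint $c=m-2$ is precisely the kind of off-by-one range issue your deferred bookkeeping would have to resolve rather than assume.
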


\begin{proof} We will construct a $c$-twined 2-factorization of  $\vec{C}_2 \wr \overline{K}_m$ by using the following two regular sets of $m$ permutations:

      \smallskip
{\centering
  $ \scriptstyle
    \begin{aligned}
    &R_1=(1,2,3,4) \cdot \mathcal{F}_m=\{\mu_i=(1,2,3,4)\sigma_i\ | \ i=0,1, \ldots, m-1\}; &R_2= \mathcal{F}_m=\{\sigma_0, \sigma_1, \ldots, \sigma_{m-1}\}.\\
          \end{aligned}
  $ 
\par}
\smallskip

Let $c={\frac{m+1}{3}+5}+2t$ where $0\leqslant 2t \leqslant \frac{2m-1}{3}-9$ and let $\mathds{I}=\{i \ |\ i \equiv 0, 4\ (\textrm{mod}\ 6)\ \textrm{and}\ 4 \leqslant i \leqslant m-13\}$, and $M_t$ be a subset of size $t$ of $\mathds{I}$. Observe that $\mathds{I}$ contains $\frac{m-2}{3}-4$ elements. Below, we construct two sets of pairs of permutations from $R_1\times R_2$, $D_{T}$ and $D_H$, that contain $c$ pairs and $m-c$ pairs, respectively:

 \smallskip
{\centering
  $ \scriptstyle
    \begin{aligned} 
   &D_{T}&&=&&\{(\mu_{m-4}, \sigma_{m-4}),  (\mu_{m-2}, \sigma_{m-1}), (\mu_{m-1}, \sigma_{m-2})\}\cup\\
   &&&&&\{(\mu_i, \sigma_{m-i-5}), (\mu_{i+1}, \sigma_{m-i-4})\ | \ i \equiv 2 \ (\textrm{mod} \ 6)\  \textrm{and}\  2\leqslant i \leqslant m-15\}\cup \\
   &&&&& \{(\mu_i, \sigma_{m-i-5}), (\mu_{i+1}, \sigma_{m-i-4})\ | \ i\in \{m-10, m-8, m-6\}\}\cup \\
   &&&&&\{(\mu_i, \sigma_{m-i-5}), (\mu_{i+1}, \sigma_{m-i-4}) \ | \ i \in M_t\};\\
   &D_H&&=&&\{(\mu_{m-3}, \sigma_0), (\mu_0, \sigma_{m-3})\}\cup\\
   &&&&&\{(\mu_i, \sigma_{m-i-4})\ | \ i \equiv 0,1, 4,5 \ (\textrm{mod} \ 6), 1\leqslant i \leqslant m-11, \textrm{and}\ i, i-1 \not\in M_t\}.
   \end{aligned}
  $ 
\par}
   \smallskip
   
It is tedious but straightforward to verify that $D=D_T\cup D_H$ is indeed a 2-factorization of $\vec{C}_2\wr \overline{K}_m$. See \ref{App} for proof that all pairs of $D_T$ and $D_H$ are truncated hamiltonian and hamiltonian, respectively. \end{proof}

When $m=11$, neither of Propositions \ref{prop:3mod4} and \ref{prop:5mod6} gives rise to a $c$-twined 2-factorization of $\vec{C}_2 \wr \overline{K}_m$ for $c\in \{5,7\}$. We remedy this omission in Lemma \ref{lem:m11} below. 

\begin{lemma}
\label{lem:m11}
Let $m=11$ and $c\in \{5,7\}$. The digraph $\vec{C}_2 \wr \overline{K}_m$ admits a $c$-twined 2-factorization. 
\end{lemma}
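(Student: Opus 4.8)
The plan is to handle $m=11$ directly by exhibiting explicit $c$-twined $2$-factorizations, exactly as in the earlier propositions but with no congruence parameter, since only the two sporadic values $c=5$ and $c=7$ remain uncovered by Propositions \ref{prop:3mod4} and \ref{prop:5mod6}. Here $m=2k+1$ with $k=5$, and $\mathcal{F}_{11}=\{\sigma_0,\ldots,\sigma_{10}\}$ from Construction \ref{cons:Fodd} is a regular permutation set of order $11$ by Lemma \ref{lem:fodd2}. I would first observe that $5=3+2\cdot 1$ and $7=3+2\cdot 2$, so both targets fit the template $c=3+2t$ with $t\in\{1,2\}$ already used elsewhere; this suggests building one family whose truncated part consists of three fixed ``base'' truncated hamiltonian pairs together with $2t$ further pairs indexed by a size-$t$ subset $M_t$ of a two-element index set $\mathds{I}$, so that $|D_T|=3+2t$ and $|D_H|=8-2t$.

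Concretely, I would take $R_2=\mathcal{F}_{11}$ and $R_1=\gamma_s\cdot\mathcal{F}_{11}=\{\mu_a=\gamma_s\sigma_a \mid a\in\mathds{Z}_{11}\}$ for a suitable shift $s\in\mathds{Z}_{10}$ (allowing, if more freedom is needed to make the hamiltonian pairs work out, a short transposition product $\mu$ of the type used in Propositions \ref{prop:1mod6}--\ref{prop:5mod6}). By Lemma \ref{lem:dec}, $R_1$ is automatically a regular permutation set of order $11$, so any list of $11$ pairs from $R_1\times R_2$ in which each first coordinate and each second coordinate is used exactly once is a $2$-factorization of $\vec{C}_2\wr\overline{K}_{11}$. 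I would then write down, for $t\in\{1,2\}$, the explicit partition $D=D_T\cup D_H$ as a list of pairs $(\mu_a,\sigma_b)$ and confirm the $2$-factorization condition by reading off from the list that the first index $a$ and the second index $b$ each range over $\mathds{Z}_{11}$ bijectively.

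The verification then splits in the usual two steps. For the truncated pairs, all indices are chosen to avoid $\{0,10\}$, so Remark \ref{rem:ned} gives $\hat\sigma_b=\gamma_b$ and, since $\mu_a=\gamma_{s+a}(m-1,x_a)$, also $\hat\mu_a=\gamma_{s+a}$; hence the product of the truncations is $\gamma_{s+a+b}=\gamma_1^{\,s+a+b}$. Because $\gamma_1$ is a $10$-cycle on $\{0,\ldots,9\}$ fixing $10$, Lemma \ref{lem:compgam} yields $T(\gamma_r)=\gcd(10,r)+1$, so each base pair and each $M_t$-indexed pair is arranged to satisfy $\gcd(s+a+b,10)=1$, forcing $T=2$ and hence truncated hamiltonicity. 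For the hamiltonian pairs I would compute the products $\mu_a\sigma_b=\gamma_s\,\sigma_a\sigma_b$ explicitly and check $T=1$, deferring these routine multiplications to \ref{App} as in the earlier proofs; the two pairs built from the non-generic permutations $\sigma_0=id$ and $\sigma_{10}$ (whose truncations are not clean powers of $\gamma_1$) I would place in $D_H$ and verify by hand, using $T(\sigma_{10})=1$ from Construction \ref{cons:Fodd}.

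The main obstacle is that the hamiltonicity of the $D_H$ pairs cannot be read off a single residue condition the way truncated hamiltonicity can: because each $\sigma_a=\gamma_a(m-1,x_a)$ carries an extra transposition involving the symbol $m-1$, the products $\sigma_a\sigma_b$ are genuine $11$-letter computations and must be shown to be single cycles one at a time. The design problem is therefore to choose the shift $s$ and the two-element set $\mathds{I}$ so that exactly the intended $3+2t$ pairs meet $\gcd(s+a+b,10)=1$ while every one of the remaining $8-2t$ products is simultaneously a single $11$-cycle; for the fixed small value $m=11$ this is a finite search, which I would settle by presenting the explicit pair lists for $c=5$ and $c=7$ and carrying out the direct computations in \ref{App}.
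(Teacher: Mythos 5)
Your overall strategy---fix $m=11$, build $D=D_T\cup D_H$ inside $R_1\times R_2$ with $R_2=\mathcal{F}_{11}$ and $R_1$ a twist of $\mathcal{F}_{11}$, certify truncated pairs via $\hat{\mu}_a\hat{\sigma}_b=\gamma_{s+a+b}$ together with $T(\gamma_r)=\gcd(10,r)+1$, and check the hamiltonian pairs by direct multiplication---is exactly the paper's mode of argument, and your supporting observations are sound: Lemma \ref{lem:dec} does give regularity of $\gamma_s\cdot\mathcal{F}_{11}$, Remark \ref{rem:ned} gives $\hat{\sigma}_b=\gamma_b$ for $b\notin\{0,10\}$, and since $\gamma_s$ fixes $10$ one indeed gets $\hat{\mu}_a=\gamma_{s+a}$. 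The genuine gap is that Lemma \ref{lem:m11} is a pure existence statement whose entire mathematical content is the two explicit lists of eleven pairs, and you never produce them. You end by asserting that the design problem ``is a finite search, which I would settle by presenting the explicit pair lists''---a promissory note, not a proof. Until the lists are written down and each product (or truncated product) is computed to have $T=1$ (or $T=2$), nothing has been established; this verification is precisely what the paper supplies, exhibiting concrete $D_T$ and $D_H$ for $c=5$ and $c=7$ and computing all twenty-two products in \ref{App}.

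There is also a real risk that your proposed search space is too narrow, so the deferred search is not known to succeed. You restrict $R_1$ to $\gamma_s\cdot\mathcal{F}_{11}$ (with only a vague hedge about other multipliers) so that truncated pairs can be read off the residue condition $\gcd(s+a+b,10)=1$. The paper's actual constructions use the multipliers $(0,10)$ for $c=5$ and $(10,1,2,3,4,5)$ for $c=7$, neither of which fixes $m-1=10$; as a consequence the $(m-1)$-stabilizer of the twisted set is no longer the index-$0$ permutation, the truncations of the twisted permutations are not powers of $\gamma_1$, and every pair in $D_T$ is certified as a $10$-cycle plus fixed point by direct computation rather than by any congruence. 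You offer no feasibility argument that a pure shift admits, simultaneously, $3+2t$ residue-certified truncated pairs and $8-2t$ hamiltonian pairs---and the hamiltonian side, as you yourself concede, obeys no residue criterion. Your choice to banish all index-$0$ and index-$10$ pairs to $D_H$ makes matters tighter still: the paper places $(\sigma_{10},\mu_{10})$ in $D_T$ in both cases (legitimately, since $\sigma_{10}$ is not an $(m-1)$-stabilizer), whereas under your convention the $c=7$ case forces seven of the nine ``interior'' pairs to be truncated with only two interior hamiltonian pairs left over. The gap is closed only by doing what the paper does: exhibit the lists and verify them pair by pair.
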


\begin{proof} We consider two cases, one for each $c\in \{5,7\}$. 

\noindent \underline{Case 1}: $c=5$. Let

      \smallskip
{\centering
  $ \scriptstyle
    \begin{aligned}
    &R_1= \mathcal{F}_{11}=\{\sigma_0, \sigma_1, \ldots, \sigma_{10}\}; R_2= (0, 10)\cdot  \mathcal{F}_{11}=\{\mu_i=(0,10)\sigma_i\ | \ i=0,1, \ldots, 10\}.\\
          \end{aligned}
  $ 
\par}
   \smallskip

\noindent We construct the following two sets of pairs of permutations from $R_1\times R_2$:

      \smallskip
{\centering
  $ \scriptstyle
    \begin{aligned}
&D_T&&=&&\{(\sigma_1, \mu_0), (\sigma_5, \mu_1), (\sigma_8, \mu_6), (\sigma_9, \mu_9), (\sigma_{10}, \mu_{10})\};\\
&D_H&&=&&\{(\sigma_2, \mu_7), (\sigma_3, \mu_8), (\sigma_4, \mu_5), (\sigma_6, \mu_3), (\sigma_7, \mu_4), (\sigma_0, \mu_2) \}.
          \end{aligned}
  $ 
\par}
   \smallskip
 \noindent See \ref{App} for proof that all pairs in $D_T$ and $D_H$ are truncated hamiltonian and hamiltonian, respectively.
 
In conclusion, the set $D=D_T\cup D_H$ is the desired $5$-twined 2-factorization of $\vec{C}_{2}\wr \overline{K}_{11}$. 

\noindent \underline{Case 2}: $c=7$. Let

      \smallskip
{\centering
  $ \scriptstyle
    \begin{aligned}
    &R_1=(10,1,2,3,4,5) \cdot \mathcal{F}_{11}=\{ \mu_i=(10,1,2,3,4,5) \sigma_i\ | \ i=0,1, \ldots, 10\}; \\
    &R_2= \mathcal{F}_{11}=\{\sigma_0, \sigma_1, \ldots, \sigma_{10}\}. \\
          \end{aligned}
  $ 
\par}
   \smallskip

\noindent We construct the following two sets of pairs of permutations from $R_1\times R_2$:

      \smallskip
{\centering
  $ \scriptstyle
    \begin{aligned}
&D_T&&=&&\{(\mu_2, \sigma_6), (\mu_3, \sigma_1), (\mu_5, \sigma_9), (\mu_6, \sigma_8), (\mu_7, \sigma_7), (\mu_9, \sigma_5), (\sigma_{10}, \mu_{10})\};\\
&D_H&&=&&\{(\mu_1, \sigma_4), (\mu_4, \sigma_0), (\mu_{8}, \sigma_3), (\mu_0, \sigma_{2})\}.
          \end{aligned}
  $ 
\par}
   \smallskip
 \noindent See \ref{App} for proof that all pairs in $D_T$ and $D_H$ are truncated hamiltonian and hamiltonian, respectively.
 
In conclusion, the set $D=D_T\cup D_H$ is the desired $7$-twined 2-factorization of $\vec{C}_{2}\wr \overline{K}_{11}$. \end{proof}

\section{Acknowledgements}
The author would like to thank her Ph.D.~supervisor Mateja \v{S}ajna for her support and for verifying all the details of this paper. The author was supported by the Natural Sciences and Engineering Research Council of Canada (NSERC) Post Graduate Scholarship program and the Pacific Institute for the Mathematical Sciences (PIMS).

\pagebreak

\appendix
\section{Example of a regular permutation set of odd order}
\label{App2}

Below we give an example of Construction \ref{cons:Fodd}.  

\noindent {\bf Example A.1.} In this example, we construct $\mathcal{F}_{15}$. First, we list the elements of $G_{14}$:

   \medskip
{\centering
  $ \displaystyle
    \begin{aligned} 
&\gamma_0=id;\\   
&\gamma_1=(0, 1,2,3,4,5,6,7,8,9,10,11,12,13)(14);\\
&\gamma_2=(0, 2, 4, 6, 8, 10, 12)(1, 3, 5, 7, 9, 11, 13)(14);\\
&\gamma_3=(0, 3, 6, 9, 12, 1, 4, 7, 10, 13, 2, 5, 8, 11)(14);\\
&\gamma_4=(0, 4, 8, 12, 2, 6, 10)(1, 5, 9, 13, 3, 7, 11)(14);\\
&\gamma_5=(0, 5, 10, 1, 6, 11, 2, 7, 12, 3, 8, 13, 4, 9)(14);\\
&\gamma_6=(0, 6, 12, 4, 10, 2, 8)(1, 7, 13, 5, 11, 3, 9)(14);\\
&\gamma_7=(0,7)(1, 8)(2, 9)(3, 10)(4, 11)(5, 12)(6, 13)(14);\\
&\gamma_8=( 0, 8, 2, 10, 4, 12, 6)(1, 9, 3, 11, 5, 13, 7)(14);\\
&\gamma_9=(0, 9, 4, 13, 8, 3, 12, 7, 2, 11, 61, 10, 5)(14);\\
&\gamma_{10}=(0, 10, 6, 2, 12, 8, 4)(1, 11, 7, 3, 13, 9, 5)(14);\\
&\gamma_{11}=(0, 11, 8, 5, 2, 13, 10, 7, 4, 1, 12, 9, 6, 3 )(14);\\
&\gamma_{12}=(0, 12, 10, 8, 6, 4, 2)(1, 13, 11, 9, 7, 5, 3)(14);\\
&\gamma_{13}=(0, 13, 12, 11, 10, 9, 8, 7, 6, 5, 4, 3, 2, 1)(14).\\
      \end{aligned}
  $ 
\par}
\medskip 

Note that $m=2k+1$ where $k=7$. Below, we list each of the 15 permutations in $\mathcal{F}_{15}$:

   \medskip
{\centering
  $ \displaystyle
    \begin{aligned} 
&\sigma_0=id;\\
&\sigma_1=\gamma_1(14, 2)=(14, 2, 3, 4, 5, 6, 7, 8, 9, 10, 11, 12, 13, 0, 1);\\
&\sigma_2=\gamma_2(14, 9)=(14, 9, 11, 13, 1, 3, 5, 7)(2, 4, 6, 8, 10, 12, 0);\\
&\sigma_3=\gamma_3(14, 3)=(14, 3, 6, 9, 12, 1, 4, 7, 10, 13, 2, 5, 8, 11, 0);\\
&\sigma_4=\gamma_4(14, 10)=(14, 10, 0, 4, 8, 12, 2, 6)(1, 5, 9, 13, 3, 7, 11);\\
&\sigma_5=\gamma_5(14, 4)=(14, 4, 9, 0, 5, 10, 1, 6, 11, 2, 7, 12, 3, 8, 13);\\
&\sigma_6=\gamma_6(14, 11)=(14, 11, 3, 9, 1, 7, 13, 5)(2, 8, 0, 6, 12, 4, 10);\\
&\sigma_7=\gamma_7(14, 5)=(14, 5, 12)(1, 8)(2, 9)(3, 10)(4, 11)(6, 13)(0, 7);\\
&\sigma_8=\gamma_8(14, 12)=(14, 12, 6, 0, 8, 2, 10, 4)(1, 9, 3, 11, 5, 13, 7);\\
&\sigma_9=\gamma_{9}(14, 6)=(14, 6, 1, 10, 5, 0, 9, 4, 13, 8, 3, 12, 7, 2, 11);\\
&\sigma_{10}=\gamma_{10}(14, 13)=(14, 13, 9, 5, 1, 11, 7, 3)(2, 12, 8, 4, 0, 10, 6);\\
    &\sigma_{11}=\gamma_{11}(14, 7)=(14, 7, 4, 1, 12, 9, 6, 3, 0, 11, 8, 5, 2, 13, 10);\\
&\sigma_{12}=\gamma_{12}(14, 0)=(14, 0, 12, 10, 8, 6, 4, 2)(1, 13, 11, 9, 7, 5, 3);\\
&\sigma_{13}=\gamma_{13}(14, 8)=(14, 8, 7, 6, 5, 4, 3, 2, 1, 0, 13, 12, 11, 10, 9);\\
&\sigma_{14}=(0, 3, 13, 4, 12, 5, 11, 6, 10, 7, 9, 8, 14, 1, 2).\\      
      \end{aligned}
  $ 
\par}

\hfill $\square$

\section{Additional computations}
\label{App}

\noindent{\bf Computations for the proof of Proposition \ref{prop:m7even}}. Since $m \equiv 7, 11 \ (\textrm{mod}\ 12)$, then $m=2k+1$ where $k$ is odd. In these computations, it is understood that, for each $i \in \mathds{Z}_{m-1}$, $\gamma_i \in G_{m-1}$, where $G_{m-1}$  is defined in Definition \ref{def:gammas}. We use the following two regular permutation sets of order $m$:

   \smallskip
{\centering
  $ \displaystyle
    \begin{aligned} 
&R_1&=&&\gamma_1 \cdot \mathcal{F}_m&&=&&\{\mu_i=\gamma_1 \sigma_i\ | \ i=0,1, \ldots, m-1\};\\
&R_2&=&&\gamma_{-1} \cdot \mathcal{F}_m&&=&&\{\tau_i=\gamma_{-1} \sigma_i \ | \ i=0,1, \ldots, m-1\}. 
      \end{aligned}
  $ 
\par}
   \smallskip

Let $c=2t$ where $1 \leqslant t \leqslant \frac{m-3}{2}$, $\mathds{I}=\{i \ | \ i \equiv 1,2\ (\textrm{mod}\ 4)\ \textrm{and}\ 1 \leqslant i \leqslant m-3 \}$ and $M_t$ be a subset of $\mathds{I}$ of size $t$. Recall that

   \smallskip
{\centering
  $ \displaystyle
    \begin{aligned} 
&D_{T}=&&\{(\mu_i, \tau_{m-i-2}), (\mu_{i+3}, \tau_{m-i-3})\ | \ i\equiv 1 \ (\textrm{mod}\ 4) \ \textrm{and}\ i \in M_t\}\cup\\
&&& \{ (\mu_i, \tau_{m-i}), (\mu_{i+1}, \tau_{m-i-3})\ | \ i\equiv 2 \ (\textrm{mod}\ 4)\ \textrm{and}\ i \in M_t\};\\
    &D_{H}=&& \{(\mu_0, \tau_1), (\mu_{m-2}, \tau_{0}), (\mu_{m-1}, \tau_{m-1})\}\cup\\
    &&&\{(\mu_i, \tau_{m-i-3}), (\mu_{i+3}, \tau_{m-i-2} ) \ | \ i \equiv 1\ (\textrm{mod}\ 4) \ \textrm{and}\  i \in  \mathds{I}/M_t \}\cup \\
    &&& \{(\mu_i, \tau_{m-i-3}), (\mu_{i+1}, \tau_{m-i} ) \ | \ i \equiv 2\ (\textrm{mod}\ 4) \ \textrm{and}\   i \in \mathds{I}/M_t  \}. 
      \end{aligned}
  $ 
\par}
   \smallskip

\noindent STEP 1. We show that all pairs in $D_T$ are truncated hamiltonian pairs; we do so by considering two cases. In both cases, we use the fact that $T(\gamma_1)=T(\gamma_{-1})=2$ where $\gamma_1$ is defined in Definition \ref{def:gammas}. 

\noindent \underline{Case 1}: Pairs of the form $(\mu_i, \tau_{m-i-2}), (\mu_{i+3}, \tau_{m-i-3})$ where $\ i\equiv 1 \ (\textrm{mod}\ 4) \ \textrm{and}\ i \in M_t$. Note that $(m-1)^{\mu_i}=(m-1)^{\tau_i}=(m-1)^{\sigma_i}$ for all $i \in \{0,1,\cdots, m-1\}$. As a result, if $i \not\in\{ 0, m-1\}$, we see that $\hat{\mu}_i=\gamma_1\, \gamma_i=\gamma_{i+1}$ and $\hat{\tau}_i=\gamma_{-1}\gamma_i=\gamma_{i-1}$. This means that

   \smallskip
{\centering
  $ \scriptstyle
    \begin{aligned} 
&\hat{\mu}_{i}\hat{\tau}_{m-i-2}&&=\gamma_{i+1}\gamma_{m-i-3}\,&&=\gamma_{i+1} \gamma_{-i-2}&&=\gamma_{-1};\\
&\hat{\mu}_{i+3}\hat{\tau}_{m-i-3}&&=\gamma_{i+4}\gamma_{m-i-4}&&=\gamma_{i+4}\, \gamma_{-i-3}&&=\gamma_{1}. 
      \end{aligned}
  $ 
\par}
\smallskip

\noindent Hence, all pairs of the given form are truncated hamiltonian pairs. 

\noindent \underline{Case 2}: Pairs of the form $(\mu_i, \tau_{m-i}), (\mu_{i+1}, \tau_{m-i-3})$ where $\ i\equiv 2 \ (\textrm{mod}\ 4) \ \textrm{and}\ i \in M_t$. Below, we show that each pair of this form is a truncated hamiltonian pair:

   \smallskip
{\centering
  $ \scriptstyle
    \begin{aligned} 
&\hat{\mu}_{i}\hat{\tau}_{m-i}&&=\gamma_{i+1}\, \gamma_{m-i-1}&&=\gamma_{i+1}\, \gamma_{-i}&&=\gamma_{1};\\
&\hat{\mu}_{i+1}\hat{\tau}_{m-i-3}&&=\gamma_{i+2}\, \gamma_{m-i-4}&&=\gamma_{i+2}\, \gamma_{-i-3}&&=\gamma_{-1}. 
      \end{aligned}
  $ 
\par}
\smallskip

\noindent STEP 2. We show that all pairs in $D_H$ are  hamiltonian pairs; we do so by considering four cases.\\
\noindent \underline{Case 1}: The pair $(\mu_{m-1}, \tau_{m-1})$. We consider two sub-cases because the resulting product depends on the congruency class of $m$ modulo 12. 

\noindent SUB-CASE 1.1: $m \equiv 7 \ (\textrm{mod} \ 12)$. Then $k \equiv 0 \ (\textrm{mod}\ 3)$ and thus

   \smallskip
{\centering
  $ \displaystyle
    \begin{aligned} 
&\mu_{m-1} \, \tau_{m-1}&&=&&\gamma_1\,(0, 3, m-2, 4, m-3, 5, m-4, \ldots, k+3, k, k+2, k+1,  m-1,1,2) \gamma_{-1}\, \\
&&&&& (0, 3, m-2, 4,  m-3, 5, m-4,  \cdots, k+3, k, k+2, k+1, m-1,1,2)\\
&&&=&&(0, 2, 5, 8, 11, \ldots, k-1, m-1,3,6,9, \ldots, k, 1, 4, 7, 10, \ldots, k+1, k+2,\\
&&&&& k+3, \ldots, m-2 ).\\
      \end{aligned}
  $ 
\par}
\smallskip 

\noindent SUB-CASE 1.1: $m \equiv 11 \ (\textrm{mod} \ 12)$. Then $k \equiv 2 \ (\textrm{mod}\ 3)$ and thus

   \smallskip
{\centering
  $ \displaystyle
    \begin{aligned} 
&\mu_{m-1} \, \tau_{m-1}&&=&&(0, 2, 5, 8, \ldots, k, 1, 4, 7, \ldots, k-1, m-1, 3, 6, 9, \ldots, k+1, k+2, k+3, \ldots, m-2). 
      \end{aligned}
  $ 
\par}
\smallskip 

\noindent \underline{Case 2}: The pairs $(\mu_0, \tau_1), (\mu_{m-2}, \tau_{0})$.   Below, we show that $(\mu_0, \tau_1)$ and $(\mu_{m-2}, \tau_0)$ are hamiltonian pairs:

   \smallskip
{\centering
  $ \displaystyle
    \begin{aligned} 
&\mu_0 \, \tau_1&&=&&\gamma_1\,\gamma_{-1}\,\gamma_1\,(m-1,2)=\gamma_1\,(m-1, 2)=(0,1, m-1, 2,3,4,\ldots, m-3, m-2 );\\
&\mu_{m-2} \, \tau_{0}&&=&&\gamma_1\,\gamma_{-1}\, (m-1, k+1) \, \gamma_{-1}=(m-1, k+1)\, \gamma_{-1}\\
&&&=&&(0, m-2, m-3, m-4, \ldots, k+1, m-1, k, k-1, k-2, k-3, \ldots, 1). 
      \end{aligned}
  $ 
\par}
\smallskip 

\noindent \underline{Case 3}: Pairs of the form $(\mu_i, \tau_{m-i-3}),$ where $ i \equiv 1, 2\ (\textrm{mod}\ 4) \ \textrm{and}\  i \in \mathds{I}/M_t$. 

\noindent SUB-CASE 3.1:  $i \equiv 1 \ (\textrm{mod}\ 4)$ and $i \neq m-2$. Then $i=2j+1$, where $j$ is even and $0 \leqslant j \leqslant k-2$. Since $i$ is odd and $i\neq m-2$, Lemma \ref{lem:comput} implies that
 
 \begin{center}
 $\tau_{m-i-3}=\gamma_{-1}\, \sigma_{m-i-3}= \gamma_{-1}\ \gamma_{-i-2}\, (m-1, k-j)= \gamma_{-i-3}\, (m-1, k-j)$. 
 \end{center}

\noindent Note that, since $i=2j+1$, Lemma \ref{lem:compgam} implies that $(m-j-1)^{\gamma_{i+1}}=j+2$ and that $(j+2)^{\gamma_{-i-3}}=m-j-3$. In addition, we point out that $k-j$ is odd and $m-j-1$ is even. We then see that
 
    \smallskip
{\centering
  $ \displaystyle
    \begin{aligned} 
&\mu_{i} \, \tau_{m-i-3}&&=&&\gamma_1\, \gamma_i\, (m-1, j+2) \,\gamma_{-i-3} \, (m-1, k-j)\\
&&&=&&(0, m-3, m-5, \ldots, m-j-1, k-j, k-j-2, \ldots, 1, m-2, m-4, \\
&&&&&  \cdots, k-j+2, m-1, m-j-3, m-j-5, m-j-7, \ldots, 4, 2). 
      \end{aligned}
  $ 
\par}
\smallskip 

\noindent SUB-CASE 3.2:  $i \equiv 2 \ (\textrm{mod}\ 4)$. Then $i=2j$, where $j$ is odd and $1 \leqslant j \leqslant k-2$. Since $i$ is even, Lemma \ref{lem:comput} implies that
 
 \begin{center}
 $\tau_{m-i-3}=\gamma_{-1}\, \sigma_{m-i-3}= \gamma_{-1}\ \gamma_{-i-2}\, (m-1, m-j-1)= \gamma_{-i-3}\, (m-1, m-j-1)$. 
 \end{center}

\noindent Note that, since $i=2j$, Lemma \ref{lem:compgam} implies that $(k-j)^{\gamma_{i+1}}=k+j+1$. In addition, we point out that $k-j$ is even and   $m-j-1$ is odd. We then see that

   \smallskip
{\centering
  $ \displaystyle
    \begin{aligned} 
&\mu_{i} \, \tau_{m-i-3}&&=&&\gamma_1\gamma_{i}\, (m-1, k+j+1) \, \gamma_{-i-3} \, (m-1, m-j-1)\\
&&&=&&(0, m-3, m-5,\cdots, k-j, m-j-1,m-j-3, \ldots,  1, m-2, m-4, \\
&&&&&  \ldots, m-j+1, m-1, k-j-2, k-j-4, \ldots, 4, 2 ).
      \end{aligned}
  $ 
\par}
\smallskip 

We must now verify that pairs in the set below are hamiltonian pairs:

\begin{center}
$A=\{(\sigma_{i+3}, \sigma_{m-i-2} ) \ | \ i \equiv 1\ (\textrm{mod}\ 4)\ \textrm{and} \ i \in\mathds{I}/M_t\} \cup\{(\sigma_{i+1}, \sigma_{m-i} ) \ | \ i \equiv 2\ (\textrm{mod}\ 4)\ \textrm{and}\ i \in \mathds{I}/M_t\}$.
\end{center}

\noindent Similarly to the proof of Proposition \ref{prop:m1even}, we see that $A \subseteq \{(\sigma_i, \sigma_{m-i+1})\ | \ i \equiv 0,3 \ (\textrm{mod}\ 4)\ \textrm{and}\ 3\leqslant i \leqslant m-3\}.$ It suffices to show that all pairs in $ \{(\sigma_i, \sigma_{m-i+1})\ | \ i \equiv 0,3 \ (\textrm{mod}\ 4)\ \textrm{and}\ 3\leqslant i \leqslant m-3\}$ are hamiltonian pairs. Below, we consider two additional cases.

\noindent \underline{Case 4}: Pairs of the form $(\sigma_i, \sigma_{m-i+1})$ and $(\mu_{i+1}, \tau_{m-i-2})$, where $ i \equiv 0 \ (\textrm{mod}\ 4) \ \textrm{and}\ 4\leqslant i \leqslant m-3$. 

\noindent SUB-CASE 4.1: $m=7$ and $i=4$. Then:

   \smallskip
{\centering
  $ \displaystyle
    \begin{aligned} 
&\mu_{4} \, \tau_{4}&&=&&\gamma_1\, \gamma_4\, (6, 0) \,\gamma_{-1}\, \gamma_{4} \, (6, 0)\\
&&&=&&(0, 2, 4, 6, 3, 5, 1).
      \end{aligned}
  $ 
\par}
\smallskip 

\noindent SUB-CASE 4.2: $m \geqslant 11$ and $i=4$. Then:

   \smallskip
{\centering
  $ \displaystyle
    \begin{aligned} 
&\mu_{4} \, \tau_{m-3}&&=&&\gamma_1\, \gamma_4\, (m-1, k+3) \,\gamma_{-1}\, \gamma_{-2} \, (m-1, 0)\\
&&&=&&(0, 2, 4, \ldots, m-3, m-1,k, k+2, k+4, \ldots, m-2, 1, 3, \ldots, k-2). 
      \end{aligned}
  $ 
\par}
\smallskip 

\noindent SUB-CASE 4.3: $i=m-3$. Then:

   \smallskip
{\centering
  $ \displaystyle
    \begin{aligned} 
&\mu_{m-3} \, \tau_{4}&&=&&\gamma_1\, \gamma_{-2}\, (m-1, 0) \, \gamma_{3} \, (m-1, k+3)\\
&&&=&&(0, 2, 4, \ldots, k+1, m-1, 3, 5, 7, \ldots, m-2,1, k+3, k+5, \ldots, m-3).
      \end{aligned}
  $ 
\par}
\smallskip

\noindent SUB-CASE 4.4:  $i \not \in \{4, m-3\}$.  Since $i$ is even, Lemma \ref{lem:comput} implies that  $\tau_{m-i-2}=\gamma_{-1}\, \sigma_{m-i+2}= \gamma_{-1}\ \gamma_{-i+2}\, (m-1, m-j+1)=\gamma_{-i+1}\, (m-1, m-j+1)$. Note that, since $k$ is odd and $j$ is even, it follows that $m-j-1$ is even and $k-j$ is odd. If $i \neq m-3$, then:

\smallskip
{\centering
  $ \displaystyle
    \begin{aligned} 
&\mu_{i} \, \tau_{m-i+1}&&=&&\gamma_1\, \gamma_i\, (m-1, k+j+1) \, \gamma_{-i+1} \, (m-1, m-j+1)\\
&&&=&&(0, 2, 4, \ldots, m-j-1, m-1,k-j+2, k-j+4, k-j+6, \ldots, m-2, 1, 3, \\
&&&&&  \ldots, k-j, m-j+1, m-j+3, \ldots, m-3). 
      \end{aligned}
  $ 
\par}
\smallskip 

\noindent \underline{Case 5}: Pairs of the form $(\sigma_i, \sigma_{m-i+1})$ and $(\mu_{i+1}, \tau_{m-i-2})$, where $ i \equiv 3 \ (\textrm{mod}\ 4) \ \textrm{and}\ 3\leqslant i \leqslant m-4$. 

\noindent SUB-CASE 5.1: $i=3$. Then:

   \smallskip
{\centering
  $ \displaystyle
    \begin{aligned} 
&\mu_{3} \, \tau_{m-2}&&=&&\gamma_{1}\,\gamma_3\, (m-1, 3) \, \gamma_{-1} \,\gamma_{-1}\, (m-1, k+1)\\
&&&=&& (0, 2, 4, \ldots, k-1, m-1, 1, 3, 5, 7, \ldots, m-2, k+1, k+3, \ldots, m-3).  \\
      \end{aligned}
  $ 
\par}
\smallskip

\noindent SUB-CASE 5.4: If  $i \equiv 3 \ (\textrm{mod}\ 4)$ and $i\neq 3$, then $i=2j+1$, where $j$ is odd, and $2 \leqslant j \leqslant k-2$. Since $i$ is odd, Lemma \ref{lem:comput} implies that  $\tau_{m-i+1}=\gamma_{-1}\, \sigma_{m-i+1}= \gamma_{-1}\ \gamma_{-i+2}\, (m-1, k-j+2)=\gamma_{-i+1}\, (m-1, k-j+2)$.  Furthermore, Lemma \ref {lem:compgam} implies that, since $i=2j+1$, we have $(m-j-1)^{\gamma_{i+1}}=j+2$ and $(j+2)^{\gamma_{-i+1}}=m-j+1$. Lastly, we note that $k-j$ is even and that $m-j+1$ is odd. Therefore:

   \smallskip
{\centering
  $ \displaystyle
    \begin{aligned} 
&\mu_{i} \, \tau_{m-i+1}&&=&&\gamma_1\, \gamma_{i}\, (m-1, j+2) \,\gamma_{-i+1}\, (m-1, k-j+2)\\
&&&=&&(0, 2, 4, \ldots, k-j, m-1,m-j+1, m-j+3, \ldots, m-2, 1, 3, 5, \ldots, \\
&&&&&   m-j-1, k-j+2,  k-j+4, \ldots, m-3). 
      \end{aligned}
  $ 
\par}
\smallskip 

In summary, all pairs in $D_H$ are hamiltonian pairs because the product of the two permutations in each pair is a permutation on a single cycle. \hfill $\square$\\

\noindent{\bf Computations for the proof of Proposition \ref{prop:m3even}}. If $m \equiv 3\ ( \textrm{mod} \ 12)$, then $m=2k+1$ where $k$ is odd. We use the following two regular permutation sets:

   \smallskip
{\centering
  $ \displaystyle
    \begin{aligned} 
&R_1= \gamma_1\,\cdot \mathcal{F}_m&=&&\{ \mu_i=\gamma_1 \sigma_i \ | \ i=0,1, \ldots, m-1\};\\
&R_2=\gamma_{-1}\, \cdot \mathcal{F}_m&=&&\{\tau_i=\gamma_{-1} \sigma_i\ | \ i=0,1, \ldots, m-1\}. 
      \end{aligned}
  $ 
\par}
   \smallskip 

Permutations $\mu_0$ and $\tau_0$ are the $(m-1)$-stabilizers of $R_1$ and $R_2$, respectively. 

Recall that $c=2t+2$ where $0\leqslant t \leqslant \frac{m-5}{2}$, $\mathds{I}=\{i \ |\ i\  \textrm{is odd}\ \ \textrm{and} \ 3 \leqslant i \leqslant m-4\}$, and $M_t$ is a subset of size $t$ of $\mathds{I}$.  Recall that $D_{T}$ and $D_H$ are constructed as follows:

   \smallskip
{\centering
  $ \displaystyle
    \begin{aligned} 
&D_T&&=&&\{(\mu_1, \tau_2), (\mu_2, \tau_1)\}\cup\{(\mu_i, \tau_{m-i}), (\mu_{i+1}, \tau_{m-i+1})\ | \ i\in M_t\}; \\
&D_H&&=&& \{(\mu_0, \tau_{m-1}), (\mu_{m-1}, \tau_0)\}\cup\{(\mu_i, \tau_{m-i+1})\ |\ 3\leqslant i \leqslant m-2\ \textrm{and}\ i, i-1 \not\in M_t\}.
      \end{aligned}
  $ 
\par}
   \smallskip 

\noindent STEP 1. We show that all pairs in $D_T$ are truncated hamiltonian pairs; we do so by considering two cases. In both cases, we use the fact that $T(\gamma_3)=T(\gamma_{1})=2$ since $m-1$ is relatively prime with 3. 

\noindent \underline{Case 1}: Pairs $(\mu_1, \tau_2)$ and $(\mu_2, \tau_1)$. We see that

 \smallskip
{\centering
  $ \scriptstyle
    \begin{aligned} 
&\hat{\mu}_{1}\hat{\tau}_{2}=\gamma_{2}\, \gamma_{1}=\gamma_{3};&& \hat{\mu}_{2}\hat{\tau}_{1}=\gamma_{3}\, \gamma_0=\gamma_{3}. 
      \end{aligned}
  $ 
\par}
\smallskip

\noindent \underline{Case 2}: Pairs of the form $(\mu_i, \tau_{m-i}), (\mu_{i+1}, \tau_{m-i+1})$ where  $i\in M_t$. Lemma \ref{lem:compgam} and Construction \ref{cons:Fodd} jointly imply that $\hat{\mu}_i=\gamma_{i+1}$, and $\hat{\tau}_{m-i+1}=\gamma_{-i+1}$. It follows that, for all $i \in M_t$, we have

 \smallskip
{\centering
  $ \scriptstyle
    \begin{aligned} 
&\hat{\mu}_{i}\hat{\tau}_{m-i}=\gamma_{i+1}\, \gamma_{-i}=\gamma_{1};&& \hat{\mu}_{i+1}\hat{\tau}_{m-i+1}=\gamma_{i+2}\, \gamma_{-i+1}=\gamma_{3}. 
      \end{aligned}
  $ 
\par}
\smallskip

\noindent Since $m-1$ is relatively prime with $3$, then $T(\gamma_3)=2$ and thus $(\mu_{i+1}, \tau_{m-i+1})$ is a truncated hamiltonian pair. In conclusion, all pairs in $D_T$ are truncated hamiltonian pairs. 

\noindent STEP 2. We show that all pairs in $D_H$ are  hamiltonian pairs. We consider two cases. \\

\noindent \underline{Case 1}: The pairs $(\mu_0, \tau_{m-1})$ and $(\mu_{m-1}, \tau_0)$. We see that

 \smallskip
{\centering
  $ \scriptstyle
    \begin{aligned} 
&\mu_0 \, \tau_{m-1}&&=&&\gamma_1 \gamma_{-1}\sigma_{m-1}=&&\sigma_{m-1}; &\mu_{m-1} \, \tau_{0}&&=&&\gamma_1\sigma_{m-1} \gamma_{-1}.
      \end{aligned}
  $ 
\par}
\smallskip 

\noindent It follows that $T(\sigma_{m-1})=T(\gamma_1\sigma_{m-1} \gamma_{-1})=1$ and thus, both pairs are hamiltonian pairs. 

\smallskip 

\noindent \underline{Case 2}: Pairs of the form $(\mu_i, \tau_{m-i+1})$ where $3\leqslant i \leqslant m-2\ \textrm{and}\ i, i-1 \not\in M_t$. 

\noindent SUB-CASE 2.1: $i\in \{4,5, m-3\}$. We see that 

 \smallskip
{\centering
  $ \scriptstyle
    \begin{aligned}
&\mu_4 \, \tau_{m-3}&&= &&\gamma_5 (m-1, k+3) \gamma_{-3}\, (m-1,0) \\
&&&=&&(0,2, 4, \ldots, m-3, m-1, k, k+2, \ldots, m-2, 1, 3, 5, \ldots, k-2);\\
&\mu_5 \, \tau_{m-4}&&=&&\gamma_6 (m-1, 4) \gamma_{-4}\, (m-1,k) \\
&&&=&&(0,2,  4, \ldots, m-3, k, k+2, k+4, \ldots, m-2, 1, 3,\ldots, k-2, m-1);\\
&\mu_{m-3} \, \tau_{4}&&=&&\gamma_{-1} (m-1, 0) \gamma_{3}\, (m-1,k+3) \\
&&&=&&(0,2, 4, \ldots, k+1, m-1, 3, 5, \ldots, m-2, k+3, k+5, \ldots, m-3).\\
      \end{aligned}
  $ 
\par}
\smallskip 

\noindent The pairs $(\mu_4, \tau_{m-3})$, $(\mu_5, \tau_{m-4})$, and $(\mu_{m-3}, \tau_{4})$ are hamiltonian pairs. 

\noindent SUB-CASE 2.2: $i \equiv 0\ (\textrm{mod}\ 4)$ and $8 \leqslant i \leqslant m-7$.  Then $i=2j$ where $j$ is even and $4\leqslant j \leqslant k-3$.  Since $i=2j$, Lemma \ref{lem:comput} implies that $\tau_{m-i+1}=\gamma_{-1}\sigma_{m-i+1}=\gamma_{-1} \gamma_{-i+2}(m-1, m-j+1)=\gamma_{-i+1}(m-1, m-j+1).$ Lastly, note that $m-j-1$ is even and $k-j+2$ is odd. Therefore:

 \smallskip
{\centering
  $ \scriptstyle
    \begin{aligned} 
&\mu_i \, \tau_{m-i+1}&&=&&\gamma_{i+1}\, (m-1, k+j+1)\, \gamma_{-i+1}\, (m-1, m-j+1)\\
&&&=&& (0,2,4,\cdots, m-j-1, m-1, k-j+2, k-j+4, \ldots, m-2, 1, 3, \ldots, k-j, \\
&&&&&m-j+1, m-j+3, \ldots, m-3).  
      \end{aligned}
  $ 
\par}
\smallskip 

\noindent SUB-CASE 2.3: $i \equiv 1\ (\textrm{mod}\ 4)$ and $9 \leqslant i \leqslant m-2$.  Then $i=2j+1$ where $j$ is even and $4\leqslant j \leqslant k-1$. Lemma \ref{lem:compgam} implies that $(m-j-1)^{\gamma_{i+1}}=j+2$ and that $(j+2)^{\gamma_{-i+1}}=m-j+1$. Since $i=2j+1$, Lemma \ref{lem:comput} implies that  $\tau_{m-i+1}=\gamma_{-1}\sigma_{m-i+1}=\gamma_{-1} \gamma_{-i+2}(m-1, k-j+2)=\gamma_{-i+1}(m-1, k-j+2).$ Therefore:

 \smallskip
{\centering
  $ \scriptstyle
    \begin{aligned} 
&\mu_i \, \tau_{m-i+1}&&=&&\gamma_{i+1}\, (m-1, j+2)\, \gamma_{-i+1}\, (m-1, k-j+2)\\
&&&= &&(0,2,4,\cdots, m-j-1, k-j+2, k-j+4, \ldots, m-2, 1, 3, \ldots, k-j,m-1, \\
&&&&& m-j+1, m-j+3, \ldots, m-3).  
      \end{aligned}
  $ 
\par}
\smallskip

\noindent SUB-CASE 2.4: $i \equiv 2\ (\textrm{mod}\ 4)$ and $6\leqslant i \leqslant m-5$. Then $i=2j$ where $j$ is odd and $3 \leqslant j \leqslant k-3$:

 \smallskip
{\centering
  $ \scriptstyle
    \begin{aligned} 
&\mu_i \, \tau_{m-i+1}&&=&&\gamma_{i+1}\, (m-1, k+j+1)\, \gamma_{-i+1}\, (m-1, m-j+1)\\
&&&= &&(0,2,4,\cdots, k-j, m-j+1, m-j+3, \ldots, m-2, 1, 3, \ldots,m-j-1,\\
&&&&&  m-1, k-j+2, k-j+4, \ldots, m-3).  
      \end{aligned}
  $ 
\par}
\smallskip 

\noindent SUB-CASE 2.5: $i \equiv 3\ (\textrm{mod}\ 4)$ and $3\leqslant i \leqslant m-4$.  Then $i=2j+1$ where $j$ is odd and $1\leqslant j \leqslant k-2$. Lemma \ref{lem:compgam} implies that $(j+2)^{\gamma_{-i+1}}=m-j+1$ and $(m-j-1)^{\gamma_{i+1}}=j+2$:

 \smallskip
{\centering
  $ \scriptstyle
    \begin{aligned} 
&\mu_i \, \tau_{m-i+1}&&=&&\gamma_{i+1}\, (m-1, j+2) \gamma_{-i+1}\, (m-1, k-j+2)\\
&&&=&& (0,2,4,\cdots, k-j, m-1,m-j+1, \ldots, m-2, 1, 3, 5, \ldots, m-j-3,\\
&&&&& m-j-1, k-j+2, k-j+4, \ldots, m-3).  
      \end{aligned}
  $ 
\par}
\smallskip 

In summary, all pairs in $D_H$ are hamiltonian because the product of their respective permutations is a permutation on a single cycle. \hfill $\square$

\noindent{\bf Computations for the proof of Proposition \ref{prop:1mod4<}}.  If $m \equiv 1\ ( \textrm{mod} \ 4)$, then $m=2k+1$ where $k$ is even. We use the following two regular permutation sets:

   \smallskip
{\centering
  $ \displaystyle
    \begin{aligned} 
&R_1=(1,m-1)\cdot \mathcal{F}_m=\{\mu_i=(1, m-1)\sigma_i \ | \ i=0,1, \ldots, m-1\}; && R_2=\mathcal{F}_m=\{\sigma_0, \sigma_1, \ldots, \sigma_{m-1}\}. 
      \end{aligned}
  $ 
\par}
   \smallskip

Observe that $\mu_1$ and $\sigma_0$ are the $(m-1)$-stabilizers of $R_1$ and $R_2$ respectively. We also point out that, if $i \not\in \{1, m-2, m-1\}$, then $(m-1)^{\mu_i}=1^{\sigma_i}=i+1.$ Moreover, we see that $(m-1)^{\mu_{m-2}}=1^{\sigma_{m-2}}=0, \ \textrm{and} \ (m-1)^{\mu_{m-1}}=1^{\sigma_{m-1}}=2.$ 

Recall that $c=3+2t$, where $0\leqslant t \leqslant \frac{m-5}{4}$, $\mathds{I}=\{i \ |\ i \equiv 3\ (\textrm{mod}\ 4)\ \textrm{and} \ 3 \leqslant i \leqslant m-6\}$, and $M_t$ is a subset of size $t$ of $\mathds{I}$. The sets $D_{T}$ and $D_H$ are constructed as follows:

   \smallskip
{\centering
  $ \displaystyle
    \begin{aligned} 
&D_T&&=&& \{(\mu_{0}, \sigma_{1}), (\mu_{m-2}, \sigma_{m-2}), (\mu_{m-1}, \sigma_{m-1})\}\cup\{(\mu_{i}, \sigma_{m-i-3}), (\mu_{i+3}, \sigma_{m-i-2})\ |\ i \in M_t\};\\
&D_H&&=&& \{(\mu_2, \sigma_0)\}\cup \{(\mu_i, \sigma_{m-i-2})\ | \ i\ \textrm{odd}, \ 1 \leqslant i \leqslant m-4, \textrm{and}\ i\not\in M_t\}\cup\\
&&&&&\{(\mu_i, \sigma_{m-i})\ | \ i\ \textrm{even},\ 4 \leqslant i \leqslant m-3,  \textrm{and}\ i, i-3\not\in M_t\}.
      \end{aligned}
  $ 
\par}
   \smallskip 
   
\noindent STEP 1. We show that all pairs in $D_T$ are truncated hamiltonian pairs; we do so by considering two cases.

\noindent \underline{Case 1}: Pairs $(\mu_{m-1}, \sigma_{m-1})$, $(\mu_{0}, \sigma_{1})$, and $(\mu_{m-2}, \sigma_{m-2})$. First, we show that  $(\mu_{m-1}, \sigma_{m-1})$ is a truncated hamiltonian pair. We consider two sub-cases. 

\noindent SUB-CASE 1.1: $m=5$. Then 

   \smallskip
{\centering
  $ \scriptstyle
    \begin{aligned} 
&\hat{\mu}_{4}\hat{\sigma}_{4}&&=&& (0,3,2)(1)(4)\,(0,3,1,2)(4)=(0,1,2, 3)(4).
      \end{aligned}
  $ 
\par}
   \smallskip
   
\noindent SUB-CASE 1.2: $m\neq 5$. Then 

   \smallskip
   {\centering
  $ \scriptstyle
    \begin{aligned} 
&\hat{\mu}_{m-1}\hat{\sigma}_{m-1}&&=&& (0, 3, m-2, 4,m -3,5, \ldots,k, k+2, k+1, 2)(1)(m-1)\,(0,3, m-2,  \\
&&&&& 4, \ldots, k,  k+2, k+1, 1,2)(m-1) \\
&&& =&&(0, m-2, m-3, m-4, \ldots, k+2, 1,2,3,4,5,\cdots, k, k+1)(m-1).\\
      \end{aligned}
  $ 
\par}   
\smallskip

\noindent Next, we show that $(\mu_{0}, \sigma_{1})$ is truncated hamiltonian.  Recall that $T(\gamma_1)=2$:

   \smallskip
{\centering
  $ \scriptstyle
    \begin{aligned} 
&\hat{\mu}_{0}\hat{\sigma}_{1}&&=&&id\,\gamma_1=\gamma_1.
      \end{aligned}
  $ 
\par}
\smallskip 

\noindent Lastly, we show that $(\mu_{m-2}, \sigma_{m-2})$ is truncated hamiltonian. Recall that $k$ is even. For all $m\equiv 1 \ (\textrm{mod}\ 4)$, we have that:

 {\centering
  $ \scriptstyle
    \begin{aligned} 
 &\hat{\mu}_{m-2}&&=&&(1, m-1)\,\gamma_{-1}\,(m-1, k+1)(m-1, 0)\\
& &&=&&(0, m-2,m-3, \ldots, k+3, k+2)(1, k+1, k, k-1, \ldots, 2)(m-1). 
       \end{aligned}
  $ 
\par} 

\noindent This means that

   \smallskip
{\centering
  $ \scriptstyle
    \begin{aligned} 
&\hat{\mu}_{m-2}\hat{\sigma}_{m-2}&&=&&(0, m-2, \ldots, k+3, k+2)(1, k+1, k, k-1, \ldots, 2)\gamma_{-1}(m-1)\\
&&&=&&(0, m-3, m-5, \ldots, k+2, m-2, m-4, m-6,  \ldots, 3, 1, k, \\
&&&&&  k-2, k-4,\ldots, 2 )(m-1).
      \end{aligned}
  $ 
\par}
\smallskip 

\noindent \underline{Case 2}: Pairs $(\mu_{i}, \sigma_{m-i-3})$ and $(\mu_{i+3}, \sigma_{m-i-2})$ where $ i \in M_t$. Note that  $\hat{\mu}_i=\mu_{i}\,(m-1, i+1)$ and that  $\hat{\mu}_{i+3}=\mu_{i+3}\,(m-1, i+4)$ if $i \in M_t$. 

\noindent SUB-CASE 2.1: $(\mu_{3}, \sigma_{m-6})$ and $(\mu_{6}, \sigma_{m-5})$:

   \smallskip
{\centering
  $ \scriptstyle
    \begin{aligned}
&\hat{\mu}_3\hat{\sigma}_{m-6}&&=&& (1, m-1)\,\gamma_3\,(m-1,3)\,(m-1, 4)\, \gamma_{-5}\\
&&& =&&(0, m-2, m-4, \ldots, 1, m-3, m-5, \ldots, 2)(m-1);\\
&\hat{\mu}_{6}\hat{\sigma}_{m-5}&&= &&(1, m-1)\,\gamma_{6}\,(m-1,k+4)\,(m-1, 7)\, \gamma_{-4}\\
&&& =&&(0, 2, 4, 6, \ldots, k-2, 3,  5, 7, \ldots, m-2, 1, k,  k+2, \\ 
&&&&& k+4, \ldots, m-3)(m-1).\\
      \end{aligned}
  $ 
\par}
\smallskip

\noindent SUB-CASE 2.2: Pairs of the form $(\mu_{i}, \sigma_{m-i-3})$ where $i \equiv 3 \ (\textrm{mod}\ 4)$ and $7 \geqslant  i \leqslant m-6$. Then $i=2j+1$ where $j$ is odd and $3\leqslant j\leqslant k-3$. Lemma \ref{lem:compgam} and Remark \ref{rem:ned} jointly imply that $\hat{\sigma}_{m-i-3}=\gamma_{-i-2}$. Lastly, Lemma \ref{lem:compgam} implies that $(m-j)^{\gamma_i}=j+2$ and $(j+2)^{\gamma_{-i-2}}=m-j-2$. Therefore:

   \smallskip
{\centering
  $ \scriptstyle
    \begin{aligned}
&\hat{\mu}_i\hat{\sigma}_{m-i-3}&&=&& (1, m-1)\,\gamma_i\,(m-1, j+2)\,(m-1, i+1)\, \gamma_{-i-2}\\
&&& =&&(0, m-3, m-5, \ldots, m-j, m-2,m-4, \ldots, 3, 1, m-j-2, \\
&&&&& m-j-4, m-j-6, \ldots, 2)(m-1).\\
      \end{aligned}
  $ 
\par}
\smallskip

\noindent SUB-CASE 2.3: Pairs of the form $(\mu_{i+3}, \sigma_{m-i-2})$ where $i \equiv 3 \ (\textrm{mod}\ 4)$ and $7 \geqslant  i \leqslant m-6$. Note that  $\hat{\sigma}_{m-i-2}=\gamma_{-i-1}$. Furthermore, we note that $i+3=2j+4=2(j+2)$. Construction \ref{cons:Fodd} then implies that $\sigma_{i+3}=\gamma_{i+3}\,(m-1,k+j+3)$. This means that $\mu_{i+3}=(1, m-1)\gamma_{i+3}\,(m-1,k+j+3)$. Therefore:

   \smallskip
{\centering
  $ \scriptstyle
    \begin{aligned}
&\hat{\mu}_{i+3}\hat{\sigma}_{m-i-2}&&=&& (1, m-1)\,\gamma_{i+3}\,(m-1,k+j+3)\,(m-1, i+4)\, \gamma_{-i-1}\\
&&& =&&(0, 2, 4, 6, \ldots, k-j-1, 3,  5, \ldots, m-2, 1, k-j+1,  k-j+3, \\ 
&&&&& k-j+5, \ldots, m-3)(m-1).\\
      \end{aligned}
  $ 
\par}
\smallskip

In conclusion, we see that all pairs in $D_T$ are truncated hamiltonian pairs. 

   \smallskip
\noindent STEP 2. We show that all pairs in $D_H$ are  hamiltonian pairs. We consider two cases. 

\noindent \underline{Case 1}: The pair $(\mu_2, \tau_{0})$. Note that we assume that $m\geqslant 9$ and thus $k \geqslant 4$:

   \smallskip
{\centering
  $ \scriptstyle
    \begin{aligned} 
&\mu_2\,\sigma_0&&=&&(1, m-1)\, \sigma_2\, id=(1,m-1)\gamma_2(m-1, k+2)\\
&&&=&&(0,2,4,\ldots, k, m-1, 3, 5, \ldots, m-2,1,  k+2, k+4, \ldots, m-3).
      \end{aligned}
  $ 
\par}
\smallskip 

\noindent \underline{Case 2}: Pairs $(\mu_i, \tau_{m-i-2})$ where $i$ is odd. We consider two sub-cases. 

   \smallskip
\noindent SUB-CASE 2.1: $i=1$. Then:

   \smallskip
{\centering
  $ \scriptstyle
    \begin{aligned} 
&\mu_{1}\sigma_{m-3}&&=&&(1, m-1)\, \gamma_1\ (m-1, 2) \, \gamma_{-2}\, (m-1, 0) =&&(0, m-2, m-3, \ldots,  2, 1, m-1).\\
      \end{aligned}
  $ 
\par}
\smallskip 

\noindent SUB-CASE 2.2: $i\neq 1$. If $i$ is odd and $3 \leqslant i \leqslant m-4$, then $i=2j+1$ where $1 \leqslant j \leqslant k-2$. Since $i$ is odd,  Lemma \ref{lem:comput} implies that $\sigma_{m-i-2}= \gamma_{-i-1}\, (m-1, m-j-1)$. Lemma \ref{lem:compgam} implies that $(m-j)^{\gamma_i}=j+2$ and $(j+2)^{\gamma_{-i-1}}=m-j-1$. Therefore:

   \smallskip
{\centering
  $ \scriptstyle
    \begin{aligned} 
&\mu_{i}\sigma_{m-i-2}&&=&&(1, m-1)\, \gamma_i\ (m-1, j+2) \, \gamma_{-i-1}\, (m-1, m-j-1)\\
&&&=&&(0, m-2, m-3, \ldots, m-j, m-j-1, m-j-2, \ldots,  2, 1, m-1).\\
      \end{aligned}
  $ 
\par}
\smallskip 

In conclusion, since $T(\mu_i\tau_{m-i-2})=1$, all pairs $(\mu_i, \tau_{m-i-2})$ where $i$ is odd and $i \neq 1$ are hamiltonian pairs. 

\smallskip 
\noindent \underline{Case 3}: Pairs $(\mu_i, \tau_{m-i})$ where $i$ is even. We consider two sub-cases. 

   \smallskip
\noindent SUB-CASE 2.1: $i=m-3$. Then:

   \smallskip
{\centering
  $ \scriptstyle
    \begin{aligned} 
&\mu_{m-3}\sigma_{3}&&=(1, m-1)\, \gamma_{-2}\ (m-1, 0) \, \gamma_{3}\, (m-1, 3)=(0, 1, m-1, 2,3,4,\cdots, m-2).\\
      \end{aligned}
  $ 
\par}
\smallskip

\noindent SUB-CASE 2.2: $i\neq m-3$. If $i$ is even and $4 \leqslant i \leqslant m-5$, then $i=2j$ where $2 \leqslant j \leqslant k-2$. Lemma \ref{lem:comput} implies that  $\sigma_{m-i}= \gamma_{-i+1}\, (m-1, k-j+2)$.  Therefore:

   \smallskip
{\centering
  $ \scriptstyle
    \begin{aligned} 
&\mu_{i}\sigma_{m-i}=(1, m-1)\, \gamma_i\ (m-1, k+j+1) \, \gamma_{-i+1}\, (m-1, k-j+2)=(0, 1, m-1, 2,3,4,\cdots, m-2).\\
      \end{aligned}
  $ 
\par}
\smallskip 

In conclusion, since $T(\mu_i\tau_{m-i})=1$, all pairs of the form $(\mu_i, \tau_{m-i})$ where $i$ is even and $i \neq m-3$ are hamiltonian pairs. \hfill $\square$ \\

\noindent{\bf Computations for the proof of Proposition \ref{prop:3mod4}}.  If $m \equiv 3\ ( \textrm{mod} \ 4)$, then $m=2k+1$ where $k$ is odd. We use the following two regular permutation sets of order $m$:

   \smallskip
{\centering
  $ \displaystyle
    \begin{aligned} 
&R_1=(0,m-1)\cdot \mathcal{F}_m=\{ \mu_i=(0, m-1)\sigma_i\ |\ i=0,1,\cdots, m-1 \}; && R_2=\mathcal{F}_m=\{\sigma_0, \sigma_1, \ldots, \sigma_{m-1}\}. 
      \end{aligned}
  $ 
\par}
   \smallskip

Observe that $\mu_3$ and $\sigma_0$ are the $(m-1)$-stabilizers of $R_1$ and $R_2$ respectively. We also point out that $(m-1)^{\mu_i}=0^{\sigma_i}=i$ if $i \not\in \{3, m-1\}$ and $(m-1)^{\mu_{m-1}}=3$. This means that  $\hat{\mu}_i=\mu_i (m-1, i)$ if $i \not \in \{3,m-1\}$, and $\hat{\mu}_{m-1}=\mu_{m-1}(m-1,3)$.

We consider two cases.  

\noindent \underline{Case 1}:  $m\equiv 3 \ (\textrm{mod}\ 8)$ and $3\leqslant c \leqslant \frac{m-5}{2}$. Recall that $c=3+2t$ where $0 \leqslant t \leqslant \frac{m-11}{4}$, $\mathds{I}=\{i \ |\ i \equiv 5, 6\ (\textrm{mod}\ 8)\ \textrm{and} \ 5\leqslant i\leqslant m-13\}$, and $M_t$ is a subset of size $t$ of $\mathds{I}$. The sets $D_{T}$ and $D_H$ are constructed as follows:

   \smallskip
{\centering
  $ \displaystyle
    \begin{aligned} 
    &D_T&&=&& \{(\mu_{m-2}, \sigma_{m-2}), (\mu_{m-1}, \sigma_{m-1}), (\mu_{0}, \sigma_1)\}\cup \\
    &&&&&\{(\mu_i, \sigma_{m-i-5}), (\mu_{i+5}, \sigma_{m-i-2})\ |\ i\in M_t\ \textrm{and}\ i \equiv 5\ (\textrm{mod}\ 8)\}\cup \\ 
   &&&&& \{(\mu_i, \sigma_{m-i-5}), (\mu_{i+3}, \sigma_{m-i})\ |\ i\in M_t \ \textrm{and} \ i \equiv 6\ (\textrm{mod}\ 8)\};\\
&D_H&&=&&\{(\mu_2, \sigma_0)\}\cup \{(\mu_i, \sigma_{m-i-2})\ | \ i, i-3 \not \in M_t,  i\ \textrm{odd}, \ \textrm{and}\ 1 \leqslant i \leqslant m-4\} \cup \\
&&&&&\{(\mu_i, \sigma_{m-i}) \ |\ i, i-5 \not\in M_t,  i\ \textrm{even}, \ \textrm{and}\ 4 \leqslant i \leqslant m-3\}.
      \end{aligned}
  $ 
\par}
   \smallskip

\noindent STEP 1. We show that all pairs in $D_T$ are truncated hamiltonian. 

\noindent \underline{Sub-Case 1.1.1}: Pairs $(\mu_{m-2}, \sigma_{m-2}), (\mu_{m-1}, \sigma_{m-1})$, and $(\mu_{0}, \sigma_1)$. Note that $\hat{\mu}_0=id$ and that Lemma \ref{lem:compgam} implies that $\gamma_{m-2}=\gamma_{-1}$:

   \smallskip
{\centering
  $ \scriptstyle
    \begin{aligned}
&\hat{\mu}_{m-2}\hat{\sigma}_{m-2}&&=&&(0, m-1)\gamma_{-1}\,(m-1,k+1)(m-1,m-2)\, \gamma_{-1}\\
&&& =&&(0,k, k-2, \ldots, 3, 1, m-2, m-4, \ldots, k+2, m-3, m-5, \ldots, \\
&&&&& k+1, k-1, \ldots, 2)(m-1);\\
&\hat{\mu}_{m-1}\hat{\sigma}_{m-1}&&=&& (0,m-1)\ (0, 3,m-2,4, \ldots, k+2, k+1, m-1, 1,2 )\,(m-1,3)\, \\
&&&&&( 0,3, m-2, 4, m-3, 5, \ldots, k+2, k+1, m-1, 1,2)\, (m-1, 1) \\
&&& =&&( 0,2,3,4, 5,6, \ldots, k+1, m-2, m-3,  \ldots, k+2, 1)(m-1); \\
&\hat{\mu}_{0}\hat{\sigma}_{1}&&=&& \gamma_{1}.
      \end{aligned}
  $ 
\par}
\smallskip

In conclusion, the three pairs $(\mu_{m-2}, \sigma_{m-2}), (\mu_{m-1}, \sigma_{m-1})$, and $(\mu_{0}, \sigma_1)$ are truncated hamiltonian pairs. 

\noindent \underline{Sub-Case 1.1.2}: Pairs of the form $(\mu_i, \sigma_{m-i-5})$ where $ i\in M_t\ \textrm{and}\ i \equiv 5\ (\textrm{mod}\ 8)$. Since  $m \equiv 3\ (\textrm{mod}\ 8)$ and $m=2k+1$, then $k  \equiv 1\ (\textrm{mod}\ 4)$. Additionally, if $i \equiv 5\ (\textrm{mod}\ 8)$, then $i=2j+1$ where $j \equiv 2\ (\textrm{mod}\ 4)$ and $2 \leq j \leq k-7$. We also remind the reader that $(m-1)^{\mu_i}=0^{\sigma_i}=i$ if $i \not\in \{3, m-1\}$. It follows that. $\hat{\mu}_i=(0,m-1)\,\sigma_i (m-1, i)$ if $i \not\in \{3, m-1\}$. Lastly,  Lemma \ref{lem:compgam} implies that $\gamma_{m-i-5}=\gamma_{-i-4}$ and that $(j+2)^{\gamma_{-i-4}}=m-j-4$ and $(m-j)^{\gamma_i}=j+2$. We see that:

   \medskip
{\centering
  $ \scriptstyle
    \begin{aligned}
&\hat{\mu}_i\hat{\sigma}_{m-i-5}&&=&&(0,m-1)\,\sigma_i\,(m-1, i)\, \gamma_{-i-4}=(0,m-1)\,\gamma_i\,(m-1,j+2)\,(m-1, i)\, \gamma_{-i-4}\\
&&& =&&(0, m-j-4, m-j-8, \ldots,   5, 1, m-4, m-8, m-12, \ldots, 7,3,\\
&&&&& m-2, m-6, \ldots, m-j, m-5, m-9, \ldots, 2, m-3, m-7, \ldots, 8, 4 )(m-1). \\
      \end{aligned}
  $ 
\par}
\medskip 

\noindent \underline{Sub-Case 1.1.3}: Pairs of the form $(\mu_{i+5}, \sigma_{m-i-2})$ where $ i\in M_t\ \textrm{and}\ i \equiv 5\ (\textrm{mod}\ 8)$. If $i=2j+1$, then $i+5=2j+6=2(j+3)$. Construction \ref{cons:Fodd} then implies that $\sigma_{i+5}=\gamma_{i+5} (m-1, k+j+4)$. We then see that

   \medskip
{\centering
  $ \scriptstyle
    \begin{aligned}
&\hat{\mu}_{i+5}\hat{\sigma}_{m-i-2}&&=&& (0, m-1)\,\sigma_{i+5}\,(m-1, i+5)\, \gamma_{-i-1}\\
&&&=&& (0, m-1)\,\gamma_{i+5}\,(m-1,k+j+4)\,(m-1, i+5)\, \gamma_{-i-1}\\
&&& =&&( 0, k-j+2, k-j+6, k-j+10, \ldots,  m-2, 3, 7, \ldots, m-4, 1, \\
&&&&& 5, 9, 13, \ldots,  k-j-2, 4, 8, 12, \ldots, m-3, 2, 6, \ldots, m-5)(m-1). \\ 
      \end{aligned}
  $ 
\par}
\medskip 

\noindent \underline{Sub-Case 1.1.4}: Pairs of the form $(\mu_i, \sigma_{m-i-5})$ where  $ i\in M_t\ \textrm{and}\ i \equiv 6\ (\textrm{mod}\ 8)$.  Recall that $k  \equiv 1\ (\textrm{mod}\ 4)$. If $i \equiv 6\ (\textrm{mod}\ 8)$, then $i=2j$ where $j \equiv 3\ (\textrm{mod}\ 4)$ and $3\leq j \leq k-6$:

   \medskip
{\centering
  $ \scriptstyle
    \begin{aligned}
&\hat{\mu}_i\hat{\sigma}_{m-i-5}&&=&& (0,m-1)\,\gamma_i\,(m-1,k+j+1)\,(m-1, i)\, \gamma_{-i-4}\\
&&& =&&(0, k-j-3, k-j-7, \ldots, 3, m-2, m-6,\ldots, 5, 1, m-4, m-8, \\
&&&&&\ldots, k-j+1, m-5, m-9, \ldots,  2, m-3, m-7, \ldots, 8, 4)(m-1).  
      \end{aligned}
  $ 
\par}
\medskip

\noindent \underline{Sub-Case 1.1.5}: The pair $(\mu_{9}, \sigma_{m-6})$. Note that $(m-4)^{\gamma_{9}}=6$: 

   \medskip
{\centering
  $ \scriptstyle
    \begin{aligned}
&\hat{\mu}_{9}\hat{\sigma}_{m-6}&&=&& (0,m-1)\,\gamma_{9}\,(m-1, 6)\,(m-1, 9)\, \gamma_{-5}\\
&&& =&&(0, 1, 5, 9, 13, \ldots, m-2, 3, 7, \ldots, m-4,  4, 8, 16, \ldots, m-3, 2,6, \\
&&&&& \cdots, m-5)(m-1).  
      \end{aligned}
  $ 
\par}
\medskip

\noindent \underline{Sub-Case 1.1.6}:  Pairs of the form  $(\mu_{i+3}, \sigma_{m-i})$ where  $ i\in M_t,\ i \equiv 6\ (\textrm{mod}\ 8)$, and $10 \leqslant i \leqslant m-13$. If $i=2j$, then $i+3=2j+3=2(j+1)+1$. Construction \ref{cons:Fodd} implies that $\sigma_{i+3}=\gamma_{i+3}(m-1, j+3)$. Lastly, Lemma \ref{lem:compgam} implies that $(j+3)^{\gamma_{-i+1}}=m-j+3$ and $(m-j-1)^{\gamma_{i+3}}=j+3$:

   \medskip
{\centering
  $ \scriptstyle
    \begin{aligned}
&\hat{\mu}_{i+3}\hat{\sigma}_{m-i}&&=&& (0,m-1)\,\gamma_{i+3}\,(m-1, j+3)\,(m-1, i+3)\, \gamma_{-i+1}\\
&&& =&&(0, m-j+3, m-j+7, m-j+11, \ldots,  m-4, 1, 5, 9, 13, \ldots, m-2,\\
&&&&& 3, 7, \ldots, m-j-1, 4, 8, \ldots, m-3, 2, 6, \ldots, m-5)(m-1). \\ 
      \end{aligned}
  $ 
\par}
\medskip

\noindent STEP 2. We show that all pairs in $D_H$ are hamiltonian pairs. 

\noindent \underline{Sub-Case 1.2.1}: Pairs $(\mu_{2}, \sigma_{0})$. We see that:

 \medskip
{\centering
  $ \displaystyle
    \begin{aligned} 
&\mu_{2} \, \sigma_{0}&&=&&(0, m-1)\, \gamma_2\, (m-1, k+2) id\\
&&&=&&(0, k+2, k+4, \ldots, m-2, 1, 3, \ldots, k, m-1, 2, 4, 6, \ldots, m-3). 
      \end{aligned}
  $ 
\par}
\medskip 

\noindent Since $T(\mu_{2}\sigma_{0})=1$, the pair $(\mu_{2}, \sigma_{0})$ is a hamiltonian pair. 

\smallskip

\noindent \underline{Sub-Case 1.2.2}:  The pair $(\mu_{1}, \sigma_{m-3})$. We see that:

   \medskip
{\centering
  $ \displaystyle
    \begin{aligned} 
&\mu_1 \, \sigma_{m-3}&&=&&(0, m-1)\, \gamma_1\,(m-1,2) \,\gamma_{-2}\,(m-1, 0)=(0,m-1,m-2,m-3, m-4, \ldots, 1).\\
      \end{aligned}
  $ 
\par}
\medskip 

\noindent Since $T(\mu_{1}\sigma_{m-3})=1$, the pair $(\mu_{1}, \sigma_{m-3})$ is a hamiltonian pair. 

\smallskip

\noindent \underline{Sub-Case 1.2.3}: Pairs of the form $(\mu_i, \sigma_{m-i-2})$ where $i\ \textrm{is odd} \ \textrm{and}\ 3 \leqslant i \leqslant m-4$.  then $i=2j+1$ where $1\leqslant j \leqslant k-2$. Lemma \ref{lem:comput} implies that  $\sigma_{m-i-2}= \gamma_{-i-1}\, (m-1, m-j-1)$. Moreover, Lemma \ref{lem:compgam} implies that $(j+2)^{\gamma_{-i-1}}=m-j-1$ and $(m-j)^{\gamma_{i}}=j+2$. We see that

   \medskip
{\centering
  $ \displaystyle
    \begin{aligned} 
&\mu_{i} \, \sigma_{m-i-2}&&=&&(0, m-1)\, \gamma_i\, (m-1, j+2)\, \gamma_{-i-1} \, (m-1, m-j-1)\\
&&&=&&(0,m-1, m-2, m-3, m-4, \ldots, m-j, m-j-1, m-j-2, \ldots, 1). 
      \end{aligned}
  $ 
\par}
\medskip 

\noindent Since $T(\mu_i \sigma_{m-i-2})=1$ when  $i\ \textrm{is odd} \ \textrm{and}\ 3 \leqslant i \leqslant m-4$, pairs of the form $(\mu_i, \sigma_{m-i-2})$ are hamiltonian pairs. 

\noindent \underline{Sub-Case 1.2.4}: The pair $(\mu_{m-3}, \sigma_{3})$. We see that:

   \medskip
{\centering
  $ \displaystyle
    \begin{aligned} 
&\mu_{m-3} \, \sigma_{3}&&=&&(0, m-1)\, \gamma_{-2}\, (m-1,0)\, \gamma_{3} \, (m-1, 3)=(0,m-1, 1,2,3,4, \ldots, m-3, m-2). 
      \end{aligned}
  $ 
\par}
\medskip 

\noindent Since $T(\mu_{m-3}\sigma_{3})=1$, the pair $(\mu_{m-3}, \sigma_{3})$ is a hamiltonian pair. 

\smallskip

\noindent \underline{Sub-Case 1.2.5}: Pairs of the form $(\mu_i, \sigma_{m-i})$ where $i\ \textrm{is even} \ \textrm{and}\ 4 \leqslant i \leqslant m-3$.  then $i=2j+1$ where $1\leqslant j \leqslant k-2$. If $i$ is even and $4 \leqslant i \leqslant m-5$, then $i=2j$ and $2\leqslant j \leqslant k-2$. Lemma \ref{lem:comput} implies that  $\sigma_{m-i}= \gamma_{-i+1}\, (m-1, k-j+2)$. Then

   \medskip
{\centering
  $ \displaystyle
    \begin{aligned} 
&\mu_{i} \, \sigma_{m-i}&&=&&(0, m-1)\, \gamma_i\, (m-1, k+j+1)\, \gamma_{-i+1} \, (m-1, k-j+2)\\
&&&=&&(0,m-1, 1,2,3,\cdots, k-j+1, k-j+2, k-j+3, \ldots, m-3, m-2). 
      \end{aligned}
  $ 
\par}
\medskip 

\noindent Since $T(\mu_i \sigma_{m-i})=1$ when $i$ is even and $ 4 \leqslant i \leqslant m-3$, pairs of the form $(\mu_i, \sigma_{m-i})$ are hamiltonian pairs. 

In summary, all pairs in $D_H$ are hamiltonian pairs. This concludes all computations for the case $m \equiv 7\ (\textrm{mod}\ 8)$. 

\bigskip

\noindent \underline{Case 2}:  $m \equiv 7\ (\textrm{mod}\ 8)$ and $5 \leqslant c \leqslant \frac{m-1}{2}$. Recall that $c=2t+3$ where $0\leqslant t \leqslant \frac{m-7}{4}$, $\mathds{I}=\{i \ |\ i \equiv 5, 6\ (\textrm{mod}\ 8)\ \textrm{and}\ 5 \leqslant i \leqslant m-7\}$ and $M_t$ is a subset of $\mathds{I}$ of size $t$.  Sets $D_T$ and $D_H$ are constructed as in Case 1. 

\noindent STEP 1. We show that all pairs in $D_T$ are truncated hamiltonian pairs. 

\noindent \underline{Sub-Case 2.1.1}: Pairs of the form $(\mu_i, \sigma_{m-i-5})$ where $ i\in M_t\ \textrm{and}\ i \equiv 5\ (\textrm{mod}\ 8)$. See Sub-Case 1.1.2 of Case 1. 

\noindent \underline{Sub-Case 2.1.2}: Pairs of the form $(\mu_{i+5}, \sigma_{m-i-2})$ where $ i\in M_t\ \textrm{and}\ i \equiv 5\ (\textrm{mod}\ 8)$. Since  $m \equiv 7\ (\textrm{mod}\ 8)$ and $m=2k+1$, then $k  \equiv 3\ (\textrm{mod}\ 4)$. If $i \equiv 5\ (\textrm{mod}\ 8)$, then $i=2j+1$ where $j \equiv 2\ (\textrm{mod}\ 4)$ and $2\leq j \leq k-5$. Therefore, we see that

   \medskip
{\centering
  $ \scriptstyle
    \begin{aligned}
&\hat{\mu}_{i+5}\hat{\sigma}_{m-i-2}&&=&& (0, m-1)\,\gamma_{i+5}\,(m-1,k+j+4)\,(m-1, i+5)\, \gamma_{-i-1}\\
&&& =&&( 0, k-j+2, k-j+6, k-j+10, \ldots,  m-4, 1, 5, 9, \ldots,  m-2, \\
&&&&& 3, 7, \ldots, k-j-2, 4, 8, 12, \ldots, m-3, 2, 6, \ldots, m-5)(m-1). \\ 
      \end{aligned}
  $ 
\par}
\medskip 

\noindent Since $T(\hat{\mu}_{i+5}\hat{\sigma}_{m-i-2})=2$, it follows that  $(\mu_{i+5}, \sigma_{m-i-2})$is a truncated hamiltonian pair under the given hypothesis. 

\noindent \underline{Sub-Case 2.1.3}: Pairs of the form $(\mu_i, \sigma_{m-i-5})$ where $ i\in M_t\ \textrm{and}\ i \equiv 6\ (\textrm{mod}\ 8)$. Recall that $k  \equiv 3\ (\textrm{mod}\ 4)$.  Moreover, if $i \equiv 6\ (\textrm{mod}\ 8)$ and $i=2j$, then $j \equiv 3\ (\textrm{mod}\ 4)$ and $3\leq j \leq k-4$. Consequently, we see that

   \medskip
{\centering
  $ \scriptstyle
    \begin{aligned}
&\hat{\mu}_i\hat{\sigma}_{m-i-5}&&=&& (0,m-1)\,\gamma_i\,(m-1,k+j+1)\,(m-1, i)\, \gamma_{-i-4}\\
&&& =&&(0, k-j-3, k-j-7, \ldots, 1, m-4, m-8,\cdots, 3, m-2, m-6, \ldots,\\
&&&&& k-j+1, m-5, m-9, \ldots,  2, m-3, m-7, \ldots, 8, 4)(m-1).  
      \end{aligned}
  $ 
\par}
\medskip

\noindent Since $T(\hat{\mu}_{i}\hat{\sigma}_{m-i-5})=2$, it follows that  $(\mu_i, \sigma_{m-i-5})$ is a truncated hamiltonian pair under the given hypothesis. 

\noindent \underline{Sub-Case 2.1.4}: Pairs of the form $(\mu_{i+3}, \sigma_{m-i})$ where $ i\in M_t\ \textrm{and}\ i \equiv 6\ (\textrm{mod}\ 8)$.   See Sub-Case 1.1.4 of Case 1. 

In conclusion, all pairs in $D_T$ are truncated hamiltonian pairs. 

\noindent STEP 2. See STEP 2  of Case 1 for all computations that show that pairs in $D_H$ are hamiltonian pairs. \hfill $\square$

\bigskip

\noindent{\bf Computations for the proof of Proposition \ref{prop:1mod6}}. If $m \equiv 1\ ( \textrm{mod} \ 6)$, then $m=2k+1$ where $k \equiv 0 \ (\textrm{mod}\ 3)$. We use the following two regular sets of permutations:

\smallskip
{\centering
  $ \scriptstyle
    \begin{aligned}
    &R_1=(m-1,1,2,3) \cdot \mathcal{F}_m=\{\mu_i=(m-1, 1,2,3) \sigma_i\ | \ i=0,1, \ldots, m-1 \}; \\
    &R_2= \mathcal{F}_m=\{\sigma_0, \sigma_1, \ldots, \sigma_{m-1}\}.\\
          \end{aligned}
  $ 
\par}
\smallskip

\noindent Recall that $c=\frac{m+2}{3}+2t$ where $0 \leqslant 2t \leqslant \frac{2m-8}{3}$, $\mathds{I}=\{i \ |\ i \equiv 0, 4\ (\textrm{mod}\ 6)\ \textrm{and}\ 4 \leqslant i \leqslant m-3\}$, and $M_t$ is a subset of size $t$ of $\mathds{I}$. Lastly, recall that $D_{T}$ and $D_H$ are constructed as follows:

 \smallskip
{\centering
  $ \scriptstyle
    \begin{aligned} 
   &D_{T}=&& \{ (\mu_{m-1}, \sigma_{m-1}), (\mu_0, \sigma_1)\} \cup \{(\mu_i, \sigma_{m-i+1}), (\mu_{i+1}, \sigma_{m-i})\ | \ i \in M_t\}\cup\\
   &&&\{(\mu_i, \sigma_{m-i+1}) \ |\ i\equiv 2, 3\ (\textrm{mod}\ 6)\ \textrm{and}\ 3 \leqslant i \leqslant m-4\};\\
&D_H=&&\{(\mu_1, \sigma_2), (\mu_2, \sigma_0)\} \cup \{(\mu_i, \sigma_{m-i})\ |\  i \not\in M_t, \  i\equiv 0, 4\ (\textrm{mod}\ 6),\textrm{and}\  4 \leqslant i \leqslant m-3\}\cup\\
&&&\{(\mu_i, \sigma_{m-i+2}) \ |\  i-1 \not\in M_t,  i\equiv 1, 5\ (\textrm{mod}\ 6), \ \textrm{and}\ 5 \leqslant i \leqslant m-2 \}.
      \end{aligned}
  $ 
\par}
   \smallskip

\noindent STEP 1. We show that all pairs in $D_T$ are truncated hamiltonian pairs. 

\noindent \underline{Case 1}: The pair $ (\mu_{m-1}, \sigma_{m-1})$. First, we compute the expression for $\hat{\mu}_{m-1}$:

\medskip
{\centering
  $ \scriptstyle
    \begin{aligned}
 &   \hat{\mu}_{m-1}&&=&&(m-1,1,2,3)\,(0,3,m-2, 4, m-3, \ldots, k, k+2, k+1, m-1, 1, 2)\,(m-1, 2)\\
&&&=&&(0, 3, 1)\, (2, m-2, 4, m-3, 5, \ldots, k, k+2, k+1) (m-1).\\
          \end{aligned}
  $ 
\par}
\medskip

\noindent We then see that

   \medskip
{\centering
  $ \scriptstyle
    \begin{aligned}
    &\hat{\mu}_{m-1} \hat{\sigma}_{m-1}&&=&&(0, 3, 1)\, (2, m-2, 4, m-3, 5, \ldots, k+3, k, k+2, k+1) (m-1)(0, 3, \\
    &&&&& m-2, 4, \ldots, k, k+2, k+1, 1,2)(m-1)\\
    &&&=&&(0, m-2, m-3, m-4, \ldots, k+3, k+2, 1, 3, 2, 4, 5, \ldots, k , k+1)(m-1).\
           \end{aligned}
  $ 
\par}
\medskip

\noindent \underline{Case 2}: The pair $(\mu_0, \sigma_1)$. Note that $\hat{\mu}_0=(1,2,3)$. This means that

   \medskip
{\centering
  $ \scriptstyle
    \begin{aligned}
     &\hat{\mu}_{0} \hat{\sigma}_{1}&&=&&(1,2,3)\, \gamma_1=(0, 1, 3, 2, 4, 5, 6, \ldots, m-2)(m-1). 
          \end{aligned}
  $ 
\par}
\bigskip

\noindent \underline{Case 3}: Pairs of the form $(\mu_i, \sigma_{m-i+1})$ and $(\mu_{i+1}, \sigma_{m-i})$ where $\ i \in M_t$. If $i \in \mathds{I}$ and $4 \leqslant i \leqslant m-4$, then $(m-1)^{\mu_{i+1}}=i+2$. Therefore, $\hat{\mu}_{i+1}=\mu_{i+1}\, (m-1, i+2)$. If $i=m-3$, then $\mu_{i+1}=\mu_{m-2}$ and $(m-1)^{\mu_{m-2}}=0$. This means that $\hat{\mu}_{m-3}=\mu_{m-2}\, (m-1, 0)$. We consider ten sub-cases. \\

\noindent SUB-CASE 3.1. The pair $(\mu_{5}, \sigma_{m-4})$. Lemma \ref{lem:compgam} implies that $\gamma_{m-4}=\gamma_{-3}$. We then see that

   \medskip
{\centering
  $ \scriptstyle
    \begin{aligned}
     &\hat{\mu}_{5} \hat{\sigma}_{m-4}&&=&&(m-1, 1,2,3)\, \gamma_{5} \, (m-1, 4)\,(m-1, 6) \gamma_{-3}\\
    &&&=&&(0, 2, 5, 7, 9,  \ldots, m-2, 3, 1, 4, 6, \ldots, m-3)(m-1). \\
          \end{aligned}
  $ 
\par}
\medskip

\noindent SUB-CASE 3.2. The pair $(\mu_{m-2}, \sigma_{3})$ and  $m\equiv 1 \ (\textrm{mod}\ 12)$. In that case , if $m=1+6k$, then $k$ is even. This means that

   \medskip
{\centering
  $ \scriptstyle
    \begin{aligned}
     &\hat{\mu}_{m-2} \hat{\sigma}_{3}&&=&&(m-1, 1,2,3)\, \gamma_{m-2} \, (m-1, k+1)\,(m-1, 0) \gamma_{3}\\
     &&&=&&(m-1, 1,2,3)\, \gamma_{-1} \, (m-1, k+1)\,(m-1, 0) \gamma_{3}\\
    &&&=&&(0, 2,5, 7, 9, \ldots, m-2, 1, 4, 6, 8, \ldots, k+2, 3, k+4, k+6,\ldots, m-3 )(m-1). \\
          \end{aligned}
  $ 
\par}
\medskip

\noindent SUB-CASE 3.3. The pair $(\mu_{m-2}, \sigma_{3})$ with $m\equiv 7 \ (\textrm{mod}\ 12)$. In that case, if $m=1+6k$, then $k$ is odd.  This means that

   \medskip
{\centering
  $ \scriptstyle
    \begin{aligned}
     &\hat{\mu}_{m-2} \hat{\sigma}_{3}&&=&&(m-1, 1,2,3)\, \gamma_{-1} \, (m-1, k+1)\,(m-1, 0) \gamma_{3}\\
    &&&=&&(0, 2,5, 7, 9, \ldots, k+2, 3, k+4, k+6, \ldots,  m-2, 1, 4, 6, 8,  \ldots, m-3)(m-1). \\
          \end{aligned}
  $ 
\par}
\medskip

\noindent SUB-CASE 3.4. Pairs of the form $(\mu_{i+1}, \sigma_{m-i})$ where $i\equiv 0, 4 \ (\textrm{mod}\ 12)$ and $12 \leqslant i \leqslant m-3$. In that case, if $i=2j$, then $j$ is even and $6 \leq j\leq k-2$. Moreover, Lemma \ref{lem:compgam} implies that $(m-j)^{\gamma_i}=j+1$ and $(j+2)^{\gamma_{-i+1}}=m-j+2$. Therefore:

   \medskip
{\centering
  $ \scriptstyle
    \begin{aligned}
&\hat{\mu}_{i+1} \hat{\sigma}_{m-i}&&=&&(m-1, 1,2,3)\, \gamma_{i+1} \, (m-1, j+2)\,(m-1, i+2) \gamma_{-i+1}\\
      &&&=&&( 0, 2, 5, 7, 9,  \ldots, m-j, 3, m-j+2, m-j+4,\ldots, m-2, 1, 4, 6, \ldots, m-3)(m-1). \\
          \end{aligned}
  $ 
\par}
\medskip

\noindent SUB-CASE 3.5. Pairs of the form $(\mu_{i+1}, \sigma_{m-i})$ where $i\equiv 6, 10 \ (\textrm{mod}\ 12)$ and $12 \leqslant i \leqslant m-3$. Then $i=2j$ where $j$ is odd and $3 \leq j\leq k-2$. Therefore:

   \medskip
{\centering
  $ \scriptstyle
    \begin{aligned}
&\hat{\mu}_{i+1} \hat{\sigma}_{m-i}&&=&&(m-1, 1,2,3)\, \gamma_{i+1} \, (m-1, j+2)\,(m-1, i+2) \gamma_{-i+1}\\
      &&&=&&( 0, 2, 5, 7, 9, \ldots, m-2,  1, 4, 6,   \ldots, m-j, 3, m-j+2,  m-j+4,\ldots, m-3)(m-1). \\
          \end{aligned}
  $ 
\par}
\medskip

\noindent SUB-CASE 3.6. The pair $({\mu}_{m-3}, {\sigma}_{4})$. Then

   \medskip
{\centering
  $ \scriptstyle
    \begin{aligned}
    &\hat{\mu}_{m-3} \hat{\sigma}_{4}&&=&&(m-1, 1,2,3)\, \gamma_{-2} \, (m-1, 0)\,(m-1, m-2) \gamma_{4} \\
        &&&=&&(0, 2, 5, 7, 9, \ldots, m-2, 1, 3, 4, 6, \ldots, m-3)(m-1).
          \end{aligned}
  $ 
\par}
\medskip

\noindent SUB-CASE 3.7. Pairs of the form $(\mu_{i}, \sigma_{m-i+1})$ where $i \equiv 0, 4\ (\textrm{mod}\ 12)$, $4 \leqslant i\leqslant m-9$, and $m \equiv 1\ (\textrm{mod}\ 12)$. Then $i=2j$, where $j$ is even, and $k$ is even. We then see that $2\leqslant j \leqslant k-4$. Therefore

   \medskip
{\centering
  $ \scriptstyle
    \begin{aligned}
    &\hat{\mu}_{i} \hat{\sigma}_{m-i+1}&&=&&(m-1, 1,2,3)\, \gamma_{i} \, (m-1, k+j+1)\,(m-1, i+1) \gamma_{-i+2}\\
    &&&=&&(0, 2, 5 , 7, 9, \ldots, k-j+1, 3, k-j+3, k-j+5, \ldots, m-4,  m-2, 1,\\
   &&&&&  4, 6, 8, \ldots, m-3 )(m-1). \\
          \end{aligned}
  $ 
\par}
\medskip

\noindent SUB-CASE 3.8. Pairs of the form $(\mu_{i}, \sigma_{m-i+1})$ where $i \equiv 0, 4\ (\textrm{mod}\ 12)$, $4 \leqslant i\leqslant m-7$, and $m \equiv 7\ (\textrm{mod}\ 12)$. Then $i=2j$, where $j$ is even, and $k$ is odd. In addition, we have $2\leq j \leq k-3$. As a result,

   \medskip
{\centering
  $ \scriptstyle
    \begin{aligned}
    &\hat{\mu}_{i} \hat{\sigma}_{m-i+1}&&=&&(m-1, 1,2,3)\, \gamma_{i} \, (m-1, k+j+1)\,(m-1, i+1) \gamma_{-i+2}\\
    &&&=&&(0, 2, 5, 7, 9, \ldots, m-2, 1, 4, 6,8, \ldots, k-j+1, 3, \\
   &&&&&  k-j+3, k-j+5, \ldots, m-3 )(m-1). \\
          \end{aligned}
  $ 
\par}
\medskip

\noindent SUB-CASE 3.9. Pairs of the form $(\mu_{i}, \sigma_{m-i+1})$ where  $i \equiv 6, 10 \ (\textrm{mod}\ 12)$,  $6 \leqslant i\leqslant m-7$, and  $m \equiv 1\ (\textrm{mod}\ 12)$. Then $i=2j$ where $j$ is odd, $k$ is even, and $3 \leqslant j \leqslant k-3$. The computation of $\hat{\mu}_{i} \hat{\sigma}_{m-i+1}$ is the same as SUB-CASE 3.8.\\

\noindent SUB-CASE 3.10. Pairs of the form $(\mu_{i}, \sigma_{m-i+1})$ where  $i \equiv 6, 10 \ (\textrm{mod}\ 12)$, $6 \leqslant i\leqslant m-9$, and  $m \equiv 7\ (\textrm{mod}\ 12)$. Then $i=2j$ where $j$ is odd, $k$ is even, and $3 \leqslant j \leqslant k-3$. hen $i=2j$ where $j$ is odd, $k$ is odd, and $3 \leq j\leq k-4$. The computation of $\hat{\mu}_{i} \hat{\sigma}_{m-i+1}$ is the same as SUB-CASE 3.7.\\

\bigskip

\noindent \underline{Case 4}: Pairs of the form $(\mu_i, \sigma_{m-i+1})$ where $i \equiv 2,3\ (\textrm{mod}\ 6)$ and $3 \leqslant i \leqslant m-4$. \\

\noindent SUB-CASE 4.1: $i=m-5$. Then

   \medskip
{\centering
  $ \scriptstyle
    \begin{aligned}
     &\hat{\mu}_{m-5} \hat{\sigma}_{6}&&=&&(m-1,1,2,3)\, \gamma_{-4} (m-1, m-2)\, (m-1, m-4)\, \gamma_6\\
    &&&=&&(0, 2, 3, 5, 7, 9, \ldots, m-2, 1,4,6,8, \ldots, m-3)(m-1). 
          \end{aligned}
  $ 
\par}
\medskip
\smallskip
\noindent  SUB-CASE 4.2: $i\equiv 2 \ (\textrm{mod}\ 12)$, $14 \leqslant i \leqslant m-11$, and $m \equiv 1\ (\textrm{mod}\ 12)$.  Then $i=2j$, where $j$ is odd, and $k$ is even. This also means that $1\leqslant j \leqslant k-5$.  Therefore:

   \medskip
{\centering
  $ \scriptstyle
    \begin{aligned}
    &\hat{\mu}_{i} \hat{\sigma}_{m-i+1}&&=&&(m-1,1,2,3)\, \gamma_i\, (m-1, k+j+1)\, (m-1, i+1)\, \gamma_{-i+2}\\
    &&&=&&(0, 2,5, 7, \ldots,  m-2, 1,4,6,8,\cdots, k-j+1, 3, k-j+3, k-j+5, \dots, m-3)(m-1).\\
          \end{aligned}
  $ 
\par}
\medskip

\noindent  SUB-CASE 4.3:  $i\equiv 2 \ (\textrm{mod}\ 12)$, $14 \leqslant i \leqslant m-17$, and  $m \equiv 7\ (\textrm{mod}\ 12)$. Then $i=2j$, where $j$ is odd, and $k$ is odd. This also means that $1\leqslant j \leqslant k-8$. Therefore:

   \medskip
{\centering
  $ \scriptstyle
    \begin{aligned}
    &\hat{\mu}_{i} \hat{\sigma}_{m-i+1}&&=&&(m-1,1,2,3)\, \gamma_i\, (m-1, k+j+1)\, (m-1, i+1)\, \gamma_{-i+2}\\
    &&&=&&(0,  2,5, 7,\cdots, k-j+1, 3, k-j+3, k-j+5, \ldots,  m-2, 1,4,6,8, \dots, m-3)(m-1).\\
          \end{aligned}
  $ 
\par}
\medskip

\noindent  SUB-CASE 4.4:  $i\equiv 3 \ (\textrm{mod}\ 12 )$, $3 \leqslant i \leqslant m-5$. Then $i=2j+1$ where $j$ is odd and $1 \leqslant j \leqslant k-2$.  Furthermore, observe that $(m-j)^{\gamma_i}=j+2$ and $(j+2)^{\gamma_{-i+2}}=m-j+2$. Therefore:

   \medskip
{\centering
  $ \scriptstyle
    \begin{aligned}
    &\hat{\mu}_{i} \hat{\sigma}_{m-i+1}&&=&&(m-1,1,2,3)\, \gamma_i\, (m-1, j+2)\, (m-1, i+1)\, \gamma_{-i+2}\\
    &&&=&&( 0, 2,5, 7, \ldots, m-2, 1,4,6,\cdots, m-j , 3, m-j+2, m-j+4, \ldots,  m-3)(m-1).
          \end{aligned}
  $ 
\par}
\medskip

\noindent  SUB-CASE 4.5:  $i\equiv 8 \ (\textrm{mod}\ 12 )$, $8 \leqslant i \leqslant m-17$, and $m \equiv 1\ (\textrm{mod}\ 12)$. Then $i=2j$, where $j$ is even, and $k$ is even.  This also means that $4\leqslant j \leqslant k-8$. Computation of $\hat{\mu}_{i} \hat{\sigma}_{m-i+1}$ is the same as SUB-CASE 4.3. \\

\noindent  SUB-CASE 4.6: $i\equiv 8 \ (\textrm{mod}\ 12 )$, $8 \leqslant i \leqslant m-11$, and $m \equiv 7\ (\textrm{mod}\ 12)$. Then $i=2j$, where $j$ is even, and $k$ is odd. This also means that $4\leqslant j \leqslant k-5$. The computation of $\hat{\mu}_{i} \hat{\sigma}_{m-i+1}$ is the same as SUB-CASE 4.2. \\

\noindent   SUB-CASE 4.7:  $i\equiv 9 \ (\textrm{mod}\ 12)$ and $9 \leqslant i \leqslant m-4$. Then $i=2j+1$ where $j$ is even and $4 \leqslant j \leqslant k-2$. Recall that Lemma \ref{lem:compgam} implies that $(m-j)^{\gamma_i}=j+2$ and $(j+2)^{\gamma_{-i+2}}=m-j+2$. Therefore:

   \medskip
{\centering
  $ \scriptstyle
    \begin{aligned}
    &\hat{\mu}_{i} \hat{\sigma}_{m-i+1}&&=&&(m-1,1,2,3)\, \gamma_i\, (m-1, j+2)\, (m-1, i+1)\, \gamma_{-i+2}\\
    &&&=&&(0, 2,5, 7, \ldots, m-j , 3, m-j+2, m-j+4, \ldots, m-2, 1,4,6,8,\ldots, m-3)(m-1).\\
          \end{aligned}
  $ 
\par}
\medskip

In conclusion, all pairs in $D_T$ are truncated hamiltonian pairs. 

\noindent STEP 2. We show that all pairs in $D_H$ are hamiltonian pairs.

\noindent \underline{Case 1}. The pair $(\mu_1, \sigma_2)$. We consider two sub-cases. \\

\noindent SUB-CASE 1.1:  $m=7$. Then

\medskip
{\centering
  $ \scriptstyle
    \begin{aligned}
    &\mu_1 \sigma_{2}&&=&&(6,1,2,3)\, \gamma_1 \, (6, 2) \gamma_{2}(6, 5)=( 0, 3,4,1,6,5,2).
          \end{aligned}
  $ 
\par}
\medskip

\noindent SUB-CASE 1.2: $m \geqslant 13$.  Recall that $k \equiv 0 \ (\textrm{mod}\ 3)$. Then

\medskip
{\centering
  $ \scriptstyle
    \begin{aligned}
    &\mu_1 \sigma_{2}&&=&&(m-1,1,2,3)\, \gamma_1 \, (m-1, 2) \gamma_{2}(m-1, k+2)\\
    &&&=&&( 0, 3, 4,  7, 10, \ldots, m-3, 1, 5, 8, \ldots, k-1, m-1,  k+2 , k+5,\ldots, m-2, 2, 6, 9, \ldots, m-4). 
          \end{aligned}
  $ 
\par}
\medskip

\noindent \underline{Case 2}: The pair $(\mu_2, \sigma_0)$. We consider three sub-cases. \\

\noindent SUB-CASE 2.1:  $m \equiv 1\ ( \textrm{mod}\ 12)$ and $m=2k+1$. Then $k$ is even. Therefore:

\medskip
{\centering
  $ \scriptstyle
    \begin{aligned}
    &\mu_2 \sigma_{0}&&=&&(m-1,1,2,3)\, \gamma_2 \, (m-1, k+2) \, id\\
    &&&=&&( 0, 2, 5, 7, \ldots, m-2, 1, 4, 6, 8, \ldots, k, m-1, 3, k+2, k+4, k+6, \ldots, m-3). 
          \end{aligned}
  $ 
\par}
\medskip

\noindent SUB-CASE 2.2:  $m=7$. Therefore:

\medskip
{\centering
  $ \scriptstyle
    \begin{aligned}
    &\mu_2 \sigma_{0}&&=&& (6,1,2,3)\, \gamma_2 \, (6, 5) \, id=(0, 2, 6, 3, 5, 1, 4). \\
          \end{aligned}
  $ 
\par}
\medskip

\noindent SUB-CASE 2.3: $m \equiv 7\ ( \textrm{mod}\ 12)$, $m \geqslant 19$, and $m=2k+1$. Then $k$ is odd. Therefore:

\medskip
{\centering
  $ \scriptstyle
    \begin{aligned}
    &\mu_2 \sigma_{0}&&=&& (m-1,1,2,3)\, \gamma_2 \, (m-1, k+2) \, id\\
    &&&=&&(0, 2, 5, 7, 9, \ldots, k, m-1, 3, k+2, k+4, k+6, \ldots, m-2, 1, 4, 6, \ldots, m-3). 
          \end{aligned}
  $ 
\par}
\medskip

\noindent \underline{Case 3}: Pairs of the form $(\mu_{i}, \sigma_{m-i})$ where $i \equiv 0, 4\ (\textrm{mod}\ 6)$ and $4 \leqslant i \leqslant m-3$. We consider two sub-cases. \\ 

\noindent SUB-CASE 3.1: $i=m-3$. We see that

   \medskip
{\centering
  $ \scriptstyle
    \begin{aligned}
    &\mu_{m-3} \sigma_{3}&&=&&(m-1,1,2,3)\, \gamma_{-2}\, (m-1,0)\, \, \gamma_{3}\, (m-1, 3)\\
    &&&=&&(0, 1, 3, m-1, 2,4,5, 6, \ldots, m-2).\\
          \end{aligned}
  $ 
\par}
\bigskip

\noindent SUB-CASE 3.2: $i \equiv 0, 4\ (\textrm{mod}\ 6)$ and $4 \leqslant i < m-3$. Then $i=2j$ where $2 \leqslant j \leqslant k-2$. Lemma \ref{lem:comput} implies that  $\sigma_{m-i}=\gamma_{-i+1}\, (m-1, k-j+2)$. Therefore:

   \medskip
{\centering
  $ \scriptstyle
    \begin{aligned}
    &\mu_{i} \sigma_{m-i}&&=&&(m-1,1,2,3)\, \gamma_i\, (m-1, k+j+1)\, \, \gamma_{-i+1}\, (m-1, k-j+2)\\
    &&&=&&(0, 1, 3, m-1, 2,4,5, 6,\cdots, k-j+1, k-j+2, k-j+3, \ldots, m-2).\\
          \end{aligned}
  $ 
\par}
\bigskip

\noindent \underline{Case 4}: Pairs of the form $(\mu_{i}, \sigma_{m-i}+2)$ where $i \equiv 1, 5\ (\textrm{mod}\ 6)$ and $5 \leqslant i \leqslant m-2$.
Note that, if $i \equiv 1, 5 \ (\textrm{mod}\ 6)$ and $11 \leqslant i \leqslant m-2$, then $i=2j+1$ and $5 \leqslant j \leqslant k-1$. Lemma \ref{lem:comput} implies that  $\sigma_{m-i+2}=\gamma_{-i+3}\, (m-1, m-j+1)$. We consider four sub-cases. \\ 

\noindent SUB-CASE 4.1: $i=5$. We see that

 \medskip
{\centering
  $ \scriptstyle
    \begin{aligned}
    &\mu_{5} \sigma_{m-3}&&=&&(m-1,1,2,3)\, \gamma_5\, (m-1, 4)\, \, \gamma_{-2}\, (m-1, 0)\\
    &&&=&&(0, 3, 2, 6, 9, \ldots, m-4, m-1, 4, 7, \ldots, m-3,1, 5, 8, \ldots, m-2).\\
          \end{aligned}
  $ 
\par}
\bigskip

\noindent SUB-CASE 4.2: $i=7$. We see that

   \medskip
{\centering
  $ \scriptstyle
    \begin{aligned}
    &\mu_{7} \sigma_{m-5}&&=&&(m-1,1,2,3)\, \gamma_7\, (m-1, 5)\, \, \gamma_{-4}\, (m-1, m-2)\\
    &&&=&&(0, 3, 1, 5, 8, \ldots, m-5, m-1, 4, 7, \ldots, m-3, m-2, 2 6, 9, \ldots, m-4).\\
          \end{aligned}
  $ 
\par}
\medskip

\noindent SUB-CASE 4.3: $i \equiv 1 \ (\textrm{mod}\ 6)$ and $13 \leqslant i \leqslant m-6$. If $i=2j$, then $j \equiv 0 \ (\textrm{mod}\ 3)$.  Moreover, Lemma \ref{lem:compgam} implies that $(j+2)^{\gamma_{-i+3}}=m-j+3$ and $(m-j)^{\gamma_i}=j+2$. We then see that

   \medskip
{\centering
  $ \scriptstyle
    \begin{aligned}
    &\mu_{i} \sigma_{m-i+2}&&=&&(m-1,1,2,3)\, \gamma_i\, (m-1, j+2)\, \, \gamma_{-i+3}\, (m-1, m-j+1)\\
    &&&=&&(0, 3, m-j+3, m-j+6, m-j+9, \ldots, m-3,  1, 5, 8, \ldots,  m-j-2, m-1, 4, \\ 
    &&&&&  7, 10,  \ldots, m-j,m-j+1, m-j+4, \ldots, m-2, 2, 6, 9, \ldots, m-4).\\
          \end{aligned}
  $ 
\par}
\medskip

\noindent SUB-CASE 4.4: $i \equiv 5\ (\textrm{mod}\ 6)$ and $11 \leqslant i \leqslant m-2$. Then $i=2j+1$ where $j \equiv 2 \ (\textrm{mod}\ 3)$ and $5 \leqslant j \leqslant k-1$. Therefore, 
 
   \medskip
{\centering
  $ \scriptstyle
    \begin{aligned}
    &\mu_{i} \sigma_{m-i+2}&&=&&(m-1,1,2,3)\, \gamma_i\, (m-1, j+2)\, \, \gamma_{-i+3}\, (m-1, m-j+1)\\
    &&&=&&(0, 3, m-j+3, m-j+6, \ldots, m-2, 2, 6, 9, \ldots, m-j-2,  m-1,4,7, 10, \ldots, \\
    &&&&&  m-3, 1, 5, 8, 11, \ldots, m-j, m-j+1, m-j+4,  m-j+7, \ldots, m-4).\\
          \end{aligned}
  $ 
\par}
\medskip

In conclusion, all pairs in $D_H$ are hamiltonian pairs. \hfill $\square$\\
\bigskip

\noindent{\bf Computations for the proof of Proposition \ref{prop:m3cong6}}. Since $m \equiv 3 \ (\textrm{mod}\ 6)$, and $m=2k+1$, then $k \equiv 1 \ (\textrm{mod}\ 3)$. We use the following two regular sets of $m$ permutations:

      \smallskip
{\centering
  $ \scriptstyle
    \begin{aligned}
    &R_1=(1,2,3,4) \cdot \mathcal{F}_m=\{\mu_i=(1,2,3,4) \sigma_i\ | \ i=0,1,\cdots, m-1\}; R_2= \mathcal{F}_m=\{\sigma_0, \sigma_1, \ldots, \sigma_{m-1}\}.\\
          \end{aligned}
  $ 
\par}
\smallskip

 Observe that $\mu_0$ and $\sigma_0$ are the $(m-1)$-stabilizers of $R_1$ and $R_2$, respectively. We also point out that  $(m-1)^{\mu_i}=(m-1)^{\sigma_i}$ and $\hat{\sigma}_i=\gamma_i$.  Therefore, if $i \not\in\{0, m-1\}$, then  $\hat{\mu}_i=(1,2,3,4)\,\gamma_i$. 

Recall that $c=\frac{m}{3}+4+2t$, where $0\leqslant 2t \leqslant \frac{2m}{3}-6$, $\mathds{I}=\{i \ |\ i \equiv 3, 5\ (\textrm{mod}\ 6)\ \textrm{and}\ 3\leqslant i \leqslant m-10\}$, and $M_t$ is a subset of size $t$ of $\mathds{I}$. The sets $D_{T}$ and $D_H$ are constructed as follows:

 \smallskip
{\centering
  $ \scriptstyle
    \begin{aligned} 
   &D_{T}&&=&&\{(\mu_{m-4}, \sigma_{m-4}),  (\mu_{m-2}, \sigma_{m-1}), (\mu_{m-1}, \sigma_{m-2})\}\cup\\
   &&&&&\{(\mu_i, \sigma_{m-i-5}), (\mu_{i+1}, \sigma_{m-i-4})\ | \ i \equiv 2 \ (\textrm{mod} \ 6)\  \textrm{and}\  2\leqslant i \leqslant m-15\}\cup \\
   &&&&& \{(\mu_i, \sigma_{m-i-5}), (\mu_{i+1}, \sigma_{m-i-4})\ | \ i\in \{m-10, m-8, m-6\}\}\cup \\
   &&&&&\{(\mu_i, \sigma_{m-i-5}), (\mu_{i+1}, \sigma_{m-i-4}) \ | \ i \in M_t\};\\
   &D_H&&=&&\{ (\mu_{m-3}, \sigma_0), (\mu_0, \sigma_{m-3})\}\cup \\ 
   &&&&&\{(\mu_i, \sigma_{m-i-4})\ |\ i\equiv 0,3,4,5\ (\textrm{mod}\ 6), 3\leqslant i \leqslant m-9, \textrm{and}\ i, i-1 \not\in M_t\}.
   \end{aligned}
  $ 
\par}
   \smallskip
   
\noindent STEP 1. We show that all pairs in $D_T$ are truncated hamiltonian pairs. We consider three cases. \\

\noindent \underline{Case 1:} The pairs $(\mu_{m-4}, \sigma_{m-4}),  (\mu_{m-2}, \sigma_{m-1})$ and $(\mu_{m-1}, \sigma_{m-2})$. See below:

   \medskip
{\centering
  $ \scriptstyle
    \begin{aligned}
     &\hat{\mu}_{m-4} \hat{\sigma}_{m-4}&&=&&(1,2,3,4)\, \gamma_{-3} \, \gamma_{-3}\\
     &&&=&& (0, m-7, m-13, \ldots,  8, 2, m-4, m-10, \ldots, 5, m-2, m-8,  7, 1, m-5,  \\
     &&&&& \ldots, m-11, \ldots, 4, m-6, m-12, \ldots, 9, 3, m-3, m-9, \ldots, 6)(m-1);\\
    &\hat{\mu}_{m-2} \hat{\sigma}_{m-1}&&=&&(1,2,3,4)\, \gamma_{-1} \, (0, 3, m-2, 4, \ldots, k,  k+2, k+1, 1,2)\, \\
    &&&=&&( 0, 4, 3, m-2, 5, m-3, 6, \ldots, k, k+3, k+1, k+2, 1, 2)(m-1);\\
    &\hat{\mu}_{m-1} \hat{\sigma}_{m-2}&&=&&(1,2,3,4)\, (0, 3, m-2, 4, m-3, 5, \ldots, k, k+2, k+1, 1,2) \gamma_{-1}\\
    &&&=&&(0, 2, m-3, 4, 1, m-2, 3, m-4, 5, m-5, 6, m-6, 7, \ldots, k+2, k, k+1)(m-1).
            \end{aligned}
  $ 
\par}
\medskip

\underline{Case 2:} Pairs of the form $(\mu_i, \sigma_{m-i-5})$ where $i \equiv 1,3,5 \ (\textrm{mod}\ 6)$. We consider two sub-cases. 

\noindent SUB-CASE 2.1: $m\equiv 1 \ (\textrm{mod}\ 4)$. If $m=2k+1$, then $k$ is even. Therefore:

   \medskip
{\centering
  $ \scriptstyle
    \begin{aligned}
    &\hat{\mu}_{i} \hat{\sigma}_{m-i-5}&&=&&(1,2,3,4)\, \gamma_{i} \, \gamma_{-i-4}\\
    &&&=&&( 0, m-5, m-9, \ldots, 4, m-4, m-8, \ldots, 5, 1, m-3, m-7, \ldots, 2, m-2,\\
    &&&&& m-6, \ldots, 3)(m-1).
          \end{aligned}
  $ 
\par}
\medskip

\noindent SUB-CASE 2.2: $m\equiv 3 \ (\textrm{mod}\ 4)$. If $m=2k+1$, then $k$ is odd. Therefore:

   \medskip
{\centering
  $ \scriptstyle
    \begin{aligned}
    &\hat{\mu}_{i} \hat{\sigma}_{m-i-5}&&=&&(1,2,3,4) \, \gamma_{-4}\\
    &&&=&&( 0, m-5, m-9, \ldots, 2, m-2, m-6, \ldots, 5, 1, m-3, m-7, \ldots, 4,m-4,\\
    &&&&& m-8, \ldots, 3)(m-1).
          \end{aligned}
  $ 
\par}
\medskip

\underline{Case 3:} Pairs of the form $(\mu_{i+1}, \sigma_{m-i-4})$ where $i \equiv 1,3,5 \ (\textrm{mod}\ 6)$. We see that

   \medskip
{\centering
  $ \scriptstyle
    \begin{aligned}
    &\hat{\mu}_{i+1} \hat{\sigma}_{m-i-4}&&=&&(1,2,3,4)\, \gamma_{i+1} \, \gamma_{-i-3}\\
    &&&=&&(0, m-3, m-5, \ldots, 6, 4, m-2, m-4, \ldots, 7, 5, 3, 2, 1)(m-1).
          \end{aligned}
  $ 
\par}
\medskip

In summary, all pairs in $D_T$ are truncated hamiltonian pairs. \\

\noindent STEP 2. We show that all pairs in $D_H$ are hamiltonian pairs. All pairs in $D_H$ are of the form $(\mu_i, \sigma_{m-i-4})$, where $i \equiv 0,3,4,5\ (\textrm{mod}\ 6)$ and $3 \leqslant i \leqslant m-9$. Recall that $k \equiv 1 \ (\textrm{mod}\ 3)$. We will consider five subcases. 

For SUB-CASES 1 and 2, we shall assume that $i=2j$ and $2 \leqslant i \leqslant m-9$. Then $1 \leqslant j \leqslant k-4$, and Lemma \ref{lem:comput} implies that $\sigma_{m-i-4}=\gamma_{-i-3}\, (m-1, k-j)$. 

For SUB-CASES 4 and 5, we shall assume that $i=2j+1$ and $1 \leqslant i \leqslant m-6$. Then $0 \leqslant j \leqslant k-3$ and Lemma \ref{lem:comput} implies that  $\sigma_{m-i-4}=\gamma_{-i-3}\, (m-1, m-j-2)$. 

\noindent SUB-CASE 1: $i \equiv 0\ (\textrm{mod} \ 6)$ and $6\leq i \leq m-9$. Then  $i=2j$ where $j\equiv 0\ (\textrm{mod}\ 3)$ and $3 \leqslant j \leqslant k-4$.  Therefore:

\medskip
{\centering
  $ \scriptstyle
    \begin{aligned}
    &\mu_i \sigma_{m-i-4}&&=&&(1,2,3,4)\, \gamma_i \, (m-1, k+j+1) \gamma_{-i-3}(m-1, k-j)\\
   &&&=&&( 0, m-4, m-7, \ldots, k-j+1, k-j, k-j-3, k-j-6,  \ldots, 7,4, m-3, m-6,  \\
    &&&&& \ldots, 3, 1, m-2, m-5, \ldots, k-j+3, m-1,k-j-2, k-j-5, \ldots, 2). 
          \end{aligned}
  $ 
\par}
\bigskip

\noindent SUB-CASE 2: $i \equiv 4\ (\textrm{mod} \ 6)$ and $4\leq i \leq m-11$. Then  $i=2j$ where $j \equiv 2 \ (\textrm{mod}\ 3)$ and $2 \leq j \leq k-6$. Therefore:

\medskip
{\centering
  $ \scriptstyle
    \begin{aligned}
    &\mu_i \sigma_{m-i-4}&&=&&(1,2,3,4)\, \gamma_i \, (m-1, k+j+1) \gamma_{-i-3}(m-1, k-j)\\
    &&&=&&(0, m-4,m-7,  \ldots,  k-j+3, m-1, k-j-2, k-j-5, \ldots, 3, 1,  \\
    &&&&& m-2,  m-5, \ldots, 4, m-3, m-6, \ldots, k-j+1, k-j,  k-j-3,  \ldots, 2). 
          \end{aligned}
  $ 
\par}
\bigskip

\noindent SUB-CASE 3: $i=3$. Therefore:

\medskip
{\centering
  $ \scriptstyle
    \begin{aligned}
        &\mu_3 \sigma_{m-7}&&=&&(1,2,3,4)\, \gamma_3\, (m-1, 3) \, \gamma_{-6}\, (m-1, m-3)\\
        &&&=&&( 0, m-3, m-6,  \ldots, 6, 3, 1, m-2, m-5, \ldots, 4, m-1, m-4, m-7, \ldots, 5, 2).
       \end{aligned}
        $ 
\par}
\bigskip

\noindent SUB-CASE 4: $i \equiv 3 \ (\textrm{mod} \ 6)$ and $9 \leq i \leq m-12$. Then $i=2j+1$ where $j \equiv 1 \ (\textrm{mod} \ 3)$ and $4 \leq j \leq k-6$. Lemma \ref{lem:compgam} implies that $(m-j)^{\gamma_i}=j+2$ and $(j+2)^{\gamma_{-i-3}}=m-j-3$. Therefore:
  
    \medskip
{\centering
  $ \scriptstyle
    \begin{aligned}
    &\mu_i \sigma_{m-i-4}&&=&&(1,2,3,4)\, \gamma_i \, (m-1, j+2) \gamma_{-i-3}(m-1, m-j-2)\\
    &&&=&&( 0, m-4, m-7, \ldots, m-j, m-j-2, m-j-5,  \ldots , 6, 3, 1, m-2,  \\
    &&&&& m-5, \ldots, 4, m-3, \ldots, m-j+1, m-1, m-j-3, m-j-6, \ldots, 5, 2)\\
    \end{aligned}   
  $ 
\par}
\bigskip

\noindent SUB-CASE 5: $i \equiv 5\  (\textrm{mod} \ 6)$ and $5 \leq i \leq m-10$. Then $i=2j+1$ where $j \equiv 2 \ (\textrm{mod} \ 3)$ and $2 \leqslant j \leqslant k-5$. Therefore:

   \medskip
{\centering
  $ \scriptstyle
    \begin{aligned}
    &\mu_i \sigma_{m-i-4}&&=&&(1,2,3,4)\, \gamma_i \, (m-1, j+2) \gamma_{-i-3}(m-1, m-j-2)\\
    &&&=&&( 0,  m-4, m-7, \ldots,  m-j+1, m-1,m-j-3, m-j-6, \ldots, 7, 4, \\
    &&&&&  m-3, m-6, \ldots, 3, 1, m-2, m-5,  \ldots, m-j, m-j-2,  m-j-5,  \ldots , 5, 2). 
          \end{aligned}
  $ 
\par}
\medskip

In conclusion, all pairs in $D_H$ are hamiltonian pairs. \hfill $\square$\\

\noindent{\bf Computations for the proof of Proposition \ref{prop:5mod6}}. Let $m \equiv 5 \ (\textrm{mod}\ 6)$, where $m=2k+1$. Then $k \equiv 2 \ (\textrm{mod}\ 3)$. We use the following two regular sets of $m$ permutations:

      \smallskip
{\centering
  $ \scriptstyle
    \begin{aligned}
    &R_1=(1,2,3,4) \cdot \mathcal{F}_m=\{\mu_i=(1,2,3,4)\sigma_i\ | \ i=0,1, \ldots, m-1\}; &R_2= \mathcal{F}_m=\{\sigma_0, \sigma_1, \ldots, \sigma_{m-1}\}.\\
          \end{aligned}
  $ 
\par}
\smallskip

Recall that $c={\frac{m+1}{3}+5}+2t$, where $0\leqslant 2t \leqslant \frac{2m-1}{3}-9$, $\mathds{I}=\{i \ |\ i \equiv 0, 4\ (\textrm{mod}\ 6)\ \textrm{and}\ 4 \leqslant i \leqslant m-13\}$, and $M_t$ is a subset of size $t$ of $\mathds{I}$. We construct the following two sets of pairs:

 \smallskip
{\centering
  $ \scriptstyle
    \begin{aligned} 
   &D_{T}&&=&&\{(\mu_{m-4}, \sigma_{m-4}),  (\mu_{m-2}, \sigma_{m-1}), (\mu_{m-1}, \sigma_{m-2})\}\cup\\
   &&&&&\{(\mu_i, \sigma_{m-i-5}), (\mu_{i+1}, \sigma_{m-i-4})\ | \ i \equiv 2 \ (\textrm{mod} \ 6)\  \textrm{and}\  2\leqslant i \leqslant m-15\}\cup \\
   &&&&& \{(\mu_i, \sigma_{m-i-5}), (\mu_{i+1}, \sigma_{m-i-4})\ | \ i\in \{m-10, m-8, m-6\}\}\cup \\
   &&&&&\{(\mu_i, \sigma_{m-i-5}), (\mu_{i+1}, \sigma_{m-i-4}) \ | \ i \in M_t\};\\
   &D_H&&=&&\{(\mu_{m-3}, \sigma_0), (\mu_0, \sigma_{m-3})\}\cup\\
   &&&&&\{(\mu_i, \sigma_{m-i-4})\ | \ i \equiv 0,1, 4,5 \ (\textrm{mod} \ 6), 1\leqslant i \leqslant m-11, \textrm{and}\ i, i-1 \not\in M_t\}.
   \end{aligned}
  $ 
\par}
   \smallskip

\noindent STEP 1. We show that all pairs in $D_T$ are truncated hamiltonian pairs. We consider five cases. 

\noindent \underline{Case 1}: Pairs $ (\mu_{m-2}, \sigma_{m-1})$ and $(\mu_{m-1}, \sigma_{m-2})$. The products are the same as Case 1 of STEP 1 of the computations of Proposition \ref{prop:m3cong6}.\\

\noindent \underline{Case 2}:  The pair  $(\mu_{m-4}, \sigma_{m-4})$. We see that

\medskip
{\centering
  $ \scriptstyle
    \begin{aligned}
     &\hat{\mu}_{m-4} \hat{\sigma}_{m-4}&&=&&(1,2,3,4)\, \gamma_{-3} \, \gamma_{-3}=(1,2,3,4)\gamma_{-6}\\
     &&&=&& (0, m-7, m-13, \ldots, 10, 4, m-6, m-12, \ldots, 5, m-2, m-8,  \ldots, 9, 3, m-3,  \\
     &&&&&  m-9, \ldots, 8, 2, m-4, m-10, \ldots, 7, 1, m-5, m-11, \ldots, 6)  (m-1).\\
      \end{aligned}
  $ 
\par}
\medskip

\noindent \underline{Case 3}: Pairs of the form $(\mu_i, \sigma_{m-i-5}), (\mu_{i+1}, \sigma_{m-i-4})$ where $ i \equiv 2 \ (\textrm{mod} \ 6)\  \textrm{and}\ 2\leqslant i \leqslant m-15$ or $i\in \{m-10, m-8, m-6\}$. See Case 2 and Case 3 of STEP 1 of the computations of Proposition \ref{prop:m3cong6}.

In conclusion, all pairs in $D_T$ are truncated hamiltonian pairs. \\

\noindent STEP 2. We show that all pairs in $D_H$ are hamiltonian pairs. All pairs in $D_H$ are of the form $(\mu_i, \sigma_{m-i-4})$, where $i \equiv 0,3,4,5\ (\textrm{mod}\ 6)$ and $3 \leqslant i \leqslant m-9$. Recall that $k \equiv 2 \ (\textrm{mod}\ 3)$. We will consider five subcases. 

For SUB-CASES 1 and 2, we shall assume that $i=2j$ and $2 \leqslant i \leqslant m-11$. Then $1 \leqslant j \leqslant k-5$, and Lemma \ref{lem:comput} implies that $\sigma_{m-i-4}=\gamma_{-i-3}\, (m-1, k-j)$. 

For SUB-CASES 4 and 5, we shall assume that $i=2j+1$ and $1 \leqslant i \leqslant m-10$. Then $0 \leqslant j \leqslant k-4$ and Lemma \ref{lem:comput} implies that  $\sigma_{m-i-4}=\gamma_{-i-3}\, (m-1, m-j-2)$. 

\noindent SUB-CASE 1:$i=1$. Therefore:

   \medskip
{\centering
  $ \scriptstyle
    \begin{aligned}
       &\mu_1 \sigma_{m-5}&&=&&(1,2,3,4)\gamma_1\, (m-1, 2)\, \gamma_{-4} (m-1,m-2)\\
       &&&=&&( 0, m-4, m-7,  \cdots, 4, m-2, m-5,\cdots, 6, 3, 1, m-1, m-3, m-7,\\
       &&&&& \cdots, 5, 2).
          \end{aligned}
  $ 
\par}
\medskip

\noindent SUB-CASE 2: $i \equiv 1 \ (\textrm{mod} \ 6)$ and $7\leq i \leq m-16$. Then $i=2j+1$ where $j \equiv 0 \ (\textrm{mod} \ 3)$ and $3 \leq j \leq k-8$. Note that Lemma \ref{lem:compgam} implies that  $(m-j)^{\gamma_i}=j+2$ and $(j+2)^{\gamma_{-i-3}}=m-j-3$. Therefore:

   \medskip
{\centering
  $ \scriptstyle
    \begin{aligned}
    &\mu_i \sigma_{m-i-4}&&=&&(1,2,3,4)\, \gamma_i \, (m-1, j+2) \gamma_{-i-3}(m-1, m-j-2)\\
    &&&=&&( 0, m-4, m-7, \ldots, 4, m-3, m-6,  \cdots,  m-j, m-j-2, m-j-5,    \\
    &&&&&\cdots, 6, 3, 1, m-2, m-5, \ldots, m-j+1, m-1,m-j-3,\\
    &&&&& m-j-6, \ldots, 5, 2).
          \end{aligned}
  $ 
\par}
\medskip

\noindent SUB-CASE 3: $i=5$. Therefore:

   \medskip
{\centering
  $ \scriptstyle
    \begin{aligned}
    &\mu_5 \sigma_{m-9}&&=&&(1,2,3,4)\, \gamma_5 \, (m-1, 4) \gamma_{-8}(m-1, m-4)\\
    &&&=&&(0, m-1, m-5, m-8, \ldots, 6, 3, 1, m-2, m-4, m-7, \ldots, 7, 4, m-3, \\
    &&&&& m-6, \ldots, 5, 2).
          \end{aligned}
  $ 
\par}
\bigskip

\noindent SUB-CASE 4: $i \equiv 5\  (\textrm{mod} \ 6)$ and $11\leq i \leq m-12$. Then $i=2j+1$ where $j \equiv 2 \ (\textrm{mod} \ 3)$ and $5 \leq j\leq k-6$. Note that Lemma \ref{lem:compgam} implies that $(m-j)^{\gamma_i}=j+2$ and $(j+2)^{\gamma_{-i-3}}=m-j-3$. Therefore:

   \medskip
{\centering
  $ \scriptstyle
    \begin{aligned}
    &\mu_i \sigma_{m-i-4}&&=&&(1,2,3,4)\, \gamma_i \, (m-1, j+2) \gamma_{-i-3}(m-1, m-j-2)\\
    &&&=&&(0, m-4, m-7, \ldots, m-j+1, m-1,m-j-3, m-j-6, \ldots, 6,  \\
    &&&&& 3, 1,  m-2, m-5, \ldots, m-j, m-j-2, m-j-5,  \cdots, 7, 4,\\
    &&& &&  m-3,  m-6, \ldots, 5, 2). \\
          \end{aligned}
  $ 
\par}
\bigskip

\noindent SUB-CASE 5: $i \equiv 0\ (\textrm{mod} \ 6)$ and $6\leq i \leq m-11$. Then  $i=2j$ where $j \equiv 0 \ (\textrm{mod}\ 3)$ and $3 \leq j \leq k-5$. Therefore:

\medskip
{\centering
  $ \scriptstyle
    \begin{aligned}
    &\mu_i \sigma_{m-i-4}&&=&&(1,2,3,4)\, \gamma_i \, (m-1, k+j+1) \gamma_{-i-3}(m-1, k-j)\\
    &&&=&&(0, m-4, m-7, \ldots, 4, m-3, m-6, \ldots,  k-j+3, m-1,k-j-2,    \\
    &&&&& k-j-5, \ldots, 3, 1, m-2, m-5,\cdots , k-j+1, k-j, \\
    &&&&&k-j-3, \ldots, 5, 2 ). 
          \end{aligned}
  $ 
\par}
\medskip

\noindent SUB-CASE 5: $i \equiv 4\ (\textrm{mod} \ 6)$ and $4 \leqslant i \leq m-13$. Then  $i=2j$ where $j\equiv 2\ (\textrm{mod}\ 3)$ and $2\leqslant j \leqslant k-6$. Therefore:

\medskip
{\centering
  $ \scriptstyle
    \begin{aligned}
    &\mu_i \sigma_{m-i-4}&&=&&(1,2,3,4)\, \gamma_i \, (m-1, k+j+1) \gamma_{-i-3}(m-1, k-j)\\
   &&&=&&(0, m-4,m-7, \ldots, k-j+1, k-j, k-j-3, k-j-6,  \cdots,  6, 3 ,1 ,\\
    &&&&&   m-2,m-5, \ldots, k-j+3, m-1,k-j-2, k-j-5, \ldots, 7, 4,\\
    &&&&&  m-3,  m-6, \ldots, 5, 2 ). 
          \end{aligned}
  $ 
\par}
\medskip

In conclusion, all pairs in $D_H$ are hamiltonian pairs. \hfill $\square$\\

\noindent{\bf Computations for the proof of Lemma \ref{lem:m11}}. Let $m=11$ and $c\in \{5,7\}$. We consider two cases. 

\noindent \underline{Case 1}: $c=5$. Let

   \medskip
{\centering
  $ \scriptstyle
    \begin{aligned}
    &R_1= \mathcal{F}_{11}=\{\sigma_0, \sigma_1, \ldots, \sigma_{10}\};\\
    &R_2= (0, 10)\cdot  \mathcal{F}_{11}=\{\mu_i=(0,10)\sigma_i\ | \ i=0,1, \ldots, 10\}.\\
          \end{aligned}
  $ 
\par}
\medskip

\noindent Recall that $D_H$ and $D_T$ are constructed as follows:

      \smallskip
{\centering
  $ \scriptstyle
    \begin{aligned}
&D_T&&=&&\{(\sigma_1, \mu_0), (\sigma_5, \mu_1), (\sigma_8, \mu_6), (\sigma_9, \mu_9), (\sigma_{10}, \mu_{10})\};\\
&D_H&&=&&\{(\sigma_2, \mu_7), (\sigma_3, \mu_8), (\sigma_4, \mu_5), (\sigma_6, \mu_3), (\sigma_7, \mu_4), (\sigma_0, \mu_2) \}.
          \end{aligned}
  $ 
\par}
   \smallskip

\noindent STEP 1. See below for proof that all pairs of $D_T$ are truncated  hamiltonian:

   \medskip
{\centering
  $ \scriptstyle
    \begin{aligned}
&\hat{\sigma}_{1}\hat{\mu}_{0}&&=&&(1 ,2 ,3 ,4 ,5 ,6 ,7 ,8 ,9 ,0)(10);\\
&\hat{\sigma}_{5}\hat{\mu}_{1}&&=&&(1 ,7 ,3 ,9 ,5 ,2 ,8 ,4 ,0 ,6)(10);\\
&\hat{\sigma}_{8}\hat{\mu}_{6}&&=&&(1 ,5 ,6 ,0 ,4 ,8 ,2 ,9 ,3 ,7)(10);\\
&\hat{\sigma}_{9}\hat{\mu}_{9}&&=&&(1 ,6 ,4 ,2 ,0 ,8 ,9 ,7 ,5 ,3)(10);\\
&\hat{\sigma}_{10}\hat{\mu}_{10}&&=&&(1 ,0 ,9 ,8 ,7 ,3 ,4 ,5 ,6 ,2)(10).  
          \end{aligned}
  $ 
\par}
\medskip

\noindent STEP 2. See below for proof that all pairs of $D_H$ are  hamiltonian:

   \medskip
{\centering
  $ \scriptstyle
    \begin{aligned}
& \sigma_{2} \mu_{7}&&=&&(10 ,4 ,3 ,2 ,1 ,0 ,9 ,8 ,5 ,7 ,6);\\
& \sigma_{3} \mu_{8}&&=&&(10 ,1 ,2 ,3 ,4 ,5 ,6 ,7 ,0 ,8 ,9);\\
& \sigma_{4} \mu_{5}&&=&&(10 ,3 ,2 ,1 ,0 ,9 ,8 ,7 ,6 ,4 ,5);\\
& \sigma_{6} \mu_{3}&&=&&(10 ,2 ,1 ,0 ,9 ,8 ,7 ,6 ,5 ,4 ,3);\\
& \sigma_{7} \mu_{4}&&=&&(10 ,9 ,0 ,1 ,2 ,3 ,8 ,4 ,5 ,6 ,7);\\
& \sigma_{0} \mu_{2}&&=&&(10 ,2 ,4 ,6 ,8 ,0 ,7 ,9 ,1 ,3 ,5).
          \end{aligned}
  $ 
\par}

\noindent Hence $D=D_T\cup D_H'$ is a $5$-twined 2-factorization.

\noindent \underline{Case 2}: $c=7$. Let

   \medskip
{\centering
  $ \scriptstyle
    \begin{aligned}
    &R_1=(10,1,2,3,4,5) \cdot \mathcal{F}_{11}=\{ \mu_i=(10,1,2,3,4,5) \sigma_i\ | \ i=0,1, \ldots, 10\};\\
    &R_2= \mathcal{F}_{11}=\{\sigma_0, \sigma_1, \ldots, \sigma_{10}\}. \\
          \end{aligned}
  $ 
\par}
\medskip

\noindent Sets $D_T$ and $D_H$ are constructed as follows:

      \smallskip
{\centering
  $ \scriptstyle
    \begin{aligned}
&D_T&&=&&\{(\mu_2, \sigma_6), (\mu_3, \sigma_1), (\mu_5, \sigma_9), (\mu_6, \sigma_8), (\mu_7, \sigma_7), (\mu_9, \sigma_5), (\sigma_{10}, \mu_{10})\};\\
&D_H&&=&&\{(\mu_1, \sigma_4), (\mu_4, \sigma_0), (\mu_{8}, \sigma_3), (\mu_0, \sigma_{2})\}.
          \end{aligned}
  $ 
\par}
   \smallskip

\noindent STEP 1. See below for proof that all pairs in $D_T$ are truncated hamiltonian pairs:

   \medskip
{\centering
  $ \scriptstyle
    \begin{aligned}
&\hat{\mu}_{2}\hat{\sigma}_{6}&&=&&(1,0,8,6,4,9,7,5,3,2)(10);\\
&\hat{\mu}_{3}\hat{\sigma}_{1}&&=&&(1,6,0,5,4,9,3,8,2,7)(10);\\
&\hat{\mu}_{5}\hat{\sigma}_{9}&&=&&(1,6,0,4,9,5,3,8,2,7)(10);\\
&\hat{\mu}_{6}\hat{\sigma}_{8}&&=&&(1,6,0,4,9,3,8,2,5,7)(10);\\
&\hat{\mu}_{7}\hat{\sigma}_{7}&&=&&(1,6,0,4,9,3,8,5,2,7)(10);\\
&\hat{\mu}_{9}\hat{\sigma}_{5}&&=&&(1,6,0,4,9,3,8,2,7,5)(10);\\
&\hat{\mu}_{10}\hat{\sigma}_{10}&&=&&(1,3,5,2,4,6,0,9,8,7)(10).\\
          \end{aligned}
  $ 
\par}
\medskip

\noindent STEP 2. See below for proof that all pairs in $D_H$ are hamiltonian pairs:

   \medskip
{\centering
  $ \scriptstyle
    \begin{aligned}
&\mu_{1}\sigma_{4}&&=&&(10,8,3,9,4,0,5,6,1,7,2);\\
&\mu_{4}\sigma_{0}&&=&&(10,5,8,2,7,1,6,0,4,9,3);\\
&\mu_{8}\sigma_{3}&&=&&(10,2,4,6,7,8,9,0,1,3,5);\\
&\mu_{0}\sigma_{2}&&=&&(10,3,6,8,0,2,5,7,9,1,4).\\
          \end{aligned}
  $ 
\par}
\medskip

\noindent Then $D=D_T\cup D_H$ is the desired $7$-twined 2-factorization.

\end{document}